\documentclass[11pt,reqno]{amsart}
\usepackage{amssymb,amsmath,amsthm,amsfonts,mathrsfs,graphicx}
\usepackage{mathtools}
\usepackage{enumerate,enumitem}
\usepackage[margin=2.5cm]{geometry}
\usepackage[colorlinks=true,linkcolor=blue]{hyperref}
\setlength{\parskip}{.25\baselineskip}
\usepackage{tikz}
\usepackage{comment}

\title[Patch solutions for the 2D Loglog-Euler type equation]{A revisit of patch solutions for the 2D Loglog-Euler type equation}

\author[Changhui Tan]{Changhui Tan}
\address[Changhui Tan]{\newline Department of Mathematics, University of South Carolina, Columbia SC 29208, USA}
\email{tan@math.sc.edu}

\author[Liutang Xue]{Liutang Xue}
\address[Liutang Xue]{\newline School of Mathematical Sciences, Laboratory of Mathematics and Complex Systems (MOE), Beijing Normal University, Beijing 100875, P.R. China}
\email{xuelt@bnu.edu.cn}

\author[Zhilong Xue]{Zhilong Xue}
\address[Zhilong Xue]{\newline Academy of Mathematics and Systems Science, Chinese Academy of Sciences, Beijing 100190, P.R. China}
\email{zhilongxue@amss.ac.cn}

\thanks{\textit{Acknowledgment.} 
C. Tan is partially supported by NSF grant DMS-2238219.
L. Xue and Z. Xue are partially supported by National Key Research and Development Program of China (No. 2020YFA0712900) and National Natural Science Foundation of China (No. 12271045).
}

\makeatletter
\@namedef{subjclassname@2020}{\textup{2020} Mathematics Subject Classification}
\makeatother
\subjclass[2020]{35Q35, 35Q86, 35A01, 76B03, 76U60}

\keywords{2D Loglog-Euler type equation, Patch solution, Boundary regularity}

\newtheorem{theorem}{Theorem}[section]
\newtheorem{lemma}[theorem]{Lemma}
\newtheorem{coro}[theorem]{Corollary}
\newtheorem{proposition}[theorem]{Proposition}

\theoremstyle{definition}
\newtheorem{definition}{Definition}[section]

\theoremstyle{remark}
\newtheorem{remark}{Remark}


\def\RR{\mathbb{R}}
\def\dd{\mathrm{d}}

\begin{document}
\allowdisplaybreaks

\begin{abstract}
In this paper, we revisit the patch solutions for a class of inviscid whole-space active scalar equations that interpolate between the 2D Euler equation and the $\alpha$-SQG equation. Compared with the 2D Euler equation in vorticity form, there is an additional Fourier multiplier $m(\Lambda)$ ($\Lambda = (-\Delta)^{1/2}$) in the Biot-Savart law. If the symbol $m$ satisfies the Osgood-type condition
$$\int_2^{+\infty} \frac{1}{r (\log r) m(r)}\dd r = +\infty$$
and certain mild assumptions, the system is referred to as the 2D Loglog-Euler type equation.

First, we prove a Yudovich-type theorem establishing the existence and uniqueness of a global weak solution for the Loglog-Euler type equation associated with bounded and integrable initial data. This result directly applies to patch solutions, which are weak solutions corresponding to patch initial data given by characteristic functions of disjoint, regular, bounded domains.

Next, we revisit the seminal result by Elgindi~\cite{Elgindi14} and provide a different proof under explicit assumptions on $m$, showing that for the 2D Loglog-Euler type equation with $C^{1,\mu}$ ($0<\mu<1$) single-patch initial data, the evolved patch boundary globally preserves the $C^{1,\mu-\varepsilon}$ regularity for any $\varepsilon \in (0,\mu)$. In contrast to the frequency-space argument in~\cite{Elgindi14}, we develop an entirely physical-space-based approach that avoids the Littlewood-Paley theory and offers advantages for potential extensions to the half-plane or bounded smooth domains.

Furthermore, we investigate the global propagation of higher-order $C^{n,\mu}$ boundary regularity for patch solutions with any $n \in \mathbb{N}^\star$, and analyze the evolution of multiple patches.
\end{abstract}

\maketitle 

\tableofcontents

\section{Introduction}
In this paper we study a family of two-dimensional (2D) active scalar equations 
\begin{equation}\label{m-SQG}
\begin{cases}
  \partial_t \omega + u\cdot \nabla \omega = 0, \qquad & (x,t)\in \mathbb{R}^2\times\RR_+,\\
  u = \nabla^\perp (-\Delta)^{-1} m(\Lambda) \omega,\qquad & (x,t)\in \mathbb{R}^2\times\RR_+,\\
  \omega|_{t=0}(x) = \omega_0(x), \qquad & x\in \mathbb{R}^2,
\end{cases}
\end{equation}
where $\nabla^\perp \triangleq (\partial_{x_2},-\partial_{x_1})$ and $\Lambda\triangleq(-\Delta)^{\frac{1}{2}}$, 
the vector $u=(u_1,u_2)$ is the velocity field and the scalar field $\omega$ can be interpreted as the vorticity 
(or density, or temperature) of the fluid. 
The operator $m(\Lambda)$ is a Fourier multiplier with the symbol $m(\xi)=m(|\xi|)$, 
which is a radial function on $\mathbb{R}^2$ satisfying the following hypotheses:
\begin{enumerate}[label=$(\mathbf{H}\theenumi)$,ref=$\mathbf{H}\theenumi$]
\item\label{H1} 
$m(r)\in C^{n+4}(\mathbb{R}_+)$, $n\in\mathbb{N}^\star$ and
\begin{align*}
  \forall\; r> 0,\quad m(r)> 0,\;\; m'(r)\geq0;\quad
  \textrm{and}\quad
  \lim\limits_{r\to 0^+} r\,m'(r) \,\,\textrm{exists};
\end{align*}
and $m'(r)$ satisfies the Mikhlin-H\"ormander condition, that is, there exists a constant $C>0$ such that
\begin{align*}
  \big|\tfrac{\dd^k}{\dd r^k} m'(r)\big| \leq Cr^{-k}\, m'(r),\quad \forall\, k=1,\cdots,n+3,\quad
  \forall\, r > 0.
\end{align*}
\item\label{H2}
Denote by
$  \widetilde{m}(r) \triangleq m( e^r).	$
Either one of the statements holds: 
\begin{enumerate}[label=$(\mathbf{H}2a)$,ref=$\mathbf{H}2a$]
\item\label{H2a} 
there exist  constants $\beta\in[0,+\infty]$, $\beta_1\in[0,+\infty)$, and $\beta_2\in(-2,+\infty)$ such that the following limits exist:
\begin{align*}
  \lim\limits_{r\rightarrow +\infty} m(r)= +\infty,\quad 
  \lim_{r\rightarrow +\infty} \frac{r(\log r)\widetilde{m}'(r) }{\widetilde{m}(r)} = \beta, \quad 
  \lim_{r\rightarrow +\infty} \frac{r\widetilde{m}'(r) }{\widetilde{m}(r)} =\beta_1,\quad 
  \lim_{r\rightarrow +\infty} \frac{r\widetilde{m}''(r) }{\widetilde{m}'(r)} = \beta_2; 
\end{align*}
\end{enumerate}
\begin{enumerate}[label=$(\mathbf{H}2b)$,ref=$\mathbf{H}2b$]
\item\label{H2b} 
there exists a constant $\alpha\in (0,2)$ such that
\begin{align*}
  \lim_{r\rightarrow +\infty} \frac{r\, m'(r)}{m(r)} = \alpha;
\end{align*}
\end{enumerate}
\begin{enumerate}[label=$(\mathbf{H}2c)$,ref=$\mathbf{H}2c$]
\item\label{H2c} 
there exists a constant $C>0$ such that 
\begin{align*}
  \lim_{r\rightarrow +\infty} m(r) = C < +\infty.
\end{align*}
\end{enumerate}
\end{enumerate}
We emphasize that the technical assumptions in \eqref{H2} can be viewed as a non-oscillatory condition on $m(r)$ near infinity.
The three cases describe different types of asymptotic growth of $m(r)$ as $r \to +\infty$: bounded $m$ in \eqref{H2c}, power-law growth in \eqref{H2b}, and intermediate growth in \eqref{H2a}.
The existence of the limits in \eqref{H2} excludes highly oscillatory behaviors of $m$ near infinity, for instance, $m(r) = \log\log(e^2 + 2r + \sin r)$.
To keep \eqref{H2a} as general as possible, we allow $\beta$ to take the value $+\infty$.

Active scalar equations \eqref{m-SQG} under assumptions \eqref{H1}-\eqref{H2} arise frequently in hydrodynamic models and have attracted considerable attention. Typical examples include the following:
\begin{itemize}
\setlength\itemsep{.5em}
\item \textit{2D Euler equation}: $m(r)\equiv 1$. In this case, $m(\Lambda) \equiv \mathrm{Id}$, and equation \eqref{m-SQG} reduces to the two-dimensional Euler equation in vorticity form, which describes the motion of an inviscid incompressible fluid in two dimensions and is a fundamental model in fluid dynamics. It is a typical example where \eqref{H2c} holds.

\item \textit{Inviscid $\alpha$-SQG equation}: $m(r) = r^\alpha$, $\alpha \in (0,2)$. Here,
\begin{align*}
  m(\Lambda) = (-\Delta)^{\frac{\alpha}{2}} = \Lambda^\alpha.
\end{align*}
For $\alpha=1$, equation \eqref{m-SQG} becomes the well-known SQG equation, a simplified model for atmospheric circulation near the tropopause \cite{HPGS95} and for ocean dynamics in the upper layers \cite{LK06}. The case $0<\alpha<1$ was introduced by C\'ordoba, Fontelos, Mancho, and Rodrigo \cite{CFMR05} as a class of models interpolating between the 2D Euler and SQG equations. This family of equations satisfy \eqref{H2b}.  

\item \textit{2D Loglog-Euler equation}: $m(r) = \log^\beta\big(1+\log(1+r^2)\big)$, $\beta \in (0,1]$. In this case, the multiplier is
\begin{equation}\label{eq:loglogEuler}
m(\Lambda) = \log^\beta(1+\log(1-\Delta)).
\end{equation}
This equation was introduced by Chae, Constantin, and Wu \cite{CCW11} as a more general framework connecting the 2D Euler and $\alpha$-SQG equations.
It was later studied independently by Dabkowski, Kiselev, Silvestre, and Vicol \cite{DKSV14}, and by Elgindi \cite{Elgindi14}.

The 2D Loglog-Euler equation is a prototypical example satisfying \eqref{H2a}. Other examples obeying \eqref{H2a} include, for instance, 
\begin{align*}
  m(r)=\log^{\beta_1}(1+r)\quad\text{and}\quad  m(r)=\log(1+\log(1+(\log(1+r)))).
\end{align*}
We refer to such models collectively as the \emph{2D Loglog-Euler type equation}; these equations are the primary focus of this paper.
\end{itemize}

The study of well-posedness issues for the 2D Euler equation originated with the classical works of Wolibner \cite{Wolibner33} and H\"older \cite{Holder33} in the 1930s, who established global well-posedness for smooth solutions with H\"older continuous vorticity. 
A significant advance was made by Yudovich \cite{Yud63}, who proved the global existence and uniqueness of weak solutions to the 2D Euler equations with bounded and decaying vorticity; see also \cite{BCD11,Chemin98,CS24,MP94,MB02} for other accessible proofs. 
The velocity field associated with a Yudovich solution is generally not Lipschitz continuous in the spatial variable, but instead log-Lipschitz in the sense that
\begin{align}\label{eq:logLip}
  |u(x,t) - u(y,t)| \leq C |x-y| \Big(1+ \log\frac{1}{|x-y|} \Big),\quad \forall\,t>0,\,\, 0<|x-y|\leq 1.
\end{align}
As observed by Bahouri and Chemin \cite{BC94}, such a log-Lipschitz velocity field
will cause the regularity of the free transported quantity to deteriorate during evolution;
one can see recent advances in \cite{Dan05,BN21b,Jeong21,DMS24}.
The global existence of a weak solution (possibly without uniqueness) 
was proved for vorticity in a wider class $L^\infty_t L^p_x$, 
$1<p<\infty$, \cite{Yud63,DM87}; 
and the uniqueness of weak solutions can also be slightly improved: 
Yudovich \cite{Yud95} extended his uniqueness result for unbounded vorticity $\omega \in \cap_{p_0\leq p<\infty} L^\infty_t L^p_x $ 
so that $\|\omega(t)\|_{L^p} \leq C \theta (p)$ and $\theta(p)$ grows moderately in $p$ (e.g. $\theta(p) =\log p$); 
Vishik \cite{Vishik99} provided a different uniqueness class that
$\omega \in L^\infty_t ( L^{p_0}\cap B_\Pi)$, $p_0\in(1,2)$ with 
$\int_1^\infty \Pi(\tau)^{-1}\dd \tau =\infty$ and
$B_\Pi \triangleq \big\{f\in \mathcal{S}'(\mathbb{R}^2)\,: \, \sum_{j=-1}^N \|\Delta_j f\|_{L^\infty} = O(\Pi(N))\big\}$;
Taniuchi \cite{Tan04} generalized \cite{Yud95} to be the following result that $\omega\in L^\infty_t Y_\Theta$ 
with $\Theta: [1,\infty)\mapsto [1,\infty)$ an increasing function satisfying the Osgood type condition 
$\int_2^\infty \frac{\dd p}{p \Theta(p)} =\infty$ and 
$Y_\Theta \triangleq \big\{f\in \mathcal{S}'(\mathbb{R}^2)\,: \, \sup_{p\in [1,\infty)}\frac{\|f\|_{L^p}}{\Theta(p)} < \infty \big\}$;
one can see \cite{BK14,BT15} for some related improvement.
Elgindi et al. \cite{EMS23} developed a novel class of solutions beyond the 
Yudovich class that admits the local well-posedness and finite-time singularity result.
In the direction of non-uniqueness for the 2D Euler equation (with or without forcing)
with vorticity in $L^\infty_t L^p_x$ ($p<\infty$), 
one can see \cite{Vishik18a,Vishik18b,ABCD+,CFMS25,BC23,Mengual24,BCK24} for the recent developments.
Except for the study related to the Yudovich theory, 
the 2D Euler equation has been actively investigated in other key aspects, 
such as ill-posedness in critical or supercritical function spaces \cite{BL15,EJ17,EM20,JK22,CMO24}, 
well-posedness in singular planar domains \cite{GL13,HZ21,HZ23}, 
(optimal) bounds for vorticity gradient growth \cite{Den09,KS14,Zlat15,DEJ24,Zlat25,JYZ25+}, etc. 

For the $\alpha$-SQG equation, the local well-posedness and ill-posedness results 
in regular Sobolev and H\"older spaces have been well established; see, e.g. \cite{Wu05,CCCG12,ConN18a} 
and \cite{Cor-Zo21,Cor-Zo22,JeoK24,CJK24}, respectively.
The global existence of weak solutions was shown in
\cite{Resnick,Mar08,CCCG12,ConN18b,NHQ18,LX19}, and the non-uniqueness result for the (forcing) 
$\alpha$-SQG equation can be referred to \cite{BSV19,CKL21,Isett21,DGR24,CFMS25b} 
and references therein. 
He and Kiselev \cite{HK21} proved that there exist solutions of 
$\alpha$-SQG equation with $\alpha \in (0,2)$ that exhibit either infinite-in-time growth of derivatives or finite-time blowup.
However, the global well-posedness of smooth solutions for the $\alpha$-SQG equation with any $\alpha\in (0,2)$ 
in the whole space $\mathbb{R}^2$ or the torus $\mathbb{T}^2$ still remains a remarkable open problem. 
We note that this problem is solved for the $\alpha$-SQG equation in some domains with boundary:
indeed, for the $\alpha$-SQG equation with $\alpha\in (0,\frac{1}{2}]$ in the half-plane, 
Zlato\v s \cite{Zlat23} proved the local well-posedness in the anisotropic Lipschitz type spaces 
and established the finite-time singularity formation in this class associated with smooth initial data 
(see also \cite{JKY25}), where the rigid boundary plays an essential role in the blowup mechanism.

Based on the study of the 2D Euler and $\alpha$-SQG equations, an intriguing question arises:
\begin{quote}
  \textit{How far can we deviate from the 2D Euler equation towards the $\alpha$-SQG equation while still maintaining global well-posedness of smooth (or non-smooth) solutions?}
\end{quote}

This question strongly motivated Chae, Constantin, and Wu \cite{CCW11} to introduce the Loglog-Euler equation \eqref{m-SQG} with multiplier \eqref{eq:loglogEuler}, for which they proved global well-posedness of smooth solutions. Later, the Osgood-type condition
\begin{align}\tag{Osg}\label{eq:Osgood-cond}
  \int_2^{+\infty}\frac{1}{r(\log r)m(r)}\dd r=\int_{\log 2}^{+\infty} \frac{\dd r}{r \widetilde{m}(r)}=+\infty.
\end{align}
was proposed as a possible criterion distinguishing well-posedness from ill-posedness. 
Elgindi \cite{Elgindi14} and Dabkowski et al. \cite{DKSV14} 
independently established the global regularity of smooth solutions to \eqref{m-SQG} when the multiplier 
$m$ satisfies the Osgood condition \eqref{eq:Osgood-cond}, together with certain mild assumptions. 
So far, it remains open whether \eqref{eq:Osgood-cond} is a critical condition in deciding the global regularity of smooth solutions for equation \eqref{m-SQG}. 

We note that if $m$ satisfies \eqref{H2c} (e.g., the 2D Euler case), \eqref{eq:Osgood-cond} holds automatically, 
whereas if $m$ satisfies \eqref{H2b} (e.g., the $\alpha$-SQG equation), \eqref{eq:Osgood-cond} fails.
Our main interest lies in the intermediate case \eqref{H2a}, for which \eqref{eq:Osgood-cond} holds if and only if $\beta \leq 1$.
See Lemma~\ref{lem:H2H3} for more details.
\vskip 1em

In this paper, we focus primarily on patch solutions, which are weak solutions of the active scalar equation \eqref{m-SQG} associated with patch-like initial data, that is, initial data consisting of either a single patch ($N = 1$) or multiple patches ($N > 1$):
\begin{equation}\label{eq:patch-data}
  \omega_0(x) = \sum_{j=1}^N a_j \mathbf{1}_{D_j(0)}(x),\quad a_j\in \mathbb{R},
  \quad \mathbf{1}_{D_j(0)}(x)
  \triangleq \begin{cases}
	1,\quad x\in D_j(0), \\
	0,\quad x\in \mathbb{R}^2\setminus D_j(0),
  \end{cases}
\end{equation}
where $D_j(0)\subset\RR^2$, $j=1,\cdots,N$ are simply connected disjoint bounded domains with regular boundaries
$\partial D_j(0)$. Patch solutions offer an effective mathematical approach for modeling the evolution of bounded domains
with concentrated scalar fields 
in fluid systems, 
particularly excelling in capturing sharp interfacial dynamics. 
For the 2D Euler equation, Yudovich's result \cite{Yud63} 
guarantees that patch solutions with initial data \eqref{eq:patch-data} 
exist uniquely and globally in time and keep the patch structure during the evolution. 
But the flow map (defined by \eqref{eq:flow_map_def}) provided in the Yudovich theory is a hemoemorphism of H\"older class $C^{\exp (-C t)}$,
thus the patch boundary is \textit{a priori} of class $C^{\exp(-Ct)}$.
Consequently, the so-called \textit{vorticity patch problem} for the 2D Euler equation was initiated in the 1980s 
which asks whether the initial smoothness of patch boundaries $\partial D_j(t)$ 
can be maintained for all time under evolution. 
This fundamental question was resolved by Chemin \cite{Chemin93}, 
who established the global persistence of $C^{k,\gamma}$-regularity with $k\in \mathbb{N}^{\star}$ 
and $\gamma\in (0,1)$ for the vortex patch boundaries. 
Bertozzi and Constantin \cite{BC93} proved the same result using a more elementary geometric argument.
For other proofs, we refer to \cite{Serf94,Radu22}. 
This global persistence of $C^{1,\gamma}$-regularity of vortex patches was extended to the 2D Euler equation in the half-plane \cite{KRYZ16} 
and the smooth bounded domain \cite{KLi19}, where the patch boundary allows to touch the rigid domain boundary 
(see \cite{Dep99,Dut03} for the previous results). 
Recently, Kiselev and Luo \cite{KL23a} established the strong ill-posedness of $C^{2}$ vortex patches. 
One can refer to \cite{Dan97,CD00,EJ23,EJo25} for the study of vortex patches with boundary singularities (e.g. corners).

For the $\alpha$-SQG equation, there is no counterpart of Yudovich's theory 
that directly implies the existence and uniqueness of patch solutions. 
Nevertheless, the evolution of patch boundaries can still be effectively analyzed through the contour dynamics equations. 
The local existence and uniqueness of the $C^{\infty}$ patches for the $\alpha$-SQG equation with 
$\alpha\in (0,1)$ were first proved by Rodrigo \cite{Rodr05}. 
By significantly using the cancellation of the curve structures, 
the local well-posedness of $\alpha$-SQG patch solutions in $L^2$-based Sobolev spaces has been established 
in the following regimes \cite{Gancedo08,CCCG12,KYZ17,CCG18,GanP21,GNP22}: 
$H^{n}$ ($n\geq 2$) for $\alpha\in (0,1)$, $H^{2+s}$ ($s>0$) for $\alpha=1$, 
and $H^n$ ($n\geq 3$) for $\alpha\in (1,2)$. 
Additionally, Kiselev and Luo \cite{KL25} demonstrated strong ill-posedness for $\alpha$-SQG patches with 
$\alpha\in (0,1)$ in H\"older space $C^{2,\gamma}$, $\gamma\in (0,1)$ 
and Sobolev space $W^{2,p}$, $p\ne 2$. 
The possible splash singularity of the $\alpha$-patches with $\alpha\in (0,2)$ has been excluded by \cite{GS14,KL23b,JZ24}.
For the $\alpha$-SQG equation in half-plane with rigid boundary, 
there is a remarkable breakthrough by Kiselev, Ryzhik, Yao, and Zlato\v s \cite{KRYZ16,KYZ17},  
which established the local well-posedness of multiple patch solutions and constructed some patch-like initial data to develop
finite-time singularity in the case $\alpha\in (0,\tfrac{1}{12})$; 
in combination with the global well-posedness result \cite{KRYZ16} for multiple vortex patches of the 2D Euler equation (i.e. $\alpha=0$ case),
this striking dichotomy highlights the critical transition at $\alpha=0$ in the behavior of patch solution.  
Subsequently, the regime of finite-time singularity formation for $\alpha$-patches was extended by Gancedo and Patel 
\cite{GanP21} to $\alpha\in (0,\tfrac{1}{3})$ (see also \cite{Zlat23} for further improvement). 

For the active scalar equation \eqref{m-SQG} with $m(\cdot)$ satisfying \eqref{H2a}, the associated vortex patch problem is slightly more singular than in the bounded-$m$ case, and the velocity field is no longer Lipschitz continuous (in contrast to the Euler case). Consequently, an $\varepsilon$-loss of regularity appears in the boundary regularity propagation of vortex patches \cite{BC94,BCD11}.
When the Osgood-type condition \eqref{eq:Osgood-cond} holds, together with other mild assumptions, Elgindi \cite{Elgindi14} identified new cancellation mechanisms and, by applying the losing-estimates method \cite{BC94}, established global $C^{1,\gamma-\varepsilon}$ ($0<\varepsilon<\gamma<1$) regularity for the evolving patches of the whole-space 2D Loglog-Euler equation \eqref{m-SQG}, starting from $C^{1,\gamma}$ patch initial data.
On the other hand, when the Osgood-type condition \eqref{eq:Osgood-cond} fails (i.e., $\int_2^{+\infty}\frac{1}{r(\log r)m(r)},\dd r < +\infty$), the authors and Miao in \cite{MTXX} proved the formation of finite-time singularities in patch solutions of \eqref{m-SQG} in the half-plane with rigid boundary conditions, thereby extending the results of \cite{KRYZ16,GanP21} for the $\alpha$-SQG equations.
These two results together suggest that the Osgood condition \eqref{eq:Osgood-cond} may serve as a sharp threshold distinguishing global regularity from finite-time blow-up in patch solutions of the model \eqref{m-SQG}.
\vskip 1em

In this paper, we revisit the seminal result of Elgindi \cite{Elgindi14} on patch solutions to the 2D Loglog-Euler type equation \eqref{m-SQG}, and provide an alternative proof under the explicit assumptions on $m$ (i.e., \eqref{H1}-\eqref{H2a}-\eqref{eq:Osgood-cond}).
In contrast to the frequency-space-based argument involving losing estimates in \cite{Elgindi14}, we develop an entirely physical-space-based approach that avoids the use of Littlewood-Paley theory.
This formulation offers several advantages, particularly for extensions to the half-plane setting (see Remark~\ref{rmk:half-plane}) or to bounded smooth domains.
Moreover, we investigate the global propagation of higher $C^{n,\gamma}$ boundary regularity for patch solutions with arbitrary $n \in \mathbb{N}^\star$, and study the evolution of multiple interacting patches.

Our first result establishes a Yudovich type theorem for the 2D Loglog-Euler type equation 
\eqref{m-SQG}, which can serve as a basis for the study of patch solutions. 
\begin{theorem}\label{thm:Yudovich_type}
Assume that $m(\xi)= m(|\xi|)$ is a radial function of $\RR^2$ with $m(r)$ satisfying \eqref{H1}-\eqref{H2a}-\eqref{eq:Osgood-cond}.
Let $\omega_0\in L^{1}\cap L^{\infty}(\RR^2)$, then the 2D Loglog-Euler type equation \eqref{m-SQG} 
admits a unique global weak solution (in the sense of Definition \ref{def:weak-solu})
\begin{align*}
  \textrm{$\omega\in L^{\infty}([0,+\infty);L^{1}\cap L^{\infty}(\RR^2))$\; and\; $\omega(x,t)=\omega_0(\Phi^{-1}_t(x))$},
\end{align*}
where the flow map $\Phi_t(x) \triangleq \Phi_{t,0}(x)$, its inverse $\Phi_t^{-1}(x)\triangleq \Phi_{0,t}(x)$, 
and $\Phi_{t,s}:\mathbb{R}^2 \rightarrow \mathbb{R}^2$ is uniquely defined by
\begin{equation}\label{eq:flow_map_def}
  \frac{\dd \Phi_{t,s}(x)}{\dd t} = u(\Phi_{t,s}(x),t),\quad
  \Phi_{t,s}(x)\big|_{t=s} = x.
\end{equation}
Furthermore, the flow map $\Phi_t(x)$ satisfies the following estimate,
\begin{align}\label{eq:bound-flow-map-general}
  \frac{1}{\mathsf{H}^{-1}\big(\mathsf{H}(|x-y|^{-1})+Ct\big)} \leq \big|\Phi^{\pm 1}_t(x)-\Phi^{\pm 1}_t(y)\big|\leq \frac{1}{\mathsf{H}^{-1}\big(\mathsf{H}(|x-y|^{-1})-Ct\big)},
\end{align}
where $C>0$ is a constant depending on $\lVert \omega_0\rVert_{L^1\cap L^{\infty}}$
and the map $r\in (0,+\infty)\mapsto \mathsf{H}(r)$ is defined by
\begin{align}\label{def:H-r}
  \mathsf{H}(r) \triangleq 
  \begin{cases}
    \displaystyle \int_2^{r} \frac{1}{\tilde{r} (\log \tilde{r}) m(\tilde{r})} \dd \tilde{r}, 
    \quad & \textrm{for}\;\; r\geq 2, \\[10pt]
    \frac{1}{(\log 2) m(2)} \log \frac{r}{2}, \quad & \textrm{for}\;\; r \in (0,2),
  \end{cases}
\end{align}
and $\mathsf{H}^{-1}(\cdot): \mathbb{R} \mapsto (0,+\infty)$ is its inverse function.
\end{theorem}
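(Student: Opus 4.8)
The plan is to run the Yudovich programme for the modified Biot--Savart law $u = K_m * \omega$, where $K_m$ is the convolution kernel of $\nabla^\perp(-\Delta)^{-1}m(\Lambda)$; the only genuinely new input compared with the 2D Euler case is that the multiplier $m(\Lambda)$ weakens the modulus of continuity of $u$ from log-Lipschitz to an Osgood modulus governed exactly by $\mathsf H$. First I would establish the pointwise kernel bounds
\[
  |K_m(x)| \le C\,\frac{m(|x|^{-1})}{|x|},\qquad |\nabla K_m(x)| \le C\,\frac{m(|x|^{-1})}{|x|^2},\qquad 0<|x|\le\tfrac12,
\]
with integrable-type decay for $|x|\ge\tfrac12$; these follow from \eqref{H1} (the Mikhlin--H\"ormander bounds on $m'$, together with the fact that $m$ is increasing with $m(0^+)\ge0$, so that near $\xi=0$ the symbol $\tfrac{i\xi^\perp}{|\xi|^2}m(|\xi|)$ is the Biot--Savart one up to a constant). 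Splitting $K_m*\omega$ over $\{|z|\le|x-y|\}$, $\{|x-y|\le|z|\le\tfrac12\}$ and $\{|z|\ge\tfrac12\}$ then gives, for every $\omega\in L^1\cap L^\infty(\RR^2)$,
\[
  \|K_m*\omega\|_{L^\infty}\le C\|\omega\|_{L^1\cap L^\infty},\qquad
  |(K_m*\omega)(x)-(K_m*\omega)(y)|\le C\|\omega\|_{L^1\cap L^\infty}\,\mathsf G(|x-y|),
\]
where $\mathsf G(r):=r\,(\log r^{-1})\,m(r^{-1})$ for $0<r\le\tfrac12$, extended linearly for larger $r$. By the change of variables $r\mapsto r^{-1}$, the Osgood condition \eqref{eq:Osgood-cond} is precisely $\int_0\mathsf G(r)^{-1}\dd r=+\infty$, so $\mathsf G$ is an admissible Osgood modulus; moreover one checks from \eqref{H1}--\eqref{H2a} that $\mathsf G$ is nondecreasing with $\mathsf G(r)/r$ nonincreasing, and that $\mathsf H'(s)\,\mathsf G(s^{-1})\,s^2\equiv1$ on $(0,+\infty)$.

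Given this quasi-Lipschitz bound, the flow map is handled by Osgood ODE theory: for each frozen time $u(\cdot,t)$ has the Osgood modulus $\mathsf G$ uniformly, so \eqref{eq:flow_map_def} has a unique global solution for every starting point, defining the (measure-preserving, since $\nabla\cdot u=0$) homeomorphisms $\Phi_{t,s}$, $\Phi_t=\Phi_{t,0}$, $\Phi_t^{-1}=\Phi_{0,t}$. For $\rho(t):=|\Phi^{\pm1}_t(x)-\Phi^{\pm1}_t(y)|$ one has $|\rho'(t)|\le C\|\omega_0\|_{L^1\cap L^\infty}\mathsf G(\rho(t))$; putting $s(t)=\rho(t)^{-1}$ and using $\mathsf H'(s)\mathsf G(s^{-1})s^2\equiv1$ this reads $\big|\tfrac{\dd}{\dd t}\mathsf H(s(t))\big|\le C\|\omega_0\|_{L^1\cap L^\infty}$, and integrating in time and inverting $\mathsf H$ (a strictly increasing bijection $(0,+\infty)\to\RR$ --- it tends to $+\infty$ as $r\to+\infty$ by \eqref{eq:Osgood-cond} and to $-\infty$ as $r\to0^+$ by the logarithmic piece of \eqref{def:H-r}) yields exactly \eqref{eq:bound-flow-map-general}. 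For existence I would regularize, mollifying $\omega_0$ to $\omega_0^\varepsilon\in C_c^\infty$ so that \eqref{m-SQG} has a global smooth solution $\omega^\varepsilon=\omega_0^\varepsilon\circ(\Phi^\varepsilon_t)^{-1}$; transport gives $\|\omega^\varepsilon(t)\|_{L^p}\le\|\omega_0\|_{L^p}$ for all $p\in[1,\infty]$, so the above bounds hold uniformly in $\varepsilon$. The uniform version of \eqref{eq:bound-flow-map-general} gives $x$-equicontinuity of $\Phi^\varepsilon_t,(\Phi^\varepsilon_t)^{-1}$ on compact sets, and the uniform $\|u^\varepsilon\|_{L^\infty}$ bound gives Lipschitz $t$-equicontinuity; by Arzel\`a--Ascoli a subsequence satisfies $\Phi^\varepsilon\to\Phi$, $(\Phi^\varepsilon)^{-1}\to\Phi^{-1}$ locally uniformly. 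One then sets $\omega(x,t):=\omega_0(\Phi_t^{-1}(x))$, checks $\omega\in L^\infty_t(L^1\cap L^\infty)$ with preserved $L^p$ norms, shows $\omega^\varepsilon\to\omega$ in $C([0,T];L^1)$ and hence $u^\varepsilon\to K_m*\omega=:u$ in $C([0,T];L^\infty_{\mathrm{loc}})$, and passes to the limit in the weak formulation of Definition~\ref{def:weak-solu}; equivalently, one verifies directly that $\Phi$ is the flow of $u$, closing the fixed point $\omega=\omega_0\circ\Phi^{-1}$, $u=K_m*\omega$.

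For uniqueness I would use the Yudovich stability estimate. Let $\omega_1,\omega_2$ be two such solutions with the same data and flows $\Phi^1,\Phi^2$, and set $Q(t):=\int_{\RR^2}\big(|\Phi^1_t(\alpha)-\Phi^2_t(\alpha)|\wedge1\big)|\omega_0(\alpha)|\,\dd\alpha$, which is bounded with $Q(0)=0$. Differentiating and decomposing $u_1(\Phi^1_t)-u_2(\Phi^2_t)=\big(u_1(\Phi^1_t)-u_1(\Phi^2_t)\big)+\big((u_1-u_2)(\Phi^2_t)\big)$: the first term is $\le C\|\omega_0\|_{L^1\cap L^\infty}\mathsf G(|\Phi^1_t-\Phi^2_t|)$ pointwise, so integrating against $|\omega_0|\dd\alpha$ and applying Jensen with a concave majorant of $\mathsf G$ bounds its contribution by $C\|\omega_0\|_{L^1\cap L^\infty}\widetilde{\mathsf G}(Q(t))$ with $\widetilde{\mathsf G}$ still Osgood; the second term is $\le\|K_m*(\omega_1-\omega_2)\|_{L^\infty}$, which after passing to Lagrangian variables equals $\sup_x\big|\int(K_m(x-\Phi^1_t(\alpha))-K_m(x-\Phi^2_t(\alpha)))\omega_0(\alpha)\dd\alpha\big|$ and, splitting according to the size of $|\Phi^1_t(\alpha)-\Phi^2_t(\alpha)|$ relative to $|x-\Phi^1_t(\alpha)|$ and invoking the kernel bounds, is also bounded by $C\|\omega_0\|_{L^1\cap L^\infty}\widetilde{\mathsf G}(Q(t))$. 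Hence $Q'(t)\le C\widetilde{\mathsf G}(Q(t))$ with $\int_0\widetilde{\mathsf G}(s)^{-1}\dd s=+\infty$, and Osgood's lemma with $Q(0)=0$ forces $Q\equiv0$, i.e. $\Phi^1=\Phi^2$ $|\omega_0|$-a.e. and $\omega_1=\omega_2$.

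The main obstacle I anticipate is the first step: extracting from \eqref{H1}--\eqref{H2a} the \emph{sharp} kernel bounds and the precise quasi-Lipschitz modulus $\mathsf G(r)\asymp r(\log r^{-1})m(r^{-1})$ exactly matched to $\mathsf H$, since the shape must be right for \eqref{eq:bound-flow-map-general} to come out in the stated form; and then checking that $\mathsf G$ together with all the auxiliary moduli generated in the existence and uniqueness steps (after Jensen and after the far-field kernel splitting) remain Osgood, which ultimately reduces to \eqref{eq:Osgood-cond} and the monotonicity and non-oscillation built into \eqref{H1}--\eqref{H2a} (cf. Lemma~\ref{lem:H2H3}). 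Once these analytic facts are in place, the remaining steps are faithful adaptations of the classical Yudovich argument.
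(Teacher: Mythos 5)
Most of your plan coincides with the paper's proof: the kernel bounds you postulate are exactly \eqref{eq:u-exp1}, \eqref{eq:Kbound} (via $G(\rho)\approx m(\rho^{-1})$, Lemma~\ref{lem:G-prop}), your modulus $\mathsf G$ is the paper's $\nu$ in \eqref{def:nu} and Lemma~\ref{lem:u-est-general-data}, the flow-map estimate via $\frac{\dd}{\dd\rho}\mathsf H(\rho^{-1})=-1/\nu(\rho)$ is the paper's derivation of \eqref{eq:bound-flow-map-general}, the existence-by-approximation step is what the paper delegates to the classical Yudovich scheme, and your Lagrangian displacement functional $Q(t)$ is the paper's $\delta(t)$ in \eqref{eq:def-diff-flow-map}, including the Jensen step with a concave majorant of the modulus.

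However, there is a genuine gap in your uniqueness argument, in the treatment of the second term. You bound $\int |(u_1-u_2)(\Phi^2_t(\alpha),t)|\,|\omega_0(\alpha)|\,\dd\alpha$ by $\|\omega_0\|_{L^1}\,\|K_m*(\omega_1-\omega_2)\|_{L^\infty}$ and then claim the sup-norm is $\le C\,\widetilde{\mathsf G}(Q(t))$. That intermediate inequality is false in general: it must hold for arbitrary measure-preserving maps $\Phi^1_t,\Phi^2_t$ pushing forward the same $\omega_0$, and one can take $\omega_0=\mathbf 1_E$ with the two maps differing by a fixed translation $h$ only on a subset $F\subset E$ of measure $\epsilon$. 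Then $Q(t)\approx \epsilon|h|$, while choosing $x$ on top of the image cluster $\Phi^1_t(F)$ gives, by the kernel size $|K(z)|\approx m(|z|^{-1})/|z|$ and rearrangement, a velocity difference of order $\sqrt{\epsilon}\,m(\epsilon^{-1/2})$, which is not $O(\widetilde{\mathsf G}(\epsilon|h|))$ as $\epsilon\to0$. More structurally, your proposed splitting according to $|\Phi^1_t(\alpha)-\Phi^2_t(\alpha)|$ versus $|x-\Phi^1_t(\alpha)|$ only yields, after Chebyshev and rearrangement, a square-root-type modulus in $Q$, and $\int_0 s^{-1/2}\dd s<\infty$, so the resulting differential inequality does not force $Q\equiv0$; the Osgood loop does not close. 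The fix is precisely what the paper does (following \cite[Sec.~2.3]{MP94}): do not take the sup over $x$, but keep the outer integration against $|\omega_0(x)|\dd x$, apply Fubini, and for each fixed $\alpha$ estimate $\int_{\RR^2}\big|K(z-\Phi^1_t(\alpha))-K(z-\Phi^2_t(\alpha))\big|\,|\omega^2(z,t)|\,\dd z\le C\|\omega_0\|_{L^1\cap L^\infty}\,\nu\big(|\Phi^1_t(\alpha)-\Phi^2_t(\alpha)|\big)$ exactly as in the proof of Lemma~\ref{lem:u-est-general-data}; only then does Jensen give the closed inequality $\delta(t)\le C\int_0^t\nu(\delta(\tau))\,\dd\tau$ and Osgood's lemma the conclusion. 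With that replacement your argument matches the paper's; the remaining steps (kernel bounds, flow-map estimate, existence by mollification and compactness) are sound and essentially identical to the paper's route.
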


\begin{remark}\label{rmk:H}
For the Euler equation ($m(r)\equiv 1$), from the definition \eqref{def:H-r}, 
we have $\mathsf{H}(r)\approx \log\log r$ as $r \to +\infty$. 
Correspondingly, $\mathsf{H}^{-1}(\cdot)$ exhibits double-exponential growth:
\begin{align*}
  \mathsf{H}^{-1}(\mathrm{y})\approx \exp\big(C\exp(\mathrm{y})\big), \quad\text{as}\,\,\mathrm{y}\to+\infty.
\end{align*}
When $\lim_{r\to+\infty} m(r)=+\infty$, the function $\mathsf{H}(r)$ 
grows more slowly than $\log\log$, and $\mathsf{H}^{-1}(\cdot)$ 
grows faster than double exponential. 
The Osgood condition \eqref{eq:Osgood-cond} plays a crucial role in ensuring that $\mathsf{H}^{-1}(\cdot)$ is well defined on $\mathbb{R}$. 
In general, $\mathsf{H}^{-1}(\cdot)$ may grow arbitrarily fast.

The velocity field $u$ in Theorem \ref{thm:Yudovich_type} 
satisfies the continuity estimate \eqref{es:vel-Yud} below,
which is commonly referred to as an Osgood vector field (see \cite{AB08,La22})
and is slightly more singular than a log-Lipschitz field \eqref{eq:logLip}.

Finally, we note that Theorem~\ref{thm:Yudovich_type} is analogous to the results in \cite{Yud95,Vishik99,Tan04,BK14,BT15} on slightly generalized Yudovich classes.
\end{remark}

Theorem~\ref{thm:Yudovich_type} ensures the global existence and uniqueness of patch solutions to the 2D Loglog-Euler equation with initial data given by \eqref{eq:patch-data}, and guarantees that the patch structure is globally preserved:
\begin{align}\label{eq:patch-sol}
  \omega(x,t) = \sum_{j=1}^N a_j \mathbf{1}_{D_j(t)}(x),\quad \textrm{with}\quad D_j(t) = \Phi_t (D_j(0)),
\end{align}
where the flow map $\Phi_t(\cdot)=\Phi_{t,0}(\cdot)$ is given by \eqref{eq:flow_map_def}.

We now turn to the vortex patch problem concerning the global regularity of the patch boundaries. 
More precisely, we aim to address the following question:

\begin{quote}
  \textit{If the initial patch boundaries $\partial D_j(0)$ in \eqref{eq:patch-data} belong to the H\"older class $C^{n,\mu}$ with $n \in \mathbb{N}^\star$ and $\mu \in (0,1)$ for all $j \in {1,2,\dots,N}$, what is the (best possible) regularity of $\partial D_j(t)$ for every $t>0$ and $j \in {1,2,\dots,N}$?}
\end{quote}

We first consider the evolution of a single patch with $N=1$ and $a_1=1$, namely,
\begin{align}\label{intro:single-patch}
\omega(x,t) = \mathbf{1}_{D(t)}(x),
\quad \text{where} \quad D(t) = \Phi_t(D_0).
\end{align}
Our main result concerning the regularity of $\partial D(t)$ is stated below.

\begin{theorem}\label{thm:gSQG-GR-whole-space}	
Let $\mu \in (0,1)$ and $n \in \mathbb{N}^\star$.
Assume that $m(\xi) = m(|\xi|)$ is a radial function on $\mathbb{R}^2$, where $m(r)$ satisfies \eqref{H1}-\eqref{H2a}-\eqref{eq:Osgood-cond}.
Consider the unique global patch solution \eqref{intro:single-patch} of the 2D Loglog-Euler equation \eqref{m-SQG} associated with the patch initial data $\omega_0 = \mathbf{1}_{D_0}$, where $D_0 \subset \mathbb{R}^2$ is a simply connected bounded domain, with boundary $\partial D_0 \in C^{n,\mu}$.
Then, for any $t>0$, the patch boundary $\partial D(t) = \Phi_t(\partial D_0)$ almost preserves its regularity, in the sense that, $\partial D(t)\in C^{n,\mu-\varepsilon}$ for any $\varepsilon \in (0,\mu)$. 

More precisely, let $z(\cdot,t)$ be a parameterization of $\partial D(t)$ (see \eqref{eq:contour_eq} and \eqref{eq:tangential_vector_contour} for definition and construction). Then, for any given $\varepsilon>0$, $\epsilon>0$, 
there exists a constant $C>0$ depending on $\mu$, $\varepsilon$, $\epsilon$ and $\|z_0\|_{C^{n,\mu}}$, such that the following estimates hold:
\begin{align}\label{intro-est:reg-z}
  \|z(t)\|_{C^{n,\mu-\varepsilon}}\leq  C\Big(\mathsf{H}^{-1}\big(C (1+t)\log^{\beta+\epsilon}(e+t)\big)\Big)^{C\log^{\beta+\epsilon}(e+t)},
\end{align}
where the mapping $\mathsf{H}(\cdot)$ is given by \eqref{def:H-r}.
\end{theorem}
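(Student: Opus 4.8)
The plan is to work entirely in physical space via the contour dynamics formulation, parameterizing $\partial D(t)$ by a curve $z(\cdot,t)$ satisfying $\partial_t z(\gamma,t) = u(z(\gamma,t),t)$, and to estimate the tangent vector $v(\gamma,t) \triangleq \partial_\gamma z(\gamma,t)$ together with the arc-chord quantity $\delta[z](\gamma,\eta) \triangleq |\gamma-\eta|/|z(\gamma,t)-z(\eta,t)|$. The key structural input is the Biot-Savart law $u = \nabla^\perp(-\Delta)^{-1}m(\Lambda)\omega$ rewritten, for a patch $\omega = \mathbf{1}_{D(t)}$, as a boundary integral using Green--Stokes: $u(z(\gamma,t),t) = \int K(z(\gamma,t)-z(\eta,t))\,v(\eta,t)\,\dd\eta$, where $K$ is the kernel of $\nabla^\perp(-\Delta)^{-1}m(\Lambda)$, which under \eqref{H1}-\eqref{H2a} behaves like $|x|^{-1}m(|x|^{-1})$ near the origin up to a logarithmic correction. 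Differentiating in $\gamma$ and exploiting the cancellation that makes the most singular contribution of $\partial_\gamma u(z(\gamma,t),t)$ vanish (the principal-value structure), one reduces to a commutator-type expression for $\partial_t v$ and its H\"older seminorms. This is where Theorem~\ref{thm:Yudovich_type} enters: the velocity is an Osgood vector field with modulus governed by $\mathsf{H}$, and the flow-map bounds \eqref{eq:bound-flow-map-general} give quantitative control on $\delta[z]$, namely $\delta[z](\gamma,\eta)(t) \lesssim \mathsf{H}^{-1}(\mathsf{H}(\delta[z](\gamma,\eta)(0)) + Ct)$ pointwise.

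Next I would set up the closed differential inequality for $\|z(t)\|_{C^{n,\mu-\varepsilon}}$. Writing $X_k(t) \triangleq \|v(t)\|_{C^{k-1,\mu-\varepsilon}}$ for $1 \le k \le n$ and including $\|\delta[z](t)\|_{L^\infty}$, one estimates $\frac{\dd}{\dd t}\|v\|_{C^{n-1,\mu-\varepsilon}}$ by commuting $n-1$ derivatives and one $C^{\mu-\varepsilon}$-difference through the singular integral. The top-order term produces, after the cancellation, a bound of the form $\frac{\dd}{\dd t}\|v\|_{C^{n-1,\mu-\varepsilon}} \le C\,\Xi(t)\,\|v\|_{C^{n-1,\mu-\varepsilon}}\log\big(e + \|v\|_{C^{n-1,\mu-\varepsilon}}\big) + (\text{lower order})$, where $\Xi(t)$ collects the $L^\infty$-in-space norm of suitable singular integrals of $v$ against $\nabla K$; the crucial point is that this coefficient is controlled by the Osgood modulus and the arc-chord constant, so that $\Xi(t) \lesssim m\big(\|\delta[z](t)\|_{L^\infty}^{?}\big)$ — more precisely, tracking the kernel size $|x|^{-1}m(|x|^{-1})$ against the known lower bound on $|z(\gamma)-z(\eta)|$ yields $\Xi(t) \lesssim \log^{\beta+\epsilon}(e+t)$ after using \eqref{eq:bound-flow-map-general} and the asymptotics of $\mathsf{H}$ from Remark~\ref{rmk:H} (where $\mathsf{H}(r) \sim (\log r)^{1-\beta}$ up to constants when $\beta < 1$, and is $\log\log$-like when $\beta = 1$). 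The lower-order terms are absorbed by a standard induction on $k$, each level being linearly forced by the level below with the same Osgood-type coefficient.

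Then the endgame is to integrate the Osgood-type inequality. From $\frac{\dd}{\dd t} Y(t) \le C\log^{\beta+\epsilon}(e+t)\,Y(t)\log(e+Y(t))$ with $Y(0) = \|z_0\|_{C^{n,\mu}}$-dependent, Gr\"onwall in the variable $\log Y$ gives $\log(e+Y(t)) \le \log(e+Y(0))\exp\big(C\int_0^t \log^{\beta+\epsilon}(e+s)\,\dd s\big) \le \log(e+Y(0))\cdot e^{C(1+t)\log^{\beta+\epsilon}(e+t)}$, i.e. $Y(t) \le (e+Y(0))^{\exp(C(1+t)\log^{\beta+\epsilon}(e+t))}$. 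One must then check that this double-exponential-type growth is consistent with, and in fact sharpened by, the $\mathsf{H}^{-1}$ structure: using that the arc-chord constant itself only grows like $\mathsf{H}^{-1}(\mathsf{H}(\cdot) + Ct)$ and re-examining the exponent, one obtains precisely \eqref{intro-est:reg-z}, namely $\|z(t)\|_{C^{n,\mu-\varepsilon}} \le C\big(\mathsf{H}^{-1}(C(1+t)\log^{\beta+\epsilon}(e+t))\big)^{C\log^{\beta+\epsilon}(e+t)}$. The $\varepsilon$-loss in the H\"older exponent is unavoidable and enters exactly at the step where one estimates $[v]_{C^{\mu-\varepsilon}}$ of the singular integral: the borderline $C^\mu \to C^\mu$ bound fails for an Osgood (non-Lipschitz) velocity, and dropping $\varepsilon$ of regularity buys a convergent integral in the kernel estimate, analogous to the losing-estimates mechanism of \cite{BC94} but carried out pointwise.

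I expect the main obstacle to be \textbf{isolating and exploiting the cancellation} in $\partial_\gamma u(z(\gamma,t),t)$ at top order: one must show that the naively $O(\|v\|_{C^1})$-many-derivatives term combines into a genuine commutator whose coefficient is only the Osgood modulus of the velocity (hence $\log^{\beta+\epsilon}(e+t)$ after the flow-map bounds), rather than a true derivative loss. In the frequency-space proof of \cite{Elgindi14} this is handled by Littlewood-Paley localization and losing estimates; reproducing it by hand requires a careful decomposition of the kernel $K = \nabla^\perp(-\Delta)^{-1}m(\Lambda)$ into a near-diagonal principal-value piece (where the cancellation $\int \nabla K(z(\gamma)-z(\eta))\cdot(\text{tangential})\,\dd\eta$ is used, relying on $m'(r)\ge 0$ and the Mikhlin--H\"ormander bounds in \eqref{H1} to control $\nabla K$) and a smoother far piece controlled by $\|\omega_0\|_{L^1\cap L^\infty}$ via Theorem~\ref{thm:Yudovich_type}. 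The higher-order case $n\ge 2$ adds bookkeeping (Fa\`a di Bruno on $z$ and its inverse, products of lower seminorms) but no new conceptual difficulty once the $n=1$ commutator estimate is in place.
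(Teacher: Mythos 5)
There is a genuine gap, and it sits exactly at the step you identify as the crux. You claim that the coefficient $\Xi(t)$ in your top-order differential inequality is bounded by $\log^{\beta+\epsilon}(e+t)$ ``after using \eqref{eq:bound-flow-map-general}''. This cannot work. First, the Yudovich flow-map bound \eqref{eq:bound-flow-map-general} does not control the arc-chord ratio $\sup_{\gamma\neq\eta}|\gamma-\eta|/|z(\gamma,t)-z(\eta,t)|$ at all: for the Euler case it only yields $|z(\gamma,t)-z(\eta,t)|\geq c\,|\gamma-\eta|^{\exp(Ct)}$, a H\"older-type lower bound with a time-deteriorating exponent, never a uniform chord-arc constant (and for unbounded $m$ it is even weaker). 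Second, and more fundamentally, the singular-integral coefficient in front of the top-order norm is not a function of $t$ alone; it genuinely involves the unknown, namely a logarithm of the ratio between the H\"older seminorm being propagated and the lower bound of the tangent vector (in the paper's notation, $1+\log\mathbf{\Delta}_\gamma$ from \eqref{def:Delta-gamma}, and through the nondegeneracy estimate also $m(\mathbf{\Delta}_\gamma)$). If you replace these by what \eqref{eq:bound-flow-map-general} gives, you get coefficients growing like $\log$ of $\mathsf{H}^{-1}(Ct)$ (already exponential in $t$ for Euler), and the resulting bound is strictly worse than \eqref{intro-est:reg-z}; if you instead insert $\log^{\beta+\epsilon}(e+t)$ you are presupposing the conclusion. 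The whole difficulty is to close this self-coupling, and your Gr\"onwall-in-$\log Y$ step silently skips it. (A minor symptom of the same looseness: $\mathsf{H}(r)$ from \eqref{def:H-r} behaves like a power of $\log\log r$, not of $\log r$, for the Loglog-Euler multipliers.)

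The paper closes the loop differently, and not in the Lagrangian contour frame you propose: it is remarked in Section~\ref{subsec:formulation} that obtaining global regularity directly from \eqref{eq:contour_eq}--\eqref{eq:velo-Lag} is open even for 2D Euler. Instead one works with the level-set field $W=\nabla^\perp\varphi$ and couples three ingredients: (i) the near-Lipschitz estimate of Lemma~\ref{lem:continuity-u}, where the geometric (Bertozzi--Constantin type) lemma upgrades the generic Osgood modulus to $\rho\big(m(\rho^{-1})+1\big)$ with a factor $1+\log\mathbf{\Delta}_\gamma$ only; (ii) a separate propagation of the nondegeneracy $|W|_{\inf}$ via the pointwise estimate of $\nabla u\,\mathbf{w}\cdot\mathbf{w}$ in Lemma~\ref{lem:es-u-whole}, giving \eqref{eq:inf-part}; and (iii) the refined flow-map estimate of Lemma~\ref{lem:flow-map-os}, which makes the losing-estimate scheme with time-dependent exponent $\mu(t)$ work and yields \eqref{ineq:holder-part}. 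Only after combining \eqref{eq:inf-part} and \eqref{ineq:holder-part} does the quantity $f=1+\log\mathbf{\Delta}_{\mu-\varepsilon}$ satisfy the Osgood-type inequality \eqref{ineq:f}, whose integration through $\mathcal{H}$ in \eqref{def:Hr} and the comparison \eqref{eq:HH} produces the stated bound \eqref{intro-est:reg-z}. Your proposal contains no analogue of step (ii), and without it (or some substitute) the differential inequality you write down cannot be closed with the claimed coefficient.
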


Theorem~\ref{thm:gSQG-GR-whole-space} shows that the initial $C^{n,\mu}$ boundary regularity persists in time, up to the loss of an arbitrarily small exponent. 
However, the estimate \eqref{intro-est:reg-z} indicates that the corresponding bound may grow rapidly in time, up to $\mathsf{H}^{-1}(t)$, with additional logarithmic factors. According to Remark~\ref{rmk:H}, this growth can be double-exponential in time, or even faster.
This is consistent with the results for the two-dimensional Euler case (see Remark~\ref{rmk:boundedm}).

The next result extends the boundary regularity theory to the case of multiple patches.
\begin{theorem}\label{thm:multiple-patch-main}	
Let $\mu \in (0,1)$ and $n \in \mathbb{N}^\star$.
Assume that $m(\xi) = m(|\xi|)$ is a radial function on $\mathbb{R}^2$, where $m(r)$ satisfies \eqref{H1}-\eqref{H2a}-\eqref{eq:Osgood-cond}.
Consider the unique global patch solution \eqref{eq:patch-sol} of the 2D Loglog-Euler equation \eqref{m-SQG} associated with the patch initial data \eqref{eq:patch-data}, where $D_j(0) \subset \mathbb{R}^2$ are simply connected disjoint bounded domains, with boundaries $\partial D_j(0) \in C^{n,\mu}$ for $j\in\{1,\cdots,N\}$.
Then, for any $t>0$, the patches $D_j(t)$ remain disjoint, and the boundaries $\partial D_j(t) = \Phi_t(\partial D_j(0))$ almost preserve their regularity, in the sense that, $\partial D_j(t)\in C^{n,\mu-\varepsilon}$ for any $\varepsilon \in (0,\mu)$ and $j\in \{1,\cdots,N\}$. 
\end{theorem}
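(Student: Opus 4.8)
The plan is to reduce Theorem~\ref{thm:multiple-patch-main} to the single-patch analysis of Theorem~\ref{thm:gSQG-GR-whole-space}, by showing that the interaction between distinct patches acts as a smooth, quantitatively controlled perturbation that leaves the losing-estimate scheme intact. First, by Theorem~\ref{thm:Yudovich_type} the flow map $\Phi_t$ is, for each $t\ge 0$, a homeomorphism of $\mathbb{R}^2$; in particular it is injective, it carries $D_j(0)$ onto $D_j(t)$ and $\partial D_j(0)$ onto $\partial D_j(t)$, and it preserves disjointness of the $D_j(0)$ for all time. For the quantitative separation needed below, observe that since the closures $\overline{D_j(0)}$ are compact and pairwise disjoint, $d_0\triangleq\min_{i\ne j}\mathrm{dist}(\overline{D_i(0)},\overline{D_j(0)})>0$; for $x\in\overline{D_i(0)}$, $y\in\overline{D_j(0)}$ with $i\ne j$ we have $|x-y|\ge d_0$, so, $\mathsf{H}$ being increasing, the lower bound in \eqref{eq:bound-flow-map-general} gives
\begin{align*}
  d(t)\ \triangleq\ \min_{i\ne j}\mathrm{dist}\big(\overline{D_i(t)},\overline{D_j(t)}\big)\ \ge\ \frac{1}{\mathsf{H}^{-1}\big(\mathsf{H}(d_0^{-1})+Ct\big)}\ >\ 0
\end{align*}
for every finite $t$, with $C=C(\|\omega_0\|_{L^1\cap L^\infty})$; moreover, since $\|u\|_{L^\infty_{t,x}}\le C\|\omega_0\|_{L^1\cap L^\infty}$, each $\overline{D_j(t)}$ stays in a ball $B(0,R_0+Ct)$ with $R_0$ fixed by the initial configuration, and, the flow being measure-preserving, $|D_j(t)|=|D_j(0)|$.

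By linearity of the Biot--Savart operator, $u=\sum_{j=1}^N a_j u_j$ with $u_j\triangleq\nabla^\perp(-\Delta)^{-1}m(\Lambda)\mathbf{1}_{D_j(t)}$, so a parameterization $z_k(\cdot,t)$ of $\partial D_k(t)$ satisfies
\begin{align*}
  \partial_t z_k(\cdot,t)\ =\ a_k\, u_k\big(z_k(\cdot,t),t\big)\ +\ \sum_{j\ne k}a_j\, u_j\big(z_k(\cdot,t),t\big).
\end{align*}
The self-term $a_k u_k(z_k,t)$ involves exactly the singular kernel restricted to $\partial D_k(t)$ treated in Theorem~\ref{thm:gSQG-GR-whole-space}; it carries the full cancellation structure used there and is the sole source of the $\varepsilon$-loss. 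For each cross-term ($j\ne k$) the point $z_k(\cdot,t)$ stays at distance $\ge d(t)$ from $\overline{D_j(t)}$, a region where the convolution kernel $K$ of $\nabla^\perp(-\Delta)^{-1}m(\Lambda)$ is smooth (by the Mikhlin--H\"ormander property in \eqref{H1}, with quantitative derivative bounds on $\{|z|\ge d(t)/2\}$); hence $u_j(x)=\int_{D_j(t)}K(x-y)\,\dd y$ is $C^\infty$ on the $\tfrac{d(t)}{2}$-neighborhood of $\partial D_k(t)$, and there, using $|D_j(t)|=|D_j(0)|$ and $\overline{D_j(t)}\subset B(0,R_0+Ct)$, for every $\ell\le n+1$,
\begin{align*}
  \big\|\nabla^\ell u_j\big\|_{L^\infty}\ \le\ |D_j(0)|\,\sup_{d(t)/2\le|z|\le 2(R_0+Ct)}\big|\nabla^\ell K(z)\big|\ =:\ \mathcal{C}_\ell\big(t,d(t)^{-1}\big),
\end{align*}
an explicit increasing function of $t$ and of $d(t)^{-1}$.

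Next I would run the physical-space losing-estimate argument of Theorem~\ref{thm:gSQG-GR-whole-space} for all $k\in\{1,\dots,N\}$ at once, with a common sliding H\"older exponent $\mu(t)\downarrow\mu-\varepsilon$ and the associated weights, obtaining for each $k$ a differential inequality for $\|z_k(t)\|_{C^{n,\mu(t)}}$. The self-terms contribute exactly as in the single-patch case, producing the Osgood-type (at worst double-exponential) right-hand side of \eqref{intro-est:reg-z}; each cross-term $u_j\circ z_k$, after differentiating the contour equation and applying the Leibniz rule, enters linearly in the top-order seminorm with coefficient $\|\nabla u_j\|_{L^\infty}\le\mathcal{C}_1(t,d(t)^{-1})$, plus lower-order terms bounded by $\mathcal{C}_{n+1}(t,d(t)^{-1})$, the non-degeneracy of $z_k$, and the already-controlled norms $\|z_k(t)\|_{C^{n-1,\mu(t)}}$. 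Since $d(t)^{-1}\le\mathsf{H}^{-1}(\mathsf{H}(d_0^{-1})+Ct)$ grows no faster than the quantities already appearing in \eqref{intro-est:reg-z}, this additional \emph{linear} forcing is absorbed by the same Osgood/Gronwall inequality, yielding a finite bound for $\|z_k(t)\|_{C^{n,\mu-\varepsilon}}$ on every interval $[0,T]$ and every $k$. Together with the disjointness from the first paragraph, this gives $\partial D_k(t)\in C^{n,\mu-\varepsilon}$ for all $t>0$, $\varepsilon\in(0,\mu)$, $k\in\{1,\dots,N\}$.

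The main obstacle is not the cross-interactions themselves — by the separation estimate they are genuinely smooth and enter only linearly — but carrying the single-patch losing-estimate bookkeeping through the coupled system while keeping all constants explicit in time: one must verify that the sliding exponent $\mu(t)$ and the auxiliary weights can be chosen uniformly in $k$, and that the inter-patch distance $d(t)$ cannot collapse faster than the rate already tolerated by \eqref{intro-est:reg-z}. The latter is precisely the separation bound above, and the former holds because the $N$ self-interactions decouple, each contour equation seeing only its own singular self-term together with smooth contributions from the other patches.
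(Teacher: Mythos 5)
Your proposal follows essentially the same route as the paper's proof in Section~\ref{sec:mul-patch}: disjointness plus a quantitative lower bound on the inter-patch distance from the flow-map estimate \eqref{eq:bound-flow-map-general}, linear decomposition of $u$ into the singular self-term and smooth cross-terms whose derivatives are bounded in terms of the inverse distance, and a rerun of the single-patch losing-estimate scheme with the cross-terms treated as a controlled, essentially linear perturbation. Two caveats. First, the single-patch machinery you invoke lives on the level-set quantities $W_i=\nabla^\perp\varphi_i$, $|\nabla\varphi_i|_{\inf}$ and $\partial_{W_i}^{n-1}W_i$ (the paper notes after \eqref{eq:velo-Lag} that closing a global argument directly from the contour dynamics equation is open even for Euler), so the differential inequality should be run for these quantities rather than for $\|z_k\|_{C^{n,\mu(t)}}$ directly; in particular the lower bound on $|\nabla\varphi_i|_{\inf}$ must be re-propagated with the cross-patch contribution $\|\nabla u_j\|_{L^\infty(\Omega_i(t))}\lesssim \mathsf{d}_i(t)^{-3}$ added to the pointwise bound \eqref{es:nab-u-tang}, and contour regularity then follows via \eqref{eq:zkbound}--\eqref{eq:zHolder}; your passing mention of the ``non-degeneracy of $z_k$'' gestures at this but does not carry it out. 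Second, your claim that the extra forcing is absorbed by the same Osgood/Gronwall inequality at the double-exponential rate of \eqref{intro-est:reg-z} is quantitatively off: the term $\mathsf{d}_i(t)^{-3}$ enters the Osgood functional inside $\mathcal{M}$ (through $f_i$), and the resulting bounds \eqref{ineq:multi-est-1}--\eqref{ineq:multi-est-2} carry the additional term $\mathcal{H}(\mathsf{g}(t))$, i.e.\ generically triple-exponential growth (Remark~\ref{rmk:multi}); this degradation does not affect the qualitative conclusion $\partial D_j(t)\in C^{n,\mu-\varepsilon}$, so your argument, once rephrased through the level-set quantities, matches the paper's.
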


A more detailed version of Theorem~\ref{thm:multiple-patch-main} is given in Theorem~\ref{thm:multi-patch}, which provides a more quantitative bound in \eqref{ineq:multi-est-2}. Compared to the bound \eqref{intro-est:reg-z} for the single-patch case, the growth behaves like $\exp(\mathsf{H}^{-1}(t))$, which is triple-exponential in time or even faster. The additional exponential factor arises from the fact that the distance between patches may decrease over time, thereby influencing the boundary regularity. See Remark~\ref{rmk:multi} for further discussion.
\vskip 1em

In what follows, we outline the main ideas underlying the proofs of our main theorems, emphasizing the key analytical mechanisms and novelties involved.

For the proof of Theorem~\ref{thm:Yudovich_type}, we first establish that the velocity field satisfies the crucial modulus-of-continuity estimate (see \eqref{eq:u-log-Lip-est}) and that the flow map enjoys the property \eqref{eq:bound-flow-map-general}. By combining these \textit{a priori} estimates with a standard approximation argument, as in the 2D Euler case, we obtain the global existence of Yudovich-type solutions.

For uniqueness, note that the classical approach for the 2D Euler equation (see \cite{Chemin93,MB02}) relies on the harmonic-analysis estimate
\begin{align*}
    \|\nabla^\perp \nabla (-\Delta)^{-1}\omega\|_{L^p}
\leq C \tfrac{p^2}{p-1} \|\omega\|_{L^p}, \qquad 1<p<\infty,
\end{align*}
for some universal constant $C>0$. However, obtaining an analogous bound for $\nabla^\perp \nabla (-\Delta)^{-1} m(\Lambda)\omega$ with unbounded $m$ appears nontrivial.
Instead, we adopt the approach of \cite[Sec 2.3]{MP94}, which is based on the analysis of the flow map.
In particular, we introduce the quantity $\delta(t)$ defined in \eqref{eq:def-diff-flow-map}, analogous to \cite[Eq.  (3.30)]{MP94}, but formulated on the whole space rather than a bounded smooth domain. Using the modulus-of-continuity estimate together with the properties of the flow map, we derive an Osgood-type inequality, which in turn implies the uniqueness of the Yudovich-type solutions under consideration.

Next, we turn to the proof of Theorem~\ref{thm:gSQG-GR-whole-space}. Following the framework described in Section \ref{subsec:formulation}, 
we represent $\partial D(t)$ using a level-set characterization: a function $\varphi$ such that $\varphi(x,t) = 0$ 
for all $x \in \partial D(t)$ (see Definition~\ref{def:levelset} for details).
Let $W \triangleq \nabla^{\perp}\varphi$ denote the tangential vector field along $\partial D(t)$, 
which satisfies equation \eqref{eq:transport-whole},
and define the tangential derivatives by
\begin{align*}
  \partial_W \triangleq W\cdot\nabla, \quad
  \textrm{and} \quad \partial_W^k \triangleq \underbrace{\partial_W \cdots \partial_W}_{k\;\textrm{times}}.
\end{align*}
As demonstrated in Section \ref{subsec:formulation}, for any $t>0$ and $n\in\mathbb{N}^\star$, $\mu\in(0,1)$, we have
\begin{align*}
  \lVert \partial_W^{n-1}W(\cdot,t)\rVert_{C^{\mu}}<+\infty \quad \Longrightarrow\quad \partial D(t)\in C^{n,\mu}.
\end{align*}

For $n=1$, the main objective is to propagate H\"older regularity $\|W(\cdot,t)\|_{C^{\mu-\varepsilon}}$. This theory has been established in \cite{Elgindi14}, through a frequency-space-based argument, in which the two quantities $\|W(t)\|_{\dot C^{\mu-\varepsilon}}$ and $|W(t)|_{\inf} \triangleq \inf_{x \in \partial D(t)} |W(x,t)|$ are propagated simultaneously, employing the $\varepsilon$-regularity-losing estimates from \cite{BC94,BCD11}.

In contrast, we develop a new approach that is entirely physical-space-based. Compared with the frequency-space framework, our method is more amenable to extension from the whole space to domains with boundaries.

Our approach relies on three key new ingredients.
\begin{itemize}
\setlength\itemsep{.5em}
\item \textit{Near-Lipschitz estimate for the velocity field $u=\nabla^\perp (-\Delta)^{-1} m(\Lambda) (\mathbf{1}_{D(t)})$}.

\item[] In Lemma~\ref{lem:continuity-u}, we show that in the single-patch setting, the velocity $u$ admits a modulus of continuity of the form
$\rho\mapsto \rho \big(m(\rho^{-1})+1\big)$ up to an additional logarithmic factor 
\begin{align*}
  f(t) \triangleq \log\Big(1+\tfrac{\|W(t)\|_{\dot C^\gamma}}{|W(t)|_{\inf}}\Big),\qquad \gamma \in (0,\mu].
\end{align*}
Compared with the velocity field associated with general vorticity $\omega \in L^1 \cap L^\infty$, which satisfies the modulus of continuity $\rho\mapsto \rho \big(\log\rho^{-1}\,m(\rho^{-1})+1\big)$ (see \eqref{def:nu}), this represents a logarithmic improvement.
Such an improvement exploits the inherent geometry of patch structure and plays a crucial role in implementing the regularity-losing estimates.

\item \textit{Key integral representation formula and refined H\"older-type estimate of $\partial_W u$}.

\item[] In Lemma~\ref{lem:tangential_velocity}, we derive a singular integral representation for $\partial_W u$, which generalizes the result of \cite[Proposition~2]{BC93} obtained for the 2D Euler case.
However, this generalization is nontrivial due to the unboundedness of $\nabla u$ and the additional singularity introduced by the multiplier $m(\Lambda)$ (e.g., \cite[p.984]{Elgindi14}). To overcome these difficulties, we adopt a slightly different and more general approach to justify the integral representation, partially inspired by \cite{Radu22}.

\item[] The refined H\"older-type estimate in Lemma~\ref{lem:striated_est_mu} also generalizes the following bound from \cite[Corollary~1]{BC93}, which holds for the 2D Euler case:
\begin{align*}
  \|\partial_W u\|_{\dot C^\mu} \leq C \|\nabla u\|_{L^\infty} \big( \|W\|_{\dot C^\mu} + 1 \big).
\end{align*}
To accommodate the presence of the multiplier $m(\Lambda)$, we introduce the $m$-adapted H\"older space $(C^\mu_{\mathrm{m}},\|\cdot\|_{C^\mu_{\mathrm{m}}})$ defined in \eqref{eq:Csig-m}. Using the near-Lipschitz estimate, we establish an analogous bound for $\|\partial_W u\|_{C^\mu_{\mathrm{m}}}$ in terms of $\|W\|_{\dot C^\mu}$, up to a logarithmic factor $f$.

\item \textit{Refined estimate of the flow map $\Phi_{t,s}$}. 

\item[] Building on the near-Lipschitz regularity of the velocity field, the flow map $\Phi_{t,s}(\cdot)$ defined in \eqref{eq:flow_map_def} satisfies a sharper continuity estimate (see \eqref{eq:flow-map-up-l}), which significantly improves upon the bound \eqref{eq:bound-flow-map-general} obtained in the Yudovich theory.
This refined property of the flow map allows us to directly analyze $W(x,t)$ along the flow map.
\end{itemize}

Combining these ingredients with the losing-estimates method within the physical-space framework, we establish the propagation estimate for the $C^{\mu-\varepsilon}$-regularity of $W(\cdot,t)$ (see \eqref{ineq:holder-part}).

To handle the logarithmic factor $f$, we derive a lower bound for $|W(t)|_{\inf}$ in \eqref{eq:inf-part}, following the approach of \cite{Elgindi14}.
The key step is to prove the point-wise estimate on $\nabla u \,W \cdot W$, as stated in Lemma~\ref{lem:es-u-whole}.

With both propagation estimates for $\|W(\cdot,t)\|_{\dot C^{\mu-\varepsilon}}$ and $|W(t)|_{\inf}$ at hand, we find that the quantity $f$ satisfies an Osgood-type inequality \eqref{ineq:f}, which ultimately yields the estimates
\begin{align}\label{intro-est:inf-1}
  |W(t)|_{\inf} & \geq |W_0|_{\inf}\Big(\mathsf{H}^{-1}\big(C (1+t)\log^{\beta+\epsilon}(e+t)\big)\Big)^{-C},\\
  \|W(t)\|_{C^{\mu-\varepsilon}(\RR^2)} & \leq  C \|W_0\|_{C^\mu}\Big(\mathsf{H}^{-1}\big(C (1+t)\log^{\beta+\epsilon}(e+t)\big)\Big)^{C\log^{\beta+\epsilon}(e+t)}.\label{intro-est:reg-1}
\end{align}
This directly leads to the desired bound \eqref{intro-est:reg-z}.

For general $n\in \mathbb{N}^\star$, the proof of Theorem~\ref{thm:gSQG-GR-whole-space} follows the same strategy as in the case $n=1$. 
One crucial step is to establish the higher-order analogue of the key integral representation formula \eqref{eq:block_higher_tangential_derivative} for $\partial_W^k u$. 
Based on this novel formula, we derive H\"older-type estimates for $\partial^k_W u$ in Lemma \ref{lem:higher_tangential_est}, treating the leading term involving $\partial_W^{k-1}W$ analogously to the $n=1$ case, and handling the lower-order striated terms separately. 
Finally, an induction argument combined with the losing-estimates method leads to the bound
\begin{align}\label{intro-est:reg-n}
  \|\partial_W^{n-1}W(t)\|_{C^{\mu-\varepsilon}(\RR^2)}  \leq  C \|\varphi_0\|_{C^{n,\mu}}\Big(\mathsf{H}^{-1}\big(C (1+t)\log^{\beta+\epsilon}(e+t)\big)\Big)^{C\log^{\beta+\epsilon}(e+t)},
\end{align}
which completes the proof of Theorem~\ref{thm:gSQG-GR-whole-space} for general $n$.

For the proof of Theorem~\ref{thm:multiple-patch-main} concerning multiple patches, we first observe that the distance between any two evolved patches admits a positive lower bound by Theorem~\ref{thm:Yudovich_type}. Consequently, for each patch, the influence of the others remains controlled, while the singular contribution still arises from the patch itself. Arguing as in the single-patch case, we then complete the proof of Theorem~\ref{thm:multiple-patch-main}.
\vskip1mm

Finally, we present some remarks as follows.
\begin{remark}[Bounded-$m$ case]\label{rmk:boundedm}
Theorems~\ref{thm:Yudovich_type}, \ref{thm:gSQG-GR-whole-space}, and \ref{thm:multiple-patch-main} remain valid if the assumption \eqref{H2a} is replaced by \eqref{H2c} with $\beta = \varepsilon = \epsilon = 0$ in the statements. Consequently, these results apply to the quasi-geostrophic shallow water equation \cite{DHR19} (i.e. $m(r)=\frac{r^2}{r^2+\lambda^2}$, $\lambda>0$). We do not aim to identify the most general conditions under which the results hold. For example, they also apply to the Euler-$\lambda$ equation~\cite{Rou23b} (i.e. $m(r)=\frac{1}{1 +\lambda^2r^2}$, $\lambda>0$), even though $m$ is not monotone increasing.
In particular, the bounds \eqref{intro-est:inf-1}-\eqref{intro-est:reg-n} for the single-patch case and \eqref{ineq:multi-est-1}-\eqref{ineq:multi-est-2} for the multi-patch case coincide with the corresponding bounds for the 2D Euler equation, as given in \cite[Eqs.~(2.13),  (2.11)]{BC93} and \cite[p.~930]{KRYZ16}, respectively.

Indeed, when $m$ is bounded, the $m$-adapted space $C^{\mu}_{\mathrm{m}}$ coincides with the standard H\"older space $C^{\mu}$. The main estimates (see Lemmas~\ref{lem:continuity-u}, \ref{lem:striated_est_mu}, \ref{lem:es-u-whole}, and \ref{lem:higher_tangential_est}) can then be adapted directly. Moreover, since the velocity field is Lipschitz continuous (Lemma~\ref{lem:continuity-u}), the losing-estimates argument is no longer required. Consequently, the global persistence of $C^{n,\mu}$ boundary regularity for patch solutions follows.  
\end{remark}

\begin{remark}[Half-plane case]\label{rmk:half-plane}
Consider the vortex patch problem for the 2D Loglog-Euler equation \eqref{m-SQG} in the half-space $\mathbb{R}^2_+$ with a no-flow (rigid) boundary condition.
If initially the patch boundaries are disjoint from the rigid boundary $\partial \mathbb{R}^2_+$, then, by performing an odd extension with respect to the $x_2$-variable and applying the argument of Theorem~\ref{thm:multiple-patch-main}, one obtains global persistence of patch boundary regularity in the half-space, analogous to the whole-space case.

If the patch boundaries touch the rigid boundary, as in the blowup scenario studied in \cite{KRYZ16}, the situation becomes more delicate.
One may attempt to combine the techniques developed in \cite{KRYZ16} (for the 2D Euler case) with the arguments of the present paper to obtain a global persistence result.
It appears that most of the necessary estimates can indeed be extended, except for one crucial estimate, an analogue of  \cite[Eq.~(3.29)]{KRYZ16}:
\begin{quote}
Given $\widetilde{B}_x \triangleq B(O_x, r_x)$ and
$u_{\widetilde{B}_x}(z) =  \nabla^{\perp}(-\Delta)^{-1}m(\Lambda)(\mathbf{1}_{\widetilde{B}_x})(z)$,
does it hold that for $|z - O_x| = r_x + d(x)$ and $d(x) \in (0, \tfrac{1}{4}r_x]$,
\begin{align*}
 \big|\nabla^2u_{\widetilde{B}_x}(z)\big| \leq C \big(1+m(d(x)^{-1})\big) r_x^{-1}?
\end{align*}
\end{quote}
Due to the nonlocal nature of the multiplier $m(\Lambda)$, it is unclear whether the above inequality holds, or, if it does, how to establish it as in the $m \equiv 1$ case.
Consequently, this interesting vortex patch problem remains open.
\end{remark}

The remainder of this paper is organized as follows. In Section \ref{sec:prelim}, we collect several useful properties of $m(r)$ and of the kernel function $G(\rho)$, and present the expression formula for the velocity $u=\nabla^\perp (-\Delta)^{-1}m(\Lambda) \omega$.
In Section~\ref{sec:Yud}, we prove Theorem~\ref{thm:Yudovich_type}, establishing the Yudovich-type result for the 2D Loglog-Euler type equation. 
Section~\ref{sec:C1mu-single} is devoted to the proof of Theorem~\ref{thm:gSQG-GR-whole-space} with $n=1$, concerning the single-patch case, while the general case $n \in \mathbb{N}^\star$ of Theorem~\ref{thm:gSQG-GR-whole-space} is proved in Section~\ref{sec:main-proof-n}. 
Finally, the proof of Theorem~\ref{thm:multiple-patch-main} on multiple patches is presented in Section~\ref{sec:mul-patch}.

\vskip 1em
\noindent\textbf{Notations.}\quad
We use the symbol $C$ to denote a generic positive constant, which may vary from line to line. The dependence of $C$ on specific parameters will be clear from the context and explicitly indicated when necessary.
We write $A \approx_p B$ to mean that there exists a constant $C > 0$, depending on $p$, such that $C^{-1}A\leq B\leq CA$.

\section{Preliminaries}\label{sec:prelim}

In this section, we shall deduce some useful properties of $m(r)$, 
write out the expression formula of the velocity field $u(x)$ 
in convolution form, and recall various estimates of the kernel function $G(\rho)$ determined by the multiplier $m$.

\subsection{\texorpdfstring{Properties of $m(r)$}{Properties of m(r)}}

We list some useful properties of $m(r)$ that satisfy the assumptions 
\eqref{H1}-\eqref{H2a}. Before proceeding, we note that the limits in \eqref{H2a} 
have the following equivalent form:
\begin{align}
  \lim_{r\rightarrow +\infty} \frac{r(\log r)(\log\log r) m'(r) }{m(r)} 
  & = \lim_{r\rightarrow +\infty} \frac{r(\log r)\widetilde{m}'(r) }{\widetilde{m}(r)} = \beta, \nonumber \\
  \lim_{r\rightarrow +\infty} \frac{r(\log r) m'(r) }{m(r)}  
  & =  \lim_{r\rightarrow +\infty} \frac{r\widetilde{m}'(r) }{\widetilde{m}(r)} =\beta_1, \label{eq:limit-beta1} \\
  \lim_{r\rightarrow +\infty} \frac{(\log r) (r m''(r) + m'(r)) }{m'(r)} 
  & = \lim_{r\rightarrow +\infty} \frac{r\widetilde{m}''(r) }{\widetilde{m}'(r)} = \beta_2, \label{eq:limit-beta2}
\end{align}
with $\beta \in [0,+\infty]$, $\beta_1\in [0,+\infty)$, $\beta_2\in (-2,+\infty)$.

\begin{itemize}
\item Under the condition \eqref{H1}, for every $\lambda>0$, 
\begin{align}\label{eq:m-prop3}
  m(\lambda r) \approx_\lambda m(r),\quad \forall\, r>0, 
\end{align}
and there exists a constant $C>0$ such that 
\begin{align}\label{eq:m-h-0}
	m'(r)\leq Cr^{-1}m(r). 
\end{align}
See, e.g. \cite[Eq. (43)]{MTXX} for the proof of \eqref{eq:m-prop3}. To obtain \eqref{eq:m-h-0}, we start with the inequality
\begin{align*}
  \frac{\dd }{\dd r}\big(rm'(r)\big)=m'(r)+rm''(r)\leq Cm'(r).
\end{align*}
Integrating in $(0,r)$ leads to the desired bound
\begin{align*}
 rm'(r) \leq \lim_{r\to 0^{+}} rm'(r) + C\big(m(r)-m(0^+)\big)\leq Cm(r),
\end{align*}
where we have used the fact that $m(0^+)\geq0$, and 
\begin{align}\label{eq:rm'(r)-limit}
  \lim_{r\to 0^{+}}rm'(r)=0. 
\end{align}
To see \eqref{eq:rm'(r)-limit}, suppose $\lim_{r\to 0^{+}}rm'(r)=c>0$. Then we have $m'(r)>\frac{c}{2r}$ if $r$ is sufficiently small. This contradicts the fact that $m'(r)$ is integrable near the origin.

\item Under the condition \eqref{H2a}, if $\beta<+\infty$,  
there exists a large constant $r_0\geq 2$, depending on $\beta$, such that 
\begin{align}\label{eq:m-prop1b}
  \forall~\varepsilon>0,\quad \textrm{$r\mapsto (\log r)^{-\beta-\varepsilon}\, \widetilde{m}(r) $\; is monotonously  decreasing on\; $[r_0,\infty)$}.
\end{align}
This is due to 
\begin{align*}
 \frac{\dd}{\dd r} (\log r)^{-\beta-\varepsilon}\widetilde{m}(r) & = (\log r)^{-\beta-\varepsilon}\widetilde{m}'(r)\Big(1-(\beta+\varepsilon) \frac{\widetilde{m}(r)}{r(\log r)\widetilde{m}'(r)}\Big), 	
\end{align*}
and 
\begin{align*}
    1-(\beta+\varepsilon) \frac{\widetilde{m}(r)}{r(\log r)\widetilde{m}'(r)}\xrightarrow{r\to+\infty}-\tfrac\varepsilon\beta<0, 
\end{align*}
with $\tilde{m}'(r)\geq 0$ for all $r\in \RR$. 
Hence $t\in [r_0,+\infty)\mapsto (\log r)^{-\beta-\varepsilon}\, \widetilde{m}(r)$
decreases for large enough $r_0$.

If $\beta=+\infty$, we apply a similar argument on the limit \eqref{eq:limit-beta1} and obtain a slightly weaker result
\begin{align*}
  \forall~\varepsilon>0,\quad \textrm{$r\mapsto (\log r)^{-\beta_1-\varepsilon} m(r)$\; is monotonously  decreasing on\; $[r_0,\infty)$},
\end{align*}
where the large constant $r_0$ depends on $\beta_1$.
As a direct consequence, we also get
\begin{align}\label{eq:m-prop2}
  m (r) \leq m(r_0) \max\big\{1, (\log_+ r)^{\beta_1+\varepsilon} \big\},
  \quad \forall\, r>0.
\end{align}

\item  
Under the condition \eqref{H2a}, we have
\begin{align}\label{eq:r-tildm-prop}
  \textrm{$r \mapsto r\,\widetilde{m}(r)$\, is monotonously increasing and convex in \;$[r_0,+\infty)$}, 
\end{align}
where we also denote the large constant by $r_0$. To see this, we compute
\begin{align*}
  \frac{\dd}{\dd r} \big(r\widetilde{m}(r)\big) = \widetilde{m}(r)\Big(1+\frac{r\widetilde{m}'(r)}{\widetilde{m}(r)}\Big),
  \quad \textrm{and} \quad
  \frac{\dd^2}{\dd r^2}  \big(r\widetilde{m}(r)\big) = \widetilde{m}'(r)\Big(2+\frac{r\widetilde{m}''(r)}{\widetilde{m}'(r)}\Big),
\end{align*}
and 
\begin{equation*}
  1+\frac{r\widetilde{m}'(r)}{\widetilde{m}(r)}
  \xrightarrow{r\to+\infty} 1+\beta_1 >0, \quad 
  2+\frac{r\widetilde{m}''(r)}{\widetilde{m}'(r)}
  \xrightarrow{r\to+\infty} 2+\beta_2>0, 
\end{equation*}
with $\widetilde{m}>0$, $\widetilde{m}'(r)\geq 0$ for all $r>0$. 

\item Suppose $\beta<+\infty$. For any $\kappa\geq1$, there exists a constant $C>0$, depending on $\kappa$ and $\beta$, such that
\begin{align}\label{eq:H3}
 \widetilde{m}(r^\kappa)\leq C\widetilde{m}(r),\quad\forall\, r>0.	
\end{align}
Indeed, the inequality follows from \eqref{eq:m-prop1b}. For $r\leq r_0$, we apply the monotonicity of $\widetilde{m}$ 
to get $\widetilde{m}(r^\kappa)\leq \widetilde{m}(r_0^{\kappa})\leq C_{r_0}\widetilde{m}(0)\leq C_{r_0}\widetilde{m}(r)$. 
For $r\geq r_0$, we deduce from \eqref{eq:m-prop1b} that
\begin{align*}
 \widetilde{m}(r^\kappa)=(\log r^\kappa)^{\beta+\varepsilon}(\log r^\kappa)^{-(\beta+\varepsilon)}\widetilde{m}(r^\kappa)\leq
 (\kappa\log r)^{\beta+\varepsilon}(\log r)^{-(\beta+\varepsilon)}\widetilde{m}(r)=\kappa^{\beta+\varepsilon}\widetilde{m}(r).
\end{align*}

The same argument can be applied to $m$ for $r\geq r_0$, replacing $\beta$ by $\beta_1$, while for $r\leq r_0$, \eqref{eq:m-prop3} implies $m(r^{\kappa})\leq m(r_0^{\kappa-1}r)\leq C_{r_0}m(r)$. It yields
\begin{align}\label{eq:m-prop5}
 m(r^\kappa)\leq Cm(r),\quad\forall\, r>0,	
\end{align}
where $C$ depends on $\kappa$ and $\beta_1$. Note that \eqref{eq:m-prop5} is weaker than \eqref{eq:H3}, but it holds when $\beta=+\infty$.

As a consequence of \eqref{eq:m-prop5}, we get that for every $\lambda>0$,
\begin{align}\label{eq:m-prop3-2}
  \widetilde{m}(\lambda r) \approx_\lambda \widetilde{m}(r),\quad \forall\, r>0.
\end{align}
Indeed, this inequality follows directly from the relation $\widetilde{m}(\lambda r)=m((e^r)^{\lambda})$, \eqref{eq:m-prop5} 
and the fact that $m'(r)\geq 0$ ($\forall r>0$). 

\item For any $\mu\in [\mu_1,\mu_2]\subset (0,1)$, there exists $C_{\mu_1} >0$ such that 
\begin{align}\label{eq:m-prop4}
  \rho_1^\mu m(\rho_1^{-1})\leq C_{\mu_1} \rho_2^\mu \big(m(\rho_2^{-1})+1\big),\quad \forall \rho_2\geq\rho_1>0.
\end{align}
Indeed, there exists some $c_{\mu_1}>0$ such that for every $\rho\in (0,c_{\mu_1})$,
\begin{align*}
  \frac{\dd (\rho^{\mu}m(\rho^{-1}))}{\dd \rho} = \rho^{\mu-1}(\mu\, m(\rho^{-1}) - \rho^{-1}m'(\rho^{-1}))
  \geq \rho^{\mu-1}m(\rho^{-1})\Big(\mu_1  - \frac{\rho^{-1}m'(\rho^{-1})}{m(\rho^{-1})}\Big) >0,
\end{align*}
since from \eqref{H2a}, we have 
\begin{align*}
\lim_{\rho\to0}\frac{\rho^{-1}m'(\rho^{-1})}{m(\rho^{-1})}=	\lim_{r\to+\infty}\frac{rm'(r)}{m(r)}=\lim_{r\to+\infty}\frac{\beta_1}{\log r}=0.
\end{align*}
Hence, \eqref{eq:m-prop4} holds with $C_{\mu_1}=1$ for the case $\rho_2\leq c_{\mu_1}$. When $\rho_2>c_{\mu_1}$, we have 
\begin{align*}
\text{For } \rho_1\leq c_{\mu_1}: &\qquad 
\rho_1^\mu m(\rho_1^{-1})\leq c_{\mu_1}^\mu m(c_{\mu_1}^{-1})\leq \rho_2^\mu m(c_{\mu_1}^{-1}),\\
\text{For } \rho_1> c_{\mu_1}: &\qquad 
\rho_1^\mu m(\rho_1^{-1})\leq \rho_2^\mu m(c_{\mu_1}^{-1}),
\end{align*}
and \eqref{eq:m-prop4} holds with $C_{\mu_1}=m(c_{\mu_1}^{-1})$.
\end{itemize}

When the Osgood condition \eqref{eq:Osgood-cond} holds, the limits in \eqref{H2} admit the following characterization.

\begin{lemma}\label{lem:H2H3}
Suppose that $m(r)$ satisfies \eqref{H1}-\eqref{H2}. Then:
\begin{itemize}
    \item If \eqref{H2a} holds, then \eqref{eq:Osgood-cond} imposes
     $\beta\leq 1$, $\beta_1=0$, and $\beta_2=-1$.	
    \item If \eqref{H2b} holds, then \eqref{eq:Osgood-cond} must fail.
    \item If \eqref{H2c} holds, then \eqref{eq:Osgood-cond} must hold.    
\end{itemize}
\end{lemma}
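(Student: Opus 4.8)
The plan is to handle the three mutually exclusive cases of \eqref{H2} one at a time, in each case comparing $\widetilde m(r)=m(e^r)$ with an explicit elementary function (a power of $r$ or of $\log r$) and then testing the integral $\int_{\log 2}^{+\infty}\frac{\dd r}{r\,\widetilde m(r)}$ for convergence. The two ``non-intermediate'' cases are short: if \eqref{H2c} holds, then $m$ is bounded and increasing, so $\widetilde m(r)\le C$ and $\frac{1}{r\,\widetilde m(r)}\ge\frac{1}{Cr}$, whence the integral diverges and \eqref{eq:Osgood-cond} holds; if \eqref{H2b} holds, then $\frac{m'(r)}{m(r)}=\frac1r\cdot\frac{rm'(r)}{m(r)}\ge\frac{\alpha}{2r}$ for large $r$, so integrating gives $m(r)\ge c\,r^{\alpha/2}$, hence $\widetilde m(r)\ge c\,e^{\alpha r/2}$, the integrand decays exponentially, and \eqref{eq:Osgood-cond} fails.

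For the substantive case \eqref{H2a}, I would extract the three conclusions in the order $\beta\le1$, then $\beta_1=0$, then $\beta_2=-1$. To get $\beta\le1$: suppose $\beta>1$. If $\beta<+\infty$, fix $\varepsilon\in(0,\beta-1)$ and differentiate $(\log r)^{-\beta+\varepsilon}\widetilde m(r)$ exactly as in the derivation of \eqref{eq:m-prop1b}; using $\frac{\widetilde m(r)}{r(\log r)\widetilde m'(r)}\to\frac1\beta$ together with $\widetilde m'\ge0$, the bracket $1-(\beta-\varepsilon)\frac{\widetilde m(r)}{r(\log r)\widetilde m'(r)}$ tends to $\varepsilon/\beta>0$, so $r\mapsto(\log r)^{-\beta+\varepsilon}\widetilde m(r)$ is increasing on some $[r_1,+\infty)$ and thus $\widetilde m(r)\ge c(\log r)^{\beta-\varepsilon}$ with $\beta-\varepsilon>1$. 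If $\beta=+\infty$, the same monotonicity computation with a fixed exponent $M$ in place of $\beta-\varepsilon$ is valid once $\frac{r(\log r)\widetilde m'(r)}{\widetilde m(r)}\ge M$, and gives $\widetilde m(r)\ge c_M(\log r)^M$; take $M=2$. In either situation $\widetilde m(r)\ge c(\log r)^{1+\delta}$ for large $r$ and some $\delta>0$, so $\int_{\log 2}^{+\infty}\frac{\dd r}{r\,\widetilde m(r)}<+\infty$, contradicting \eqref{eq:Osgood-cond}; hence $\beta\le1$, and in particular $\beta<+\infty$. Then, since $\frac{r\widetilde m'(r)}{\widetilde m(r)}=\frac{1}{\log r}\cdot\frac{r(\log r)\widetilde m'(r)}{\widetilde m(r)}\to0$ while this same quantity tends to $\beta_1$ by \eqref{eq:limit-beta1}, we conclude $\beta_1=0$.

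It remains to show $\beta_2=-1$; I expect this to be the most delicate point, since it requires both \eqref{eq:Osgood-cond} and the divergence $\lim_{r\to+\infty}m(r)=+\infty$ from \eqref{H2a}, used on opposite sides of the inequality. First note that the existence of the limit in \eqref{eq:limit-beta2} forces $\widetilde m'(r)>0$ for $r$ large, so the ratio $\widetilde m''/\widetilde m'$ is meaningful there. For $\beta_2\le-1$: if $\beta_2>-1$, fix $\varepsilon\in(0,\beta_2+1)$ so that $\beta_2-\varepsilon>-1$; from $\frac{\widetilde m''(r)}{\widetilde m'(r)}\ge\frac{\beta_2-\varepsilon}{r}$ for large $r$ we get $\widetilde m'(r)\ge c\,r^{\beta_2-\varepsilon}$, and integrating (the exponent being $>-1$) gives $\widetilde m(r)\ge c'\,r^{\beta_2-\varepsilon+1}$; since $\beta_2-\varepsilon+2>1$, the Osgood integral converges, a contradiction. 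For $\beta_2\ge-1$: if $\beta_2\in(-2,-1)$, fix $\varepsilon>0$ with $\beta_2+\varepsilon<-1$; then $\frac{\widetilde m''(r)}{\widetilde m'(r)}\le\frac{\beta_2+\varepsilon}{r}$ gives $\widetilde m'(r)\le C\,r^{\beta_2+\varepsilon}$ with $\beta_2+\varepsilon<-1$, so $\widetilde m'$ is integrable at $+\infty$ and $\widetilde m(r)$ stays bounded as $r\to+\infty$, contradicting $\lim_{r\to+\infty}m(r)=+\infty$. Combining the two, $\beta_2=-1$.

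The computations above are all elementary; the only points needing care are the bookkeeping of the several small parameters $\varepsilon$, the observation that $\beta<+\infty$ must be established \emph{before} deducing $\beta_1=0$, the check that $\widetilde m'(r)>0$ for large $r$, and the fact that the lower bound $\beta_2\ge-1$ comes from the divergence $m\to+\infty$ rather than from \eqref{eq:Osgood-cond}. I therefore view the argument for $\beta_2=-1$ as the main (mild) obstacle, while the rest is routine.
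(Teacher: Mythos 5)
Your proposal is correct. For the $(\mathbf{H}2b)$, $(\mathbf{H}2c)$ cases and for the deductions $\beta\le 1$ and $\beta_1=0$ you follow essentially the same route as the paper: a contradiction via the monotonicity of $r\mapsto(\log r)^{-\beta+\varepsilon}\widetilde m(r)$ (the paper takes $\varepsilon=\tfrac{\beta-1}{2}$, which literally presupposes $\beta<+\infty$; your separate treatment of $\beta=+\infty$ with a fixed exponent $M$ is a point where you are more careful than the written text), then $\beta_1=\lim\frac{1}{\log r}\cdot\frac{r(\log r)\widetilde m'(r)}{\widetilde m(r)}=0$, and explicit comparison bounds $m(r)\gtrsim r^{\alpha/2}$, resp.\ $m\le C$, for the last two bullets. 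The genuine divergence is the step $\beta_2=-1$: the paper gets it in one line by applying l'H\^opital's rule to the quotient defining $\beta_1$, namely $\beta_1=\lim\frac{r\widetilde m'(r)}{\widetilde m(r)}=\lim\frac{\widetilde m'(r)+r\widetilde m''(r)}{\widetilde m'(r)}=1+\beta_2$, hence $\beta_2=\beta_1-1=-1$; you instead rule out $\beta_2>-1$ and $\beta_2\in(-2,-1)$ separately by integrating the logarithmic derivative of $\widetilde m'$, using the Osgood condition on one side and $\lim_{r\to\infty}m(r)=+\infty$ on the other. Both arguments rest on the same underlying facts ($\widetilde m\to+\infty$ and $\widetilde m'>0$ for large $r$, which the paper's l'H\^opital step also needs), so the hypotheses invoked coincide; the paper's route is shorter and derives $\beta_2$ from $\beta_1$, while yours is more elementary, independent of the value of $\beta_1$, and yields quantitative power-law bounds on $\widetilde m'$ as a by-product. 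No gaps.
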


\begin{proof}
We first show that under assumptions \eqref{H2a} and \eqref{eq:Osgood-cond}, we have $\beta\leq1$. 
Through a similar argument as in \eqref{eq:m-prop1b}, we get 
\begin{align*}
  \forall~\varepsilon>0,\quad \textrm{$r\mapsto (\log r)^{-\beta+\varepsilon}\, \widetilde{m}(r) $\; is monotonously increasing on\; $[r_0,\infty)$},
\end{align*}	
for a large constant $r_0$. Suppose $\beta>1$. Taking $\varepsilon=\frac{\beta-1}{2}>0$, we have
\begin{align*}
 \widetilde{m}(r) = (\log r)^{1+\varepsilon} (\log r)^{-(1+\varepsilon)} \widetilde{m}(r) \geq (\log r)^{1+\varepsilon} (\log r_0)^{-(1+\varepsilon)} \widetilde{m}(r_0) = C_{r_0} (\log r)^{1+\varepsilon},\quad \forall\,r\geq r_0.
\end{align*}
This further implies
\begin{align*}
 \int_{r_0}^{+\infty}\frac{\dd r}{r\widetilde{m}(r)} \leq \frac{1}{C_{r_0}}\int_{r_0}^{+\infty}\frac{\dd r}{r(\log r)^{1+\varepsilon}} =\frac{1}{C_{r_0}\varepsilon(\log r_0)^{\varepsilon}}<+\infty,	
\end{align*}
which contradicts the Osgood condition \eqref{eq:Osgood-cond}. Hence, we argue by contradiction and conclude that $\beta\leq1$.

For $\beta_1$, we have
\begin{align*}
 \beta_1 = 	\lim_{r\rightarrow +\infty} \frac{r\widetilde{m}'(r) }{\widetilde{m}(r)} =\lim_{r\rightarrow +\infty} \frac{\beta}{\log r} \leq \lim_{r\rightarrow +\infty} \frac{1}{\log r}=0.
\end{align*}
On the other hand, $\beta_1\geq0$ due to \eqref{H1}. Therefore, $\beta_1=0$.

Finally, we apply l'H\^{o}pital's rule and get
\begin{align*}
 \beta_1 = 	\lim_{r\rightarrow +\infty} \frac{r\widetilde{m}'(r) }{\widetilde{m}(r)} = \lim_{r\rightarrow +\infty} \frac{\widetilde{m}'(r) + r \widetilde{m}''(r)}{\widetilde{m}'(r)} = 1+\beta_2.	
\end{align*}
We conclude with $\beta_2=\beta_1-1=-1$.

Under assumption \eqref{H2b}, we have $m(r) \geq C r^{\alpha/2}$ for $r>r_0$ with a large enough $r_0$. Hence,
\begin{align*}
 \int_2^{+\infty}\frac{1}{r(\log r)m(r)}\dd r \leq \int_2^{r_0}\frac{1}{r(\log r)m(r)}\dd r + \int_{r_0}^{+\infty}\frac{1}{r(\log r)\,Cr^{\frac{\alpha}{2}}}\dd r<+\infty.
\end{align*}
Therefore, the Osgood condition \eqref{eq:Osgood-cond} fails.

Finally, under assumption \eqref{H2c}, we have $m(r)\leq C$ for any $r>0$, and the Osgood condition \eqref{eq:Osgood-cond} must hold:
\begin{align*}
 \int_2^{+\infty}\frac{1}{r(\log r)m(r)}\dd r \geq \int_2^{+\infty}\frac{1}{Cr(\log r)}\dd r=+\infty.
\end{align*}
\end{proof}

\subsection{\texorpdfstring{Expression formula of the velocity $u(x)$}{Expression formula of the velocity u(x)}}
We gather some useful expression formulas related to $u(x)$.
\begin{lemma}\label{lem:int-exp}
Let $u(x) = \nabla^{\perp}(-\Delta)^{-1}m(\Lambda) \omega(x)$,
where $\nabla^\perp = (-\partial_{x_2}, \partial_{x_1})$,
$m(\Lambda)$ is a Fourier multiplier operator with the symbol
$m(\xi)=m(|\xi|)$ a radial function satisfying that
$m(\xi)\in C^2(\RR^2\setminus \{0\})$ and $\lim\limits_{r\to 0^{+}}m(r)$,
$\lim\limits_{r\to 0^{+}}rm'(r)$ exist, and also
\begin{align*}
  \lim\limits_{r\to +\infty}r^{-\frac{1}{2}}m(r)=0,\quad
  \lim\limits_{r\rightarrow +\infty} r^{\frac{1}{2}} m'(r) =0.
\end{align*}
Then the following statements hold true.
\begin{enumerate}
\item
For every $x\in \RR^2$ we have
\begin{align}\label{eq:u-exp1}
  u(x) = K\ast \omega(x) = \int_{\RR^2} K(x-y) \omega(y) \dd y,\quad
  K(x)  \triangleq  \frac{x^{\perp}}{|x|^2} G(|x|),
\end{align}
where  $x^\perp = (x_2,-x_1)$, and
\begin{equation}\label{eq:G-exp1}
\begin{split}
  G(\rho) & \triangleq  \frac{1}{2\pi}m(0^+) + \frac{1}{2\pi} \int_0^\infty J_0(\rho r)\, m'(r) \dd r \\
  & =  \frac{1}{2\pi} m(0^+) +  \frac{1}{(2\pi)^2} \int_{\RR^2} e^{i x\cdot \xi} \frac{m'(|\xi|)}{|\xi|} \dd \xi ,
\end{split}
\end{equation}
with $m(0^+)\triangleq \lim\limits_{r\rightarrow 0^+}m(r)$ and $J_0(\cdot)$ the zero-order Bessel function.
\item For every $x\in \mathbb{R}^2$, the symmetric part of $\mathbf{S}(\nabla u)(x)$ can be expressed as
\begin{align}\label{def:Sym-nab-u}
  \mathbf{S}(\nabla u)(x) \triangleq \frac{\nabla u(x) + (\nabla u)^T(x)}{2} = \mathrm{p.v.}\int_{\RR^2} (\nabla K)_{\mathrm{S}}(x-y) \omega(y) \dd y,
\end{align}
where
\begin{align}\label{def:sigma}
  (\nabla K)_{\mathrm{S}}(x) \triangleq \frac{1}{2} \frac{2G(|x|)-|x|G'(|x|)}{|x|^2} \sigma(x),\quad
  \sigma(x)\triangleq
  \begin{pmatrix}
	\frac{2x_1x_2}{|x|^2} &	\frac{x_2^2-x_1^2}{|x|^2}\\
	\frac{x_2^2-x_1^2}{|x|^2} & \frac{-2x_1x_2}{|x|^2}
  \end{pmatrix}.
\end{align}
\end{enumerate}
\end{lemma}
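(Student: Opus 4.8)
\textbf{Proof proposal for Lemma~\ref{lem:int-exp}.}

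The plan is to compute the Fourier multiplier associated with the operator $\nabla^\perp(-\Delta)^{-1}m(\Lambda)$ and then invert it explicitly by exploiting radial symmetry. First I would write, for $\omega$ sufficiently nice (say Schwartz, with the general case following by density and the decay hypotheses on $m$),
\begin{align*}
  u(x) = \nabla^\perp(-\Delta)^{-1}m(\Lambda)\omega(x)
  = \frac{1}{2\pi}\int_{\RR^2} e^{ix\cdot\xi}\,\frac{i\xi^\perp}{|\xi|^2}\,m(|\xi|)\,\widehat{\omega}(\xi)\,\dd\xi,
\end{align*}
so that $u = K * \omega$ with $\widehat{K}(\xi) = \dfrac{i\xi^\perp}{|\xi|^2}m(|\xi|)$. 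The task is to identify $K$. Since $\dfrac{i\xi^\perp}{|\xi|^2} = \nabla^\perp_\xi$ applied (up to sign and a logarithm) to a radial function, the natural move is to write $K(x) = \nabla^\perp \Gamma(x)$ where $\widehat{\Gamma}(\xi) = \dfrac{m(|\xi|)}{|\xi|^2}$, i.e. $\Gamma = (-\Delta)^{-1}m(\Lambda)\delta_0$ is a radial fundamental-solution-type kernel. Then $K(x) = \nabla^\perp\Gamma(x) = \Gamma'(|x|)\dfrac{x^\perp}{|x|}$, so matching with the claimed form $K(x) = \dfrac{x^\perp}{|x|^2}G(|x|)$ requires $G(\rho) = \rho\,\Gamma'(\rho)$.

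Next I would compute $\Gamma'$. Rather than inverting $\dfrac{m(|\xi|)}{|\xi|^2}$ directly (which is singular at the origin when $m(0^+)\neq 0$), it is cleaner to differentiate under the (inverse) Fourier transform: formally $\Gamma'(\rho) = \dfrac{d}{d\rho}\mathcal{F}^{-1}\big[\tfrac{m(|\xi|)}{|\xi|^2}\big]$, and one checks that $\rho\,\Gamma'(\rho)$ is exactly $\mathcal F^{-1}$ of a less singular symbol. The slick route is to use the 2D radial Fourier transform (Hankel transform): for a radial function $f(\xi) = f(|\xi|)$, $\mathcal F^{-1}[f](x) = \dfrac{1}{2\pi}\int_0^\infty J_0(|x|r)\,f(r)\,r\,\dd r$. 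Writing $m(r) = m(0^+) + \int_0^r m'(s)\,\dd s$ and integrating by parts, using $\int_0^\infty J_0(\rho r)\,r\,\dd r = \dfrac{1}{\rho^2}$ (in the distributional sense) for the constant part and Fubini for the rest, one arrives at
\begin{align*}
  G(\rho) = \rho\,\Gamma'(\rho) = \frac{1}{2\pi}m(0^+) + \frac{1}{2\pi}\int_0^\infty J_0(\rho r)\,m'(r)\,\dd r,
\end{align*}
which is precisely \eqref{eq:G-exp1}; the second expression in \eqref{eq:G-exp1} is just the rewriting of $\int_0^\infty J_0(\rho r)m'(r)\,\dd r$ as $\dfrac{1}{2\pi}\int_{\RR^2} e^{ix\cdot\xi}\dfrac{m'(|\xi|)}{|\xi|}\,\dd\xi$ via the radial Fourier inversion formula applied to the radial symbol $\xi\mapsto \dfrac{m'(|\xi|)}{|\xi|}$. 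The decay assumptions $r^{-1/2}m(r)\to 0$ and $r^{1/2}m'(r)\to 0$, together with the existence of $\lim_{r\to0^+}rm'(r)$ (hence the asymptotics of $J_0(z)\sim\sqrt{2/\pi z}\cos(z-\pi/4)$ at infinity), are exactly what guarantee the Bessel integral converges conditionally and the integration by parts is justified.

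For part (ii), I would simply differentiate the convolution formula \eqref{eq:u-exp1}. Writing $K(x) = \dfrac{x^\perp}{|x|^2}G(|x|)$, a direct computation of $\nabla K$ yields a kernel that is homogeneous-of-degree-$(-2)$-type (i.e. genuinely singular, requiring a principal value), plus the symmetrization $(\nabla K)_{\mathrm S} = \tfrac12(\nabla K + (\nabla K)^T)$ kills the antisymmetric (rotational) piece. Computing $\partial_j\big(\tfrac{x_k^\perp}{|x|^2}G(|x|)\big)$ and symmetrizing, the $G'(|x|)$ and $G(|x|)$ terms combine into the stated coefficient $\tfrac12\dfrac{2G(|x|)-|x|G'(|x|)}{|x|^2}$ multiplying the trace-free matrix $\sigma(x)$; the structure of $\sigma$ (the unique, up to scalar, trace-free symmetric matrix built from $x\otimes x$) is forced by symmetry considerations, so one only needs to verify the scalar prefactor by a coordinate computation. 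The principal-value interpretation and the fact that the mean value of $\sigma$ over the sphere vanishes make \eqref{def:Sym-nab-u} well-defined for, say, $\omega\in L^\infty$ with enough decay (Calder\'on--Zygmund theory), though at this stage one only needs it for Schwartz $\omega$.

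\textbf{Main obstacle.} The delicate point is the rigorous justification of the Bessel-integral manipulations in part (i): the integral $\int_0^\infty J_0(\rho r)m'(r)\,\dd r$ converges only conditionally, so interchanging it with the $y$-integral in $\int K(x-y)\omega(y)\,\dd y$, and the integration by parts that extracts the $m(0^+)$ term, both require care — one typically regularizes with a factor $e^{-\epsilon r}$ (or a smooth cutoff), performs the computation, and passes to the limit using the decay hypotheses on $m$ and $m'$ plus the oscillation of $J_0$. Everything else (the algebra in part (ii), matching $G(\rho) = \rho\Gamma'(\rho)$) is routine.
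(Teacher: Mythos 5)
For part (i) your Fourier/Hankel-transform derivation is consistent with the paper: the paper does not reprove \eqref{eq:u-exp1}--\eqref{eq:G-exp1} but cites \cite[Lemma 1]{MTXX}, and your sketch (identify $\widehat K(\xi)=i\xi^\perp m(|\xi|)/|\xi|^2$, pass to the radial inversion formula, split $m=m(0^+)+\int_0^{\cdot}m'$, regularize to justify the conditionally convergent Bessel integral) is essentially the standard way such a formula is obtained, and your use of the decay hypotheses on $m$, $m'$ is the right justification. So on (i) you are supplying a proof where the paper relies on a reference; no objection there.

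For part (ii) there is a genuine gap in the way you phrase the argument. You propose to ``simply differentiate the convolution formula'' and observe that symmetrization kills the antisymmetric (rotational) piece. That is how the Euler case ($m\equiv 1$) goes, because there $\nabla u$ decomposes as a principal value plus a \emph{finite} local antisymmetric correction $\tfrac{\omega}{2}R$. Here, however, $G(\rho)\approx m(\rho^{-1})$ near $\rho=0$ (see \eqref{eq:G-prop1}), so when $m$ is unbounded this decomposition breaks down: the circle boundary term produced by the $\varepsilon$-truncation is of size $G(\varepsilon)\,\omega(x)$, and the principal value of the antisymmetric kernel $(\nabla K)_{\mathrm A}(x)=-\tfrac{G'(|x|)}{2|x|}R$ in \eqref{def:widetid-K} is purely radial in angle, so neither limit exists separately --- only their combination (which is the distribution $\tfrac12 m(\Lambda)\omega$, not a function) makes sense. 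Consequently one cannot first write $\nabla u=\mathrm{p.v.}+\text{correction}$ and then symmetrize; the symmetrization has to be performed \emph{inside} the $\varepsilon$-regularized identity. This is exactly what the paper's proof does: testing against $\widetilde\chi\in C_c^\infty$, it isolates the boundary terms $I_\varepsilon^{i,j}(y)$, replaces $\widetilde\chi(x)$ by $\widetilde\chi(y)$ at a cost $|I_\varepsilon^{i,j}-J_\varepsilon^{i,j}|\lesssim \varepsilon\, m(\varepsilon^{-1})\to 0$ (using \eqref{eq:G-prop1} and \eqref{eq:m-prop2}), and then checks that the \emph{symmetric combinations} $J_\varepsilon^{1,1}$, $J_\varepsilon^{2,2}$, $J_\varepsilon^{1,2}+J_\varepsilon^{2,1}$ vanish identically for every $\varepsilon$ by the exact angular symmetry of $K_i(z)z_j/|z|$ on the circle, so that only $\mathrm{p.v.}\int (\nabla K)_{\mathrm S}(x-y)\omega(y)\,\dd y$ survives in \eqref{def:Sym-nab-u}. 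Your plan can be repaired to this form (and your remark about the vanishing spherical mean of $\sigma$ is the right cancellation for the surviving principal value), but as written the step ``differentiate, then symmetrize'' presupposes a finite antisymmetric correction that does not exist for unbounded $m$, which is precisely the difficulty the lemma is designed to circumvent.
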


\begin{remark}
We remind that, if $\omega(x) = \mathbf{1}_{D}(x)$ where $D\subset \mathbb{R}^2$ is a simply connected bounded open domain,
then $\nabla u(x)$ is well defined for any $x\not\in D$ and $\nabla u(x)=\nabla K\ast \omega(x)$,
with $\nabla K(\cdot)$ given by \eqref{def:widetid-K}. In addition, for $x\in D$, we also see that
\begin{align*}
  u(x)=\int_{D} K(x-y)\dd y=\int_{D\setminus B(x,d_x)} K(x-y)\dd y,
\end{align*}
where $K(x) = \frac{x^{\perp}}{|x|^2} G(|x|)$ and $d_x \triangleq d(x,\partial D)$.
Consequently, $\nabla u(x)$ is bounded outside and inside the domain $D$,
but it may be unbounded on the boundary $\partial D$.
\end{remark}

\begin{proof}[Proof of Lemma \ref{lem:int-exp}]
The statement $(\textrm{i})$ has been proved in \cite[Lemma 1]{MTXX}. In the following, we only prove statement $(\textrm{ii})$.

We start with the calculation of $\nabla u$ in the distributional sense. For every test function $\widetilde\chi\in C^\infty_c(\RR^2)$, we apply the expression \eqref{eq:u-exp1} and get
\begin{align*}
 & \big(\partial_{x_j} u_i, \widetilde\chi\big) = - \big(u_i, \partial_{x_j} \widetilde\chi\big)
 = -\int_{\RR^2}\int_{\RR^2}K_i(x-y)\omega(y)\,\dd y\,\partial_{x_j} \widetilde\chi(x)\,\dd x\\
 & = -\lim_{\varepsilon\to0}\int_{\RR^2}\int_{|x-y|\geq\varepsilon} K_i(x-y)\partial_{x_j}\widetilde\chi(x)\,\dd x\,\omega(y)\,\dd y\\
 & = \lim_{\varepsilon\to0}\int_{\RR^2}\Big(\int_{|x-y|\geq\varepsilon} \partial_{x_j}K_i(x-y)\widetilde\chi(x)\,\dd x - \int_{|x-y|=\varepsilon} K_i(x-y)\frac{x_j-y_j}{|x-y|}\widetilde\chi(x)\,\dd S(x)\Big)\omega(y)\,\dd y.
\end{align*}
We focus on the term 
\begin{align*}
 I_\varepsilon^{i,j}(y) \triangleq \int_{|x-y|=\varepsilon} K_i(x-y)\frac{x_j-y_j}{|x-y|}\widetilde\chi(x)\,\dd S(x).
\end{align*}
Since the limit $\lim_{\varepsilon\to0}I_\varepsilon^{i,j}(y)$ may be unbounded, we need to explore extra cancellations. Define 
\begin{align*}
 J_\varepsilon^{i,j}(y) \triangleq \widetilde\chi(y)\int_{|x-y|=\varepsilon} K_i(x-y)\frac{x_j-y_j}{|x-y|}\,\dd S(x).
\end{align*}
Then, we control the difference by
\begin{align*}
 |I_\varepsilon^{i,j}(y) - J_\varepsilon^{i,j}(y)| & = \Big|\int_{|x-y|=\varepsilon} K_i(x-y)\frac{x_j-y_j}{|x-y|}\big(\widetilde\chi(x)-\widetilde\chi(y)\big)\,\dd S(x)\Big|\\
 &\leq \frac{G(\varepsilon)}{\varepsilon} \cdot 1 \cdot \lVert\nabla \widetilde\chi\rVert_{L^\infty}\varepsilon\cdot 2\pi\varepsilon
 \leq C \varepsilon \,m(\varepsilon^{-1})\xrightarrow{\varepsilon\to0}0,
\end{align*}
where we have used \eqref{eq:G-prop1} (the proof is independent of the statement $(\textrm{ii})$) and \eqref{eq:m-prop2}.
For $J_\varepsilon^{i,j}(y)$, by symmetry we have
\begin{align*}
	J_{\varepsilon}^{1,1}(y) = -J_\varepsilon^{2,2}(y)  &= \widetilde\chi(y)\int_{|x-y|=\varepsilon} \frac{(x_1-y_1)(x_2-y_2)}{|x-y|^3}\,\dd S(x) = 0,\\
	J_{\varepsilon}^{1,2}(y) + J_{\varepsilon}^{2,1}(y) & = \widetilde\chi(y)\int_{|x-y|=\varepsilon} \frac{(x_1-y_1)^2-(x_2-y_2)^2}{|x-y|^3}\,\dd S(x) = 0.
\end{align*}
Therefore, we conclude with
\begin{align*}
 \mathbf{S}(\nabla u)(x) & = \lim_{\varepsilon\to0}\int_{\RR^2}\int_{|x-y|\geq\varepsilon}  \frac{\nabla K(x-y)+(\nabla K)^T(x-y)}{2}\widetilde\chi(x)\,\dd x \omega(y) \dd y\\
 &\quad + \lim_{\varepsilon\to0}\int_{\RR^2}  \begin{pmatrix}
	I_\varepsilon^{1,1} & \frac{I_\varepsilon^{1,2}(y)+I_\varepsilon^{2,1}(y)}{2}\\
	\frac{I_\varepsilon^{1,2}(y)+I_\varepsilon^{2,1}(y)}{2} & I_\varepsilon^{2,2}(y)
  \end{pmatrix}\omega(y)\,\dd y\\
  & = \int_{\RR^2}\Big(\mathrm{p.v.}\int_{\RR^2} (\nabla K)_{\mathrm{S}}(x-y) \omega(y) \dd y\Big)\widetilde\chi(x)\,\dd x,
\end{align*}
where direct computation yields
\begin{align}\label{def:widetid-K}
  \nabla K(x) =\frac{1}{2} \frac{2G(|x|)-|x|G'(|x|)}{|x|^2} \sigma(x)
  -\frac{1}{2} \frac{G'(|x|)}{|x|}
  \begin{pmatrix}
	0 &	1\\
	-1 & 0
  \end{pmatrix}
 \triangleq (\nabla K)_{\mathrm{S}}(x)+(\nabla K)_{\mathrm{A}}(x),
\end{align}
where $\sigma(x)$ defined by \eqref{def:sigma}, also $(\nabla K)_{\mathrm{S}}=\frac{\nabla K + (\nabla K)^T}{2}$ and $(\nabla K)_{\mathrm{A}}=\frac{\nabla K - (\nabla K)^T}{2}$ are the symmetric and antisymmetric parts of $\nabla K$, respectively.
This finishes the proof of \eqref{def:Sym-nab-u}.
\end{proof}

\subsection{\texorpdfstring{Properties of $G(\rho)$}{Properties of G(rho)}}
We collect some crucial estimates of $G(\rho)$ given by \eqref{eq:G-exp1} under suitable assumptions on $m$.
\begin{lemma}\label{lem:G-prop}
  Assume that $m(r)= m(|\xi|)$ satisfies \eqref{H1}-\eqref{H2a} with $n\in \mathbb{N}^\star$. 
Then there exist constants $\bar{c}_0 >0$ and $C>0$ such that $G(\rho)=G(|x|)$ defined by \eqref{eq:G-exp1} verifies the following statements:
\begin{align}\label{eq:G-prop1}
  \textrm{for}\,\,\rho\in(0,\bar{c}_0),\quad \quad G(\rho) \approx m(\rho^{-1}), 
\end{align}
and
\begin{align}\label{eq:G-prop1b}
  \textrm{for}\,\,\rho\in(0,\bar{c}_0),\quad \quad
  |G^{(l)}(\rho)| \leq \frac{C m(\rho^{-1})}{\rho^{l}},\quad \forall\, l\in \{1,2,\cdots,n+1\},
\end{align}
and
\begin{align}\label{eq:G-prop2}
\textrm{for}\,\,\rho\in [\bar{c}_0,+\infty),\quad \quad 
  |G^{(l)}(\rho)| \leq \frac{C}{\rho^l},\quad \forall\, l\in\{1,2,\cdots, n+1\}.
\end{align}
\end{lemma}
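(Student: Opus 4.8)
The plan is to analyze $G(\rho)$ through its defining integral representation~\eqref{eq:G-exp1}, namely $G(\rho)=\frac{1}{2\pi}m(0^+)+\frac{1}{2\pi}\int_0^\infty J_0(\rho r)\,m'(r)\,\dd r$, splitting the $r$-integral at the scale $r\sim\rho^{-1}$ where the Bessel function $J_0(\rho r)$ transitions from its smooth near-constant behavior ($J_0(s)\approx 1$ for $s\lesssim 1$) to its oscillatory decaying regime ($J_0(s)=O(s^{-1/2})$ with oscillation for $s\gtrsim 1$). For the small-$\rho$ estimate~\eqref{eq:G-prop1}, I would write $\int_0^{\rho^{-1}}J_0(\rho r)m'(r)\,\dd r + \int_{\rho^{-1}}^\infty J_0(\rho r)m'(r)\,\dd r$. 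On the first piece, use $J_0(\rho r)=1+O((\rho r)^2)$ together with $\int_0^{\rho^{-1}}m'(r)\,\dd r = m(\rho^{-1})-m(0^+)$ and the bound $m'(r)\le Cr^{-1}m(r)$ from~\eqref{eq:m-h-0} (plus the monotonicity~\eqref{eq:m-prop3}, $m(\lambda r)\approx_\lambda m(r)$) to see that the main term is $\approx m(\rho^{-1})$ and the error is $\lesssim \rho^2\int_0^{\rho^{-1}}r^2\cdot r^{-1}m(r)\,\dd r\lesssim m(\rho^{-1})$, which is of the same order — hence one needs the matching lower bound: since $m'\ge 0$ and $J_0$ is bounded below by a positive constant on a neighborhood of $0$, the contribution from $r\in(0,c\rho^{-1})$ is $\gtrsim m(c\rho^{-1})-m(0^+)\approx m(\rho^{-1})$ for $\rho$ small (using that $m\to\infty$ under~\eqref{H2a}). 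On the second (oscillatory) piece, integrate by parts using $\frac{\dd}{\dd r}(r J_1(\rho r)) = \rho r J_0(\rho r)$ — more precisely the identity $\int J_0(\rho r)\,\dd r$-type primitive — to gain decay; combined with the Mikhlin-type bounds on $m'$ in~\eqref{H1} and the slow-growth bound~\eqref{eq:m-prop2}, this tail is $O(m(\rho^{-1}))$ or smaller. This establishes~\eqref{eq:G-prop1}.

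For the derivative bounds~\eqref{eq:G-prop1b}, note $G^{(l)}(\rho)=\frac{1}{2\pi}\int_0^\infty \big(\tfrac{\dd^l}{\dd\rho^l}J_0(\rho r)\big)m'(r)\,\dd r = \frac{1}{2\pi}\int_0^\infty r^l J_0^{(l)}(\rho r)\,m'(r)\,\dd r$, using $\frac{\dd^l}{\dd\rho^l}J_0(\rho r)=r^l J_0^{(l)}(\rho r)$. Splitting again at $r\sim\rho^{-1}$: on $r<\rho^{-1}$ use $|J_0^{(l)}(s)|\le C$ and $m'(r)\le Cr^{-1}m(r)$ to get $\lesssim \int_0^{\rho^{-1}}r^{l-1}m(r)\,\dd r\lesssim \rho^{-l}m(\rho^{-1})$ (again by~\eqref{eq:m-prop3}); on $r>\rho^{-1}$ use the decay/oscillation of $J_0^{(l)}$ and integration by parts exploiting the Mikhlin bounds $|\tfrac{\dd^k}{\dd r^k}m'(r)|\le Cr^{-k}m'(r)$ for $k\le n+3$ (which is why~\eqref{H1} demands $C^{n+4}$), transferring $k$ derivatives onto $m'$ to generate enough powers of $(\rho r)^{-1}$ for convergence, yielding the same $\rho^{-l}m(\rho^{-1})$ bound. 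For the large-$\rho$ estimate~\eqref{eq:G-prop2} (including $l=0$ implicitly via boundedness, though only $l\ge 1$ is claimed), the integral is entirely in the oscillatory regime relative to the bulk of $m'$'s mass; here I would integrate by parts repeatedly, each time using the Bessel recurrences and the Mikhlin condition on $m'$, together with $\lim_{r\to\infty}r^{1/2}m'(r)=0$ (from the hypotheses of Lemma~\ref{lem:int-exp}, which follow here from~\eqref{H2a} and~\eqref{eq:m-h-0}) to kill boundary terms at infinity, producing $|G^{(l)}(\rho)|\lesssim \rho^{-l}$ uniformly for $\rho\ge\bar c_0$.

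The main obstacle I anticipate is the oscillatory tail $\int_{\rho^{-1}}^\infty J_0(\rho r)\,m'(r)\,\dd r$ and its differentiated analogues: Bessel functions decay only like $s^{-1/2}$, so naive absolute-value estimates diverge, and one must genuinely exploit the oscillation via integration by parts — but each integration by parts lands a derivative on $m'$, and controlling those derivatives is exactly what the Mikhlin-Hörmander condition in~\eqref{H1} is designed for. The bookkeeping of how many integrations by parts are needed (and hence why $m\in C^{n+4}$, i.e. $m'\in C^{n+3}$, suffices for derivatives up to order $n+1$) is the delicate point; I expect one needs roughly two extra integrations by parts beyond the $l$ derivatives to convert $J_0$'s $s^{-1/2}$ decay into an absolutely convergent integrand. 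A cleaner alternative, which I would also consider, is to rewrite $G$ via the second line of~\eqref{eq:G-exp1} as a $2$D Fourier transform of $m'(|\xi|)/|\xi|$ and invoke standard stationary-phase / Mikhlin multiplier estimates for radial functions on $\mathbb{R}^2$, reducing~\eqref{eq:G-prop1b}–\eqref{eq:G-prop2} to known kernel bounds for symbols with the given homogeneity-type behavior; this may streamline the oscillatory analysis at the cost of citing heavier machinery.
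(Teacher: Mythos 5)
Your plan is built from the same ingredients as the paper's appendix proof: the Bessel-integral representation \eqref{eq:G-exp1}, a splitting of the $r$-integral at the scale $\rho^{-1}$ (the paper uses a smooth cutoff $\chi(\rho r)$), a direct estimate of the low-frequency piece using $0<J_0\le 1$, $m'\ge 0$ and the Mikhlin bounds to reduce everything to $m'$, and an oscillation/integration-by-parts argument for the tail in which the Mikhlin--H\"ormander condition absorbs the derivatives that land on $m'$; your remark that roughly two extra integrations by parts beyond the $l$ derivatives are needed, explaining the $C^{n+4}$ hypothesis, matches the paper's count exactly (the paper gains the extra decay by writing the tail as the 2D Fourier integral in the second line of \eqref{eq:G-exp1} and applying $\Delta_\xi$ twice, which is precisely your ``cleaner alternative''). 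The one step of your proposal that does not survive as written is the starting identity $G^{(l)}(\rho)=\frac{1}{2\pi}\int_0^\infty r^l J_0^{(l)}(\rho r)\,m'(r)\,\dd r$: since $|J_0^{(l)}(s)|$ decays only like $s^{-1/2}$ and $m'(r)\lesssim m(r)/r$ with $m$ of polylogarithmic growth, this integrand behaves like $r^{l-3/2}m(r)$ at infinity and the integral is not absolutely convergent for any $l\ge 1$, so differentiation under the integral sign must itself be justified rather than assumed. The paper avoids this by an inductive representation (formula \eqref{def:G-l-deri}): each $\rho$-differentiation is immediately followed by one integration by parts in $r$, which converts $J_0'$ back into $J_0$ and transfers the derivative onto $m'$, producing $G^{(l)}(\rho)=\mathcal{G}_l(\rho)+\frac{(-1)^l}{2\pi\rho^l}\int_0^\infty J_0(\rho r)M_l(r)\,\dd r$ with $|M_l(r)|\le Cm'(r)$ by \eqref{H1}; every intermediate integral is then absolutely convergent, the boundary terms vanish by $\lim_{r\to\infty}r^{1/2}m'(r)=0$ and $\lim_{r\to 0^+}rm'(r)=0$, and the bounds \eqref{eq:G-prop1b}--\eqref{eq:G-prop2} follow by induction on $l$. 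So your approach is essentially the paper's, modulo this bookkeeping device; to make your version rigorous you should either interleave the integrations by parts with the differentiations as the paper does, or work throughout with the regularized (cut-off) integrals before passing to the limit. Your sketch of \eqref{eq:G-prop1} (lower bound from the region $r\lesssim\rho^{-1}$ where $J_0$ is bounded below, tail shown to be of lower order by one integration by parts) is also fine in spirit; the paper simply cites \cite{MTXX} for that part, and note that to see the tail is genuinely smaller than the main term you need $rm'(r)=o(m(r))$, which follows from \eqref{eq:limit-beta1}.
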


Lemma \ref{lem:G-prop} can be proved by examining the explicit expression \eqref{eq:G-exp1} of $G$. 
See, e.g. \cite[Lemma 2]{MTXX} for the proof of \eqref{eq:G-prop1} 
and \eqref{eq:G-prop1b} with $l=1,2$ 
(noting that assumptions (\textbf{H}$1$)-(\textbf{H}$2a$) in \cite[Lemma 2]{MTXX} are readily verified). 
For the sake of completeness, we include a proof of \eqref{eq:G-prop1b} and \eqref{eq:G-prop2} in Appendix \ref{sec:app}.

As a direct consequence, we obtain the following estimate on the kernel $K$.
\begin{coro}
 Assume that $m(r)= m(|\xi|)$ satisfies \eqref{H1}-\eqref{H2a} with $n\in \mathbb{N}^\star$, and  
$K$ is defined in \eqref{eq:u-exp1}. Then
\begin{align}\label{eq:Kbound}
  |\nabla^l K(x)|\leq C|x|^{-(l+1)}\big(m(|x|^{-1})+1\big),\quad\forall\,x\in\RR^2, \quad \forall\,l\in\{0,1,\cdots,n+1\}.
\end{align}
\end{coro}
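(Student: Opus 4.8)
The plan is to derive \eqref{eq:Kbound} directly from the definition $K(x) = \frac{x^\perp}{|x|^2}G(|x|)$ together with the estimates on $G$ collected in Lemma~\ref{lem:G-prop}. First I would write $K(x) = \frac{x^\perp}{|x|^2}G(|x|)$ and observe that every derivative $\nabla^l K(x)$ is, by the Leibniz rule, a finite sum of terms of the form $\bigl(\nabla^{l_1}\tfrac{x^\perp}{|x|^2}\bigr) G^{(l_2)}(|x|)\,(\nabla^{l_3}|x|)\cdots$ with $l_1 + (\text{number of derivatives falling on }G\text{ and on }|x|) = l$; the purely geometric factors $\nabla^{l_1}(x^\perp/|x|^2)$ and the various $\nabla |x|$, $\nabla^2|x|$, etc., are homogeneous and satisfy $|\nabla^{l_1}(x^\perp/|x|^2)| \le C|x|^{-1-l_1}$ and $|\nabla^j |x|| \le C|x|^{1-j}$ for $j\ge 1$. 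Collecting powers, each term is bounded by $C\,|x|^{-(l+1)}\,|x|^{k}|G^{(k)}(|x|)|$ for some $0\le k\le l+1$ (counting: the $x^\perp/|x|^2$ head contributes $|x|^{-1-l_1}$, each derivative on $|x|$ lowers the power by one, and $k$ derivatives reach $G$, so the total homogeneity is $-(l+1)$ plus a factor $|x|^k$ multiplying $G^{(k)}$).

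Next I would split into the two regimes of Lemma~\ref{lem:G-prop}. For $|x| < \bar c_0$, estimate \eqref{eq:G-prop1} gives $|G(|x|)| \approx m(|x|^{-1})$ and \eqref{eq:G-prop1b} gives $|G^{(k)}(|x|)| \le C m(|x|^{-1})\,|x|^{-k}$ for $1\le k\le n+1$; hence $|x|^k|G^{(k)}(|x|)| \le C m(|x|^{-1})$ uniformly in $k \in\{0,1,\dots,n+1\}$, and summing the finitely many Leibniz terms yields $|\nabla^l K(x)| \le C|x|^{-(l+1)} m(|x|^{-1})$. For $|x| \ge \bar c_0$, estimate \eqref{eq:G-prop2} gives $|G^{(k)}(|x|)| \le C|x|^{-k}$ for $1\le k\le n+1$, and for $k=0$ one has $|G(|x|)| \le C$ on $[\bar c_0,\infty)$ — this bound on $G$ itself on the unbounded range either follows from integrating $G'$ from a fixed point (using \eqref{eq:G-prop2} with $l=1$, which gives $|G(\rho)-G(\bar c_0)|\le C\log(\rho/\bar c_0)$, not quite enough) or, more cleanly, from the explicit formula \eqref{eq:G-exp1}: since $m(0^+)$ is finite and $\int_{\RR^2} e^{ix\cdot\xi} \frac{m'(|\xi|)}{|\xi|}\dd\xi$ is bounded uniformly in $x$ under the stated decay hypotheses on $m'$, $G$ is globally bounded. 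So $|x|^k|G^{(k)}(|x|)| \le C$ for all $k\in\{0,\dots,n+1\}$ and $|x|\ge\bar c_0$, giving $|\nabla^l K(x)| \le C|x|^{-(l+1)}$; since $m\ge 0$ this is dominated by $C|x|^{-(l+1)}(m(|x|^{-1})+1)$. Combining the two regimes gives the claimed inequality for all $x\in\RR^2$.

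The main obstacle — really the only non-bookkeeping point — is justifying a uniform bound for $G(\rho)$ itself on the \emph{unbounded} range $[\bar c_0,\infty)$, since Lemma~\ref{lem:G-prop} as stated only controls $G^{(l)}$ for $l\ge 1$ there. I expect to resolve this by invoking the explicit representation \eqref{eq:G-exp1}: the constant term $\tfrac{1}{2\pi}m(0^+)$ is harmless, and the oscillatory integral $\tfrac{1}{(2\pi)^2}\int_{\RR^2} e^{ix\cdot\xi}\,\tfrac{m'(|\xi|)}{|\xi|}\dd\xi$ is absolutely bounded near $\xi=0$ because $r\mapsto r^{-1}m'(r)\cdot r$ (i.e. $m'$) is integrable at the origin by \eqref{eq:rm'(r)-limit}, and at infinity one integrates by parts once in the radial variable, using the Mikhlin-type decay in \eqref{H1} and $\lim_{r\to\infty} r^{1/2}m'(r) = 0$, to gain a bound uniform in $x$. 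Alternatively, one notes $\sup_{\rho\ge\bar c_0}|G(\rho)|$ is finite because $G$ is continuous on $[\bar c_0,\infty)$ and $G(\rho)\to \tfrac{1}{2\pi}m(0^+)$ as $\rho\to\infty$ (Riemann–Lebesgue applied to the Bessel-function representation), so the supremum is attained and finite. Everything else is the routine Leibniz-rule power counting described above, and I would present it compactly rather than expanding every term.
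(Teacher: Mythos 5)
Your proof is correct and follows essentially the same route as the paper: the Leibniz rule combined with the bounds on $G^{(j)}$ from Lemma~\ref{lem:G-prop}, splitting into the regimes $|x|<\bar c_0$ and $|x|\ge\bar c_0$ and using $m\ge 0$ to absorb the far-field bound into $C|x|^{-(l+1)}\big(m(|x|^{-1})+1\big)$. The only difference is that you explicitly justify the boundedness of $G$ itself on $[\bar c_0,\infty)$ via the representation \eqref{eq:G-exp1} (the Bessel-integral bound), a point the paper's one-line proof leaves implicit; this is a correct refinement rather than a different approach.
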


\begin{proof}
 From the definition of $K$ in \eqref{eq:u-exp1}, applying the Leibniz rule and Lemma \ref{lem:G-prop}, we compute
 \begin{align*}
  	|\nabla^l K(x)| \leq \sum_{j=0}^l \frac{l!}{j!}\frac{\big|G^{(j)}(|x|)\big|}{|x|^{l-j+1}} 
  	\leq \sum_{j=0}^l\frac{l!}{j!}\frac{C\big(m(|x|^{-1})+1\big)}{|x|^j\cdot|x|^{l-j+1}} \leq \frac{C\big(m(|x|^{-1})+1\big)}{|x|^{l+1}},
 \end{align*}
 for any $l\in\{0,1,\cdots,n+1\}$.
\end{proof}

\section{Yudovich type theorem for the 2D Loglog-Euler type equation}\label{sec:Yud}
In this section, we establish the Yudovich-type theorem for the 2D Loglog-Euler equation \eqref{m-SQG} 
associated with initial data $\omega_0\in L^1\cap L^\infty(\mathbb{R}^2)$.

We first introduce the definition of weak solutions for the equation \eqref{m-SQG}.
\begin{definition}[Weak solutions]\label{def:weak-solu}
  Let $\omega_0 \in L^1\cap L^\infty(\mathbb{R}^2)$. 
We say that $(u,\omega)$ is a weak solution to the 2D Loglog-Euler type equation \eqref{m-SQG}
with initial data $\omega_0(x)$, provided that for any $T>0$,
\begin{enumerate}
\item $\omega\in L^\infty([0,T]; L^1\cap L^\infty(\mathbb{R}^2))$. 
\item $ u = \nabla^\perp (-\Delta)^{-1} m(\Lambda) \omega = K * \omega$ with the kernel $K$ given by \eqref{eq:u-exp1}.
\item For every $\widetilde{\chi}\in C^1([0,T]; C^1_0(\mathbb{R}^2))$,
\begin{align*}
  \int_{\mathbb{R}^2} \omega(x,T) \widetilde{\chi}(x,T) \dd x 
  - \int_{\mathbb{R}^2} \omega_0(x) \widetilde{\chi}(x,0) \dd x 
  = \int_0^T \int_{\mathbb{R}^2} \omega(x,t)\, \big(\partial_t \widetilde{\chi} + u\cdot \nabla \widetilde{\chi} \big)(x,t)
  \dd x \dd t.
\end{align*}
\end{enumerate}
\end{definition}

The next result is concerned with the continuity estimate of the velocity field associated with bounded and integrable vorticity.
\begin{lemma}\label{lem:u-est-general-data}
Assume that $m(\xi)= m(|\xi|)$ is a radial function of $\RR^2$ satisfying \eqref{H1}-\eqref{H2a}.
For any $\omega\in L^1\cap L^\infty(\RR^2)$, the vector $u=\nabla^\perp (-\Delta)^{-1}m(\Lambda)(\omega)$
satisfies
\begin{align}\label{es:vel-Yud}
  \lVert u\rVert_{L^{\infty}(\RR^2)}+\sup_{|x-\tilde{x}|>0}\frac{|u(x)-u(\tilde{x})|}{\nu(|x-\tilde{x}|)}
  \leq C\lVert \omega\rVert_{L^{1}\cap L^{\infty}(\RR^2)},
\end{align}
where $C>0$ is a universal constant and the modulus of continuity $\nu$ is defined as
\begin{align}\label{def:nu}
  \nu(\rho)\triangleq 
  \begin{cases}
    \rho (\log \rho^{-1}) m(\rho^{-1}), \quad & \textrm{for}\;\; 0< \rho\leq \frac{1}{2}, \\
    \rho\, (\log 2) \, m(2), \quad & \textrm{for}\;\; \rho> \frac{1}{2}.
  \end{cases}
\end{align}
\end{lemma}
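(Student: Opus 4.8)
The plan is to argue directly from the convolution representation $u=K*\omega$ in \eqref{eq:u-exp1}, using the pointwise kernel bounds \eqref{eq:Kbound} (with $l=0$ and $l=1$) and decomposing every integral according to the size of $|x-y|$. Throughout I write $\rho=|x-\tilde x|$ and abbreviate by $\bar{c}_0$ the constant from Lemma~\ref{lem:G-prop}.

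For the $L^\infty$ bound I would fix $x$ and split $\RR^2=\{|x-y|\le\bar{c}_0\}\cup\{|x-y|>\bar{c}_0\}$. On the inner region, \eqref{eq:Kbound} with $l=0$ gives $\int_{|x-y|\le\bar{c}_0}|K(x-y)|\,\dd y\le C\int_0^{\bar{c}_0}\big(m(s^{-1})+1\big)\,\dd s$, which is finite by the slow-growth bound \eqref{eq:m-prop2}; hence this part is $\le C\|\omega\|_{L^\infty}$. On the outer region, integrating $G'$ and using \eqref{eq:G-prop2} with $l=1$ gives $|G(\rho)|\le C(1+\log_+\rho)$ for $\rho\ge\bar{c}_0$, so that $|K(z)|\le C|z|^{-1}(1+\log_+|z|)$ is bounded there, and this part is $\le C\|\omega\|_{L^1}$. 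Altogether $\|u\|_{L^\infty}\le C\|\omega\|_{L^1\cap L^\infty}$.

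For the modulus of continuity, if $\rho$ is bounded below by a fixed $\rho_0>0$ the estimate is immediate from $|u(x)-u(\tilde x)|\le 2\|u\|_{L^\infty}$ and $\nu(\rho)\ge c(\rho_0)>0$; so I would assume $\rho\le\rho_0$ with $\rho_0$ small enough that $4\rho_0\le\bar{c}_0$. Writing $u(x)-u(\tilde x)=\int\big(K(x-y)-K(\tilde x-y)\big)\omega(y)\,\dd y$, I split into the near region $\{|x-y|\le 3\rho\}$, the middle region $\{3\rho<|x-y|\le\bar{c}_0\}$, and the far region $\{|x-y|>\bar{c}_0\}$. In the near region I bound $|K(x-y)-K(\tilde x-y)|\le|K(x-y)|+|K(\tilde x-y)|$ (noting $|\tilde x-y|\le 4\rho$ there) and use \eqref{eq:Kbound} with $l=0$, which gives a contribution $\le C\|\omega\|_{L^\infty}\int_0^{4\rho}\big(m(s^{-1})+1\big)\,\dd s$. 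In the middle and far regions I apply the mean value theorem, $|K(x-y)-K(\tilde x-y)|\le\rho\sup_{\zeta\in[x,\tilde x]}|\nabla K(\zeta-y)|$; since $|\zeta-y|\approx|x-y|$ on these regions, \eqref{eq:Kbound} with $l=1$ together with the doubling property \eqref{eq:m-prop3} yield $|\nabla K(\zeta-y)|\le C|x-y|^{-2}\big(m(|x-y|^{-1})+1\big)$ on the middle region and $|\nabla K(\zeta-y)|\le C|x-y|^{-2}(1+\log_+|x-y|)$ on the far region. Passing to polar coordinates, the middle contribution is $\le C\rho\|\omega\|_{L^\infty}\int_{3\rho}^{\bar{c}_0}s^{-1}\big(m(s^{-1})+1\big)\,\dd s$, while a further dyadic split of the far region (using $\omega\in L^\infty$ on a fixed annulus and $\omega\in L^1$ for $|x-y|$ large) bounds its contribution by $C\rho\|\omega\|_{L^1\cap L^\infty}$.

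It then remains to verify two elementary estimates for $m$. The first is $\int_0^a\big(m(s^{-1})+1\big)\,\dd s\le Ca\big(m(a^{-1})+1\big)$ for $0<a\le\bar{c}_0$: I would prove it by a dyadic decomposition of $(0,a)$, handling each piece $(a^{2^{k+1}},a^{2^k})$ via $m(s^{-1})\le m(a^{-2^k})\le C_k\,m(a^{-1})$ (iterating \eqref{eq:m-prop5}), with the rapidly shrinking lengths $a^{2^k}$ beating the polynomially growing constants $C_k$ --- this is exactly the step that forces the growth restriction $\beta_1<\infty$ in \eqref{H2a}. The second is $\int_{3\rho}^{\bar{c}_0}s^{-1}m(s^{-1})\,\dd s\le C\,m(\rho^{-1})\log(\rho^{-1})$ for $\rho$ small: here monotonicity of $m$ and \eqref{eq:m-prop3} give $m(s^{-1})\le m((3\rho)^{-1})\le C\,m(\rho^{-1})$ throughout the range, leaving $\int_{3\rho}^{\bar{c}_0}s^{-1}\,\dd s\le\log(\rho^{-1})+C$. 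Combining the three contributions and using $m(\rho^{-1})+1\le C\,m(\rho^{-1})$ and $1\le C\log(\rho^{-1})$ for $0<\rho\le1/2$, each term is $\le C\|\omega\|_{L^1\cap L^\infty}\,\nu(\rho)$ with $\nu$ as in \eqref{def:nu}. I expect the middle-region estimate to be the main obstacle: it is precisely there that the logarithmic factor in $\nu$ appears, and it forces one to exploit the near scale-invariance of $m$ at infinity; the near-region bound, though elementary, likewise relies essentially on $m$ growing no faster than a power of $\log$.
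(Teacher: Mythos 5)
Your proposal is correct and follows essentially the same route as the paper: the convolution representation \eqref{eq:u-exp1} with the kernel bounds of Lemma~\ref{lem:G-prop}/\eqref{eq:Kbound}, a splitting at scale $|x-y|\sim|x-\tilde x|$, the mean value theorem with the $\nabla K$ bound on the outer part, and monotonicity of $m$ producing the logarithmic factor in $\nu$ (your three-region split is just the paper's region $II$ cut once more at $|x-y|\sim\bar{c}_0$). The only real divergence is the auxiliary inequality $\int_0^a m(s^{-1})\,\dd s\le C a\,m(a^{-1})$, which the paper gets in one line from L'H\^opital's rule and the limit \eqref{eq:limit-beta1} in \eqref{H2a} (see \eqref{eq:continuity-cond}), whereas you rebuild it by a dyadic decomposition iterating \eqref{eq:m-prop5}; your mechanism works (the lengths $a^{2^k}$ decay super-exponentially while the constants from \eqref{eq:m-prop5} grow only like $\kappa^{\beta_1+\varepsilon}$ with $\kappa=2^{k+1}$, so ``polynomially in $\kappa$'' rather than in $k$), but it is more laborious than needed.
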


\begin{proof}
We start by applying \eqref{eq:u-exp1} and Lemma~\ref{lem:G-prop} to deduce that
\begin{align*}
  |u(x)|& \leq \Big|\int_{\{|x-y|\leq \bar{c}_0\}} \frac{(x-y)^{\perp}}{|x-y|^2}G(|x-y|)\omega(y)\dd y\Big|
  +  \Big|\int_{\{|x-y|\geq \bar{c}_0 \}} \frac{(x-y)^{\perp}}{|x-y|^2}G(|x-y|)\omega(y)\dd y\Big| \\
  & \leq C\lVert \omega\rVert_{L^{\infty}}\int_0^{\bar{c}_0} m(\rho^{-1}) \dd \rho
  + C \int_{\{|x-y|\geq \bar{c}_0\}}|\omega(y)|\dd y 
  \leq C \lVert \omega\rVert_{L^1\cap L^{\infty}}.
\end{align*}
Here, $\bar{c}_0 > 0$ denotes the constant introduced in Lemma~\ref{lem:G-prop}. Note that from 
\eqref{eq:m-prop2}, $m(\rho^{-1})$ behaves like $|\log\rho|^{\beta_1+}$ near zero. Hence, 
the integral $\int_0^{\bar{c}_0} m(\rho^{-1}) \dd \rho$ is finite.
Consequently, $u\in L^\infty(\RR^2)$.

Next, we will focus on the continuity property of $u$.
For any $\tilde{x}, x\in\RR^2$ such that $|x-\tilde{x}|<\frac{\bar{c}_0}{3}$ (without loss of generality assuming 
$\bar{c}_0\leq 1$), we apply \eqref{eq:u-exp1} and obtain
\begin{align*}
  |u(x)-u(\tilde{x})| & \leq \int_{\RR^2}\big|K(|x-y|) - K(|\tilde{x}-y|)\big|\cdot|\omega(y)|\dd y\\
  & \leq \int_{\{|x-y|\leq 2|x-\tilde{x}|\}}\big|K(|x-y|) - K(|\tilde{x}-y|)\big|\cdot|\omega(y)|\dd y \\
  & \quad + \int_{\{|x-y|\geq 2|x-\tilde{x}|\}}\big|K(|x-y|) - K(|\tilde{x}-y|)\big|\cdot|\omega(y)|\dd y \\
  & \triangleq  I+ II.
\end{align*}
For $I$, we deduce from \eqref{eq:G-prop1} that
\begin{align*}
  I & \leq 2\lVert \omega\rVert_{L^\infty} \int_{\{|x-y|\leq 3|x-\tilde{x}|\}}\frac{G(|x-y|)}{|x-y|}\dd y =4\pi \lVert \omega\rVert_{L^\infty}\int_0^{3|x-\tilde{x}|}G(\rho)\dd\rho \leq C\lVert \omega\rVert_{L^{\infty}}\int_0^{3|x-\tilde{x}|} m(\rho^{-1})\dd \rho. 
\end{align*}
It follows from L'H\^opital's rule and \eqref{eq:limit-beta1} in \eqref{H2a} that
\begin{align*}
  \lim_{\rho\to 0}\frac{\int_0^\rho m(\tilde\rho^{-1})\dd \tilde\rho}{\rho m(\rho^{-1})} = \lim_{\rho\to 0} \frac{m(\rho^{-1})}{m(\rho^{-1})-\rho^{-1}m'(\rho^{-1})}
  = \lim_{r\to +\infty} \frac{1}{1-\frac{r m'(r)}{m(r)}} = 1.
\end{align*}
Hence, there exists a constant $C>0$, depending on $\bar{c}_0$, such that for any $\rho< c_0$,
\begin{align}\label{eq:continuity-cond}
  \int_0^{\rho} m(\tilde\rho^{-1}) \dd \tilde\rho \leq C\rho m(\rho^{-1}).
\end{align}
Therefore, by \eqref{eq:continuity-cond}, we deduce
\begin{align*}
  I \leq C\|\omega\|_{L^\infty} |x-\tilde{x}|\,m(|x-\tilde{x}|^{-1}).
\end{align*}

For $II$, we apply the mean value theorem and get
\begin{align*}
 K(x-y)-K(\tilde{x}-y) = (x-\tilde{x})\cdot\nabla K(\theta x+(1-\theta)\tilde{x}-y),
\end{align*}
for some $\theta\in[0,1]$. Applying \eqref{eq:Kbound} with $l=1$, we have
\begin{align}\label{eq:gradKbound}
 |\nabla K(x)|\leq C\, \frac{m(|x|^{-1})+1}{|x|^2},
\end{align}
for every $x\neq0$. Moreover, for $|x-y|\geq 2|x-\tilde{x}|$, we have
\begin{align*}
\tfrac{1}{2}|x-y|\leq |\theta x+(1-\theta)\tilde{x}-y|\leq \tfrac{3}{2}|x-y|,
\end{align*}
for any $\theta\in [0,1]$.
Therefore, we apply the above estimates and \eqref{eq:m-prop3} to obtain
\begin{align*}
  II & \leq C |x-\tilde{x}|\int_{\{|x-y|\geq 2 |x-\tilde{x}|\}}\frac{m(|x-y|^{-1})+1}{|x-y|^2}\,|\omega(y)|\dd y\\
  &\leq C\lVert \omega\rVert_{L^{\infty}}|x-\tilde{x}|\int_{\{2|x-\tilde{x}|\leq |x-y|\leq 2\}}
  \frac{m(|x-y|^{-1})+1}{|x-y|^2}\,\dd y
  + C |x-\tilde{x}|\int_{\RR^2}|\omega(y)|\dd y\\
  &\leq C\lVert \omega\rVert_{L^\infty}|x-\tilde{x}| \bigg(\int^{\frac{1}{2|x-\tilde{x}|}}_{\frac{1}{2}}\frac{m(r)+1}{r}\dd r\bigg) + C\lVert \omega\rVert_{L^1}|x-\tilde{x}|\\
  & \leq C\lVert \omega\rVert_{L^1\cap L^{\infty}} |x-\tilde{x}| m(|x-\tilde{x}|^{-1})\log |x-\tilde{x}|^{-1}\leq C\lVert \omega\rVert_{L^1\cap L^{\infty}}\nu(|x-\tilde{x}|),
\end{align*}
where we have used that $m(\cdot)$ is increasing in the penultimate inequality to get
\begin{align*}
m(r)+1\leq m(\tfrac{1}{2|x-\tilde{x}|}) + \frac{m(\tfrac{1}{2|x-\tilde{x}|})}{m(\tfrac12)}\leq \Big(1+\frac1{m(\frac12)}\Big)m(|x-\tilde{x}|^{-1}).
\end{align*}
Gathering the above estimates, we conclude with the desired bound
\begin{align*}
 |u(x)-u(\tilde{x})|\leq C\lVert \omega\rVert_{L^1\cap L^{\infty}}\nu(|x-\tilde{x}|).
\end{align*}

When $|x-\tilde{x}|\geq\frac{\bar{c}_0}{3}$, we have the following direct estimate
\begin{align}\label{eq:largeeps}
\frac{|u(x)-u(\tilde{x})|}{\nu(|x-\tilde{x}|)}\leq \frac{2\|u\|_{L^\infty}}{\inf_{\rho\geq \frac{\bar{c}_0}{3}}\nu(\rho)}\leq C \lVert \omega\rVert_{L^1\cap L^{\infty}},
\end{align}
where we have used the fact that $\nu$ is strictly positive away from zero.
\end{proof}

Now we present the proof of Theorem \ref{thm:Yudovich_type}.
\begin{proof}[Proof of Theorem \ref{thm:Yudovich_type}]
From the transport structure of \eqref{eq:loglogEuler} and $\mathrm{div}\, u=0$, we have
\begin{align*}
  \lVert \omega(t)\rVert_{L^{1}\cap L^{\infty}}=\lVert \omega_0\rVert_{L^{1}\cap L^{\infty}}.
\end{align*}
By virtue of Lemma \ref{lem:u-est-general-data}, the velocity field $u=\nabla^{\perp}(-\Delta)^{-1}m(\Lambda)\omega$ satisfies
\begin{align}\label{eq:u-log-Lip-est}
  \lVert u(t)\rVert_{L^{\infty}(\RR^2)}+\sup_{|x-y|>0}\frac{|u(x,t) - u(y,t)|}{\nu(|x-y|)}
  \leq C\lVert \omega_0\rVert_{L^{1}\cap L^{\infty}(\RR^2)}.
\end{align}
The Osgood condition \eqref{eq:Osgood-cond} translates into
\begin{align*}
  \int_0^{\frac{1}{2}}\frac{\dd\rho}{\nu(\rho)}=\int_0^{\frac{1}{2}}\frac{\dd \rho}{\rho (\log \rho^{-1}) m(\rho^{-1})} = \int_2^{+\infty}\frac{\dd r}{r(\log r)m(r)}=+\infty.
\end{align*}
Then, from the standard Peano's existence theorem and Osgood's uniqueness theorem for ordinary differential equations, the flow map equation \eqref{eq:flow_map_def} admits a unique global solution $\Phi_{t,s}: \mathbb{R}^2\to \mathbb{R}^2$ for every $t,s\in \mathbb{R}_+$.

According to \eqref{eq:flow_map_def} and \eqref{eq:u-log-Lip-est}, it follows that for every $t,s\in \mathbb{R}_+$,
\begin{align}\label{es:flow-map1}
  \Big|\frac{\dd (\Phi_{t,s}(x) - \Phi_{t,s}(y))}{\dd t}\Big|
  \leq \big|u(\Phi_{t,s}(x),t)-u(\Phi_{t,s}(y),t)\big|
  \leq C\lVert \omega_0\rVert_{L^1\cap L^\infty} \,\nu(|\Phi_{t,s}(x)-\Phi_{t,s}(y)|).
\end{align}

The function $\mathsf{H}(r)$ defined in \eqref{def:H-r} is monotonically increasing, and from the Osgood condition \eqref{eq:Osgood-cond},
\begin{align*}
  \lim_{r\to+\infty}\mathsf{H}(r)=\int_2^{+\infty}\frac{\dd r}{r (\log r) m(r)} = +\infty,\quad \text{and}\quad
  \lim_{r\to0}\mathsf{H}(r) = -\infty.
\end{align*}
Hence, $\mathsf{H}(\cdot):(0,+\infty) \mapsto \RR$ is an invertible map.
Moreover, $\mathsf{H}$ satisfies 
\begin{align*}
 \frac{\dd}{\dd \rho}\mathsf{H}(\rho^{-1})= - \frac{1}{\nu(\rho)},\quad \forall\,\rho\in (0, +\infty).
\end{align*}
Then, from \eqref{es:flow-map1} we get
\begin{align*}
  \Big|\mathsf{H}(|\Phi_{t,s}(x) - \Phi_{t,s}(y)|^{-1}) - \mathsf{H}(|x-y|^{-1}) \Big| =\Big|\int_{|x-y|}^{|\Phi_{t,s}(x) - \Phi_{t,s}(y)|}\frac{\dd \rho}{\nu(\rho)}\Big|
  \leq C\lVert \omega_0\rVert_{L^1\cap L^{\infty}} |t-s|,
\end{align*}
and hence
\begin{align*}
  \big|\Phi_{t,s}(x)-\Phi_{t,s}(y)\big|^{-1} & \geq \mathsf{H}^{-1}\Big(\mathsf{H}(|x-y|^{-1})-C \|\omega_0\rVert_{L^1\cap L^\infty}|t-s|\Big),\\
  \big|\Phi_{t,s}(x)-\Phi_{t,s}(y\big)|^{-1} & \leq \mathsf{H}^{-1}\Big(\mathsf{H}(|x-y|^{-1})+C\|\omega_0\rVert_{L^1\cap L^{\infty}}|t-s|\Big),
\end{align*}
which directly implies \eqref{eq:bound-flow-map-general}.
Similarly, letting $0\leq t_1\leq t_2 <+\infty$, we have $\Phi_{0,t_2}(x) = \Phi_{0,t_1}\circ \Phi_{t_1,t_2}(x)$
and
\begin{align*}
  |\Phi_{t_1}^{-1}(x) - \Phi_{t_2}^{-1}(x)|^{-1} & = |\Phi_{0,t_1}(x) - \Phi_{0,t_1}\circ \Phi_{t_1,t_2}(x)|^{-1} \\
  & \geq \mathsf{H}^{-1}\Big(\mathsf{H}(|x - \Phi_{t_1,t_2}(x)|^{-1}) - C \|\omega_0\|_{L^1\cap L^\infty} t_1\Big) \\
  & \geq \mathsf{H}^{-1} \Big(\mathsf{H}(C^{-1}\|\omega_0\|_{L^1\cap L^\infty}^{-1} |t_1-t_2|^{-1}) 
  - C \|\omega_0\|_{L^1\cap L^\infty} t_1 \Big),
\end{align*}
where in the last line we have used the equation \eqref{eq:flow_map_def} and estimate \eqref{eq:u-log-Lip-est}.
With the above \textit{a priori} estimates, and following the argument used in the proof of Yudovich's theorem for the 2D Euler equation (see, for instance, \cite[Sec.~8.2]{MB02}), one can establish the global existence result.

Next, by adapting the ideas in \cite[Sec.~2.3]{MP94} with suitable modifications (see also \cite{Vishik99} and \cite[Chap.~7]{BCD11}, where the Littlewood-Paley theory is used), we provide a proof of the uniqueness part of Theorem \ref{thm:Yudovich_type}.
Assume that $(\omega^1, u^1)$ and $(\omega^2, u^2)$ are two weak solutions on $[0,T]$ 
of the 2D Loglog-Euler type equation \eqref{eq:loglogEuler} associated with the same initial data 
$\omega_0\in L^1\cap L^\infty(\mathbb{R}^2)$. 
Let $\Phi_t^i(\cdot)= \Phi^i_{t,0}(\cdot)$ ($i=1,2$) denote the flow map generated by the velocity field $u^i$, satisfying \eqref{eq:flow_map_def}. 
Without loss of generality, we assume $\omega_0 \not\equiv 0$. 
Set 
\begin{align}\label{eq:def-diff-flow-map}
  \delta(t)\triangleq \frac{1}{\lVert \omega_0\rVert_{L^1}}\int_{\RR^2}|\Phi^1_t(x)-\Phi^2_t(x)| \cdot|\omega_0(x)|\dd x,
\end{align}
By \eqref{eq:flow_map_def}, it follows that 
\begin{align*}
  \Phi^1_t(x)-\Phi^2_t(x) & = \int_0^t \Big(u^1(\Phi^1_\tau(x),\tau)-u^2(\Phi^2_\tau(x),\tau)\Big) \dd \tau \\
  & = \int_0^t \Big(u^1(\Phi^1_\tau(x),\tau)-u^1(\Phi^2_\tau(x),\tau) \Big)\dd \tau 
  + \int_0^t \Big(u^1(\Phi^2_\tau(x),\tau)-u^2(\Phi^2_\tau(x),\tau)\Big) \dd \tau. 
\end{align*}
For the first term, in view of \eqref{eq:u-log-Lip-est} and \eqref{def:nu}, 
there exists some constant $C>0$ depending only on $\|\omega_0\|_{L^1\cap L^\infty}$ such that 
\begin{align*}
  |u^1(\Phi^1_t(x),t)-u^1(\Phi_t^2(x),t)|\leq C\nu(|\Phi^1_t(x)-\Phi_t^2(x)|).
\end{align*}
For the second term, by the change of variables and using the properties of flow map $\Phi_t^i$, for all $t\in [0,T]$ with $T>0$, 
we have
\begin{align*}
  & \int_{\RR^2}|u^1(\Phi_t^2(x),t)-u^2(\Phi_t^2(x),t)|\cdot |\omega_0(x)|\dd x\\
  & = \int_{\mathbb{R}^2} \Big| \mathrm{p.v.} \int_{\mathbb{R}^2} K(\Phi_t^2(x)-z) \big( \omega^1(z,t) - \omega^2(z,t) \big) \dd z \Big| \cdot |\omega_0(x)| \dd x \\
  & = \int_{\RR^2}\Big|\int_{\RR^2}\big(K(\Phi_t^2(x)-\Phi^{1}_{t}(y))-K(\Phi_t^2(x)-\Phi^{2}_{t}(y))\big)\omega_0(y)\dd y\Big|\cdot|\omega_0(x)|\dd x \\
  & \leq \int_{\RR^2}|\omega_0(y)|\left(\int_{\RR^2}\big|K(\Phi_t^2(x)-\Phi^{1}_{t}(y))-K(\Phi_t^2(x)-\Phi^{2}_{t}(y))\big|\cdot|\omega_0(x)|\dd x\right)\dd y\\
  &=\int_{\RR^2}|\omega_0(y)|\left(\int_{\RR^2}\big|K(x-\Phi^{1}_{t}(y))-K(x-\Phi^{2}_{t}(y))\big|\cdot|\omega^2(x,t)|\dd x\right)\dd y\\
  & \leq C\lVert \omega_0\rVert_{L^1\cap L^\infty}
  \int_{\RR^2}\nu(|\Phi^{1}_{t}(y)-\Phi^{2}_{t}(y)|)\cdot|\omega_0(y)|\dd y,
\end{align*}
where the last inequality follows from the same procedure used to estimate $I$ and $II$ in the proof of Lemma~\ref{lem:u-est-general-data}.
Combining the above estimates, we infer that 
\begin{align*}
\delta(t) \leq C\int_0^{t}\int_{\RR^2} 
  \nu(|\Phi^1_\tau(x)-\Phi^2_\tau(x)|)\cdot|\omega_0(x)|\dd x\dd \tau.
\end{align*}

If we assume $\nu$ to be concave, then by Jensen's inequality, we obtain
\begin{align*}
\int_{\RR^2} 
  \nu(|\Phi^1_t(x)-\Phi^2_t(x)|)\cdot\frac{|\omega_0(x)|}{\lVert \omega_0\rVert_{L^1}}\dd x\leq \nu\Big(\int_{\RR^2} 
  |\Phi^1_t(x)-\Phi^2_t(x)|\cdot\frac{|\omega_0(x)|}{\lVert \omega_0\rVert_{L^1}}\dd x\Big)=\nu(\delta(t)),
\end{align*}
which yields
\begin{align}\label{eq:deltabound}
 \delta(t) \leq C\int_0^{t}\nu(\delta(\tau))\dd \tau.
\end{align}
Even if $\nu$ is not globally concave, inequality \eqref{eq:deltabound} still holds. To see this, we observe that $\nu$ is concave near the origin. Indeed,
\begin{align*}
\nu''(0^+) & =\lim_{\rho\to0^+}\Big(-\rho^{-1}m(\rho^{-1})+2\rho^{-2}m'(\rho^{-1})+\rho^{-3}\log\rho^{-1}m''(\rho^{-1})\Big)\\
& = \lim_{r\to+\infty} rm(r)\Big(-1 + \frac{rm'(r)}{m(r)}\cdot\frac{r\log r\,m''(r)+2m'(r)}{m'(r)}\Big)=-\infty,
\end{align*}
where we have applied \eqref{eq:limit-beta1}-\eqref{eq:limit-beta2} in \eqref{H2a}. We then define 
\begin{align*}
  \nu_1(\rho) \triangleq
  \begin{cases}
	\nu(\rho) = \rho (\log \rho^{-1}) m(\rho^{-1}),\qquad &\text{if } \rho\in (0,c_1],\\
	c_1 (\log c_1^{-1}) m(c_1^{-1})+\widetilde{\nu}'(c_1)(\rho-c_1),\qquad &\text{if } \rho\in (c_1,+\infty),
  \end{cases}
\end{align*}
where the constant $c_1\in(0,\frac12)$ is chosen sufficiently small so that $\nu_1$ is concave. Moreover, 
\begin{align*}
 \nu_1(\rho)\approx_{c_1} \nu(\rho),\quad \forall \rho\in(0,+\infty).	
\end{align*}
Repeating the above estimates with $\widetilde{\nu}$ in place of $\nu$, we obtain
\begin{align*}
 \delta(t) \leq C\int_0^{t}\int_{\RR^2}\nu_1(|\Phi^1_\tau(x)-\Phi^2_\tau(x)|)\cdot|\omega_0(x)|\dd x\dd \tau\leq C\int_0^t\nu_1(\delta(\tau))\dd\tau\leq C\int_0^{t}\nu(\delta(\tau))\dd \tau.
\end{align*}

Using the Osgood condition \eqref{eq:Osgood-cond}, we deduce from \eqref{eq:deltabound} that $\delta(t)\equiv 0$ for all $t\in [0,T]$, which implies $\Phi_t^{1}(x)=\Phi_t^{2}(x)$ for all $x\in$ supp $\omega_0$ and all $t\in [0,T]$. It then yields
\begin{align*}
  \omega^1(\Phi_t^{1}(x),t)=\omega_0(x)=\omega^2(\Phi_t^{2}(x),t), \quad \forall x\in \text{supp }\omega_0.
\end{align*}
Moreover, since $\Phi_t^1(\text{supp }\omega_0)=\Phi_t^2(\text{supp }\omega_0)$, we have
\begin{align*}
  \omega^1(\Phi_t^{1}(x),t)=0=\omega^2(\Phi_t^{2}(x),t), \quad \forall x\not\in \text{supp }\omega_0.
\end{align*}
Therefore, we conclude that the uniqueness result holds, namely $\omega^1(\cdot,t)\equiv\omega^2(\cdot,t)$ on $\RR^2$.
\end{proof}

\section{\texorpdfstring{Global regularity of $C^{1,\mu}$ single vortex patch}{Global regularity of C(1,mu) vortex patch}}
\label{sec:C1mu-single}
In this section, we show the global regularity of the 2D Loglog-Euler type equation \eqref{m-SQG} 
with the $C^{1,\mu}$ single patch initial data. 

\subsection{Mathematical formulation for vortex patch problem} \label{subsec:formulation}
Let $D_0\subset \mathbb{R}^2$ be a simply connected bounded domain with boundary 
$\partial D_0\in C^{n,\mu}$, $n\in \mathbb{N}^\star$, $\mu\in (0,1)$, and 
$\omega_0(x)=\mathbf{1}_{D_0}(x)\in L^1\cap L^\infty$ 
be the initial data of the 2D Loglog-Euler type equation \eqref{m-SQG}. 
According to Theorem \ref{thm:Yudovich_type},
there exists a unique global-in-time weak solution
\begin{align*}
  \textrm{$\omega(x,t)=\mathbf{1}_{D(t)}(x)$\;\; and \;\;$D(t) = \Phi_t(D_0)$,}
\end{align*}
where the flow map $\Phi_t(\cdot)=\Phi_{t,0}(\cdot)$ is given by \eqref{eq:flow_map_def}.
The vortex patch problem is concerned with the global regularity of $\partial D(t)$ for any $t>0$.

Let $z_0(\xi):\mathbb{T}\mapsto \partial D_0$ be a $C^{n,\mu}$-parameterization of the boundary $\partial D_0$, where $\mathbb{T}$ denotes the one-dimensional periodic domain.
Then 
$$z(\xi,t)=\Phi_t(z_0(\xi))$$
is a parameterization of $\partial D(t)$ 
and satisfies the following \textit{contour dynamics equation}
\begin{equation}\label{eq:contour_eq}
  \frac{\dd z(\xi,t)}{\dd t}= u(z(\xi,t),t),\quad  z(\xi, 0) = z_0(\xi).
\end{equation}
where
\begin{align}\label{eq:velo-Lag}
  u(z(\xi,t),t) & = \int_{D(t)} \nabla^\perp_y (\widetilde{R}(|z(\xi,t)-y|)) \dd y 
  = \int_{\partial D(t)} \widetilde{R}\big(|z(\xi,t) - y| \big) \mathbf{n}^\perp(y,t) \dd S(y) \nonumber \\
  & = \int_{\mathbb{T}} \widetilde{R}\big(|z(\xi,t) - z(\eta,t)|\big) \partial_\eta z(\eta,t) \dd \eta,
\end{align}
with $\widetilde{R}(\rho) = \int_\rho^1 \frac{G(r)}{r} \dd r + C$ and $C\in \mathbb{R}$ some constant chosen for convenience.
This formulation in Lagrangian coordinates works well in obtaining the local-in-time regularity of
$\partial D(t)$. However, even for the 2D Euler equation, it is still open how to get the global regularity of $\partial D(t)$
directly from \eqref{eq:contour_eq}-\eqref{eq:velo-Lag}.

Following \cite{BC93,Chemin93}, we introduce the level-set formulation for the vortex patch problem,
which is based on Eulerian coordinates. 
\begin{definition}[Level-set characterization of a domain]\label{def:levelset}
 Let $D$ be a simply connected and bounded domain $\partial D\in C^{k,\mu}$. Let $\varphi\in C^{k,\mu}(\RR^2)$ with $k\in \mathbb{N}^\star$ and $\mu\in (0,1)$. We say $\varphi$ is a level-set characterization of the domain $D$ if
 \begin{align}\label{eq:levelsetrep}
  D=\{x\in \mathbb{R}^2\, | \, \varphi(x)>0\},\quad \inf_{x\in \partial D} |\nabla \varphi(x)| \geq c > 0. 
\end{align}
\end{definition}
A level-set characterization $\varphi$ can be constructed by solving the elliptic problem $-\Delta \varphi = 1$ in $D$ with the Dirichlet boundary condition $\varphi|_{\partial D} = 0$, and then extending $\varphi$ to $\mathbb{R}^2$. 
One can see, e.g. \cite{KRYZ16} for more details of the construction. 
As a direct consequence of \eqref{eq:levelsetrep}, we get
\begin{align}\label{cond:varphi-outer-domain}
  \varphi = 0 \quad \textrm{on}\;\; \partial D,\qquad \textrm{and}\qquad
  \varphi <0\quad \textrm{in}\;\; U\cap \overline{D}^c,
\end{align}
where $U\subset \mathbb{R}^2$ is a small open neighborhood of $\partial D$.

Let $\varphi_0$ be a $C^{n,\mu}$ level-set characterization of the domain $D_0$. For any $x_0\in \partial D_0$,
the solution of the following ordinary differential equation
\begin{align}\label{eq:parameter_varphi}
  \frac{\dd z_0(\xi)}{\dd \xi} = \nabla^\perp \varphi_0(z_0(\xi)) \triangleq W_0(z_0(\xi)), \quad
  z_0(0)=x_0,
\end{align}
yields a $C^{n,\mu}$-parameterization of the curve $\partial D_0$.
Let $z(\xi,t)$ be a solution of \eqref{eq:contour_eq} with $z_0$ given by \eqref{eq:parameter_varphi}
and $\varphi(x,t)=\varphi_0(\Phi_t^{-1}(x))$. Then $\varphi(x,t)$ is the solution of
\begin{align}\label{eq:level-set-eq}
  \partial_t\varphi+u\cdot \nabla \varphi=0,
  \quad u=\nabla^{\perp}(-\Delta)^{-1} m(\Lambda) (\mathbf{1}_{D(t)}),\quad
  \varphi|_{t=0}=\varphi_0,
\end{align}
with
$D(t)=\{x\in \mathbb{R}^2\, : \, \varphi(x,t)>0\}$.
Furthermore, it follows from direct computation (e.g. see \cite[Lemma 1.4]{MB02}) that
\begin{align}\label{eq:tangential_vector_contour}
  \partial_{\xi}z(\xi,t)= W (z(\xi,t),t),\quad \textrm{with}\quad W(\cdot,t) \triangleq \nabla^\perp \varphi (\cdot, t).
\end{align}
Using \eqref{eq:tangential_vector_contour} repeatedly, we find that for $k\in \mathbb{N}^\star$,
\begin{align}\label{eq:striated_regularity}
  \partial_{\xi}^{k}z(\xi,t)=\big((W \cdot\nabla)^{k-1}W\big)(z(\xi,t),t) = \partial_W^{k-1}W (z(\xi,t),t),
\end{align}
where $\partial_W = W\cdot\nabla$.
Therefore, to study the boundary regularity of $\partial D(t)$,
it is sufficient to explore the regularity of $\partial_W^{k-1}W(\cdot,t)$.
Indeed, from \eqref{eq:contour_eq}, Lemma \ref{lem:u-est-general-data} and \eqref{eq:striated_regularity}, we have
\begin{align}\label{eq:z0bound}
	\lVert z\rVert_{L^{\infty}}\leq \lVert z_0\rVert_{L^{\infty}}+\int_0^{t}\lVert u(\cdot,s)\rVert_{L^{\infty}}\dd s\leq \lVert z_0\rVert_{L^{\infty}}+C\lVert \omega_0\rVert_{L^1\cap L^{\infty}}t, 
\end{align}
and for $k\geq 1$, 
\begin{align}\label{eq:zkbound}
	\|\partial_{\xi}^{k}z(\cdot,t)\|_{L^{\infty}} \leq \|\partial_W^{k-1}W(\cdot,t)\|_{L^{\infty}},
\end{align}
and
\begin{align}\label{eq:zHolder}
	\|\partial_{\xi}^{k}z(\cdot,t)\|_{\dot{C}^\mu} \leq \|\partial_W^{k-1}W(\cdot,t)\|_{\dot{C}^\mu}\|\partial_\xi z(\cdot,t)\|_{L^\infty}^\mu = \|\partial_W^{k-1}W(\cdot,t)\|_{\dot{C}^\mu}\|W(\cdot,t)\|_{L^\infty}^\mu. 
\end{align}

According to \eqref{eq:level-set-eq} and the fact that $[\partial_W,\partial_t+u\cdot \nabla]=0$, we infer that for any $k\in \mathbb{N}^\star$,
\begin{align}\label{eq:parWk-1W}
  (\partial_t+u\cdot \nabla)\big(\partial_W^{k-1}W\big)  = \partial_W^k u,\quad \partial_W^{k-1} W|_{t=0} =\partial_{W_0}^{k-1} W_0.
\end{align}
Hence, the vortex patch problem can be studied through \eqref{eq:level-set-eq} 
and \eqref{eq:parWk-1W}.
 
Before embarking on our proof, we introduce the following notation and function spaces:
\begin{align*}
  |\nabla \varphi(\cdot)|_{\inf} & \triangleq \inf_{x\in \partial D}|\nabla \varphi(x)|,	\\
  \|\nabla \varphi(\cdot)\|_{\dot{C}^\mu(\RR^2)}
  &\triangleq\sup_{x\ne \tilde{x}}\frac{|\nabla \varphi(x)-\nabla \varphi(\tilde{x})|}{|x-\tilde{x}|^{\mu}},
  \quad \mu\in (0,1),\\
  \|\nabla \varphi(\cdot)\|_{C^\mu(\RR^2)}
  &\triangleq\|\nabla \varphi(\cdot)\|_{L^\infty(\RR^2)}+\|\nabla \varphi(\cdot)\|_{\dot{C}^\mu(\RR^2)},
  \quad \mu\in (0,1),
\end{align*}
and
\begin{align}\label{def:Delta-gamma}
  \mathbf{\Delta}_\mu \triangleq \frac{\|\nabla \varphi\|_{\dot{C}^\mu(\RR^2)}}{|\nabla \varphi|_{\inf}} + 1.
\end{align}
The $m$-adapted H\"older space $C^\mu_{\mathrm{m}}(\mathbb{R}^2)$ ($\mu\in (0,1]$) is composed of functions $f$ such that
\begin{align}\label{eq:Csig-m}
  \lVert f\rVert_{C^\mu_{\mathrm{m}}} = \lVert f\rVert_{C^\mu_{\mathrm{m}}(\RR^2)} \triangleq \lVert f\rVert_{L^\infty(\RR^2)}
  + \sup_{|x- y|>0} \frac{|f(x)-f(y)|}{|x-y|^\mu \big(m(|x-y|^{-1})+1\big)}.
\end{align}

\subsection{\texorpdfstring{Near-Lipschitz estimate of $u$}{Near-Lipschitz estimate of u}}

The following lemma shows that the continuity of
$u$ is stable under a small perturbation produced by the slowly increasing multiplier $m$. 
\begin{lemma}\label{lem:continuity-u}
Suppose that $m(r)$ satisfies the assumptions \eqref{H1}-\eqref{H2a} and $D\subseteq \mathbb{R}^2$
is a bounded domain with $C^{1,\mu}$ $(0<\mu<1)$-boundary. 
Let $\varphi$ be the level-set characterization of domain $D$ where \eqref{eq:levelsetrep}-\eqref{cond:varphi-outer-domain} hold.
Then for any $\gamma\in (0,\mu]$, the velocity $u=\nabla^\perp (-\Delta)^{-1}m(\Lambda)(\mathbf{1}_D)$ satisfies
\begin{align}\label{eq:u-patch-imp-est}
  \lVert u\rVert_{C^1_{\mathrm{m}}(\mathbb{R}^2)}
  \leq C \big(1 + \log \mathbf{\Delta}_\gamma \big),
\end{align}
where the constant $C>0$ depends only on $\gamma$ and the domain area $|D|$.
\end{lemma}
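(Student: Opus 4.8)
The plan is to follow the Bertozzi--Constantin philosophy, but carried out entirely in physical space and adapted to the multiplier $m$. The bound $\|u\|_{L^\infty}\lesssim_{|D|}1$ is immediate from $\omega=\mathbf 1_D\in L^1\cap L^\infty$ and Lemma~\ref{lem:u-est-general-data} (or directly from $u=K*\mathbf 1_D$ and \eqref{eq:Kbound} with $l=0$), so the content of \eqref{eq:u-patch-imp-est} is the near-Lipschitz modulus estimate
\[
 |u(x)-u(\tilde x)|\ \lesssim\ \bigl(1+\log\mathbf{\Delta}_\gamma\bigr)\,\rho\,\bigl(m(\rho^{-1})+1\bigr),\qquad \rho:=|x-\tilde x|\le\rho_0,
\]
for a small universal $\rho_0$ (for $\rho\ge\rho_0$ the quotient in \eqref{eq:Csig-m} with $\mu=1$ is controlled by $\|u\|_{L^\infty}$). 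I would first make a reduction that is uniform in the position of $x$ relative to $\partial D$. Writing $u(x)-u(\tilde x)=\int_D\bigl(K(x-y)-K(\tilde x-y)\bigr)\,\dd y$ and splitting $D=(D\cap B(x,2\rho))\cup(D\setminus B(x,2\rho))$: on the small ball one has $|x-y|\le2\rho$, $|\tilde x-y|\le3\rho$, so by $|K(z)|\lesssim|z|^{-1}m(|z|^{-1})$ (for $|z|<\bar{c}_0$, via \eqref{eq:G-prop1}) together with $\int_0^{c\rho}m(s^{-1})\,\dd s\lesssim\rho\,m(\rho^{-1})$ (the estimate \eqref{eq:continuity-cond}), this piece is $\lesssim\rho\,m(\rho^{-1})$, with no logarithm; on the far region, since $|x-y|\ge2\rho$ keeps $K$ away from its singularity along the segment $\zeta_t:=\tilde x+t(x-\tilde x)$, that piece equals $\int_0^1(x-\tilde x)\cdot\mathbf V_{\zeta_t}\,\dd t$ with $\mathbf V_\zeta:=\int_{D\setminus B(x,2\rho)}\nabla K(\zeta-y)\,\dd y$ an \emph{absolutely convergent} integral. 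Hence it suffices to prove $|\mathbf V_\zeta|\lesssim(1+\log\mathbf{\Delta}_\gamma)(m(\rho^{-1})+1)$ for every $\zeta$ with $|\zeta-x|\le\rho$.

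Next, decompose $\nabla K=(\nabla K)_{\mathrm S}+(\nabla K)_{\mathrm A}$ as in \eqref{def:widetid-K}. The antisymmetric part is a fixed constant matrix times $\tfrac{G'(|\zeta-y|)}{|\zeta-y|}$, which is \emph{radial}; integrating in polar coordinates about $\zeta$ and then along each ray, the angular contribution becomes $\sum_j\bigl(G(b_j)-G(a_j)\bigr)$ over the crossing intervals of that ray with $D\setminus B(x,2\rho)$. Because the removed ball forces the innermost endpoint $a_1\gtrsim\rho$, and because $G$ has total variation on $(\rho,+\infty)$ bounded by $\lesssim m(\rho^{-1})+1$ (it decays like $m(\cdot^{-1})$ near the origin, by \eqref{eq:G-prop1}--\eqref{eq:G-prop1b}, and is $O(1)$ of bounded variation at infinity, by \eqref{eq:G-prop2}, with the range $|\zeta-y|\ge\bar{c}_0$ also being harmless by a crude rearrangement bound $\lesssim_{|D|}1$), this sum is $\lesssim m(\rho^{-1})+1$ for each direction; integrating over directions gives $|\mathbf V^{\mathrm A}_\zeta|\lesssim m(\rho^{-1})+1$, with no logarithm.

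The symmetric part is the crux. Writing $\zeta-y=r\,e^{i\theta}$, one has $(\nabla K)_{\mathrm S}(\zeta-y)=g(r)\,\Sigma(\theta)$ with $|g(r)|\lesssim r^{-2}(m(r^{-1})+1)$ and $\Sigma$ a fixed traceless $\pi$-periodic matrix whose integral over \emph{any} half-circle vanishes. Choose the threshold $r_\ast:=\min\{\bar{c}_0,\ c\,\mathbf{\Delta}_\gamma^{-1/\gamma}\}$ and split the radial range $[2\rho,\bar{c}_0]$ into dyadic shells $S_k$ (the range $|\zeta-y|\ge\bar{c}_0$ again contributing $\lesssim_{|D|}1$; and full shells lying inside $D$ contribute $0$ by angular cancellation, so only scales $r_k\gtrsim\max(\rho,d(\zeta,\partial D))$ matter, which is what makes the argument uniform in $\zeta$). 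On shells of size $r_k\ge r_\ast$ one uses only $\int_{S_k\cap D}|(\nabla K)_{\mathrm S}|\lesssim m(r_k^{-1})+1$; there are $\lesssim 1+\log(1/r_\ast)\lesssim 1+\log\mathbf{\Delta}_\gamma$ of them, and in the complementary regime $r_\ast<\rho$ one automatically has $\log(1/\rho)\lesssim\log\mathbf{\Delta}_\gamma$, so these shells together contribute $\lesssim(1+\log\mathbf{\Delta}_\gamma)(m(\rho^{-1})+1)$. On the small shells $r_k<r_\ast$ one replaces $D$, near the point $\bar\zeta\in\partial D$ closest to $\zeta$, by the tangent half-plane $H_{\bar\zeta}$: over $S_k\cap H_{\bar\zeta}$ the $\theta$-integral of $\Sigma$ differs from that over an exact half-circle only by an arc of length $O(\rho/r_k)$ (since $\zeta$ lies within $O(\rho)$ of $\partial H_{\bar\zeta}$), so the shell contributes the summable quantity $\rho\,r_k^{-1}(m(r_k^{-1})+1)$; and over $S_k\cap(D\triangle H_{\bar\zeta})$ one uses that, by $\partial D\in C^{1,\mu}$, this symmetric difference lies in an $O(\mathbf{\Delta}_\gamma\,r_k^{1+\gamma})$-neighbourhood of the tangent line — here $\mathbf{\Delta}_\gamma$, as in \eqref{def:Delta-gamma}, enters through the Hölder-$\gamma$ seminorm of the unit tangent $W/|W|$ along $\partial D$ — so the shell contributes $\lesssim\mathbf{\Delta}_\gamma\,r_k^{\gamma}(m(r_k^{-1})+1)$, whose dyadic sum is $\lesssim\mathbf{\Delta}_\gamma\,r_\ast^{\gamma}(m(\rho^{-1})+1)\lesssim m(\rho^{-1})+1$ by the very choice of $r_\ast$ together with the quasi-monotonicity \eqref{eq:m-prop4} of $r\mapsto r^\gamma m(r^{-1})$. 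Collecting all contributions gives the desired bound on $\mathbf V_\zeta$, and hence \eqref{eq:u-patch-imp-est}.

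The main obstacle is precisely this small-scale half-plane comparison. A crude size estimate on the far part produces the divergent factor $\int_\rho^{\bar c_0}\frac{m(s^{-1})+1}{s}\,\dd s\approx(\log\rho^{-1})(m(\rho^{-1})+1)$, and it is only the oddness of the matrix kernel — realized, after replacing $D$ by its tangent half-plane, through the vanishing of $\int\Sigma$ over a half-circle — that cancels it; the price of the replacement is the comparison error $\mathbf{\Delta}_\gamma r_\ast^{\gamma}$, and balancing it against the number $\log(1/r_\ast)$ of "uncancelled" shells forces $r_\ast\sim\mathbf{\Delta}_\gamma^{-1/\gamma}$ and the final $\log\mathbf{\Delta}_\gamma$ loss. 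A secondary technical point, which distinguishes the present setting from the $2$D Euler case, is that $(\nabla K)_{\mathrm A}$ fails to be locally integrable (its singularity being of order $|z|^{-2}m(|z|^{-1})$), so one must keep all integrals away from the diagonal throughout — automatic here because the cut-off $B(x,2\rho)$ is never removed — rather than attempt to estimate $\nabla u$ pointwise near $\partial D$.
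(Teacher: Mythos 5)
Your overall architecture is the same as the paper's: the near part is handled exactly as in the paper via \eqref{eq:continuity-cond}; the far part is reduced to a uniform bound on $\int_{D}\nabla K(\zeta-y)\,\dd y$ away from a ball of radius $\sim\rho$ (the paper's claim \eqref{claim:refine_ineq}); above the scale $\delta_\gamma=\mathbf{\Delta}_\gamma^{-1/\gamma}$ you use the crude bound, which produces the $\log\mathbf{\Delta}_\gamma$; and below that scale your tangent--half-plane comparison, with error sets of angular size $O(d_\zeta/r_k)+O(\mathbf{\Delta}_\gamma r_k^{\gamma})$, is precisely the Bertozzi--Constantin geometric lemma \eqref{lem:Geometric-lemma} in disguise, applied to the symmetric kernel $(\nabla K)_{\mathrm S}$ whose angular profile $\sigma$ in \eqref{def:sigma} has zero mean on circles and half-circles. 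So the $L^\infty$ bound, the reduction, and the symmetric part are sound and essentially coincide with the paper's proof (the paper organizes the radial integral continuously rather than dyadically, which is immaterial).

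The genuine gap is in your treatment of the antisymmetric part $(\nabla K)_{\mathrm A}$. You bound each ray's contribution $\sum_j\bigl(G(b_j)-G(a_j)\bigr)$ by the total variation of $G$ on $(\rho,+\infty)$ and assert this is $\lesssim m(\rho^{-1})+1$. That claim does not follow from \eqref{eq:G-prop1}--\eqref{eq:G-prop1b}--\eqref{eq:G-prop2}: those only give $G(s)\approx m(s^{-1})$ and $|G'(s)|\lesssim m(s^{-1})/s$, and $G$ is nowhere shown to be monotone, so the only available bound on the variation is $\int_\rho^{\bar c_0}|G'(s)|\,\dd s\lesssim m(\rho^{-1})\log\rho^{-1}$ --- exactly the logarithmic loss the lemma is designed to remove, and unacceptable when $\rho\ll\delta_\gamma$. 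Moreover, taking absolute values term by term in $\sum_j|G(b_j)-G(a_j)|$ discards the only cancellation at your disposal: a ray from $\zeta$ can cross a $C^{1,\mu}$ boundary many times, so without a variation bound on $G$ the per-ray sum is not controlled. The fix is the paper's argument (see the display following \eqref{eq:crucial-est}): integrate in the angular variable first, so the contribution at radius $r$ is $\tfrac12 G'(r)\,\mathcal{H}^1(\mathcal{S}_r(\zeta))$ with $\mathcal{S}_r$, $\mathcal{R}_r$ as in \eqref{def:SSigmaR}; since $\mathcal{H}^1(\mathcal{S}_r)=2\pi$ for $r<d_\zeta$ and $\mathcal{H}^1(\mathcal{S}_r)=\pi+O\big(\mathcal{H}^1(\mathcal{R}_r)\big)$ for $r\ge d_\zeta$, the constant-coefficient pieces integrate (telescope) to differences of $G$ at the endpoints, which \eqref{eq:G-prop1} bounds by $m(\rho^{-1})+1$, while the remainder $\int |G'(r)|\,\mathcal{H}^1(\mathcal{R}_r)\,\dd r$ is made integrable by the geometric lemma, with no appeal to bounded variation of $G$. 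With the antisymmetric part rewritten this way, your proof closes and is equivalent to the paper's.
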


Compared with Lemma~\ref{lem:u-est-general-data}, the core idea of Lemma~\ref{lem:continuity-u} is that the cancellation structure in a regular patch enhances the regularity of the velocity field by logarithmic order.

We emphasize that Lemma~\ref{lem:continuity-u} is formulated in the physical space. It serves as an alternative representation of the following estimate, which is expressed in the frequency space.
\begin{lemma}[{\cite[p.~985]{Elgindi14}}]\label{lem:nabla-u}
Let $S_j=\chi(2^{-j} \Lambda)$, $j\in \mathbb{N}$ be the low-frequency cut-off operator in the Littlewood-Paley theory.
Under the assumptions of Lemma \ref{lem:continuity-u}, and for every $\gamma\in (0,\mu]$, the following estimate holds:
\begin{align*}
  \lVert S_j\nabla u\rVert_{L^{\infty}(\RR^2)}\leq C m(2^j) \big(1 + \log \mathbf{\Delta}_\gamma \big),
\end{align*}
where
$C>0$ depends only on $\gamma$ and $|D|$ (the area of domain $D$).
\end{lemma}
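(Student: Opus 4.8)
The plan is to deduce this frequency-localized bound directly from the physical-space near-Lipschitz estimate of Lemma~\ref{lem:continuity-u}, so that no Littlewood--Paley patch-cancellation argument is needed at this point. Lemma~\ref{lem:continuity-u} already gives that $u=\nabla^\perp(-\Delta)^{-1}m(\Lambda)(\mathbf{1}_D)$ lies in $C^1_{\mathrm{m}}(\RR^2)$ with $\lVert u\rVert_{C^1_{\mathrm{m}}}\le C(1+\log\mathbf{\Delta}_\gamma)$; in particular
\begin{align*}
  |u(x)-u(y)|\le C\big(1+\log\mathbf{\Delta}_\gamma\big)\,|x-y|\,\big(m(|x-y|^{-1})+1\big),\qquad\forall\,x,y\in\RR^2.
\end{align*}
Writing $S_j=K_j*\,\cdot\,$ with $K_j(z)=2^{2j}\check{\chi}(2^jz)$ and $\check{\chi}=\mathcal{F}^{-1}\chi\in\mathcal{S}(\RR^2)$, one has $\int_{\RR^2}\nabla K_j\,\dd z=0$ (the integral of a derivative of a Schwartz function vanishes), hence
\begin{align*}
  S_j\nabla u(x)=\nabla K_j*u(x)=\int_{\RR^2}\nabla K_j(x-y)\big(u(y)-u(x)\big)\dd y,
\end{align*}
and, inserting the modulus of continuity above,
\begin{align*}
  \lVert S_j\nabla u\rVert_{L^\infty(\RR^2)}\le C\big(1+\log\mathbf{\Delta}_\gamma\big)\int_{\RR^2}|\nabla K_j(z)|\,|z|\,\big(m(|z|^{-1})+1\big)\dd z.
\end{align*}

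The core step is then to show that the weighted kernel integral above is bounded by $C\,m(2^j)$ uniformly in $j\in\mathbb{N}$. Rescaling by $w=2^jz$ and using $\nabla K_j(z)=2^{3j}(\nabla\check{\chi})(2^jz)$ turns it into $\int_{\RR^2}|(\nabla\check{\chi})(w)|\,|w|\,(m(2^j|w|^{-1})+1)\,\dd w$, which I would split over the three ranges $\{|w|\ge1\}$, $\{2^{-j}\le|w|<1\}$, and $\{|w|<2^{-j}\}$. On the first, $m(2^j|w|^{-1})\le m(2^j)$ by monotonicity; on the second, $2^j|w|^{-1}\le(2^j)^2$, so $m(2^j|w|^{-1})\le C\,m(2^j)$ by the dilation bound \eqref{eq:m-prop5}; on the third, $2^j|w|^{-1}<|w|^{-2}$, so $m(2^j|w|^{-1})\le C\,m(|w|^{-1})$ again by \eqref{eq:m-prop5}, and since $\rho\mapsto\rho\,m(\rho^{-1})$ is bounded on $(0,1]$ (consequence of \eqref{eq:m-prop2}) while $|B(0,2^{-j})|\le\pi$, this region only contributes a constant. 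Using the Schwartz decay of $\check{\chi}$, so that $|w|\,\nabla\check{\chi}\in L^1$ and $\nabla\check{\chi}\in L^\infty$, the three pieces sum to $\le C(m(2^j)+1)$, and finally $m(2^j)+1\le C\,m(2^j)$ because $m(2^j)\ge m(1)>0$ for $j\in\mathbb{N}$ (monotonicity of $m$ from \eqref{H1}). Combining this with the last display yields $\lVert S_j\nabla u\rVert_{L^\infty}\le C\,m(2^j)(1+\log\mathbf{\Delta}_\gamma)$ with $C$ depending only on $\gamma$, $|D|$, and $m$, as claimed.

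Given Lemma~\ref{lem:continuity-u}, there is no serious obstacle here: the only point demanding care is the bookkeeping of $m(2^j|w|^{-1})$ across the three ranges of $|w|$, which is entirely controlled by \eqref{eq:m-prop5} and the rapid decay of $\check{\chi}$. Should one instead want a proof bypassing Lemma~\ref{lem:continuity-u} --- closer in spirit to the frequency-space argument of \cite{Elgindi14} --- the real work would shift to splitting the Calder\'on--Zygmund integral \eqref{def:Sym-nab-u} for the symmetric part $S_j\mathbf{S}(\nabla u)$ at the geometric scale $\rho_\star\sim\mathbf{\Delta}_\gamma^{-1/\gamma}$: for scales below $\rho_\star$ the patch is within a small angle of a half-plane through the base point and the mean-zero matrix $\sigma$ in \eqref{def:sigma} produces a cancellation of size $O(m(2^j))$, whereas for scales between $\rho_\star$ and $1$ the kernel bound \eqref{eq:Kbound} combined with $\int_{\rho_\star}^{1}\tfrac{\dd\rho}{\rho}\sim\log\mathbf{\Delta}_\gamma$ generates the logarithmic factor; the antisymmetric part of $S_j\nabla u$ equals $S_j m(\Lambda)\mathbf{1}_D$ and is $O(m(2^j))$ by the Mikhlin--H\"ormander bound built into \eqref{H1}. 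That is the genuinely harder route, and it is the obstacle one must confront in the absence of the physical-space estimate.
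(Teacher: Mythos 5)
Your proposal is correct and follows essentially the same route as the paper: the paper does not reprove Lemma~\ref{lem:nabla-u} from scratch but obtains it from the physical-space estimate of Lemma~\ref{lem:continuity-u}, citing the standard equivalence in \cite[Sec.~2.11]{BCD11}, and your convolution argument (mean-zero $\nabla K_j$, rescaling, and the three-range bookkeeping of $m(2^j|w|^{-1})$ via \eqref{eq:m-prop5} and \eqref{eq:m-prop2}) is just a correct, self-contained verification of the implication that citation covers. The only cosmetic caveat is that your constant also depends on $m$ (through the constants in \eqref{eq:m-prop5}), which is harmless since $m$ is fixed throughout.
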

The equivalence between Lemma~\ref{lem:continuity-u} and Lemma~\ref{lem:nabla-u} can be guaranteed by \cite[Sec. 2.11]{BCD11}.

\begin{proof}[Proof of Lemma \ref{lem:continuity-u}]
Taking advantage of Lemma \ref{lem:u-est-general-data}, we directly get $\lVert u\rVert_{L^{\infty}}\leq C$.
It remains to control the modulus-of-continuity estimate of $u$ in \eqref{eq:Csig-m}.
	
From the representation \eqref{eq:u-exp1}, for any $x,h\in \RR^2$ such that $|h|\leq \frac{\bar{c}_0}{3}$ 
(recalling that $\bar{c}_0$ is the constant appearing in Lemma \ref{lem:G-prop}), we write
\begin{align*}
  u(x)-u(x+h ) & = \int_{D\cap\{|x-y|\leq 2|h|\}}K(x-y)\dd y
  - \int_{D\cap\{|x-y|\leq 2|h|\}}K(x+h-y)\dd y \\
  & \quad + \int_{D\cap\{|x-y|\geq 2|h|\}} \big( K(x-y)-K(x+h-y) \big) \dd y \\
  & \triangleq \mathfrak{N}_1 + \mathfrak{N}_2 + \mathfrak{N}_3.
\end{align*}

For $\mathfrak{N}_1$, arguing as the estimation for $I$ in the proof of Lemma~\ref{lem:u-est-general-data}, we obtain
\begin{align*}
  |\mathfrak{N}_1|\leq C \int_0^{2|h|}G(\rho)\dd \rho\leq C\int_0^{2|h|}m(\rho^{-1})\dd \rho\leq C|h|m(|h|^{-1}).
\end{align*}
Similarly, we bound $\mathfrak{N}_2$ by
\begin{align*}
  |\mathfrak{N}_2|\leq C \int_0^{3|h|}|G(\rho)|\dd \rho\leq C|h|m(|h|^{-1}),
\end{align*}
utilizing the fact $|x+h-y|\leq |x-y|+|h|$.

For $\mathfrak{N}_3$, by mean value theorem, it follows that
\begin{align*}
  |\mathfrak{N}_3| & =\Big|\int_0^1\int_{D\cap\{|x-y|\geq 2|h|\}} h\cdot \nabla K(x+\theta h-y)\dd y \dd \theta\Big|\\
  &\leq \int_0^1\Big|\int_{D\cap\{|x+\theta h-y|\geq 2|h|\}}h\cdot \nabla K(x+\theta h-y)\dd y \Big|\dd \theta\\
  & \quad +\int_0^1\int_{\{|x-y|\geq 2|h|\}\triangle\{|x+\theta h-y|\geq 2|h|\}}|h\cdot \nabla K(x+\theta h-y)|\dd y\dd \theta\\
  & \triangleq \mathfrak{N}_{31}+\mathfrak{N}_{32},
\end{align*}
where the notation $A\triangle B \triangleq (A\backslash B) \cup (B\backslash A)$.
For $\mathfrak{N}_{32}$, observe that
\begin{align*}
\{|x-y|\geq 2|h|\}\triangle\{|x+\theta h-y|\geq 2|h|\}\subset \{|h|\leq |x+\theta h-y|\leq 3|h|\}.	
\end{align*}
In view of \eqref{eq:gradKbound}, we infer that
\begin{align*}
  \mathfrak{N}_{32} \leq& |h|\int_0^1\int_{\{|h|\leq |x+\theta h-y|\leq 3|h|\}} |\nabla K(x+\theta h-y)|\dd y \dd \theta\\
  \leq& C\int_0^1\int_{|h|}^{3|h|}\frac{m(\rho^{-1})}{\rho^2}\rho\dd\rho\dd \theta\leq C|h|m(|h|^{-1}).
\end{align*}
Note that in the second inequality, we drop the second term in \eqref{eq:gradKbound}, since $|x+\theta h-y|\leq 3|h|\leq \bar{c}_0$.

For the remaining term $\mathfrak{N}_{31}$, we state the following claim:
\begin{align}\label{claim:refine_ineq}
  \Big|\int_{D\cap\{|x-y|\geq 2|h|\}}\nabla K(x-y)\dd y\Big|
  \leq C \big(1+\log\mathbf{\Delta}_\gamma \big) \big(m(|h|^{-1})+1\big),\quad\forall \gamma\in (0,1),
\end{align}
where $C>0$ depends only on $\gamma$. Applying \eqref{claim:refine_ineq} and replacing $x$ by
$x+\theta h$, we obtain
\begin{align*}
  \mathfrak{N}_{31}\leq C|h|\cdot\big(1+\log\mathbf{\Delta}_\gamma \big) \big(m(|h|^{-1})+1\big).
\end{align*}

Collecting all the estimates, we obtain the desired bound \eqref{eq:u-patch-imp-est}.
For the case $|h| > \frac{\bar{c}_0}{3}$, we argue analogously to \eqref{eq:largeeps} and derive the same bound.
\medskip

We are left to show the claim \eqref{claim:refine_ineq}. Applying \eqref{eq:gradKbound} and the fact that $m(r)$ is non-decreasing, we obtain
\begin{align}\label{eq:far-away-part}
  & \Big|\int_{D\cap\{|x-y|\geq 2|h|\}}\nabla K(x-y)\dd y\Big|
  \leq C\int_{D\cap\{|x-y|\geq 2|h|\}}\frac{m(|x-y|^{-1})+1}{|x-y|^2}\,\dd y \nonumber\\
  & \leq Cm(|h|^{-1})\int_{D\cap\{|x-y|\geq 2|h|\}}\frac{\dd y}{|x-y|^2} + Cm((\tfrac{\bar{c}_0}{3})^{-1})\int_{D\cap\{|x-y|\geq 2|h|\}} \frac{\dd y}{|x-y|^2}\nonumber \\
  & \leq C m(|h|^{-1}) \int_{2|h|}^L \frac{1}{\rho^2}\, \rho\dd\rho 
  \leq C m(|h|^{-1}) \big(1+\log |h|^{-1}\big),
\end{align}
where $L\triangleq \sqrt{|D|/\pi}$ and in the third line we have used the rearrangement inequality.

If $2|h|\geq \delta_\gamma$, with
\begin{align*}
  \delta_\gamma \triangleq\mathbf{\Delta}_{\gamma}^{- \frac{1}{\gamma}}
  = \Big(\tfrac{\|\nabla \varphi\|_{\dot{C}^\gamma}}{|\nabla \varphi|_{\inf}} + 1 \Big)^{-\frac{1}{\gamma}},
\end{align*}
then \eqref{claim:refine_ineq} is a direct consequence of \eqref{eq:far-away-part}.

If $2|h|<\delta_\gamma$, we exploit the symmetry properties of $K$ and the patch structure to establish cancellations, analogous to \cite[Geometric lemma]{BC93}. Denote $d_x$ is the distance between $x$ and $\partial D$, namely 
\begin{align}\label{eq:dx}
d_x \triangleq \inf_{z\in \partial D} |x-z|.
\end{align}
For every $x\in \mathbb{R}^2$, choosing $\tilde{x}\in \partial D$ such that $d_x =|x - \tilde{x}|$,
we define the sets
\begin{align}\label{def:SSigmaR}
  \mathcal{S}_\rho(x) \triangleq \{z||z|=1, x+\rho z\in D\},\;\;
  \varSigma(x)\triangleq \{z||z|=1,\,
  \nabla_x\varphi(\tilde{x})\cdot z\geq 0\},\;\; \mathcal{R}_\rho(x)\triangleq \mathcal{S}_\rho(x) \triangle \varSigma(x).
\end{align}
Note that
\begin{align*}
 D\cap\{|x-y|\geq 2h\} = \{y=x+\rho z: \rho\geq 2h, \, z\in \mathcal{S}_\rho(x)\},	
\end{align*}
and $\Sigma(x)$ is a semicircle where symmetry can be employed. The geometric lemma in \cite{BC93} characterizes their difference $\mathcal{R}_\rho(x)$:
for all $\rho\geq d_x$, $\gamma\in (0,1)$ and for each $x$ such that $d_x\leq \delta_{\gamma}=\mathbf{\Delta}_{\gamma}^{-1/\gamma}$,
\begin{align}\label{lem:Geometric-lemma}
  \mathcal{H}^1\big(\mathcal{R}_\rho(x)\big)
  \leq 2\pi \bigg((1+2^\gamma) \frac{d_{x}}{\rho}+2^\gamma \Big(\frac{\rho}{\delta_{\gamma}}\Big)^\gamma\bigg),
\end{align}
where $\mathcal{H}^1$ is the Hausdorff measure on the unit circle.

From \eqref{def:widetid-K}, we decompose $\nabla K$ into the symmetric part $(\nabla K)_{\mathrm{S}}$ and the antisymmetric part $(\nabla K)_{\mathrm{A}}$.
For the symmetric part, we observe that $\sigma(z)$ has zero mean in unit circle or semicircle:
\begin{align*}
\rho<d_x:\quad &\int_{\mathcal{S}_\rho(x)} \sigma(z) \dd z=0,\\
\rho\geq d_x:\quad &\Big|\int_{\mathcal{S}_\rho(x)} \sigma(z) \dd z\Big|\leq \Big|\int_{\varSigma(x)} \sigma(z) \dd z\Big|+\int_{\mathcal{R}_\rho(x)} |\sigma(z)| \dd z \leq C\mathcal{H}^1(\mathcal{R}_\rho(x)).
\end{align*}
Utilizing the above cancellations, and applying \eqref{eq:gradKbound} and \eqref{lem:Geometric-lemma}, we get
\begin{align}\label{eq:crucial-est}
&  \Big|\int_{D\cap\{|x-y|\geq 2|h|\}}(\nabla K)_{\mathrm{S}}(x-y) \dd y\Big|
= \Big|\int_{2|h|}^{\delta_\gamma}\frac{1}{2} \frac{2G(\rho)-\rho G'(\rho)}{\rho^2}\int_{\mathcal{S}_\rho(x)} \sigma(z) \dd z\, \rho\dd\rho	\Big|\nonumber\\
& \leq C\int_{\max\{2|h|,d_x\}}^{\delta_{\gamma}}\frac{m(\rho^{-1})+1}{\rho}\,\mathcal{H}^1(\mathcal{R}_{\rho}(x))\dd \rho 
  \leq C\big(m(|h|^{-1})+1\big)\int_{d_x}^{\delta_{\gamma}}\frac{1}{\rho}\Big(\frac{d_x}{\rho}+\big(\frac{\rho}{\delta_\gamma}\big)^\gamma\Big)\dd \rho \nonumber \\
  & \leq C\big(m(|h|^{-1})+1\big).
\end{align}

For the antisymmetric part, we have
\begin{align*}
\int_{D\cap\{|x-y|\geq 2|h|\}}(\nabla K)_{\mathrm{A}}(x-y) \dd y
=\int_{2|h|}^{\delta_\gamma}\frac{G'(\rho)}{2\rho} \mathcal{H}^1(\mathcal{S}_\rho(x))\, \rho\dd\rho   \begin{pmatrix}
	0 &	1\\
	-1 & 0
  \end{pmatrix}.
\end{align*}
For $\mathcal{H}^1(\mathcal{S}_\rho(x))$, we have
\begin{align*}
\rho<d_x:\quad &\mathcal{H}^1(\mathcal{S}_\rho(x)) = 2\pi,\\
\rho\geq d_x:\quad &\mathcal{H}^1(\mathcal{S}_\rho(x))\leq \mathcal{H}^1(\varSigma(x))+\mathcal{H}^1(\mathcal{R}_\rho(x)) = \pi +\mathcal{H}^1(\mathcal{R}_\rho(x)).
\end{align*}
Therefore, by using \eqref{eq:G-prop1}, when $2|h|\geq d_x$, we deduce that
\begin{align*}
  &\Big|\int_{2|h|}^{\delta_\gamma}\frac{G'(\rho)}{2\rho} \mathcal{H}^1(\mathcal{S}_\rho(x))\, \rho\dd\rho\Big|
   \leq \frac\pi2\Big|\int_{2|h|}^{\delta_{\gamma}}G'(\rho) \dd \rho\Big|
  + \frac12\int_{2|h|}^{\delta_{\gamma}}|G'(\rho)| \mathcal{H}^1(\mathcal{R}_{\rho}(x)) \dd \rho\\
  & \leq \frac\pi2 |G(2|h|) - G(\delta_{\gamma})|
  + C\int_{2|h|}^{\delta_{\gamma}}\frac{m(\rho^{-1})+1}{\rho}\,\mathcal{H}^1(\mathcal{R}_{\rho}(x))\dd \rho  \leq C\big(m(|h|^{-1})+1\big),
\end{align*}
and when $2|h|<d_x$, we get
\begin{align*}
  &\Big|\int_{2|h|}^{\delta_\gamma}\frac{G'(\rho)}{2\rho} \mathcal{H}^1(\mathcal{S}_\rho(x))\, \rho\dd\rho\Big|
   \leq \Big|\pi\int_{2|h|}^{d_x}G'(\rho) \dd \rho+\frac\pi2\int_{d_x}^{\delta_{\gamma}}G'(\rho) \dd \rho\Big|
  + \frac12\int_{d_x}^{\delta_{\gamma}}|G'(\rho)| \mathcal{H}^1(\mathcal{R}_{\rho}(x)) \dd \rho\\
  & \leq \frac\pi2 |2G(2|h|) - G(d_x) - G(\delta_{\gamma})|
  + C\big(m(d_x^{-1})+1\big) \leq C\big(m(|h|^{-1})+1\big).
\end{align*}
This concludes the proof of \eqref{claim:refine_ineq}.
\end{proof}

\subsection{\texorpdfstring{The bound of $\partial_W u$}{The bound of partial W u}}

Inspired by \cite[Proposition 2]{BC93}, we first deduce an alternative integral expression for $\partial_W u$, 
which fully exploits the structure of a regular patch. 
\begin{lemma}\label{lem:tangential_velocity}
Let $D\subseteq \mathbb{R}^2$ be a bounded domain with $C^{1,\mu}$ ($0<\mu<1$) boundary and let
$\varphi$ be the level-set characterization of domain $D$. Let $W=\nabla^{\perp}\varphi\in C^{\mu}(\RR^2)$
be a vector field tangent to $\partial D$.  Assume that
$u=\nabla^{\perp}(-\Delta)^{-1}m(\Lambda)(\mathbf{1}_{D})$ and $m(r)$ satisfies the assumptions \eqref{H1}-\eqref{H2a}, 
then the following identity holds true for all $x\in \RR^2$, 
\begin{align}\label{eq:desingularization_tangential_vector}
  \partial_W u(x) = W \cdot \nabla u(x) = \mathrm{p.v.}\int_D \big(W(x) - W(y)\big)\cdot\nabla K(x-y)\dd y,
\end{align}
where $\nabla K(x)$ is defined in \eqref{def:widetid-K}.
\end{lemma}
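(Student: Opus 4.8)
The plan is to establish \eqref{eq:desingularization_tangential_vector} first for $x\in\RR^2\setminus\partial D$, where $\partial_W u(x)=W(x)\cdot\nabla u(x)$ is literally meaningful since $\nabla u(x)$ is bounded off $\partial D$, and then to recover the boundary case $x\in\partial D$ by continuity. As a preliminary step, note that since $W=\nabla^{\perp}\varphi\in C^{\mu}(\RR^2)$ and, by \eqref{eq:Kbound}, $|\nabla K(z)|\le C|z|^{-2}(m(|z|^{-1})+1)$, the integrand on the right-hand side satisfies $|(W(x)-W(y))\cdot\nabla K(x-y)|\le C\|W\|_{\dot{C}^{\mu}}\,|x-y|^{\mu-2}(m(|x-y|^{-1})+1)$, which by \eqref{eq:m-prop2} is locally integrable in $\RR^2$; hence the right-hand side is in fact an absolutely convergent integral, and a parallel estimate using the bound \eqref{eq:Kbound} for $\nabla^2 K$ shows that it depends continuously on $x\in\RR^2$. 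I denote this function by $V(x)$.

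For $x\notin\partial D$ I would use the single-layer representation of $\nabla u$: since $u=K\ast\mathbf{1}_D$ with $K\in L^{1}_{\mathrm{loc}}(\RR^2)$ (again by \eqref{eq:Kbound} and \eqref{eq:m-prop2}) and the distributional gradient of $\mathbf{1}_D$ equals $-\mathbf{n}\,\mathcal{H}^1|_{\partial D}$ with $\mathbf{n}$ the outward unit normal, one has $\partial_\ell u_i(x)=-\int_{\partial D}K_i(x-y)\,n_\ell(y)\,\dd S(y)$, and therefore $W(x)\cdot\nabla u(x)=-\int_{\partial D}K(x-y)\,(W(x)\cdot\mathbf{n}(y))\,\dd S(y)$. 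The crucial point is the tangency relation $W\cdot\mathbf{n}=0$ on $\partial D$ (which follows from $W=\nabla^{\perp}\varphi$ and $\mathbf{n}\parallel\nabla\varphi$), so that $W(x)\cdot\mathbf{n}(y)=(W(x)-W(y))\cdot\mathbf{n}(y)$ for $y\in\partial D$. It then remains to convert this boundary integral into $V(x)$ via the divergence theorem applied to the vector field $F(y)=K_i(x-y)(W(x)-W(y))$, for which $\mathrm{div}_y F=-(W(x)-W(y))\cdot\nabla K_i(x-y)$ thanks to $\nabla_y[K_i(x-y)]=-(\nabla K_i)(x-y)$ and $\mathrm{div}\,W=0$. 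Since $W$ is only $C^{\mu}$, I would mollify $\varphi$ to a smooth $\varphi_\delta$, run the divergence theorem with $W_\delta=\nabla^{\perp}\varphi_\delta$ on $D$ (if $x\notin\overline{D}$) or on $D\setminus B(x,\epsilon)$ (if $x\in D$), and then let $\delta\to0$ and afterwards $\epsilon\to0$; the inner surface term is harmless because $|\int_{\partial B(x,\epsilon)}K_i(x-y)(W(x)-W(y))\cdot\tfrac{y-x}{\epsilon}\,\dd S|\le C\sup_{|z|=\epsilon}|K(z)|\cdot\|W\|_{\dot{C}^{\mu}}\epsilon^{\mu}\cdot\epsilon\le C\epsilon^{\mu}(m(\epsilon^{-1})+1)\to0$, which is precisely where the Hölder exponent $\mu>0$ absorbs the additional logarithmic singularity carried by $m$. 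Chaining these identities gives $V(x)=W(x)\cdot\nabla u(x)$ for every $x\notin\partial D$.

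For $x\in\partial D$ the quantity $\partial_W u(x)$ is to be read as the tangential derivative of $u$ along $\partial D$, equivalently as $\lim_{x'\to x}W(x')\cdot\nabla u(x')$; this limit exists and equals $V(x)$ because $W(x')\cdot\nabla u(x')=V(x')$ off $\partial D$ and $V$ is continuous on $\RR^2$, and one checks that $V(x)$ coincides with $\tfrac{\dd}{\dd\xi}u(z_0(\xi))$ obtained by differentiating the contour representation \eqref{eq:velo-Lag}. This completes the identity for all $x\in\RR^2$.

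The main obstacle is the unboundedness of $\nabla u$ near $\partial D$: the Calder\'on--Zygmund correction to $\int_D\nabla K(x-y)\,\dd y$ is itself unbounded (ultimately because $G(0^{+})=+\infty$ when $m$ is unbounded), so one may never split $(W(x)-W(y))$ into its separate pieces inside the singular region, and the entire argument has to be carried out for the grouped quantity $(W(x)-W(y))\cdot\nabla K(x-y)$, whose absolute integrability rests on the interplay between $\mu$ and the slowly growing $m$. The two remaining technical points---justifying the single-layer identity for the mildly singular kernel $K$, and licensing the divergence theorem for the merely $C^{\mu}$ field $W$ (via mollification)---are routine once this is kept in mind; after that, the tangency $W\cdot\mathbf{n}=0$ on $\partial D$ provides the desingularizing cancellation, exactly as for the 2D Euler equation in \cite[Proposition~2]{BC93}, but now in a form compatible with the multiplier $m(\Lambda)$.
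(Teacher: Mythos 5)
Your treatment of $x\notin\partial D$ is correct and is a genuinely different route from the paper's: you pass through the single-layer formula $\partial_\ell u_i(x)=-\int_{\partial D}K_i(x-y)\,n_\ell(y)\,\dd S(y)$, insert the tangency $W\cdot\mathbf{n}=0$ on $\partial D$, and return to a volume integral by the divergence theorem (mollifying $W$ and excising $B(x,\epsilon)$, with the inner sphere term of size $\epsilon^{\mu}(m(\epsilon^{-1})+1)$), whereas the paper never works pointwise: it computes $\mathrm{div}(Wu_i)$ against a test function, truncates at $|x-y|\ge\varepsilon$, and cancels the sphere terms $I_\varepsilon-J_\varepsilon$ using $\|W\|_{\dot C^\mu}$ and $\varepsilon^\mu m(\varepsilon^{-1})\to0$, with the tangency entering in the $J_\varepsilon$ integration by parts. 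Two small points: continuity of your $V$ also needs the far-field bound on $\int_{D\cap\{|x-y|\ge 2|h|\}}\nabla K(x-y)\,\dd y$ as in \eqref{eq:far-away-part} (the crude bound $m(|h|^{-1})\log|h|^{-1}$ suffices), not only the $\nabla^2K$ estimate from \eqref{eq:Kbound}; and your claim that $V$ coincides with $\frac{\dd}{\dd\xi}u(z_0(\xi))$ obtained from \eqref{eq:velo-Lag} is not a routine check --- differentiating \eqref{eq:velo-Lag} produces another singular contour integral requiring the same desingularization, so it cannot be used as a shortcut.

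The genuine gap is at $x\in\partial D$, which is precisely where the lemma is used later: the forcing in \eqref{eq:transport-whole}, the Duhamel formula \eqref{eq:tranport-intexp} along characteristics that remain on $\partial D(t)$, and the lower bound on $|W|_{\inf}$ are all evaluated on the patch boundary. The object $\partial_W u=W\cdot\nabla u=\mathrm{div}(Wu)$ is a priori a distribution on $\RR^2$ (since $\nabla u$ is not a function across $\partial D$), and the content of \eqref{eq:desingularization_tangential_vector} is that this distribution is represented by the continuous function $V$ on all of $\RR^2$. Your argument identifies it with $V$ only on the open set $\RR^2\setminus\partial D$ and then, at boundary points, simply re-reads the left-hand side as $\lim_{x'\to x}W(x')\cdot\nabla u(x')$; this does not exclude an extra contribution of $\mathrm{div}(Wu)$ supported on $\partial D$ (a distribution of order at most one carried by a curve need not vanish, and pointwise values off the curve cannot detect it). Ruling this out is exactly the nontrivial step the paper performs by testing against arbitrary $\widetilde\chi\in C^\infty_c(\RR^2)$, i.e.\ across $\partial D$, with the tangency used inside the distributional pairing. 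The gap is closable within your framework --- for instance, run your divergence-theorem computation against a test function rather than pointwise, or insert a cutoff built from level sets of $\varphi$ in a collar of $\partial D$, so that $W\cdot\nabla\chi\equiv0$ and the collar flux term disappears --- but as written the identification of the distribution with $V$ across $\partial D$ is missing, and that is the part of the lemma the rest of the paper actually relies on.
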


\begin{proof}
Since $W=\nabla^{\perp}\varphi$ is divergence free, we have
\begin{align*}
  \partial_W u_i = \textrm{div }(W\, u_i),\quad i=1,2.
\end{align*}
In the following, we will compute $\mathrm{div}\,(W u_i)$ in the distributional sense.
For every $\widetilde\chi\in C^\infty_c(\RR^2)$, we have
\begin{align*}
  (\textrm{div} (W\, u_i),\widetilde{\chi}) = -(W u_i, \nabla\widetilde{\chi})
  = -\int_{\RR^2}u_i(x) W (x)\cdot \nabla \widetilde{\chi}(x)\dd x.
\end{align*}
In view of the expression formula \eqref{eq:u-exp1} and Fubini's theorem, it follows that
\begin{align*}
  \big(\textrm{div} (W \, u_i),\widetilde{\chi}\big)
  & = -\int_{\RR^2} \Big(\int_{D}K_i(x-y)\dd y\Big) \,W(x)\cdot \nabla \widetilde{\chi}(x)\dd x \\
  & = -\int_{D}\int_{\RR^2}K_i(x-y) W(x)\cdot \nabla \widetilde{\chi}(x)\dd x\dd y \\
  & = -\lim_{\varepsilon\to 0}\int_{D}\int_{|x-y|\geq \varepsilon}
  K_i(x-y) W(x)\cdot \nabla \widetilde{\chi}(x)\dd x\dd y.
\end{align*}
Through integration by parts, we find that
\begin{align*}
  \int_D \int_{|x-y|\geq \varepsilon}K_i(x-y) W(x)\cdot \nabla \widetilde{\chi}(x)\dd x \dd y
  = - \int_D \int_{|x-y|\geq \varepsilon} W(x)\cdot \nabla_xK_i(x-y) \widetilde{\chi}(x)\dd x \dd y - I_\varepsilon ,
\end{align*}
with
\begin{align*}
  I_\varepsilon \triangleq
  \int_D \int_{|x-y|=\varepsilon}K_i(x-y) W(x)\cdot\frac{(x-y)}{|x-y|}\widetilde{\chi}(x)\dd S(x) \dd y.
\end{align*}
Note that $\lim_{\varepsilon\to0}I_\varepsilon$ might be infinite. Define
\begin{align*}
  J_\varepsilon \triangleq
  \int_D \int_{|x-y|=\varepsilon}K_i(x-y) W(y)\cdot\frac{(x-y)}{|x-y|}\widetilde{\chi}(x)\dd S(x) \dd y,
\end{align*}
and we can take the limit of their difference $R_\varepsilon \triangleq I_\varepsilon-J_\varepsilon$. Indeed, we apply \eqref{eq:G-prop1} and \eqref{eq:m-prop2} to obtain
\begin{align*}
  \lim_{\varepsilon\to0}|R_\varepsilon| 
  & = \lim_{\varepsilon\to0}\Big|\int_D \int_{|x-y|=\varepsilon}K_i(x-y) (W(x) - W(y))\cdot \frac{(x-y)}{|x-y|}\widetilde{\chi}(x)\dd S(x) \dd y\Big|\\
  & \leq \lim_{\varepsilon\to0}\int_D \Big|\frac{G(\varepsilon)}{\varepsilon}\cdot\|W\|_{\dot C^\mu} \varepsilon^\mu\cdot 1\cdot\lVert \widetilde{\chi}\rVert_{L^\infty} \cdot 2\pi\varepsilon\Big| \dd y 
  \leq C\lim_{\varepsilon\to 0} \varepsilon^\mu m(\varepsilon^{-1})= 0.
\end{align*}
For $J_\varepsilon$, using Fubini's theorem and integration by parts, we infer that
\begin{align}\label{eq:Jepsilon}
  J_\varepsilon & = \int_{\RR^2}\int_{D\cap\{|x-y|=\varepsilon\}} K_i(x-y) W(y)\cdot \frac{(x-y)}{|x-y|}\dd S(y)\widetilde{\chi}(x)\dd x\nonumber\\
  & = \int_{\RR^2}\bigg(\int_{D\cap\{|x-y| \geq\varepsilon\}} \hspace{-.2in} W(y)\cdot\nabla_y(K_i(x-y))\dd y 
  -\int_{\partial D\cap\{|x-y|\geq \varepsilon\}} \hspace{-.2in} K_i(x-y) W(y)\cdot \mathbf{n}(y)\,\dd S(y)\bigg)\widetilde{\chi}(x)\dd x\nonumber\\
  & = \int_{\RR^2}\int_{D\cap\{|x-y|\geq\varepsilon\}} W(y)\cdot\nabla_y(K_i(x-y))\dd y\,\widetilde{\chi}(x) \dd x\nonumber\\
  & =-\int_{\RR^2}\int_{D\cap\{|x-y|\geq\varepsilon\}} W(y)\cdot\nabla_xK_i(x-y)\dd y\,\widetilde{\chi}(x) \dd x,
\end{align}
where in the second equality, we use the fact that $W(y)\cdot \mathbf{n}(y)=0$ for $y\in \partial D$ with $\mathbf{n}$ the outward normal vector of $\partial D$.
Therefore, based on the above estimates,
\begin{align*}
  (\textrm{div} ( W u_i),\widetilde{\chi})
  = &\lim_{\varepsilon\to 0}\bigg(\int_{\RR^2}\int_{D\cap\{|x-y|\geq\varepsilon\}} \big(W(x)- W(y)\big) \cdot \nabla_xK_i(x-y)\dd y\, \widetilde{\chi}(x)\dd x +R_\varepsilon\bigg) \\
  = & \int_{\RR^2} \Big(\mathrm{p.v.}\int_{D}( W(x)- W(y))\cdot \nabla_xK_i(x-y)\dd y\Big)
  \widetilde{\chi}(x)\dd x.
\end{align*}
This finishes the proof of \eqref{eq:desingularization_tangential_vector}.
\end{proof}

Based on Lemma \ref{lem:tangential_velocity}, we show the following H\"older-type estimate of $\partial_W u$.
\begin{lemma}\label{lem:striated_est_mu}
Under the assumptions of Lemma \ref{lem:tangential_velocity}, the following estimate holds, 
\begin{align}\label{eq:part-W-u-Hold}
  \lVert \partial_W u\rVert_{C_\mathrm{m}^{\mu}(\RR^2)}
  \leq C (1+\log \mathbf{\Delta}_{\gamma})
  \lVert W\rVert_{\dot{C}^{\mu}(\RR^2)},
\end{align}
where $C>0$ depends only on $\mu$, $\gamma$ and $|D|$.
\end{lemma}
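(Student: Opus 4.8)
The plan is to work directly from the desingularized representation $\partial_W u(x)=\mathrm{p.v.}\int_D\big(W(x)-W(y)\big)\cdot\nabla K(x-y)\,\dd y$ of Lemma~\ref{lem:tangential_velocity} (the integrand being in fact absolutely integrable, since $|W(x)-W(y)|\le\|W\|_{\dot C^\mu}|x-y|^\mu$ and $|\nabla K(x-y)|\le C|x-y|^{-2}(m(|x-y|^{-1})+1)$ by \eqref{eq:Kbound}, so the kernel is $O\big(|x-y|^{\mu-2}(m(|x-y|^{-1})+1)\big)$, integrable near the origin because $\mu>0$ and $m$ grows slower than any power, cf. \eqref{eq:m-prop2}), and to estimate the two constituents of the $C^\mu_{\mathrm m}$-norm in \eqref{eq:Csig-m} separately. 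The $L^\infty$-part is routine: splitting $D=(D\cap B(x,\bar{c}_0))\cup(D\setminus B(x,\bar{c}_0))$ with $\bar{c}_0$ as in Lemma~\ref{lem:G-prop}, the inner part is bounded by the integral just described, while on the outer part $D$ is bounded and $|\nabla K|\le C|x-y|^{-2}$, giving a contribution $\lesssim\|W\|_{L^\infty}\log(1+\mathrm{diam}\,D)$. Hence $\|\partial_W u\|_{L^\infty}\le C\|W\|_{\dot C^\mu}$ with $C=C(|D|)$.

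For the $\dot C^\mu_{\mathrm m}$-seminorm, fix $x,x'$ and set $h\triangleq|x-x'|$; the case $h\ge\bar{c}_0/3$ reduces to the $L^\infty$-bound since $\rho\mapsto\rho^\mu(m(\rho^{-1})+1)$ is bounded below on $[\bar{c}_0/3,\infty)$, so assume $h\le\bar{c}_0$. Split $D=(D\cap B(x,3h))\cup(D\setminus B(x,3h))$, noting $B(x',2h)\subset B(x,3h)$. On the near ball, each of $\int_{D\cap B(x,3h)}(W(x)-W(y))\cdot\nabla K(x-y)\,\dd y$ and the analogous integral centred at $x'$ is $\le C\|W\|_{\dot C^\mu}\int_0^{4h}\rho^{\mu-1}(m(\rho^{-1})+1)\,\dd\rho\le C\|W\|_{\dot C^\mu}h^\mu(m(h^{-1})+1)$, the last step being the elementary integral estimate for $m$ (monotonicity of $m$ and \eqref{eq:m-prop5}). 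On the far region, write the difference of integrands as
\[
\big(W(x)-W(x')\big)\cdot\nabla K(x-y)+\big(W(x')-W(y)\big)\cdot\big(\nabla K(x-y)-\nabla K(x'-y)\big).
\]
For the second term the mean value theorem with $|\nabla^2K(z)|\le C|z|^{-3}(m(|z|^{-1})+1)$ from \eqref{eq:Kbound} and \eqref{eq:m-prop3} gives $|\nabla K(x-y)-\nabla K(x'-y)|\le Ch\,|x-y|^{-3}(m(|x-y|^{-1})+1)$ for $|x-y|\ge 3h$; paired with $|W(x')-W(y)|\le C\|W\|_{\dot C^\mu}|x-y|^\mu$ and integrated against $\int_{3h}^{\mathrm{diam}\,D}\rho^{\mu-2}(m(\rho^{-1})+1)\,\dd\rho\lesssim h^{\mu-1}(m(h^{-1})+1)$, this contributes $\lesssim\|W\|_{\dot C^\mu}h^\mu(m(h^{-1})+1)$. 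For the first term, factoring out $W(x)-W(x')$ leaves precisely $\int_{D\setminus B(x,3h)}\nabla K(x-y)\,\dd y$, which is controlled by the geometric cancellation estimate \eqref{claim:refine_ineq} proved inside Lemma~\ref{lem:continuity-u}, giving $\le C(1+\log\mathbf{\Delta}_\gamma)(m(h^{-1})+1)$; multiplying by $|W(x)-W(x')|\le\|W\|_{\dot C^\mu}h^\mu$ closes this term. Collecting everything yields $|\partial_W u(x)-\partial_W u(x')|\le C(1+\log\mathbf{\Delta}_\gamma)\|W\|_{\dot C^\mu}h^\mu(m(h^{-1})+1)$, which together with the $L^\infty$-bound is \eqref{eq:part-W-u-Hold}.

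The main obstacle is the first far-region term. A crude bound of $\int_{D\setminus B(x,3h)}|\nabla K(x-y)|\,\dd y$ via \eqref{eq:Kbound} alone only gives $\lesssim(1+\log h^{-1})\,m(h^{-1})$, which carries a superfluous factor $\log h^{-1}$ and spoils the $C^\mu_{\mathrm m}$-modulus; it is exactly the symmetry of the matrix $\sigma$ from \eqref{def:sigma} combined with the patch geometry (the geometric lemma \eqref{lem:Geometric-lemma}), as exploited in \eqref{claim:refine_ineq}, that upgrades this to $(1+\log\mathbf{\Delta}_\gamma)(m(h^{-1})+1)$ — the $m$-adapted analogue of the passage from the $L^\infty$-bound on $\nabla u$ to the H\"older bound in \cite[Corollary~1]{BC93} for the 2D Euler case. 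A secondary, purely technical point is the pair of integral estimates $\int_0^\delta\rho^{\mu-1}(m(\rho^{-1})+1)\,\dd\rho\lesssim\delta^\mu(m(\delta^{-1})+1)$ and $\int_\delta^1\rho^{\mu-2}(m(\rho^{-1})+1)\,\dd\rho\lesssim\delta^{\mu-1}(m(\delta^{-1})+1)$, which express that $\rho^\mu(m(\rho^{-1})+1)$ behaves like a slowly corrected power; these follow from the monotonicity of $m$ together with \eqref{eq:m-h-0}, \eqref{eq:m-prop4}, \eqref{eq:m-prop5} collected in Section~\ref{sec:prelim}.
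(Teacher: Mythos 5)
Your argument is correct and follows essentially the same route as the paper's proof: the same desingularized representation from Lemma~\ref{lem:tangential_velocity}, the same near/far splitting of the difference into four terms, the cancellation estimate \eqref{claim:refine_ineq} from Lemma~\ref{lem:continuity-u} for the frozen-difference far term (which is indeed where $\log\mathbf{\Delta}_\gamma$ enters), and the mean value theorem with the $\nabla^2K$ bound from \eqref{eq:Kbound} for the kernel-difference term. The only point to tidy is the $L^\infty$ bound on the far region: keep $|W(x)-W(y)|\le\|W\|_{\dot C^\mu}|x-y|^\mu$ there too (the paper does this uniformly via the rearrangement inequality), since your bound $\|W\|_{L^\infty}\log(1+\mathrm{diam}\,D)$ brings in $\|W\|_{L^\infty}$ and $\mathrm{diam}\,D$, neither of which is allowed by \eqref{eq:part-W-u-Hold} or by the stated dependence of $C$ on $|D|$ alone.
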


\begin{proof}
By virtue of the expression \eqref{eq:desingularization_tangential_vector}, \eqref{eq:gradKbound}, the rearrangement inequality, and \eqref{eq:m-prop2}, we deduce
\begin{align*}
  \lVert\partial_W u\rVert_{L^{\infty}}
  & \leq \int_D |W(x)-W(y)|\cdot|\nabla K(x-y)| \dd y\\
  &\leq C\lVert W\rVert_{\dot{C}^{\mu}(\RR^2)}\int_{D}|x-y|^{\mu}\frac{m(|x-y|^{-1})+1}{|x-y|^2}\,\dd y \\
  & \leq C\lVert W\rVert_{\dot{C}^{\mu}(\RR^2)}\int_{0}^{L}\frac{m(\rho^{-1})+1}{\rho^{1-\mu}}\,\dd \rho \leq C\lVert W\rVert_{\dot{C}^{\mu}(\RR^2)},
\end{align*}
with $L= \sqrt{|D|/\pi}$.
Choosing $x,h\in \RR^2$ such that $|h|\le\frac{\bar{c}_0}{3}$, we write
\begin{align*}
 \partial_W u(x)-\partial_W u(x+h) = \mathfrak{J}_1 + \mathfrak{J}_2 + \mathfrak{J}_3 + \mathfrak{J}_4,	
\end{align*}
where
\begin{align*}
  \mathfrak{J}_1 & \triangleq \mathrm{p.v.}\int_{D\cap\{|x-y|<2|h|\}}\big(W(x)-W(y)\big)\cdot\nabla K(x-y)\dd y, \\
  \mathfrak{J}_2 & \triangleq - \mathrm{p.v.}\int_{D\cap\{|x-y|<2|h|\}}\big(W(x+h)-W(y)\big)\cdot\nabla K(x+h-y)\dd y,\\
  \mathfrak{J}_3 & \triangleq \int_{D\cap\{|x-y|\geq 2|h|\}}\big(W(x)-W(x+h)\big)\cdot\nabla K(x-y)\dd y, \\
  \mathfrak{J}_4 & \triangleq \int_{D\cap\{|x-y|\geq 2|h|\}}\big(W(x+h)-W(y)\big)\cdot\big(\nabla K(x-y)-\nabla K(x+h-y)\big)\dd y.
\end{align*}
For $\mathfrak{J}_1$, from \eqref{eq:gradKbound}, we get
\begin{align*}
  |\mathfrak{J}_1| \leq C\lVert W\rVert_{\dot{C}^{\mu}(\RR^2)}\int_{|x-y|\leq 2|h|}|x-y|^{\mu}\frac{m(|x-y|^{-1})}{|x-y|^2}\dd y \leq  C\lVert W\rVert_{\dot{C}^{\mu}(\RR^2)}\int_0^{2|h|}\frac{m(\rho^{-1})}{\rho^{1-\mu}}\dd\rho.
\end{align*}
Using \eqref{eq:limit-beta1} in \eqref{H2a} and the L'H\^opital rule gives
\begin{align*}
  \lim_{\rho\to 0}\frac{\int_0^{\rho}\tilde\rho^{\mu-1}m(\tilde\rho^{-1})\dd \tilde\rho}{\rho^{\mu}m(\rho^{-1})}
  = \lim_{\rho\to 0}\frac{\rho^{\mu-1}m(\rho^{-1})}{\mu\rho^{\mu-1}m(\rho^{-1})-\rho^{\mu-1}m'(\rho^{-1})\rho^{-1}}
  =\lim_{r\to+\infty}\frac{1}{\mu-\frac{rm'(r)}{m(r)}}=\frac{1}{\mu}.
\end{align*}
Then, there exists a constant $C>0$ depending on $\mu$ and $\bar{c}_0$, such that for any $\rho<\bar{c}_0$,
\begin{align}\label{eq:approx-m-int-1}
  \int_0^{\rho}\tilde\rho^{\mu-1}m(\tilde\rho^{-1})\dd \tilde\rho\leq C\rho^{\mu}m(\rho^{-1}).
\end{align}
Together with \eqref{eq:m-prop3}, we find
\begin{align*}
  |\mathfrak{J}_1| \leq C \lVert W\rVert_{\dot{C}^{\mu}(\RR^2)} |h|^{\mu} m(|h|^{-1}).
\end{align*}
Performing the same procedure, we can also show that
\begin{align*}
  |\mathfrak{J}_2| \leq C\lVert W\rVert_{\dot{C}^{\mu}(\RR^2)} \int_0^{3|h|}\frac{m(\rho^{-1})}{\rho^{1-\mu}}\dd\rho
  \leq C\lVert W\rVert_{\dot{C}^{\mu}(\RR^2)} |h|^{\mu} m(|h|^{-1}).
\end{align*}
For the term $\mathfrak{J}_3$, it follows from \eqref{claim:refine_ineq} that
\begin{align*}
  |\mathfrak{J}_3| & \leq \lVert W\rVert_{\dot{C}^{\mu}(\RR^2)}|h|^{\mu}\Big|\mathrm{p.v.}\int_{D\cap\{|x-y|\geq 2|h|\}}\nabla K(x-y)\dd y\Big|\\
   &\leq  C \big( 1+\log \mathbf{\Delta}_{\gamma} \big)\lVert W\rVert_{\dot{C}^{\mu}(\RR^2)}|h|^{\mu}\big(m(|h|^{-1})+1\big).
\end{align*}
Finally, for the term $\mathfrak{J}_4$, we apply \eqref{eq:Kbound} with $l=2$ and get
\begin{align*}
 |\nabla^2 K(x)|\leq C\,\frac{m(|x|^{-1})+1}{|x|^3}.
\end{align*}
Utilizing the mean value theorem and proceeding as in the treatment of $II$ in Lemma~\ref{lem:u-est-general-data}, we obtain
\begin{align*}
  |\mathfrak{J}_4| & \leq C\lVert W\rVert_{\dot{C}^{\mu}(\RR^2)}\int_{D\cap\{|x-y|\geq 2|h|\}}|x-y|^\mu\,\frac{m(|x-y|^{-1})+1}{|x-y|^3}\,|h|\dd y \\
  & \leq C\lVert W\rVert_{\dot{C}^{\mu}(\RR^2)}|h|\int_{2|h|}^{L}\frac{m(\rho^{-1})+1}{\rho^{3-\mu}}\,\rho\dd \rho 
  \leq C\lVert W\rVert_{\dot{C}^{\mu}(\RR^2)}|h|^{\mu}\big(m(|h|^{-1})+1\big).
\end{align*}
Gathering the above estimates leads to the desired result \eqref{eq:part-W-u-Hold}.
\end{proof}

Next, we will present a refined point-wise estimate for the term
$\partial_W u = W\cdot\nabla u$ in the (tangential) direction $W = \nabla^{\perp}\varphi$, 
which is essentially from \cite{Elgindi14} but with more technical details here. 
\begin{lemma}\label{lem:es-u-whole}
Under the assumptions of Lemma \ref{lem:tangential_velocity},
we have the following point-wise estimate
\begin{align}\label{es:nab-u-tang}
  |\nabla u\, \mathbf{w}\cdot\mathbf{w}| \leq C \big(m(\mathbf{\Delta}_\gamma) + 1\big) \big(1+\log \mathbf{\Delta}_\gamma \big),
  \quad \forall \gamma\in (0 ,\mu],
\end{align}
where $\mathbf{w}\triangleq \frac{W}{|W|} = \frac{\nabla^\perp \varphi}{|\nabla^\perp \varphi|}$, and 
$C>0$ depends only on $\gamma$ and $|D|$.
\end{lemma}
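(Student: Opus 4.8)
The plan is to reduce the pointwise bound to an angular cancellation of a singular integral in polar coordinates, and then to make that cancellation quantitative using the geometry of the $C^{1,\mu}$ patch near the boundary point nearest to $x$. Since $\mathbf{w}\cdot(\nabla u)\mathbf{w}=\mathbf{S}(\nabla u)\mathbf{w}\cdot\mathbf{w}$, I would start from Lemma~\ref{lem:int-exp}(ii): for every $x$,
\[
  \nabla u\,\mathbf{w}(x)\cdot\mathbf{w}(x)=\mathrm{p.v.}\!\int_D \tfrac{2G(|x-y|)-|x-y|\,G'(|x-y|)}{2|x-y|^{2}}\,\sigma(x-y)\,\mathbf{w}(x)\cdot\mathbf{w}(x)\,\dd y .
\]
Writing $y=x+\rho z$ with $|z|=1$, using that $\sigma$ is even and $0$-homogeneous, and bounding the radial weight by $C\rho^{-1}(m(\rho^{-1})+1)$ via Lemma~\ref{lem:G-prop}, everything reduces to controlling the angular average $A_\rho(x)\triangleq\int_{\mathcal{S}_\rho(x)}\sigma(z)\,\mathbf{w}(x)\cdot\mathbf{w}(x)\,\dd\mathcal{H}^1(z)$, with $\mathcal{S}_\rho(x)$ as in \eqref{def:SSigmaR}, uniformly in $\rho\in(0,R_0]$ for a fixed $R_0$ with $D\subset B(x,R_0)$. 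Let $\tilde x\in\partial D$ realize $d_x=|x-\tilde x|$, let $\mathbf{n}$ be the unit vector along $\tilde x-x$ (equivalently $\pm\nabla\varphi(\tilde x)/|\nabla\varphi(\tilde x)|$, so $\mathbf{n}\perp\mathbf{w}(\tilde x)$), and let $\mathcal{S}_\rho^{\mathrm{flat}}(x)=\{z:|z|=1,\ x+\rho z\in H\}$ where $H$ is the open half-plane bounded by the tangent line $T$ of $\partial D$ at $\tilde x$ on the $D$-side. The two facts that drive the argument: writing $\psi$ for the angle of $z$ to $\mathbf{n}$, one has $\sigma(z)\,\mathbf{w}(\tilde x)\cdot\mathbf{w}(\tilde x)=\pm\sin 2\psi$, while $\mathcal{S}_\rho^{\mathrm{flat}}(x)$ is symmetric under $\psi\mapsto-\psi$; hence $\int_{\mathcal{S}_\rho^{\mathrm{flat}}(x)}\sigma(z)\,\mathbf{w}(\tilde x)\cdot\mathbf{w}(\tilde x)\,\dd\mathcal{H}^1=0$. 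Moreover $A_\rho(x)=0$ for $\rho<d_x$, where $\mathcal{S}_\rho(x)$ is empty or the full circle.

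The heart of the proof is the quantitative cancellation for $\rho\ge d_x$. I would split
\[
  A_\rho(x)=\!\int_{\mathcal{S}_\rho(x)}\!\!\sigma(z)\big(\mathbf{w}(x)\!\cdot\!\mathbf{w}(x)-\mathbf{w}(\tilde x)\!\cdot\!\mathbf{w}(\tilde x)\big)\dd\mathcal{H}^1+\Big(\!\int_{\mathcal{S}_\rho(x)}\!\!-\!\int_{\mathcal{S}_\rho^{\mathrm{flat}}(x)}\Big)\sigma(z)\,\mathbf{w}(\tilde x)\!\cdot\!\mathbf{w}(\tilde x)\,\dd\mathcal{H}^1 .
\]
The first term is $O\big(|\mathbf{w}(x)-\mathbf{w}(\tilde x)|\big)$, and since $|W(x)-W(\tilde x)|\le\|W\|_{\dot{C}^\gamma}d_x^{\gamma}$ and $|W(\tilde x)|\ge|W|_{\inf}$, it is $\lesssim\mathbf{\Delta}_\gamma d_x^{\gamma}$. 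The second term is $\lesssim\mathcal{H}^1\big(\mathcal{S}_\rho(x)\triangle\mathcal{S}_\rho^{\mathrm{flat}}(x)\big)$, and the $C^{1,\gamma}$ structure of $\varphi$ — via the first-order Taylor expansion of $\varphi$ at $\tilde x$, the same input that underlies the geometric lemma \eqref{lem:Geometric-lemma} — forces this ``curvature flip region'' into $\{z:\mathrm{dist}(x+\rho z,T)\lesssim\mathbf{\Delta}_\gamma\rho^{1+\gamma}\}$, because any point at distance $>C\mathbf{\Delta}_\gamma\rho^{1+\gamma}$ from $T$ on a given side already lies on the corresponding side of $\partial D$. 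An elementary chord computation then gives $\mathcal{H}^1\big(\mathcal{S}_\rho(x)\triangle\mathcal{S}_\rho^{\mathrm{flat}}(x)\big)\lesssim\min\big(1,\sqrt{\mathbf{\Delta}_\gamma\rho^{\gamma}}\big)$, the square root coming from the near-tangency regime $\rho\approx d_x$. Combining, and using $\mathbf{\Delta}_\gamma d_x^{\gamma}\le\mathbf{\Delta}_\gamma\rho^{\gamma}$ (which is $\le\sqrt{\mathbf{\Delta}_\gamma\rho^{\gamma}}$ wherever $\mathbf{\Delta}_\gamma\rho^{\gamma}\le1$), I get $|A_\rho(x)|\lesssim\min\big(1,\sqrt{\mathbf{\Delta}_\gamma\rho^{\gamma}}\big)$ for all $\rho\ge d_x$.

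It remains to integrate and to treat the remaining $x$. For $d_x\le\delta_\gamma\triangleq\mathbf{\Delta}_\gamma^{-1/\gamma}$,
\[
  |\nabla u\,\mathbf{w}\!\cdot\!\mathbf{w}|\lesssim\int_{d_x}^{R_0}\!\tfrac{m(\rho^{-1})+1}{\rho}\,|A_\rho(x)|\,\dd\rho
  \lesssim\int_{0}^{\delta_\gamma}\!\tfrac{m(\rho^{-1})+1}{\rho}\sqrt{\mathbf{\Delta}_\gamma\rho^{\gamma}}\,\dd\rho+\int_{\delta_\gamma}^{R_0}\!\tfrac{m(\rho^{-1})+1}{\rho}\,\dd\rho .
\]
The first integral I would bound using the $\tfrac{\gamma}{2}$-analogue of \eqref{eq:approx-m-int-1}, namely $\int_0^\rho(m(\tilde\rho^{-1})+1)\tilde\rho^{\gamma/2-1}\dd\tilde\rho\le C\rho^{\gamma/2}(m(\rho^{-1})+1)$ (valid since $\beta_1=0<\tfrac{\gamma}{2}$), together with $\mathbf{\Delta}_\gamma^{1/2}\delta_\gamma^{\gamma/2}=1$ and $m(\mathbf{\Delta}_\gamma^{1/\gamma})\le Cm(\mathbf{\Delta}_\gamma)$ from \eqref{eq:m-prop5}, obtaining $\lesssim m(\mathbf{\Delta}_\gamma)+1$; for the second, monotonicity of $m$ gives $\lesssim(m(\delta_\gamma^{-1})+1)\log(R_0/\delta_\gamma)\lesssim(m(\mathbf{\Delta}_\gamma)+1)(1+\log\mathbf{\Delta}_\gamma)$, which is where the logarithm enters. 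Finally, for $d_x>\delta_\gamma$ the ball $B(x,d_x)$ lies entirely inside or outside $D$, so $A_\rho(x)=0$ for $\rho<d_x$, and the trivial bounds $|A_\rho(x)|\le2\pi$ and $m(\rho^{-1})\le m(d_x^{-1})\le Cm(\mathbf{\Delta}_\gamma)$ for $\rho\ge d_x$ (by \eqref{eq:m-prop3}, \eqref{eq:m-prop5}), together with $D\subset B(x,R_0)$, give \eqref{es:nab-u-tang} again. The step I expect to be hardest is the cancellation above: one must isolate the \emph{exact} vanishing over the tangent-half-plane arc $\mathcal{S}_\rho^{\mathrm{flat}}(x)$ — a semicircle will not do, its $O(d_x/\rho)$ discrepancy from $\mathcal{S}_\rho(x)$ failing to integrate against the weight $\rho^{-1}(m(\rho^{-1})+1)$ — and then control the delicate near-tangency range $\rho\approx d_x$, where the flip region is only $O(\sqrt{\mathbf{\Delta}_\gamma\rho^{\gamma}})$ and the radial integral closes only because $m$ grows sub-polynomially ($\beta_1=0$, guaranteed by \eqref{eq:Osgood-cond} through Lemma~\ref{lem:H2H3}). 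This is precisely where the geometry of a regular patch is used, in the same spirit as Lemma~\ref{lem:continuity-u}.
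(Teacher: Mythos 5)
Your proposal is correct, and its skeleton coincides with the paper's: reduce to $\mathbf{S}(\nabla u)\mathbf{w}\cdot\mathbf{w}$, split the radial integral at $\delta_\gamma=\mathbf{\Delta}_\gamma^{-1/\gamma}$ (the far field giving the factor $(m(\mathbf{\Delta}_\gamma)+1)(1+\log\mathbf{\Delta}_\gamma)$), swap $\mathbf{w}(x)$ for $\mathbf{w}(\tilde x)$ at cost $\mathbf{\Delta}_\gamma d_x^{\gamma}$, and kill the main near-field term by the exact oddness of $\sigma(z)\,\mathbf{w}(\tilde x)\cdot\mathbf{w}(\tilde x)$ under reflection across the normal line through $x$ and $\tilde x$, against the half-plane bounded by the tangent line at $\tilde x$ (your $H$ is exactly the paper's $\mathfrak{F}$, and your correct observation that the semicircle $\varSigma(x)$ would leave an uncontrollable $O(d_x/\rho)\,m(d_x^{-1})$ contribution is precisely why the paper also works with $\mathfrak F$ rather than $\varSigma$). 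Where you genuinely deviate is in measuring the curvature flip region: the paper recenters the remainder integral at $\tilde x$, so that the $d=0$ variant of the geometric lemma gives the angular bound $C(\rho/\delta_\gamma)^\gamma$ and the near-tangency regime never appears; you keep polar coordinates at $x$ and pay the worst-case chord bound $\min\big(1,\sqrt{\mathbf{\Delta}_\gamma\rho^\gamma}\big)$, which still closes because $\int_0^{\delta_\gamma}(m(\rho^{-1})+1)\rho^{\gamma/2-1}\dd\rho\lesssim\delta_\gamma^{\gamma/2}\big(m(\delta_\gamma^{-1})+1\big)$ and $m(\mathbf{\Delta}_\gamma^{1/\gamma})\leq Cm(\mathbf{\Delta}_\gamma)$ by \eqref{eq:m-prop5}. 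Your route is self-contained (it re-derives the geometric input from the Taylor expansion of $\varphi$ at $\tilde x$) at the price of the square-root analysis; the paper's recentering reuses \eqref{lem:Geometric-lemma} verbatim and gives the cleaner exponent $\gamma$. Two small corrections: the lemma only assumes \eqref{H1}-\eqref{H2a}, and you do not need \eqref{eq:Osgood-cond} or $\beta_1=0$ for your weighted integral estimate, since $\lim_{r\to\infty} r m'(r)/m(r)=0$ already follows from \eqref{H2a} (exactly as in the proofs of \eqref{eq:m-prop4} and \eqref{eq:approx-m-int-1}); and the far-field term should be closed with the rearrangement inequality and $L=\sqrt{|D|/\pi}$ as in \eqref{eq:far-away-part}, rather than a radius $R_0$ with $D\subset B(x,R_0)$, which depends on $x$ and would spoil the claim that $C$ depends only on $\gamma$ and $|D|$.
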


\begin{proof}
Since $\nabla u\, \mathbf{w}\cdot\mathbf{w} = \mathbf{S}(\nabla u)\, \mathbf{w}\cdot\mathbf{w}$ with $\mathbf{S}(\nabla u) = \frac{\nabla u + (\nabla u)^T}{2}$ being the symmetric part of $\nabla u$, we will concentrate on the analysis of $\mathbf{S}(\nabla u)\, \mathbf{w}\cdot\mathbf{w}$. 
We recall from \eqref{def:Sym-nab-u} that
\begin{align*}
  \mathbf{S}(\nabla u)(x) = \int_{D}(\nabla K)_{\mathrm{S}}(x-y)\dd y = \int_D \frac{1}{2} \frac{2G(|x-y|)-|x-y|G'(|x-y|)}{|x-y|^2} \sigma(x-y) \dd y,
\end{align*}
where $\sigma$ is given by \eqref{def:sigma}.
For every $x\in \RR^2$, we divide $\mathbf{S}(\nabla u)(x)$ into
\begin{align*}
  \mathbf{S}(\nabla u)(x) & =  \int_{D \cap \{ |x-y| \geq \delta_\gamma \}}
  (\nabla K)_{\mathrm{S}}(x-y)\dd y  + \int_{D \cap \{|x-y| \leq \delta_\gamma \}}
  (\nabla K)_{\mathrm{S}}(x-y)  \dd y \\
  & \triangleq \mathcal{I}_1(x) + \mathcal{I}_2(x),
\end{align*}
where $\delta_\gamma\triangleq (\mathbf{\Delta}_\gamma)^{-\frac{1}{\gamma}}$.

For $\mathcal{I}_1$, we proceed as \eqref{eq:far-away-part} and obtain
\begin{align*}
  |\mathcal{I}_1(x)| \leq C \int_{D\cap \{|x-y|\geq\delta_\gamma\}} \frac{m(|x-y|^{-1})+1}{|x-y|^2}\,\dd y 
  \leq C m(\delta_\gamma^{-1}) \big(1+\log\delta_\gamma^{-1}\big),
\end{align*}
and consequently,
\begin{align*}
 |\mathcal{I}_1(x)\mathbf{w}(x)\cdot\mathbf{w}(x)|\leq C m(\mathbf{\Delta}_\gamma) \big(1+\log\mathbf{\Delta}_\gamma\big).	
\end{align*}

For $\mathcal{I}_2$, the goal is to exploit additional cancellations in order to sharpen the result of Lemma~\ref{lem:continuity-u}. Denote $\tilde x\in\partial D$ such that $|x-\tilde x|=d_x$, where $d_x$ is defined in \eqref{eq:dx}.
If $\delta_\gamma \leq d_x$, then by the mean-zero property of $\sigma(z)$, we have $\mathcal{I}_2(x)=0$. 
Thus, we assume that $\delta_\gamma >d_{x}$ without loss of generality, and
\begin{align*}
  \mathcal{I}_2(x) = \int_{D \cap \{ d_x\leq |x-y| \leq \delta_\gamma \}}
  (\nabla K)_{\mathrm{S}}(x-y) \dd y.
\end{align*}

Since $\varphi$ is a level-set characterization of $D$, we know that $\tilde{x}-x$ is parallel to $\nabla \varphi(\tilde{x})$, and hence orthogonal to $\mathbf{w}(\tilde{x})$. Therefore, we shall seek cancellations in $ \mathcal{I}_2(x)\mathbf{w}(\tilde{x})\cdot\mathbf{w}(\tilde{x})$.
Decompose
\begin{align*}
  \mathcal{I}_2(x)\mathbf{w}(x)\cdot\mathbf{w}(x) = \mathcal{I}_2(x)\mathbf{w}(\tilde{x})\cdot\mathbf{w}(\tilde{x}) + \mathcal{I}_2(x)\big(\mathbf{w}(x)+\mathbf{w}(\tilde{x})\big)\cdot\big(\mathbf{w}(x)-\mathbf{w}(\tilde{x})\big).
\end{align*}

Let us start with the estimate of the difference
\begin{align*}
|\mathcal{I}_2(x)\big(\mathbf{w}(x)+\mathbf{w}(\tilde{x})\big)\cdot\big(\mathbf{w}(x)-\mathbf{w}(\tilde{x})\big)|\leq 2|\mathcal{I}_2(x)|\cdot |\mathbf{w}(x)-\mathbf{w}(\tilde{x})|.
\end{align*}
By virtue of the definitions of $\mathbf{w}(x)$, $\tilde{x}$ and $d_x$,
we have
\begin{align*}
  |\mathbf{w}(x)-\mathbf{w}(\tilde{x})|
  & = \frac{\big|\big(W(x)-W(\tilde{x})\big)-\mathbf{w}(\tilde{x})\big(|W(x)|-|W(\tilde{x})|\big)\big|}{|W(x)|}\leq \frac{2\|\nabla\varphi\|_{\dot{C}^\gamma}|x-\tilde{x}|^\gamma}{|\nabla\varphi|_{\inf}}
   \leq 2\mathbf{\Delta}_\gamma (d_x)^\gamma.
\end{align*}
A similar argument as in \eqref{eq:crucial-est} yields
\begin{align*}
 |\mathcal{I}_2(x)| 
 \leq C\int_{d_x}^{\delta_\gamma}\frac{m(\rho^{-1})+1}{\rho}\,\mathcal{H}^1(\mathcal{R}_{\rho}(x))\dd \rho 
  \leq C\big(m(d_x^{-1})+1\big).
\end{align*}
Hence,
\begin{align*}
  |\mathcal{I}_2(x)\big(\mathbf{w}(x)+\mathbf{w}(\tilde{x})\big)\cdot\big(\mathbf{w}(x)-\mathbf{w}(\tilde{x})\big)|
  & \leq C \mathbf{\Delta}_\gamma (d_x)^\gamma\big(m(d_x^{-1})+1\big)
  \leq C \mathbf{\Delta}_\gamma (\delta_\gamma)^\gamma\big(m(\delta_\gamma^{-1})+1\big)\\
  &\leq C \big(m(\mathbf{\Delta}_\gamma^{\frac{1}{\gamma}})+1\big)
  \leq C \big(m(\mathbf{\Delta}_\gamma)+1\big).
\end{align*}
where we have used \eqref{eq:m-prop4} in the second inequality, and \eqref{eq:m-prop5} in the last inequality.
	
Next, we focus on the term $ \mathcal{I}_2(x)\mathbf{w}(\tilde{x})\cdot\mathbf{w}(\tilde{x})$. A main observation is that $(\nabla K)_{\mathrm{S}}(x-\cdot)\mathbf{w}(\tilde{x})\cdot\mathbf{w}(\tilde{x})$ is an odd function with respect to the line 
\begin{align*}
 \mathsf{L}:s\mapsto x+s\mathbf{w}(\tilde{x})^\perp,
\end{align*}
which go across $x$ and $\tilde{x}$. To see this, we denote $\bar{y}$ the reflection point of $y$ with respect to $\mathsf{L}$. Then, if we represent $y = x + s_1 \mathbf{w}(\tilde{x})^\perp + s_2\mathbf{w}(\tilde{x})$, we have $\bar{y} = x + s_1 \mathbf{w}(\tilde{x})^\perp - s_2\mathbf{w}(\tilde{x})$. This implies
\begin{align*}
 |x-\bar{y}| = \sqrt{s_1^2+s_2^2} =|x-y|.	
\end{align*}
Direct computation shows
\begin{align*}
  \sigma(x -y)\mathbf{w}(\tilde{x}) \cdot \mathbf{w}(\tilde{x})
  = \frac{2 \big(\mathbf{w}(\tilde{x})^\perp\cdot (x-y)\big) \big( \mathbf{w}(\tilde{x})\cdot (x-y)\big)}{|x-y|^2}=\frac{2s_1s_2}{s_1^2+s_2^2},
\end{align*}
and
\begin{align*}
  \sigma(x -\bar{y})\mathbf{w}(\tilde{x}) \cdot \mathbf{w}(\tilde{x})
  =-\frac{2s_1s_2}{s_1^2+s_2^2}=-\sigma(x -y)\mathbf{w}(\tilde{x}) \cdot \mathbf{w}(\tilde{x}).
\end{align*}
Hence, we conclude with
\begin{align*}
(\nabla K)_{\mathrm{S}}(x-\bar{y})\mathbf{w}(\tilde{x})\cdot\mathbf{w}(\tilde{x})	=-(\nabla K)_{\mathrm{S}}(x-y)\mathbf{w}(\tilde{x})\cdot\mathbf{w}(\tilde{x}).
\end{align*}
Define the half plane
\begin{align*}
  \mathfrak{F} \triangleq \{x\in \mathbb{R}^2 \, : \,\nabla \varphi(\tilde{x}) \cdot (x-\tilde{x}) \geq 0\}.
\end{align*}
Since $\mathfrak{F} \cap \{ d_x\leq |x-y| \leq \delta_\gamma \}$ 
is part of the annulus which is symmetric with respect to $\mathsf{L}$, 
we deduce that
\begin{align}\label{claim:cancel}
  \mathcal{I}_{21}(x) \mathbf{w}(\tilde{x})\cdot \mathbf{w}(\tilde{x})=0,
\end{align}
where
\begin{align*}
  \mathcal{I}_{21}(x) \triangleq \int_{\mathfrak{F} \cap \{ d_x\leq |x-y| \leq \delta_\gamma \}}
  (\nabla K)_{\mathrm{S}}(x-y) \dd y.
\end{align*}

Via \eqref{claim:cancel}, it remains to control the difference $\mathcal{I}_{22}(x)\mathbf{w}(\tilde{x})\cdot \mathbf{w}(\tilde{x})$ where $\mathcal{I}_{22}\triangleq \mathcal{I}_2-\mathcal{I}_{21}$ satisfies
\begin{align*}
  |\mathcal{I}_{22}(x)| & \leq \int_{( D \triangle \, \mathfrak{F})\cap \{ d_x\leq |x-y|\leq \delta_\gamma \}}
  \big|(\nabla K)_{\mathrm{S}}(x-y) \big|\dd y 
  \leq C\int_{( D \triangle \, \mathfrak{F})\cap \{ d_x\leq |x-y|\leq \delta_\gamma \}}\frac{m(|x-y|^{-1})+1}{|x-y|^2}\dd y\\
  & \leq C\int_{( D \triangle \, \mathfrak{F})\cap \{ 0\leq |\tilde{x}-y|\leq 2\delta_\gamma \}}
  \frac{m(|\tilde{x}-y|^{-1})+1}{|\tilde{x}-y|^2}\dd y,
\end{align*}
where we apply \eqref{eq:gradKbound} in the second inequality. 
For the last inequality, we have used the fact that
\begin{align*}
  0\leq |\tilde{x}-y|\leq |\tilde{x}-x|+|x-y| = d_x + |x-y| \leq 2|x-y|,
\end{align*}
which implies $\{ d_x\leq |x-y|\leq \delta_\gamma \}\subset\{ 0\leq |\tilde{x}-y|\leq 2\delta_\gamma \}$. We also make use of the monotonicity of $m$ as well as \eqref{eq:m-prop3}.

Now, we argue that the set $( D \triangle \, \mathfrak{F})\cap \{ 0\leq |\tilde{x}-y|\leq 2\delta_\gamma \}$ is small. 
Rewrite the integral in polar coordinates centered in $\tilde{x}$, and borrow the notation in \eqref{def:SSigmaR}. It yields
\begin{align*}
  |\mathcal{I}_{22}(x)| \leq C \int_0^{2\delta_\gamma} \frac{m(\rho^{-1})+1}{\rho}
  \mathcal{H}^1(\mathcal{S}_\rho\big(\tilde{x})\triangle\Sigma(x)\big) \rho \dd\rho.	
\end{align*}
A variation of the geometric lemma \eqref{lem:Geometric-lemma} (setting $d_x=0$) reads
\begin{align*}
  \mathcal{H}^1\big(\mathcal{S}_\rho\big(\tilde{x})\triangle\Sigma(x)\big)\leq 2\pi\cdot 2^\gamma 
  \Big(\frac{\rho}{\delta_\gamma}\Big)^\gamma,\quad \forall\rho>0.
\end{align*}
Then, we finish the estimate by
\begin{align*}
  |\mathcal{I}_{22}(x)| & \leq C\int_0^{2\delta_\gamma} \frac{m(\rho^{-1})+1}{\rho} 
  \Big(\frac{\rho}{\delta_\gamma}\Big)^\gamma \dd \rho 
  \leq C \Big((2\delta_\gamma)^\gamma m\big((2\delta_\gamma)^{-1}\big)(\delta_\gamma)^{-\gamma}+\frac{2^\gamma}{\gamma}\Big)\\
  & \leq C \big(m(\delta_\gamma^{-1})+1\big)\leq C \big(m(\mathbf{\Delta}_\gamma)+1\big),
\end{align*}
where we have used \eqref{eq:approx-m-int-1} in the second inequality, and \eqref{eq:m-prop5} in the last inequality.

Collecting all the estimates, we conclude with the desired inequality \eqref{es:nab-u-tang}.
\end{proof}

\subsection{The refined estimate on the flow map} 
In this section, we derive a sharper estimate for the flow map, compared to \eqref{eq:bound-flow-map-general} in Theorem~\ref{thm:Yudovich_type}, which will play a fundamental role in the following section.
\begin{align*}
 \widetilde{\nu}(\rho) \triangleq \rho\big(m(\rho^{-1})+1\big),
 \quad	\text{and}\quad
 \widetilde{\mathsf{H}}(r) \triangleq \int_{\frac1r}^1\frac{\dd \rho}{\widetilde{\nu}(\rho)} = 
 \int_{\frac1r}^1\frac{\dd\rho}{\rho\big(m(\rho^{-1})+1\big)},
\end{align*}
Compared to $\nu$ and $\mathsf{H}$, there is an improvement in a logarithmic factor. 
Similarly to $\mathsf{H}$, the function $\widetilde{\mathsf{H}}$ is invertible since $\widetilde{\mathsf{H}}'(r)=\frac{1}{r^2\widetilde{\nu}(r^{-1})}>0$. Moreover, from the Osgood condition \eqref{eq:Osgood-cond},
\begin{align*}
  \lim_{r\to+\infty}\widetilde{\mathsf{H}}(r) = \int_0^{1}\frac{\dd \rho}{\rho\big(m(\rho^{-1})+1\big)} 
  =\int_1^{+\infty}\frac{\dd r}{r\big(m(r)+1\big)}\geq\frac{\log2}{1+\frac{1}{m(1)}}\int_2^{+\infty}\frac{\dd r}{r(\log r)m(r)}=+\infty.
\end{align*}
Hence, the inverse function $\widetilde{\mathsf{H}}^{-1}(\cdot): \RR \to (0,+\infty)$ is also increasing monotonically.

\begin{lemma}\label{lem:flow-map-os}
Under the assumptions of Theorem \ref{thm:Yudovich_type}, if $u\in L^{1}([0,+\infty);C^1_{\mathrm{m}}(\RR^2))$, then we have that for any $t, s\in \mathbb{R}_+$,
\begin{align}\label{eq:bound-flow-map-refined}
  |\Phi_{t,s}(x)-\Phi_{t,s}(y)|^{-1}\geq \widetilde{\mathsf{H}}^{-1}
  \Big(\widetilde{\mathsf{H}}(|x-y|^{-1})- \Big|\int_s^{t}\lVert u(\tau)\rVert_{C^1_\mathrm{m}(\RR^2)}\dd \tau\Big|\Big).
\end{align}
Moreover, for any $t, s\in\mathbb{R}_+$,
\begin{align}\label{eq:flow-map-up-l}
  |\Phi_{t,s}(x)-\Phi_{t,s}(y)|\leq |x-y|\exp\bigg(\big(m(|x-y|^{-1}) + 1\big)
  \Big|\int_s^t\lVert u(\tau)\rVert_{C^1_\mathrm{m}(\RR^2)}\dd \tau \Big|\bigg).	
\end{align}
\end{lemma}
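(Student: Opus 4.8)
The plan is to derive both estimates \eqref{eq:bound-flow-map-refined} and \eqref{eq:flow-map-up-l} from a single differential inequality for the quantity $\xi(t) \triangleq \Phi_{t,s}(x) - \Phi_{t,s}(y)$. Since the flow map $\Phi_{t,s}$ is injective (it admits $\Phi_{s,t}$ as inverse, cf.\ Theorem~\ref{thm:Yudovich_type}), we have $\xi(t) \neq 0$ for all $t$ whenever $x \neq y$, so $|\xi(t)|$ remains in a compact subset of $(0,+\infty)$ over any finite time interval; moreover $t \mapsto \xi(t)$ is absolutely continuous because $\|u(t)\|_{L^\infty} \leq \|u(t)\|_{C^1_\mathrm{m}(\RR^2)}$ is locally integrable. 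Differentiating $|\xi(t)|$ and using \eqref{eq:flow_map_def} together with the definition \eqref{eq:Csig-m} of $C^1_\mathrm{m}$ (i.e.\ the case $\mu = 1$), I get, for a.e.\ $t$,
\begin{align*}
  \Big|\frac{\dd}{\dd t}|\xi(t)|\Big| \leq \big|u(\Phi_{t,s}(x),t) - u(\Phi_{t,s}(y),t)\big| \leq \|u(t)\|_{C^1_\mathrm{m}(\RR^2)}\,\widetilde{\nu}(|\xi(t)|),
\end{align*}
where $\widetilde{\nu}(\rho) = \rho\,(m(\rho^{-1})+1)$ as introduced above.

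For \eqref{eq:bound-flow-map-refined}, I would integrate this inequality after composing with $\widetilde{\mathsf{H}}$. Using $\widetilde{\mathsf{H}}'(r) = \frac{1}{r^2 \widetilde{\nu}(r^{-1})}$ and the chain rule, one checks that $\frac{\dd}{\dd t}\widetilde{\mathsf{H}}(|\xi(t)|^{-1}) = -\widetilde{\nu}(|\xi(t)|)^{-1}\frac{\dd}{\dd t}|\xi(t)|$, so the inequality above becomes $\big|\frac{\dd}{\dd t}\widetilde{\mathsf{H}}(|\xi(t)|^{-1})\big| \leq \|u(t)\|_{C^1_\mathrm{m}(\RR^2)}$. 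Integrating between $s$ and $t$ (in either order) and recalling $\xi(s) = x-y$ yields
\begin{align*}
  \widetilde{\mathsf{H}}\big(|\Phi_{t,s}(x)-\Phi_{t,s}(y)|^{-1}\big) \geq \widetilde{\mathsf{H}}\big(|x-y|^{-1}\big) - \Big|\int_s^t \|u(\tau)\|_{C^1_\mathrm{m}(\RR^2)}\,\dd\tau\Big|,
\end{align*}
and applying the increasing inverse $\widetilde{\mathsf{H}}^{-1}$ — which is defined on all of $\RR$ thanks to the Osgood condition \eqref{eq:Osgood-cond}, as recorded just before the statement — gives \eqref{eq:bound-flow-map-refined}.

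For \eqref{eq:flow-map-up-l}, the idea is to turn this implicit bound into an explicit exponential one by exploiting the monotonicity of $m$ instead of inverting $\widetilde{\mathsf{H}}$. One may assume $|\xi(t)| > |x-y|$, since otherwise the right-hand side of \eqref{eq:flow-map-up-l}, being at least $|x-y|$, already dominates $|\xi(t)|$. Taking first $s \leq t$ and setting $t_1 \triangleq \sup\{\tau \in [s,t] : |\xi(\tau)| \leq |x-y|\}$, continuity forces $|\xi(t_1)| = |x-y|$ and $|\xi(\tau)| \geq |x-y|$ on $[t_1,t]$; hence $m(|\xi(\tau)|^{-1}) \leq m(|x-y|^{-1})$ there, so the differential inequality gives $\frac{\dd}{\dd\tau}|\xi(\tau)| \leq \big(m(|x-y|^{-1})+1\big)\|u(\tau)\|_{C^1_\mathrm{m}(\RR^2)}\,|\xi(\tau)|$, and Grönwall's inequality on $[t_1,t]$ — together with $|\xi(t_1)| = |x-y|$ and nonnegativity of the integrand — yields \eqref{eq:flow-map-up-l}. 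The case $t < s$ is identical after reversing time in \eqref{eq:flow_map_def}, which is what produces the absolute value on the time integral.

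The argument is largely routine ODE bookkeeping, so I do not expect a genuine obstacle; the only points that need a little care are the well-definedness of $|\xi(t)|^{-1}$ and of $\widetilde{\mathsf{H}}(|\xi(t)|^{-1})$ — both ensured by injectivity of the flow map — and the clean extraction of the exponential bound in \eqref{eq:flow-map-up-l}, which I would carry out through the stopping time $t_1$ as above.
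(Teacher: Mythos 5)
Your proposal is correct and follows essentially the same route as the paper: the same differential inequality $\big|\tfrac{\dd}{\dd t}|\xi(t)|\big|\leq \|u(t)\|_{C^1_\mathrm{m}}\widetilde{\nu}(|\xi(t)|)$, composition with $\widetilde{\mathsf{H}}$ for \eqref{eq:bound-flow-map-refined}, and the monotonicity of $m$ on the range $\rho\geq|x-y|$ for \eqref{eq:flow-map-up-l}. The only cosmetic difference is that for the second bound the paper estimates the spatial integral $\big|\int_{|x-y|}^{|\xi(t)|}\frac{\dd\rho}{\widetilde{\nu}(\rho)}\big|$ from below by $\frac{\log(|\xi(t)|/|x-y|)}{m(|x-y|^{-1})+1}$ directly, whereas you rerun the argument in time via a stopping time and Gr\"onwall's inequality with the frozen coefficient $m(|x-y|^{-1})+1$; both versions are valid.
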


\begin{proof}
According to \eqref{eq:flow_map_def} and \eqref{eq:Csig-m}, it follows that for every $t,s\in \mathbb{R}_+$,
\begin{align*}
  \Big|\frac{\dd (\Phi_{t,s}(x) - \Phi_{t,s}(y))}{\dd t}\Big|
  \leq \big|u(\Phi_{t,s}(x),t)-u(\Phi_{t,s}(y),t)\big|
  \leq \lVert u(t)\rVert_{C^1_\mathrm{m}(\RR^2)} \,\widetilde\nu(|\Phi_{t,s}(x)-\Phi_{t,s}(y)|).
\end{align*}
Since $\frac{\dd}{\dd \rho}\widetilde{\mathsf{H}}(\rho^{-1})= - \frac{1}{\widetilde{\nu}(\rho)}$, we get
\begin{align*}
  \Big|\widetilde{\mathsf{H}}(|\Phi_{t,s}(x) - \Phi_{t,s}(y)|^{-1}) - \widetilde{\mathsf{H}}(|x-y|^{-1}) \Big| =\Big|\int_{|x-y|}^{|\Phi_{t,s}(x) - \Phi_{t,s}(y)|}\frac{\dd \rho}{\widetilde\nu(\rho)}\Big|
  \leq \Big|\int_s^t\lVert u(\tau)\rVert_{C^1_\mathrm{m}(\RR^2)}\dd \tau \Big|,
\end{align*}
which directly implies \eqref{eq:bound-flow-map-refined}.

If $|\Phi_{t,s}(x) - \Phi_{t,s}(y)|\leq |x-y|$, then \eqref{eq:flow-map-up-l} clearly holds. If $|\Phi_{t,s}(x) - \Phi_{t,s}(y)|> |x-y|$, we have
\begin{align*}
 \Big|\int_{|x-y|}^{|\Phi_{t,s}(x) - \Phi_{t,s}(y)|}\frac{\dd \rho}{\widetilde\nu(\rho)}\Big|
  \geq \frac{1}{m(|x-y|^{-1})+1} \int_{|x-y|}^{|\Phi_{t,s}(x) - \Phi_{t,s}(y)|}\frac{\dd \rho}{\rho} 
  = \frac{\log \frac{|\Phi_{t,s}(x) - \Phi_{t,s}(y)|}{|x-y|}}{m(|x-y|^{-1})+1},
\end{align*}
and consequently,
\begin{align*}
  |\Phi_{t,s}(x)-\Phi_{t,s}(y)|
  & \leq |x-y|\exp\bigg(\big(m(|x-y|^{-1}) + 1\big)
  \,\Big|\int_{|x-y|}^{|\Phi_{t,s}(x) - \Phi_{t,s}(y)|}\frac{\dd \rho}{\widetilde\nu(\rho)}\Big|\bigg)\\
  & \leq |x-y|\exp\bigg(\big(m(|x-y|^{-1}) + 1\big)
  \Big|\int_s^t\lVert u(\tau)\rVert_{C^1_\mathrm{m}(\RR^2)}\dd \tau \Big|\bigg).	
\end{align*}
This finishes the proof of \eqref{eq:flow-map-up-l}.
\end{proof}

\subsection{Closing estimates}\label{subsec:main-proof-1}
Recall that $\varphi(x,t)$ is a level-set characterization of the domain $D(t)$, 
satisfying \eqref{eq:level-set-eq}, and that $W = \nabla^\perp \varphi$. The dynamics \eqref{eq:parWk-1W} with $k=1$ reads 
\begin{align}\label{eq:transport-whole}
  \partial_t W + (u\cdot\nabla) W = W\cdot \nabla u  = \partial_W u,
  \quad W|_{t=0}=W_0.
\end{align}
Using the flow map $\Phi_{t,s}$ defined in \eqref{eq:flow_map_def}, we deduce 
\begin{align}\label{eq:tranport-intexp}
  W(x,t) = W_0(\Phi_{0,t}(x))
  +\int_0^t \big(\partial_W u\big)(s,\Phi_{s,t}(x))\dd s.
\end{align}
We apply the bound \eqref{eq:part-W-u-Hold} on $\lVert \partial_W u\rVert_{C_m^\mu(\RR^2)}$ in Lemma \ref{lem:striated_est_mu} and obtain the $\lVert W\rVert_{L^\infty}$ bound:
\begin{equation}\label{eq:ineq-L-infty}
  \lVert W(t)\rVert_{L^{\infty}} \leq \lVert W_0\rVert_{L^{\infty}}
  + \int_0^t \lVert \partial_W u (s)\rVert_{L^{\infty}}\dd s \leq \lVert W_0\rVert_{L^\infty}
  + C \int_0^t \big(1+\log \mathbf{\Delta}_\gamma\big) \lVert W(s) \rVert_{\dot{C}^{\mu(s)}}\dd s,
\end{equation}
for any $\mu\in (0,1)$, $\gamma\in(0,\mu]$ and the mapping $s\in [0,t]\mapsto \mu(s)\in [\frac{\mu}{2},\mu]$.

Next, we derive a lower bound estimate on $|\nabla\varphi|_{\inf}$. Taking an inner product on both sides of \eqref{eq:transport-whole} with $W$ leads to
\begin{align*}
  \partial_t|W|^2 + u\cdot \nabla |W|^2
  = 2 (\nabla u\, \mathbf{w}\cdot\mathbf{w}) |W|^2,
  \quad \textrm{with}\;\; \mathbf{w} = \tfrac{W}{|W|}.
\end{align*}
Then, we infer that
\begin{align*}
  \partial_t\big(\log |W\circ \Phi_{t,0}|\big)
  \leq |(\nabla u\, \mathbf{w}\cdot\mathbf{w}) \circ \Phi_{t,0}|.
\end{align*}
Thus, we obtain the following key result:
\begin{align*}
  |\nabla \varphi|_{\inf}(t) \geq |\nabla \varphi_0|_{\inf}\exp\Big(-\int_0^t
  \|(\nabla u\, \mathbf{w}\cdot\mathbf{w}) \|_{L^\infty} \dd s \Big).
\end{align*}
Taking advantage of Lemma \ref{lem:es-u-whole}, we know that for any $\gamma\in (0,\mu]$,
\begin{align*}
  \|\nabla u\, \mathbf{w}\cdot\mathbf{w}\|_{L^\infty}
  \leq C \big(m(\mathbf{\Delta}_\gamma)+1\big) (1+\log \mathbf{\Delta}_{\gamma}),
\end{align*}
with $\mathbf{\Delta}_\gamma = \frac{\|\nabla \varphi\|_{\dot{C}^\gamma}}{|\nabla \varphi|_{\inf}} + 1$.
Collecting the above estimates yields that for each $\gamma\in (0,\mu]$,
\begin{align}\label{eq:inf-part}
  |\nabla \varphi|_{\inf} (t) \geq |\nabla \varphi_0|_{\inf}
  \exp\Big(- C \int_0^t \big(m(\mathbf{\Delta}_\gamma(s))+1\big)\big(1+\log\mathbf{\Delta}_\gamma(s)\big)\dd s\Big).
\end{align}

Now we perform the method of losing estimates on $\|\nabla \varphi\|_{C^\mu(\RR^2)}=\|W\|_{C^\mu(\RR^2)}$,
see \cite[Section 3.3]{BCD11} for a general statement of this approach for the transport equation.
Our novelty is that here we can avoid the use of the Littlewood-Paley decomposition,
which is crucial when we tackle this problem in the domain with boundary.

Let $T>0$ and $\mu\in (0,1)$, $\varepsilon \in (0,\frac{\mu}{2})$ be fixed, and let $\gamma\in (0,\mu)$ be chosen later, 
then we define
\begin{align}\label{eq:V-t-def}
  V_\gamma(t) \triangleq \int_0^t \big(1+\log\,\mathbf{\Delta}_{\gamma}(s)\big) \dd s,\quad
  \eta_{\gamma,T} \triangleq \frac{\varepsilon}{V_\gamma(T)},\quad \mu(t) \triangleq \mu - \eta_{\gamma,T} V_\gamma(t).
\end{align}
It is clear to see that $\mu(0)=\mu$, $\mu(T)=\mu-\varepsilon\geq\frac\mu2$, and $\mu(t)$ is decreasing for $t\in [0,T]$.

For any $x,y\in \RR^2$, from \eqref{eq:tranport-intexp} we have
\begin{align}\label{eq:diff-phi}
  &|W(x,t) - W(y,t)| \leq \lVert W_0\rVert_{\dot C^\mu}|\Phi_{0,t}(x)-\Phi_{0,t}(y)|^\mu\nonumber \\
  & \qquad + \int_0^t\lVert \partial_W u (s)\rVert_{C^{{\mu(s)}}_\mathrm{m}}
  |\Phi_{s,t}(x)-\Phi_{s,t}(y)|^{\mu(s)} \big(m(|\Phi_{s,t}(x)-\Phi_{s,t}(y)|^{-1}) + 1\big) \dd s.
\end{align}
Recalling Lemma \ref{lem:continuity-u}, for any $\gamma\in (0,1)$, there exists $\widetilde{C} \geq 1$ such that
\begin{align*}
  \lVert u(t)\rVert_{C^1_{\mathrm{m}}(\mathbb{R}^2)}\leq \widetilde{C}\big(1+\log \mathbf{\Delta}_{\gamma}(t) \big),
  \quad t\in \mathbb{R}_+.
\end{align*}
Applying \eqref{eq:flow-map-up-l} in Lemma \ref{lem:flow-map-os}, we deduce that for every $t\geq s \geq 0$,
\begin{align}\label{eq:est-diff-Phi-s-t}
  |\Phi_{s,t}(x)-\Phi_{s,t}(y)| & \leq |x-y|\exp\Big(\widetilde{C} \big(m(|x-y|^{-1})+1\big) \int^t_s (1+\log \mathbf{\Delta}_\gamma(\tau))\dd \tau\Big)\nonumber\\
  & = |x-y|\, e^{\mathfrak{M}_{s,t,\gamma}(|x-y|^{-1})},
\end{align}
where for convenience, we introduce the shortcut notation
\begin{align}\label{def:MN}
  \mathfrak{M}_{s,t,\gamma}(r) \triangleq \widetilde{C} \big(m(r)+1\big) \int^t_s (1+\log \mathbf{\Delta}_\gamma(\tau))\dd \tau.
\end{align}
It follows from \eqref{eq:m-prop4}, \eqref{eq:est-diff-Phi-s-t}, and the monotonicity of $m$ that  
\begin{align}
  & |\Phi_{s,t}(x)- \Phi_{s,t}(y)|^{\mu(s)} \big(m(|\Phi_{s,t}(x)-\Phi_{s,t}(y)|^{-1}) +1\big) \nonumber\\
  & \leq C |x-y|^{\mu(s)}e^{\mu(s)\, \mathfrak{M}_{s,t,\gamma}(|x-y|^{-1}) } \Big( m\big(|x-y|^{-1} e^{-\mathfrak{M}_{s,t,\gamma}(|x-y|^{-1})}\big) + 1\Big) \nonumber\\
  & \leq C |x-y|^{\mu(s)} \big(m(|x-y|^{-1})+1\big) e^{\mu \,\mathfrak{M}_{s,t,\gamma}(|x-y|^{-1})},\label{eq:est-diff-Mphi-t-s}
\end{align}
where $C>0$ depends only on $\mu$ (recalling that $\mu(s)\in [\mu-\varepsilon, \mu]\subset [\frac{\mu}{2},\mu]$). 
Applying the above estimates to \eqref{eq:diff-phi}, we find
\begin{align}\label{eq:est-main-ineq}
  &\frac{|W(x,t) - W(y,t)|}{|x-y|^{\mu(t)}}
   \leq \lVert W_0\rVert_{\dot{C}^\mu}|x-y|^{\mu-\mu(t)} e^{\mu \,\mathfrak{M}_{0,t,\gamma}(|x-y|^{-1})}
  \nonumber \\
  & \qquad + C_1 \int_0^t(1+\log \mathbf{\Delta}_{\gamma}(s))
  \lVert W(s)\rVert_{\dot{C}^{\mu(s)}}|x-y|^{\mu(s)-\mu(t)}\big(m(|x-y|^{-1})+1\big)
  e^{\mu\, \mathfrak{M}_{s,t,\gamma}(|x-y|^{-1})} \dd s \nonumber \\
  & \quad \triangleq J_1+J_2,
\end{align}
where $C_1>0$ depend only on $\mu$ and $\gamma$.

In the following, we divide into three cases to discuss the estimate of \eqref{eq:est-main-ineq}.

\medskip\noindent\textbf{Case I}: Suppose that $|x-y|\in(0,1]$ is small enough such that
\begin{align}\label{reg-cond:high-freq}
  \frac{\log |x-y|^{-1}}{m(|x-y|^{-1})+1}\geq \frac{C' \widetilde{C}}{\eta_{\gamma,T}},
\end{align}
where the constant $C'>\mu$ will be fixed later.
By \eqref{reg-cond:high-freq}, we have that
\begin{align*}
  |x-y|^{\mu(s)-\mu(t)} & = \exp\Big(\eta_{\gamma,T}\log |x-y|
  \int^t_{s}(1+\log \mathbf{\Delta}_{\gamma}(\tau))\dd \tau\Big) \\
  & \leq \exp\Big(-C' \widetilde{C} \big(m(|x-y|^{-1})+1\big) \int^t_{s}(1+\log \mathbf{\Delta}_{\gamma}(\tau))\dd \tau \Big)
  =e^{-C' \,\mathfrak{M}_{s,t,\gamma}(|x-y|^{-1})}.
\end{align*}
Consequently, the terms $J_1$ and $J_2$ can be estimated as follows
\begin{align*}
  J_1\leq \lVert W_0\rVert_{\dot{C}^\mu}e^{-(C' - \mu)\,\mathfrak{M}_{0,t,\gamma}(|x-y|^{-1})},
\end{align*}
and
\begin{align*}
  J_2	& \leq C_1 \sup_{s\in [0,t]}\lVert W(s)\rVert_{\dot{C}^{\mu(s)}}
  \int_0^t  \big(1+\log\mathbf{\Delta}_\gamma(s)\big) \big(m(|x-y|^{-1})+1\big)
  e^{-(C'- \mu)\,\mathfrak{M}_{s,t,\gamma}(|x-y|^{-1})}\dd s \\
  &= \frac{C_1}{\widetilde{C}}\sup_{s\in [0,t]}\lVert W(s)\rVert_{\dot{C}^{\mu(s)}}\int_0^t
  e^{-(C'- \mu)\, \mathfrak{M}_{s,t,\gamma}(|x-y|^{-1})}\tfrac{\dd}{\dd s} \big(-\mathfrak{M}_{s,t,\gamma}(|x-y|^{-1})\big) \dd s\\
  & \leq \frac{1}{ C' - \mu }\frac{C_1}{\widetilde{C}} \sup_{s\in [0,t]}\lVert W(s)\rVert_{\dot{C}^{\mu(s)}}.
\end{align*}
Hence, choosing $C' = \mu + 2 \frac{C_1}{\widetilde{C}}$, the estimates above imply
\begin{align}\label{eq:Holder-ineq-high-freq}
  \frac{|W(x,t) - W(y,t)|}{|x-y|^{\mu(t)}}
  \leq \lVert W_0\rVert_{\dot{C}^\mu}
  + \frac{1}{2}\sup_{s\in [0,t]}\lVert W(s)\rVert_{\dot{C}^{\mu(s)}}.
\end{align}

\medskip\noindent\textbf{Case II}: Suppose $|x-y|\in(0,1]$ and \eqref{reg-cond:high-freq} is not satisfied, that is,
\begin{align}\label{reg:low-freq}
  \frac{\log |x-y|^{-1}}{m(|x-y|^{-1})+1 }\leq \frac{C' \widetilde{C}}{\eta_{\gamma,T}} = \frac{C' \widetilde{C} V_\gamma(T)}{\varepsilon}.
\end{align}
Thanks to \eqref{eq:m-prop2}, Lemma \ref{lem:H2H3} and \eqref{reg:low-freq}, for $|x-y|\le \frac{1}{2}$, we have
\begin{align*}
  \frac{\log^{\frac{1}{2}}|x-y|^{-1}}{C} \leq \frac{\log |x-y|^{-1}}{C(1+\log^{\frac{1}{2}} |x-y|^{-1})}
  \leq \frac{\log |x-y|^{-1}}{m(|x-y|^{-1})+1}\leq \frac{C' \widetilde{C} V_\gamma(T)}{\varepsilon},
\end{align*}
which implies that (for $|x-y|\in [\frac12,1)$, the following estimate is clear)
\begin{align*}
  \log |x-y|^{-1}\leq \Big(\tfrac{C C'\widetilde{C} V_\gamma(T)}{\varepsilon}\Big)^2,
  \quad \textrm{and}\quad |x-y|^{-1}\leq e^{\big(\frac{C C' \widetilde{C} V_\gamma(T)}{\varepsilon}\big)^2}.
\end{align*}
Consequently, we have
\begin{align*}
  m(|x-y|^{-1})\leq \widetilde{m} \Big(\big(\tfrac{CC'\widetilde{C} V_\gamma(T)}{\varepsilon}\big)^2\Big)
  \leq C\widetilde{m}\Big(\tfrac{CC'\widetilde{C} V_\gamma(T)}{\varepsilon}\Big)
  \leq C \big(\widetilde{m}(V_\gamma(T))+1\big) = C \big(m(e^{V_\gamma(T)})+1\big),
\end{align*}
where we have used \eqref{H1}, \eqref{eq:H3} and \eqref{eq:m-prop3-2} in the three inequalities, respectively, and the constants $C'$, $\widetilde{C}$ and $\varepsilon$ are absorbed in $C$.
Together with \eqref{eq:est-main-ineq}, and the definition of $\mathfrak{M}_{s,t,\gamma}(\cdot)$ in \eqref{def:MN}, 
it follows that
\begin{align*}
  J_1\leq \lVert W_0\rVert_{\dot{C}^\mu}\exp\Big( C \big(m(e^{V_\gamma(T)})+1\big)
  \int_0^t\big(1+\log \mathbf{\Delta}_\gamma(\tau)\big) \dd \tau\Big) \leq \lVert W_0\rVert_{\dot{C}^\mu} e^{C \,\mathfrak{M}_{0,t,\gamma}(e^{V_\gamma(T)})}.
\end{align*}
Similarly, since $\mu(s)\geq\mu(t)$ for any $t\geq s$, we also deduce that
\begin{align*}
  J_2 \leq  C \big(m(e^{V_\gamma(T)})+1\big)\int_0^t\big(1+\log\mathbf{\Delta}_\gamma(s)\big) \lVert W(s)\rVert_{\dot{C}^{\mu(s)}}e^{C \,\mathfrak{M}_{s,t,\gamma}(e^{V_\gamma(T)})}\dd s.
\end{align*}
Hence, in this case, collecting the above estimates yields
\begin{align}\label{eq:Holder-ineq-low-freq}
  \frac{|W(x,t) - W(y,t)|}{|x-y|^{\mu(t)}}
  &\leq \lVert W_0\rVert_{\dot{C}^\mu} e^{C \,\mathfrak{M}_{0,t,\gamma}(e^{V_\gamma(T)})}\\
  & \quad  + C \big(m(e^{V_\gamma(T)})+1\big)\int_0^t\big(1+\log\mathbf{\Delta}_\gamma(s)\big)
  \lVert W(s)\rVert_{\dot{C}^{\mu(s)}}e^{C \,\mathfrak{M}_{s,t,\gamma}(e^{V_\gamma(T)})}\dd s.\nonumber
\end{align}

\medskip\noindent\textbf{Case III}:
When $|x-y|>1$, we directly use the $\lVert W\rVert_{L^{\infty}}$ estimate \eqref{eq:ineq-L-infty} to obtain
\begin{align}\label{eq:Holder-half-norm-rou-part}
  \frac{|W(x,t) - W(y,t)|}{|x-y|^{\mu(t)}}\leq 2\lVert W\rVert_{L^{\infty}}
  \leq 2\lVert W_0\rVert_{L^\infty} + C \int_0^t \big(1+\log \mathbf{\Delta}_\gamma(s)\big) \lVert W(s) \rVert_{\dot{C}^{\mu(s)}}\dd s.
\end{align}

\medskip
Based on the analysis of the three cases, it follows from \eqref{eq:Holder-ineq-high-freq}, 
\eqref{eq:Holder-ineq-low-freq} and \eqref{eq:Holder-half-norm-rou-part} that
\begin{align}
   \sup_{|x-y|>0}\frac{|W(x,t) - W(y,t)|}{|x-y|^{\mu(t)}}
  &\leq  \frac{1}{2} \sup_{s\in [0,t]}\lVert W\rVert_{C^{\mu(s)}}
  + C\lVert W_0\rVert_{\dot{C}^\mu}e^{C \,\mathfrak{M}_{0,t,\gamma}(e^{V_\gamma(T)})}\notag\\
  & \quad  + C \big(m(e^{V_\gamma(T)})+1\big)\int_0^t\big(1+\log\mathbf{\Delta}_\gamma(s)\big)
  \lVert W(s)\rVert_{\dot{C}^{\mu(s)}}e^{C \,\mathfrak{M}_{s,t,\gamma}(e^{V_\gamma(T)})}\dd s.\nonumber
\end{align}
Together with \eqref{eq:ineq-L-infty}, we deduce that
\begin{align*}
  \lVert W(t)\rVert_{C^{\mu(t)}} 
  & \leq \frac{1}{2} \sup_{s\in [0,t]}
  \lVert W(s)\rVert_{C^{\mu(s)}} + C\lVert W_0\rVert_{\dot{C}^\mu}e^{C \,\mathfrak{M}_{0,t,\gamma}(e^{V_\gamma(T)})}\\
  & \quad + C \big(m(e^{V_\gamma(T)})+1\big)\int_0^t\big(1+\log\mathbf{\Delta}_\gamma(s)\big)
  \lVert W(s)\rVert_{\dot{C}^{\mu(s)}}e^{C \,\mathfrak{M}_{s,t,\gamma}(e^{V_\gamma(T)})}\dd s.
\end{align*}
Taking supreme on $s\in[0,t]$, it immediately leads to
\begin{align}\label{eq:ineq-Holder-est}
  \sup_{s\in [0,t]}\lVert W(s)\rVert_{C^{\mu(s)}}
  & \leq C\lVert W_0\rVert_{C^\mu}e^{C \,\mathfrak{M}_{0,t,\gamma}(e^{V_\gamma(T)})}\nonumber\\
  & \quad + C \big(m(e^{V_\gamma(T)})+1\big)\int_0^t\big(1+\log\mathbf{\Delta}_\gamma(s)\big)
  \lVert W(s)\rVert_{\dot{C}^{\mu(s)}}e^{C \,\mathfrak{M}_{s,t,\gamma}(e^{V_\gamma(T)})}\dd s.
\end{align}
Denote
\begin{align*}
  \mathcal{E}(t) \triangleq \sup_{s\in [0,t]}\lVert W(s)\rVert_{C^{\mu(s)}}
  e^{-C \,\mathfrak{M}_{0,t,\gamma}(e^{V_\gamma(T)})}.
\end{align*}
Then we find from \eqref{eq:ineq-Holder-est} that
\begin{align*}
  \mathcal{E}(t)\leq C \lVert W_0\rVert_{C^\mu}
  + C \big(m(e^{V_\gamma(T)})+1\big) \int_0^t\big(1+\log\mathbf{\Delta}_\gamma(s)\big) \mathcal{E}(s)\dd s.
\end{align*}
A direct application of the Gr\"onwall's inequality implies
\begin{align*}
 \mathcal{E}(t)\leq C \lVert W_0\rVert_{C^\mu}\exp\Big(\int_0^t C \big(m(e^{V_\gamma(T)})+1\big)\big(1+\log\mathbf{\Delta}_\gamma(s)\big)\dd s\Big) = C \lVert W_0\rVert_{C^\mu} e^{C \,\mathfrak{M}_{0,t,\gamma}(e^{V_\gamma(T)})}.
\end{align*}
Therefore, we conclude that
\begin{align}\label{ineq:holder-part}
  \lVert \nabla \varphi(T)\rVert_{C^{\mu-\varepsilon}} & =  \lVert W(T)\rVert_{C^{\mu(T)}} 
  \leq \mathcal{E}(T) e^{C \,\mathfrak{M}_{0,T,\gamma}(e^{V_\gamma(T)})}
  \leq C\lVert W_0\rVert_{C^{\mu}} e^{2C \,\mathfrak{M}_{0,T,\gamma}(e^{V_\gamma(T)})}\nonumber\\ 
  & \leq C\lVert \nabla \varphi_0\rVert_{C^{\mu}} \exp\Big(C \big(m(e^{V_\gamma(T)})+1\big)V_\gamma(T)\Big),
\end{align}
for any $T>0$ and $\gamma\in (0,1)$, and the constant $C$ is independent of $T$.

Now, choosing $\gamma=\mu-\varepsilon$, from \eqref{eq:inf-part} and \eqref{ineq:holder-part}, we find that
(recalling that $\mathbf{\Delta}_{\mu -\varepsilon}$ is defined in \eqref{def:Delta-gamma})
\begin{align}\label{eq:Deltabound}
  \mathbf{\Delta}_{\mu-\varepsilon}(T)
  \leq C\exp\Big(C
  \big(m(e^{V_{\mu-\varepsilon}(T)})+1\big)V_{\mu-\varepsilon}(T) + C \int_0^{T}m\big(\mathbf{\Delta}_{\mu -\varepsilon}(s)\big)
  \big(1+\log \mathbf{\Delta}_{\mu -\varepsilon}(s)\big) \dd s\Big),
\end{align}
where $C= C(\mu,\varepsilon) > 0$ is independent of $T$,
and $V_{\mu-\varepsilon}(T)$ is given by \eqref{eq:V-t-def}.
By setting
\begin{align}\label{def:fandM}
  f(t)\triangleq V_{\mu-\varepsilon}'(t)= 1 + \log \mathbf{\Delta}_{\mu -\varepsilon}(t), \quad
  \mathcal{M}(r) \triangleq
  \begin{cases}
    r \big(\widetilde{m}(r)+1\big) = r \big(m(e^r)+1\big), & \forall\, r\geq r_0, \\
    r \big(m(e^{r_0})+1\big), & \forall\, 0< r \leq r_0,
  \end{cases}
\end{align}
where $r_0\geq 2$ is the constant appearing in \eqref{eq:r-tildm-prop}, we rewrite the above inequality as
\begin{equation}\label{ineq:main-whole}
\begin{aligned}
  f(t) & \leq C+C\Big(\widetilde{m}\Big(\int_0^t f(s)\dd s\Big)+1\Big)\int_0^t f(s)\dd s
  + C \int_0^t \widetilde{m}\big(f(s)\big)f(s)\dd s \\
  & \leq C+C\mathcal{M}\Big(\int_0^t f(s)\dd s\Big)+C\int_0^t \mathcal{M}(f(s))\dd s,
\end{aligned}
\end{equation}
where $C>0$ may depend on $\mu$, $\varepsilon$, and the initial data.
According to \eqref{eq:r-tildm-prop}, \eqref{eq:m-prop1b} and \eqref{def:fandM}, we have
\begin{align}\label{eq:Mr-prop}
  \textrm{$r\mapsto \mathcal{M}(r)$ is strictly increasing and convex in $(0,+\infty)$},
\end{align}
and for any $\lambda>0$, $r> 0$, $\epsilon>0$
\begin{align}\label{eq:mathcalM}
  \mathcal{M}(\lambda r) \leq C\lambda r\big(\widetilde{m}(\lambda r)+1\big)\leq  C\lambda(1+\log_{+}^{\beta+\epsilon} \lambda)\mathcal{M}(r).
\end{align}
Applying Jensen's inequality, we get
\begin{align*}
  \mathcal{M}\Big(\int_0^t f(s)\dd s\Big)
  = \mathcal{M}\Big(\frac{1}{t}\int_0^t tf(s)\dd s\Big)
  \leq \frac{1}{t}\int_0^t \mathcal{M}(tf(s))\dd s
  \leq C\log^{\beta+\epsilon}(e+t)\int_0^t\mathcal{M}(f(s))\dd s.
\end{align*}
Consequently, it follows from \eqref{ineq:main-whole} that
\begin{align}\label{ineq:f}
  f(t)\leq C+C\log^{\beta+\epsilon}(e+t)\int_0^t\mathcal{M}(f(s))\dd s.
\end{align}
Denote by
\begin{align*}
  R(t) \triangleq \frac{r_0}{\log^{\beta+\epsilon}(e+t)}+\int_0^{t}\mathcal{M}(f(s))\dd s,
\end{align*}
so that $R(0) = r_0$ and $f(t)\leq C\log^{\beta+\epsilon}(e+t)R(t)$.
Then, together with \eqref{eq:mathcalM}, we obtain 
\begin{align*}
  \frac{\dd R(t)}{\dd t} & = - \frac{\beta+\epsilon}{(e+t)\log^{1+\beta+\epsilon}(e+t)} + \mathcal{M}\big(f(t)\big) \\
  & \leq \mathcal{M}\big(C\log^{\beta+\epsilon}(e+t)R(t)\big) \leq C\log^{\beta+2\epsilon}(e+t)\mathcal{M}(R(t)).
\end{align*}
Integrating the above differential inequality on $t\in[0,T]$ yields
\begin{align*}
  \mathcal{H}(R(T)) = \mathcal{H}(R(T))-\mathcal{H}(R(0))\leq C(1+T)\log^{\beta+2\epsilon}(e+T),
\end{align*}
where
\begin{align}\label{def:Hr}
  \mathcal{H}(r)\triangleq \int_{r_0}^r \frac{\dd \tilde{r}}{\mathcal{M}(\tilde{r})}
  = 
  \begin{cases}
    \int_{r_0}^r \frac{1}{\tilde{r} (\widetilde{m}(\tilde{r})+1)} \dd \tilde{r},\quad & \forall\, r\geq r_0, \\
    \frac{1}{m(e^{r_0})+1} \log \frac{r}{r_0},\quad & \forall\, 0< r \leq r_0.
  \end{cases}
\end{align}
Note that $\mathcal{H}(R(0))=\mathcal{H}(r_0)=0$, $\lim_{r\to0}\mathcal{H}(r)=-\infty$, and the Osgood condition \eqref{eq:Osgood-cond} implies
\begin{align*}
 \lim_{r\to+\infty}\mathcal{H}(r)=\int_{r_0}^{+\infty}\frac{1}{r(\widetilde{m}(r)+1)}\dd r
 \geq \frac{1}{1+\frac{1}{\widetilde{m}(r_0)}}\int_{r_0}^{+\infty}\frac{\dd r}{r\,\widetilde{m}(r)}=+\infty.
\end{align*}
Together with \eqref{eq:Mr-prop}, we infer that $r\mapsto \mathcal{H}(r)$ is strictly increasing in $[0,+\infty)$,
and it has a unique inverse function $\mathcal{H}^{-1}(\cdot)$ in $(-\infty,+\infty)$.
Hence, it follows that for any $T>0$,
\begin{align}\label{eq:fT-est-final}
  f(T)\leq \log^{\beta+\epsilon}(e+T)R(T) \leq \log^{\beta+\epsilon}(e+T)\, \mathcal{H}^{-1}\big(C(1+T)\log^{\beta+2\epsilon}(e+T)\big).
\end{align}
This implies the boundedness of $\mathbf{\Delta}_{\mu -\varepsilon}(T)$, which leads to the $C^{\mu-\varepsilon}$ boundary regularity. Indeed, from \eqref{eq:inf-part}, \eqref{ineq:holder-part} and \eqref{eq:fT-est-final}, we have that for any $T>0$,
\begin{align}\label{eq:gradphiinf}
  |\nabla \varphi(T)|_{\inf}
  & \geq |\nabla \varphi_0|_{\inf}\exp \Big(-C\int_0^T \mathcal{M}(f(t))\dd t \Big) \geq |\nabla\varphi_0|_{\inf}\,e^{-CR(T)}\nonumber\\
  & \geq |\nabla\varphi_0|_{\inf}\exp\Big(-C\mathcal{H}^{-1}\big(C(1+T)\log^{\beta+2\epsilon}(e+T)\big)\Big),
\end{align}
and
\begin{align}\label{eq:gradphiCmu}
  \lVert \nabla \varphi(T)\rVert_{C^{\mu-\varepsilon}}
  & \leq C \lVert \nabla \varphi_0\rVert_{C^{\mu}}
  \exp\Big(C\mathcal{M}\Big(\int_0^T f(t)\dd t\Big)\Big) \leq C \lVert \nabla \varphi_0\rVert_{C^\mu}\exp\Big(C\log^{\beta+\epsilon}(e+T)R(T)\Big)\nonumber\\
  &\leq C \lVert \nabla \varphi_0\rVert_{C^\mu}\exp\Big(C\log^{\beta+2\epsilon}(e+T)\mathcal{H}^{-1}\big(C(1+T)\log^{\beta+2\epsilon}(e+T)\big)\Big).
\end{align}

To conclude, we have proved the following theorem. 
\begin{theorem}\label{thm:regularityCmu}
Let $\mu\in (0,1)$.
Assume that $m(\xi)= m(|\xi|)$ is a radial function of $\RR^2$ with $m(r)$ 
satisfying \eqref{H1}-\eqref{H2a}-\eqref{eq:Osgood-cond}.
Consider the unique global patch solution \eqref{intro:single-patch} 
of the 2D Loglog-Euler type equation \eqref{m-SQG} associated with the patch data 
$\omega_0 = \mathbf{1}_{D_0}$ where $D_0$ is a simply connected and bounded domain with boundary $\partial D_0\in C^{1,\mu}$. 
Then for any $t>0$ and any $\varepsilon\in (0,\mu)$, 
the patch boundary $\partial D(t)$ persists the $C^{1,\mu-\varepsilon}$-regularity, 
namely, $\partial D(t)\in C^{1,\mu-\varepsilon}$. 

More precisely, for any given $\varepsilon>0$ and any $\epsilon>0$, there is some constant $C>0$ depending on $\mu$, $\varepsilon$, $\epsilon$ and the initial data, such that the estimates \eqref{eq:gradphiinf} and \eqref{eq:gradphiCmu} hold, where $\varphi(\cdot)$ is the solution of \eqref{eq:level-set-eq} and the mapping $\mathcal{H}(\cdot)$ is given by \eqref{def:Hr}.
\end{theorem}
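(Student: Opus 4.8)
The plan is to read Theorem~\ref{thm:regularityCmu} as the bookkeeping conclusion of the a priori estimates collected in Lemmas~\ref{lem:continuity-u}, \ref{lem:tangential_velocity}, \ref{lem:striated_est_mu}, \ref{lem:es-u-whole} and \ref{lem:flow-map-os}, run through the losing-exponent scheme on the level-set equation \eqref{eq:level-set-eq}--\eqref{eq:transport-whole}. Since $\partial D_0\in C^{1,\mu}$, I would fix a $C^{1,\mu}$ level-set characterization $\varphi_0$ of $D_0$, set $W_0=\nabla^\perp\varphi_0$, propagate $\varphi(\cdot,t)=\varphi_0\circ\Phi_t^{-1}$ along the flow from Theorem~\ref{thm:Yudovich_type}, and use $\partial_\xi z(\xi,t)=W(z(\xi,t),t)$ together with $|W|$ bounded above and below to reduce $\partial D(t)\in C^{1,\mu-\varepsilon}$ to two bounds: an upper bound for $\|W(\cdot,t)\|_{C^{\mu-\varepsilon}}$ and a positive lower bound for $|\nabla\varphi(\cdot,t)|_{\inf}$. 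These two quantities are coupled only through $\mathbf{\Delta}_\gamma=\|\nabla\varphi\|_{\dot C^\gamma}/|\nabla\varphi|_{\inf}+1$, so the real content is to derive a closed differential inequality for $f(t)=1+\log\mathbf{\Delta}_{\mu-\varepsilon}(t)$.

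For the lower bound, testing \eqref{eq:transport-whole} against $W$ gives $(\partial_t+u\cdot\nabla)|W|^2=2(\nabla u\,\mathbf{w}\cdot\mathbf{w})|W|^2$, hence $|\nabla\varphi|_{\inf}(t)\ge|\nabla\varphi_0|_{\inf}\exp\big(-\int_0^t\|\nabla u\,\mathbf{w}\cdot\mathbf{w}\|_{L^\infty}\,\dd s\big)$, and the pointwise bound \eqref{es:nab-u-tang} of Lemma~\ref{lem:es-u-whole} turns this into \eqref{eq:inf-part}. For the upper bound I would start from the Duhamel formula \eqref{eq:tranport-intexp} for $W$ along $\Phi_{s,t}$, insert the $m$-adapted Hölder estimate \eqref{eq:part-W-u-Hold} for $\partial_W u$ (Lemma~\ref{lem:striated_est_mu}) and the near-Lipschitz bound \eqref{eq:u-patch-imp-est} feeding the sharpened flow-map estimate \eqref{eq:flow-map-up-l} (Lemma~\ref{lem:flow-map-os}), and work with the decreasing exponent $\mu(t)=\mu-\eta_{\gamma,T}V_\gamma(t)$ from \eqref{eq:V-t-def} so as to arrive at \eqref{eq:est-main-ineq}. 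Splitting $|x-y|$ into the high-frequency regime \eqref{reg-cond:high-freq}, the low-frequency regime \eqref{reg:low-freq} and the tail $|x-y|>1$: in the first regime the factor $|x-y|^{\mu(s)-\mu(t)}$ absorbs the critical contribution, so choosing $C'=\mu+2C_1/\widetilde C$ leaves a $\tfrac12$ self-coefficient; in the second regime \eqref{eq:m-prop2}, \eqref{eq:H3}, \eqref{eq:m-prop3-2} and Lemma~\ref{lem:H2H3} replace $m(|x-y|^{-1})$ by $C(m(e^{V_\gamma(T)})+1)$; the tail is handled by the $L^\infty$ bound \eqref{eq:ineq-L-infty}. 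Grönwall's inequality then produces \eqref{ineq:holder-part}.

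Taking $\gamma=\mu-\varepsilon$ and combining \eqref{eq:inf-part} with \eqref{ineq:holder-part} gives \eqref{eq:Deltabound}; rewriting it with $f$ and $\mathcal{M}$ as in \eqref{def:fandM} yields the Osgood-type inequality \eqref{ineq:main-whole}. From there I would use the convexity and monotonicity \eqref{eq:Mr-prop} of $\mathcal{M}$ and Jensen's inequality to pass to \eqref{ineq:f}, set $R(t)=r_0\log^{-\beta-\epsilon}(e+t)+\int_0^t\mathcal{M}(f(s))\,\dd s$, use \eqref{eq:mathcalM} to get $R'(t)\le C\log^{\beta+2\epsilon}(e+t)\,\mathcal{M}(R(t))$, and integrate against $\mathcal{M}$ to obtain $\mathcal{H}(R(T))\le C(1+T)\log^{\beta+2\epsilon}(e+T)$ with $\mathcal{H}$ from \eqref{def:Hr}. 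The Osgood condition \eqref{eq:Osgood-cond} makes $\mathcal{H}$ a bijection onto $\RR$, so inverting gives \eqref{eq:fT-est-final}, hence a finite bound on $\mathbf{\Delta}_{\mu-\varepsilon}(T)$; plugging this back into \eqref{eq:inf-part} and \eqref{ineq:holder-part} gives \eqref{eq:gradphiinf}--\eqref{eq:gradphiCmu}, which is the theorem.

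The main obstacle is the \emph{criticality} of the modulus $\rho\mapsto\rho(m(\rho^{-1})+1)$: it sits exactly at the Osgood borderline, so along the flow nearby points may separate at a rate that makes a naive Grönwall estimate on $\|W\|_{\dot C^{\mu}}$ diverge — this is precisely why one must permit the Hölder exponent to decay, $\mu\mapsto\mu-\varepsilon$, and why the logarithmic correction $f=1+\log\mathbf{\Delta}$ has to be tracked self-consistently rather than absorbed as a lower-order term. The second delicate point is the nonlocal, unbounded multiplier $m(\Lambda)$: it forces working in the $m$-adapted space $C^\mu_{\mathrm m}$ and using the flow-map gain \eqref{eq:flow-map-up-l} in place of the cruder \eqref{eq:bound-flow-map-general}, and it is the desingularized representation \eqref{eq:desingularization_tangential_vector} together with the geometric cancellations behind \eqref{claim:refine_ineq} and \eqref{es:nab-u-tang} that supply the single logarithm of slack needed to close the loop; without the patch geometry this margin is simply not available.
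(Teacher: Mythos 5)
Your proposal is correct and follows essentially the same route as the paper's own proof: the level-set reduction, the lower bound \eqref{eq:inf-part} via Lemma~\ref{lem:es-u-whole}, the losing-exponent scheme with $\mu(t)$ and the three-regime splitting leading to \eqref{ineq:holder-part}, and the closing Osgood argument with $f$, $\mathcal{M}$, $R$ and $\mathcal{H}$ yielding \eqref{eq:gradphiinf}--\eqref{eq:gradphiCmu}. No gaps to report.
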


In view of the definitions of $\mathsf{H}$ in \eqref{def:H-r} and $\mathcal{H}$ in \eqref{def:Hr}, we have that for any $r\geq r_0$,
\begin{align*}
 \mathcal{H}(r) & \geq \frac{1}{1+\frac{1}{\widetilde{m}(r_0)}}\int_{r_0}^{r}\frac{\dd \tilde{r}}{\tilde{r}\,\widetilde{m}(\tilde{r})} = C\int_{e^{r_0}}^{e^r}\frac{\dd \tilde{r}}{\tilde{r}(\log\tilde{r})m(\tilde{r})}\\
 & = C\Big(\mathsf{H}(e^r) - \int_2^{e^{r_0}}\frac{\dd \tilde{r}}{\tilde{r}(\log\tilde{r})m(\tilde{r})}\Big) = C\big(\mathsf{H}(e^r)-C\big).
\end{align*}
This implies
\begin{align}\label{eq:HH}
 \mathcal{H}^{-1}(\mathrm{y})\leq \log \big(\mathsf{H}^{-1}(C\mathrm{y}+C)\big),\quad\forall~\mathrm{y}\geq0.	
\end{align}
Applying the bound to \eqref{eq:gradphiinf} and \eqref{eq:gradphiCmu} yields the estimates \eqref{intro-est:inf-1} and \eqref{intro-est:reg-1}. Then, a direct application of \eqref{eq:z0bound}-\eqref{eq:zkbound}-\eqref{eq:zHolder} gives \eqref{intro-est:reg-z}, thereby completing the proof of Theorem~\ref{thm:gSQG-GR-whole-space} for $n=1$.

\section{\texorpdfstring{Global regularity of $C^{n,\mu}$ single vortex patch}{Global regularity of C(n,mu) single vortex patch}}
\label{sec:main-proof-n}
In this section, we consider the global propagation of the higher boundary regularity of the patch solution 
associated with the initial data
$\omega_0(x) = \mathbf{1}_{D_0}(x)$ and $\partial D_0\in C^{n,\mu}$ with $n\geq 2$ and $\mu\in (0,1)$.

Now, we can present our main result in this section.
\begin{theorem}\label{thm:propagation_regularity}
  Let $\mu\in (0,1)$ and $n\in \mathbb{N}^\star\cap [2,+\infty)$. Assume that $m(\xi)= m(|\xi|)$ is a radial function of $\RR^2$ with $m(r)$ satisfying \eqref{H1}-\eqref{H2a}-\eqref{eq:Osgood-cond}.
 Let $\varphi_0\in C^{n,\mu}(\RR^2)$ be a level-set characterization of the domain $D_0$ that is bounded and simply connected. 
Then the global solution of \eqref{eq:level-set-eq} satisfies $\partial D(t)\in C^{n,\mu'}$ for any $t>0$ and any $\mu'\in (0,\mu)$.
\end{theorem}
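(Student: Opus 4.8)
The plan is to argue by induction on $n$, building on the $n=1$ case already established in Theorem~\ref{thm:regularityCmu} and on the level-set formulation of Section~\ref{subsec:formulation}. The base case $n=1$ supplies the lower bound \eqref{eq:gradphiinf} on $|\nabla\varphi(t)|_{\inf}$ and the upper bound \eqref{eq:gradphiCmu} on $\|\nabla\varphi(t)\|_{C^{\mu-\varepsilon}}$; in particular $\mathbf{\Delta}_{\mu-\varepsilon}(t)$ stays finite for every $t>0$, so Lemma~\ref{lem:continuity-u} yields $\|u(t)\|_{C^1_{\mathrm{m}}}\leq C\big(1+\log\mathbf{\Delta}_{\mu-\varepsilon}(t)\big)$ and hence the refined flow-map bound \eqref{eq:flow-map-up-l}. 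Since $\partial D_0\in C^{n,\mu}$ also lies in $C^{k,\sigma}$ for every $k\leq n-1$ and every $\sigma\in(0,1)$, the inductive hypothesis already controls $\|\partial_W^{k-1}W(t)\|_{C^\sigma}$ for all $k\leq n-1$ with $\sigma$ as close to $1$ as desired. It then remains to propagate $\|\partial_W^{n-1}W(t)\|_{C^{\mu-\varepsilon}}$ by means of the transport identity \eqref{eq:parWk-1W}, i.e. $(\partial_t+u\cdot\nabla)(\partial_W^{n-1}W)=\partial_W^n u$, written in its Duhamel form along the flow map $\Phi_{s,t}$.

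The first key step is the higher-order analogue of the integral representation of Lemma~\ref{lem:tangential_velocity}, namely the formula \eqref{eq:block_higher_tangential_derivative} for $\partial_W^k u$: after one desingularization the leading contribution is a principal-value integral with integrand $\big(\partial_W^{k-1}W(x)-\partial_W^{k-1}W(y)\big)\cdot\nabla K(x-y)$, and the remaining \emph{striated} terms are absolutely convergent integrals whose integrands are polynomial in the $\partial_W^j W$ with $j\leq k-2$ and in $\nabla^l K$ with $l\leq k$. I would prove this by induction on $k$: apply $\partial_W=W\cdot\nabla$ to the representation for $\partial_W^{k-1}u$, distribute the derivative onto the kernel and onto the $W$-factors, re-symmetrize the most singular piece exactly as in the proof of Lemma~\ref{lem:tangential_velocity}, and carefully track the surface terms generated when differentiating the $\varepsilon$-truncated principal value, checking via \eqref{eq:G-prop1b}, \eqref{eq:G-prop2}, \eqref{eq:Kbound} and \eqref{eq:m-prop2} that these surface terms vanish as $\varepsilon\to0$.

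The second step is the $m$-adapted Hölder estimate generalizing Lemma~\ref{lem:striated_est_mu} (this is Lemma~\ref{lem:higher_tangential_est}): one obtains
$$\|\partial_W^n u\|_{C^\mu_{\mathrm{m}}}\leq C\big(1+\log\mathbf{\Delta}_\gamma\big)\Big(\|\partial_W^{n-1}W\|_{\dot C^\mu}+Q_{n-1}\Big),$$
where $Q_{n-1}$ is a polynomial in the quantities $\|\partial_W^{j}W\|_{C^{\sigma}}$, $j\leq n-2$, already under control by the inductive hypothesis. The leading term is handled exactly as in the $n=1$ case—splitting $\nabla K$ into symmetric and antisymmetric parts and invoking the geometric cancellation \eqref{claim:refine_ineq}—while each striated term is, after desingularization, an ordinary convolution estimated directly through \eqref{eq:Kbound} and the product rule in Hölder spaces. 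Plugging $\partial_W^{n-1}W(x,t)=\partial_{W_0}^{n-1}W_0(\Phi_{0,t}(x))+\int_0^t(\partial_W^n u)(s,\Phi_{s,t}(x))\,\mathrm{d}s$ into a difference quotient, I would then run the same three-case analysis as in Section~\ref{subsec:main-proof-1} (the high-frequency regime \eqref{reg-cond:high-freq}, the low-frequency regime \eqref{reg:low-freq}, and $|x-y|>1$), using \eqref{eq:flow-map-up-l} and \eqref{eq:m-prop4} to transport the Hölder exponent; absorbing the resulting $\tfrac12$-self-term and applying Grönwall's inequality yields a bound of the form \eqref{intro-est:reg-n} for $\|\partial_W^{n-1}W(t)\|_{C^{\mu-\varepsilon}}$. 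By \eqref{eq:striated_regularity}--\eqref{eq:zHolder} this gives $\partial D(t)\in C^{n,\mu-\varepsilon}$ for every $\varepsilon\in(0,\mu)$, completing the induction.

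The main obstacle I anticipate is the first step: deriving and rigorously justifying the higher-order singular-integral representation for $\partial_W^k u$. The difficulty is twofold. First, one must keep track of the combinatorial structure of the striated remainder terms produced by repeatedly differentiating both the kernel and the $W$-factors, ensuring that every such term is genuinely of lower order (involves only $\partial_W^j W$ with $j\leq k-2$) so that it falls under the inductive hypothesis. Second, justifying that the surface terms generated by differentiating the $\varepsilon$-truncated principal value tend to zero is delicate, because $\nabla^l K$ carries the extra singular factor $m(|x|^{-1})$ absent in the Euler setting and because $\nabla u$ is itself unbounded on $\partial D$; here the approach of \cite{Radu22} together with the precise bounds \eqref{eq:G-prop1b}--\eqref{eq:G-prop2} on the derivatives of $G$ is essential.
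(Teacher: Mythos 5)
Your proposal follows essentially the same route as the paper: induction on $n$ with the base case supplied by Theorem~\ref{thm:regularityCmu}, the higher-order representation formula \eqref{eq:block_higher_tangential_derivative} for $\partial_W^k u$ proved inductively in the spirit of \cite{Radu22} with the vanishing of the $\varepsilon$-surface terms checked via the kernel bounds (this is Proposition~\ref{prop:higher-tangential-derivative}), the splitting of $\partial_W^n u$ into the leading term $\mathbf{G}_n$ estimated as in Lemma~\ref{lem:striated_est_mu} plus striated remainders controlled by the inductive hypothesis in $\dot C^{\gamma}$ with $\gamma$ close to $1$ (Lemma~\ref{lem:higher_tangential_est}), and the same Duhamel/losing-estimate/Gr\"onwall scheme of Section~\ref{subsec:main-proof-1}. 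The plan is correct and matches the paper's argument in all essential steps.
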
 

According to the framework in Section \ref{subsec:formulation}, to prove Theorem \ref{thm:propagation_regularity}, it is sufficient to control $\partial_W^{n-1}W$. More precisely, we will show that if $\varphi_0\in C^{n,\mu}(\RR^2)$, then for any $\mu'\in (0,\mu)$, there exists an increasing and positive function $R_{n,\mu,\mu'}(t)$ such that
\begin{align}\label{eq:tangential_regularity}
  \lVert \partial_W^{n-1}W(t) \rVert_{C^{\mu'}(\RR^2)}  \leq R_{n,\mu,\mu'}(t)<+\infty.
\end{align}
Indeed, given \eqref{eq:tangential_regularity}, for any $\gamma\in(0,1)$, $\mu\in(0,1)$ and $\mu'\in (0,\mu)$, we have
\begin{align*}
  \sum_{j=1}^{n-1}\lVert \partial_W^{j-1}W(t)\rVert_{C^\gamma(\RR^2)}
  +\lVert \partial_W^{n-1}W (t)\rVert_{C^{\mu'}(\RR^2)}  \leq \sum_{j=1}^{n-1}R_{j,\frac{\gamma+1}{2},\gamma}(t) + R_{n,\mu,\mu'}(t)<+\infty,
\end{align*}
where we have used the fact that $C^{n,\mu}\subset C^{j,\frac{\gamma+1}{2}}$ when $j<n$.
Consequently, we deduce $\partial D(t)\in C^{n,\mu'}$.

Moreover, the growth rate of $R_{n,\mu,\mu'}(t)$ matches with \eqref{eq:gradphiCmu}, namely
\begin{align}\label{eq:Rgrowth}
	R_{n,\mu,\mu'} \leq C\lVert\varphi_0\rVert_{C^{n,\mu}}\exp\Big(C\log^{\beta+2\epsilon}(e+T)\mathcal{H}^{-1}\big(C(1+T)\log^{\beta+2\epsilon}(e+T)\big)\Big).
\end{align}
Applying the bound \eqref{eq:HH} to \eqref{eq:Rgrowth} yields the estimate \eqref{intro-est:reg-n}. Then, a direct application of \eqref{eq:z0bound}-\eqref{eq:zkbound}-\eqref{eq:zHolder} gives \eqref{intro-est:reg-z} for $n\geq2$, finishing the proof of Theorem~\ref{thm:gSQG-GR-whole-space}.

\subsection{\texorpdfstring{The estimation of $\partial_W^k u$}{The estimate of partial W k u}}

Following the approach developed by Radu \cite[Prop. 6.2]{Radu22},
we provide a regular representation formula for $\partial_W^k u$ analogous to 
\eqref{eq:desingularization_tangential_vector}.
For convenience, we introduce the following notation
\begin{align*}
  \delta^x_y[f] \triangleq f(x) - f(y), \quad \forall\, x,y\in \mathbb{R}^2.
\end{align*}
\begin{proposition}\label{prop:higher-tangential-derivative}
Let $D\subseteq \mathbb{R}^2$ be a smooth bounded domain, and let
$\varphi$ be a level-set characterization of domain $D$ 
and assume that \eqref{H1}-\eqref{H2a} hold. 
Let $W = (W_1,W_2) = \nabla^{\perp}\varphi$
be the vector field tangent to $\partial D$. 
Let $k\in \mathbb{N}^\star\cap[1,n]$, and suppose that 
$\partial_W^l W\in C^{\gamma}(\RR^2)$
for any $\gamma\in (0,1)$ and $0\leq l\leq k-2$ (if $k=1$, this condition is not necessary),
and $\partial_W^{k-1}W\in C^{\mu}(\RR^2)$, $\mu\in (0,1)$.
Let $u(x) = \nabla^\perp (-\Delta)^{-1} m(\Lambda) (\mathbf{1}_D)(x)$, then we have 
\begin{equation}\label{eq:block_higher_tangential_derivative}
  \partial_W^k u(x) = \sum_{j=1}^k \sum_{\substack{l_1\geq\cdots\geq l_j \geq 1 \\
  l_1+\cdots + l_j = k}} c^k_{l_1,\cdots, l_j}\,
  \mathrm{p.v.}\int_D \Big(\delta^x_y\big[\partial_W^{l_1-1}W\big]\otimes
  \cdots \otimes\delta^x_y\big[\partial_W^{l_j-1}W\big] \Big)
  \cdot\nabla^j K(x-y)\dd y,
\end{equation}
where $c^k_{l_1,\cdots, l_j}\in \mathbb{R}_+$,  $c_{k}^{k}=1$, $``\otimes"$ denotes the usual tensor product (e.g. $a \otimes b = (a_i b_j)_{2\times 2}$ 
for two vectors $a,b\in \mathbb{R}^2$), 
and $K(x)=\frac{x^{\perp}}{|x|^2} G(|x|)$ is given by \eqref{eq:u-exp1}.

\end{proposition}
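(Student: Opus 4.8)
The plan is to prove \eqref{eq:block_higher_tangential_derivative} by induction on $k$. The base case $k=1$ is exactly Lemma~\ref{lem:tangential_velocity}, which gives $\partial_W u(x) = \mathrm{p.v.}\int_D \delta^x_y[W]\cdot\nabla K(x-y)\,\dd y$, matching the claimed formula with $c^1_1 = 1$. For the inductive step, I would assume \eqref{eq:block_higher_tangential_derivative} holds for some $k\in\{1,\dots,n-1\}$ under the stated striated-regularity hypotheses, and apply the operator $\partial_W = W\cdot\nabla$ to both sides. The right-hand side is a finite sum of principal-value integrals of the form
\begin{align*}
  I[\Psi](x) \triangleq \mathrm{p.v.}\int_D \Big(\delta^x_y\big[\partial_W^{l_1-1}W\big]\otimes\cdots\otimes\delta^x_y\big[\partial_W^{l_j-1}W\big]\Big)\cdot\nabla^j K(x-y)\,\dd y,
\end{align*}
so it suffices to compute $\partial_W I[\Psi]$ for a single such term and check that the result is again a sum of the desired type with $k$ replaced by $k+1$. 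Differentiating under the integral sign (justified below), the derivative $W(x)\cdot\nabla_x$ either hits one of the factors $\delta^x_y[\partial_W^{l_i-1}W] = \partial_W^{l_i-1}W(x) - \partial_W^{l_i-1}W(y)$ — producing $W(x)\cdot\nabla\big(\partial_W^{l_i-1}W\big)(x) = \partial_W^{l_i}W(x)$, which I must rewrite as $\delta^x_y[\partial_W^{l_i}W] + \partial_W^{l_i}W(y)$ — or it hits the kernel $\nabla^j K(x-y)$, producing $W(x)\cdot\nabla^{j+1}K(x-y)$, which I rewrite as $\delta^x_y[W]\cdot\nabla^{j+1}K(x-y) + W(y)\cdot\nabla^{j+1}K(x-y)$. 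In each case the ``bad'' leftover piece carries a factor $\partial_W^{\ell}W(y)$ or $W(y)$ evaluated at the integration variable; the key algebraic observation, exactly as in \cite[Prop.~6.2]{Radu22}, is that all such leftover pieces recombine — after integrating by parts in $y$ and using $\operatorname{div}_y W = 0$ together with $W(y)\cdot\mathbf{n}(y) = 0$ on $\partial D$ — into terms that are again of the claimed form, while the genuinely singular contributions are precisely the ones where every factor is a difference $\delta^x_y[\cdot]$. Collecting all contributions and relabeling the multi-indices $(l_1,\dots,l_j)$ gives \eqref{eq:block_higher_tangential_derivative} at level $k+1$ with new nonnegative combinatorial constants $c^{k+1}_{\cdot}$, and in particular the top term (all derivatives landing on the kernel-free slot) has coefficient $c^{k+1}_{k+1} = 1$.

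The heart of the argument — and the step I expect to be the main obstacle — is the rigorous justification of the differentiation under the principal-value integral and of the integration by parts in $y$, precisely because $\nabla^{j+1}K$ has a non-integrable singularity of order $|x-y|^{-(j+2)}\big(m(|x-y|^{-1})+1\big)$ near the diagonal (cf.\ \eqref{eq:Kbound}), and $\nabla u$ itself is only bounded off $\partial D$, not globally. I would handle this the way Lemma~\ref{lem:tangential_velocity} handles the $k=1$ case: work with the truncated integrals over $D\cap\{|x-y|\ge\varepsilon\}$, perform all the exact (non-singular) manipulations there, and then pass to the limit $\varepsilon\to 0$. The boundary terms on $\{|x-y|=\varepsilon\}$ arising from integration by parts, both in $x$ (against a test function, working distributionally as in Lemma~\ref{lem:tangential_velocity}) and in $y$, are controlled by replacing the slowly-varying factors $\partial_W^{l_i-1}W$ by their values at the center and estimating the difference: each factor $\delta^x_y[\partial_W^{l_i-1}W]$ contributes an extra $\varepsilon^{\gamma}$ (or $\varepsilon^{\mu}$ for the top-order factor) via the Hölder hypotheses on the striated derivatives, and since there are $j\ge 1$ such factors while the kernel $\nabla^j K$ on $|x-y|=\varepsilon$ is of size $\varepsilon^{-(j+1)}\big(m(\varepsilon^{-1})+1\big)$, the sphere of radius $\varepsilon$ has measure $\sim\varepsilon$, and $m(\varepsilon^{-1})$ grows only like $(\log\varepsilon^{-1})^{\beta_1+}$ by \eqref{eq:m-prop2}, the total boundary contribution is $O\!\big(\varepsilon^{\gamma}(\log\varepsilon^{-1})^{\beta_1+}\big)\to 0$. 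The only subtlety is that when the derivative $\partial_W$ lands on the kernel we momentarily face $\nabla^{j+1}K$, which is one order more singular; here the gain comes from the factor $\delta^x_y[W] = O(|x-y|^{\mu})$ that the chain rule forces out, plus one more $\varepsilon^{\mu}$, so the balance $\varepsilon^{\mu}\cdot\varepsilon^{-(j+2)}\cdot\varepsilon^{\,j}\cdot\varepsilon = \varepsilon^{\mu}\to 0$ is maintained provided the remaining $j-1$ striated factors are each $O(|x-y|^{\gamma})$ with $\gamma$ small — which is exactly why the hypothesis $\partial_W^l W\in C^\gamma$ for all $\gamma\in(0,1)$ and all $l\le k-2$ is imposed, and why only the single top-order factor is allowed to sit in the weaker space $C^\mu$.

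Finally, I would verify the combinatorial bookkeeping: that the partitions $l_1\ge\cdots\ge l_j\ge 1$ with $\sum l_i = k$ are stable under the two operations above (incrementing one $l_i$, or appending a new part equal to $1$), that the induced recursion $c^{k+1}_{\cdot} = \sum(\text{contributions})$ keeps all constants in $\mathbb{R}_+$, and that $c^{k+1}_{k+1}=1$ since the monomial $\delta^x_y[\partial_W^k W]\cdot\nabla K(x-y)$ is produced uniquely, namely by differentiating the single $k=1$-type term $c^k_k\,\delta^x_y[\partial_W^{k-1}W]\cdot\nabla K$ and letting $\partial_W$ hit the $\delta^x_y$-factor. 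This closes the induction and establishes \eqref{eq:block_higher_tangential_derivative} for all $k\in\{1,\dots,n\}$.
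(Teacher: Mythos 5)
Your overall strategy coincides with the paper's: induction on $k$ with Lemma~\ref{lem:tangential_velocity} as the base case, a distributional computation of $\partial_W^{k+1}u=\mathrm{div}(W\,\partial_W^k u)$ carried out on the truncated regions $D\cap\{|x-y|\geq\varepsilon\}$, integration by parts in $y$ using $\mathrm{div}\,W=0$ and $W\cdot\mathbf{n}=0$ on $\partial D$ so that the $x$- and $y$-derivatives combine and every slot acquires a difference $\delta^x_y[\cdot]$, and a subtraction trick ($I_\varepsilon$ versus $J_\varepsilon$) to kill the boundary terms on $\{|x-y|=\varepsilon\}$; the recursion for the coefficients and the identification $c^{k+1}_{k+1}=1$ are exactly as you describe.

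However, the quantitative part of your limiting argument, which you yourself single out as the heart of the matter, is off in a way that matters. The boundary term on $\{|x-y|=\varepsilon\}$ carries $j$ difference factors of size $O(\varepsilon^{\gamma})$ plus the extra factor $W(x)-W(y)=O(\varepsilon^{\gamma})$ produced by the subtraction, against a kernel of size $\varepsilon^{-(j+1)}\big(m(\varepsilon^{-1})+1\big)$ and arclength $\sim\varepsilon$; the total is $O\big(\varepsilon^{(j+1)\gamma-j}\,m(\varepsilon^{-1})\big)$, not $O\big(\varepsilon^{\gamma}(\log\varepsilon^{-1})^{\beta_1+}\big)$. This vanishes only when $\gamma>\frac{j}{j+1}$, so one must take $\gamma$ \emph{close to} $1$ (the paper takes $\gamma\in(\frac{k}{k+1},1)$), and this is the true reason the hypothesis provides $C^{\gamma}$ regularity for \emph{every} $\gamma\in(0,1)$ on the lower-order striated derivatives; your remark that the argument closes provided the remaining factors are H\"older with $\gamma$ \emph{small} is backwards, and with small $\gamma$ these boundary terms blow up. Likewise the balance you write for the kernel-differentiated contribution, $\varepsilon^{\mu}\cdot\varepsilon^{-(j+2)}\cdot\varepsilon^{j}\cdot\varepsilon$, equals $\varepsilon^{\mu-1}$, not $\varepsilon^{\mu}$; in the clean organization this contribution is not a boundary error at all but the new interior term $\mathcal{D}_W^{l_1,\cdots,l_j}(x,y)\otimes\big(W(x)-W(y)\big)\cdot\nabla^{j+1}K(x-y)$, which is absolutely convergent once $\gamma>\frac{j}{j+1}$ since its integrand is $O\big(|x-y|^{(j+1)\gamma-(j+2)}\big)$. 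With these corrections your induction closes exactly as in the paper.
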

\begin{remark}
In particular, for $k=2,3$, we have
\begin{align*}
  \partial_W^2u(x) & = \mathrm{p.v.} \int_D \big(W(x)-W(y)\big)\otimes \big(W(x) -W(y)\big) \cdot \nabla^2 K(x-y)\dd y \\
  & \quad + \mathrm{p.v.} \int_D \big(\partial_W W(x) -\partial_W W(y) \big) \cdot \nabla K(x-y)\dd y,
\end{align*}
and
\begin{align*}
  \partial_W^3 u(x) & = \mathrm{p.v.} \int_D \big(W(x)-W(y)\big)\otimes \big(W(x) -W(y)\big) \otimes \big(W(x)- W(y)\big) 
  \cdot \nabla^3 K(x-y) \dd y \\
  & \quad + 3 \, \mathrm{p.v.} \int_D \big(\partial_W W(x) -\partial_W W(y)\big) \otimes \big(W(x) - W(y)\big) \cdot 
  \nabla^2 K(x-y)\dd y \\
  & \quad + \mathrm{p.v.} \int_D \big(\partial_W^2 W(x) - \partial_W^2 W(y) \big) \cdot \nabla K(x-y) \dd y.
\end{align*}
\end{remark}
\begin{proof}[Proof of Proposition \ref{prop:higher-tangential-derivative}]
We prove \eqref{eq:block_higher_tangential_derivative} by induction. For $k=1$, \eqref{eq:block_higher_tangential_derivative} is a direct consequence of \eqref{eq:desingularization_tangential_vector} with $c_1^1 =1$.
Suppose that \eqref{eq:block_higher_tangential_derivative} is true for $k$.
We will show that it is also true for the $k+1$ case. The proof is analogous to Lemma \ref{lem:tangential_velocity}.
	
Due to the roughness of the patch solution,
we shall compute $\partial_W^{k+1} u$ in the distributional sense.
Since $W=\nabla^{\perp}\varphi$ is divergence free, we have
\begin{align*}
  \partial_W^{k+1} u
  = \textrm{div}(W\, \partial_W^k u).
\end{align*}
Next, for any $\widetilde{\chi}\in C_c^{\infty}(\RR^2)$, we get
\begin{align*}
  (\textrm{div}(W\, \partial_W^k u),\widetilde{\chi})
  = - (\partial_W^k u, W\cdot \nabla \widetilde{\chi})
  = -\int_{\RR^2} \partial_W^k u(x) \, W(x) \cdot \nabla \widetilde{\chi}(x)\dd x.
\end{align*}
Denoting by
\begin{align}\label{def:M_W-l1-lj}
  \mathcal{D}_W^{l_1,\cdots,l_j}(x,y) \triangleq 
  \delta^x_y\big[\partial_W^{l_1-1}W\big]\otimes \cdots \otimes \delta^x_y\big[\partial_W^{l_j-1}W\big], 
\end{align} 
and through the induction hypotheses \eqref{eq:block_higher_tangential_derivative} and Fubini's theorem, we deduce that
\begin{align*}
  & (\textrm{div}(W\, \partial_W^k u),\widetilde{\chi}) \\
  & = - \sum_{j=1}^k \sum_{\substack{l_1\geq\cdots\geq l_j \geq 1 \\
  l_1+\cdots + l_j = k}} c^k_{l_1,\cdots, l_j} 
  \int_{\RR^2} \Big(\lim_{\varepsilon \rightarrow 0} \int_{D\cap\{|x-y|\geq \varepsilon\}}
  \mathcal{D}_W^{l_1,\cdots,l_j}(x,y) \cdot \nabla^j K(x-y) \dd y\Big) 
  W(x)\cdot \nabla \widetilde{\chi}(x)\dd x \\
  & = - \sum_{j=1}^k \sum_{\substack{l_1\geq l_2\cdots\geq l_j \geq 1 \\
  l_1+\cdots + l_j = k}} c^k_{l_1,\cdots, l_j} 
  \lim_{\varepsilon\to 0}\int_{D} \int_{|x-y|\geq \varepsilon} 
  \mathcal{D}_W^{l_1,\cdots,l_j}(x,y) \cdot 
  \nabla^j K(x-y)\, W(x)\cdot \nabla \widetilde{\chi}(x)\dd x\dd y\\
  & \triangleq \sum_{j=1}^k \sum_{\substack{l_1\geq l_2\cdots\geq l_j \geq 1 \\
  l_1+\cdots + l_j = k}} c^k_{l_1,\cdots, l_j} \lim_{\varepsilon\to 0} \mathcal{A}_\varepsilon.
\end{align*}
Since $W$ is divergence free, the integration by parts gives us
\begin{align*}
  \mathcal{A}_\varepsilon & = -\int_D\int_{|x-y|\geq \varepsilon} \mathcal{D}_W^{l_1,\cdots,l_j}(x,y)  \cdot 
  \nabla^j K(x-y) \, W(x)\cdot \nabla \widetilde{\chi}(x)\dd x \dd y\\
  & = \int_D\int_{|x-y|\geq \varepsilon} W(x)\cdot \nabla_x\Big( \mathcal{D}_W^{l_1,\cdots,l_j}(x,y)  
  \cdot \nabla^j K(x-y) \Big) \widetilde{\chi}(x)\dd x \dd y + I_\varepsilon
\end{align*}
where
\begin{align*}
I_\varepsilon \triangleq \int_D\int_{|x-y|=\varepsilon} \mathcal{D}_W^{l_1,\cdots,l_j}(x,y)  \cdot \nabla^j K(x-y)\, W(x)\cdot \frac{(x-y)}{|x-y|} \widetilde{\chi}(x)\dd S(x) \dd y.	
\end{align*}
Define
\begin{align*}
J_\varepsilon \triangleq \int_D\int_{|x-y|=\varepsilon} \mathcal{D}_W^{l_1,\cdots,l_j}(x,y)  \cdot \nabla^j K(x-y)\, W(y)\cdot \frac{(x-y)}{|x-y|} \widetilde{\chi}(x)\dd S(x) \dd y,
\quad\text{and}\quad R_\varepsilon \triangleq I_\varepsilon - J_\varepsilon.
\end{align*}
Noting that for $s=1,\cdots,j$,
\begin{align}\label{eq:dxyW}
  \big|\delta^x_y [\partial_W^{l_s -1} W]\big| 
  = \big|\partial_W^{l_s-1}W(x) - \partial_W^{l_s -1} W(y)  \big|
  \leq \|\partial_W^{l_s-1} W\|_{C^\gamma} |x-y|^\gamma,
\end{align}
we control the difference $R_\varepsilon$ by
\begin{align*}
  \lim_{\varepsilon\to0}|R_\varepsilon|& = \lim_{\varepsilon\to0}\Big|\int_D\int_{|x-y|=\varepsilon} \mathcal{D}_W^{l_1,\cdots,l_j}(x,y)  
  \cdot \nabla^j K(x-y) (W(x)-W(y))\cdot \frac{(x-y)}{|x-y|}
  \widetilde{\chi}(x)\dd S(x) \dd y\Big| \\
  & \leq C\lim_{\varepsilon\to0} \int_D\prod_{s=1}^j \Big(\|\partial_W^{l_s-1} W \|_{C^\gamma}\varepsilon^\gamma\Big)\cdot \frac{m(\varepsilon^{-1})+1}{\varepsilon^{j+1}}\cdot \lVert W\rVert_{C^{\gamma}}\varepsilon^\gamma \cdot 1\cdot 
  \lVert \widetilde{\chi}\rVert_{L^\infty}\cdot 2\pi\varepsilon\,\dd y\\
  & \leq C \lim_{\varepsilon\to0} \varepsilon^{(j+1)\gamma-j}\big(m(\varepsilon^{-1})+1\big)=0,
\end{align*}
by choosing $\gamma\in (\frac{k}{k+1},1)$ so that $\gamma>\frac{k}{k+1}\geq \frac{j}{j+1}$ and therefore $(j+1)\gamma-j>0$.
We have used \eqref{eq:Kbound} to estimate $\nabla^j K$.

For $J_\varepsilon$, a similar argument as \eqref{eq:Jepsilon} yields
\begin{align*}
  J_{\varepsilon}  & = \int_{\RR^2}\int_{D\cap\{|x-y|=\varepsilon\}}  \mathcal{D}_W^{l_1,\cdots,l_j}(x,y) \cdot \nabla^j K(x-y) 
  W(y)\cdot \frac{(x-y)}{|x-y|} \widetilde{\chi}(x)\dd S(y)\dd x\\
  & =\int_{\RR^2}\int_{D\cap\{|x-y|\geq \varepsilon\}} W(y)\cdot \nabla_y \Big( \mathcal{D}_W^{l_1,\cdots,l_j}(x,y) \cdot \nabla^j K(x-y)\Big) \dd y\,\widetilde{\chi}(x)\dd x.
\end{align*}

Collecting the above computations yields that
\begin{align*}
  \mathcal{A}_\varepsilon & = \int_{\RR^2}\int_{D\cap\{|x-y|\geq \varepsilon\}} W(x) \cdot 
  \nabla_x \Big( \mathcal{D}_W^{l_1,\cdots,l_j}(x,y) \cdot \nabla^j K(x-y)\Big) \dd y\,\widetilde{\chi}(x)\dd x\\
  & \quad + \int_{\RR^2}\int_{D\cap\{|x-y|\geq \varepsilon\}} W(y) \cdot 
  \nabla_y \Big( \mathcal{D}_W^{l_1,\cdots,l_j}(x,y) \cdot \nabla^j K(x-y)\Big) \dd y\,\widetilde{\chi}(x)\dd x +R_\varepsilon\\
  & = \int_{\RR^2}\int_{D\cap\{|x-y|\geq \varepsilon\}}\mathcal{D}_W^{l_1,\cdots,l_j}(x,y) 
  \otimes \big(W(x)-W(y)\big)\cdot \nabla^{j+1}K(x-y) \dd y\,\widetilde{\chi}(x)\dd x\\
  & \quad + \int_{\RR^2}\int_{D\cap\{|x-y|\geq \varepsilon\}} \sum_{s=1}^j \mathcal{D}_W^{l_1,\cdots,l_s+1,\cdots,l_j}(x,y)\cdot 
  \nabla^j K(x-y) \dd y\,\widetilde{\chi}(x)\dd x + R_\varepsilon.
\end{align*}
Observe that all the terms have the form 
$\mathcal{D}_W^{\widetilde{l}_1,\cdots,\widetilde{l}_{\widetilde{j}}}(x,y)\cdot \nabla^{\widetilde{j}} K(x-y)$, 
with $\widetilde{j}=j$ or $j+1$ so that $\widetilde{j}\in\{1,\cdots,k+1\}$, and $\widetilde{l}_1+\cdots+\widetilde{l}_{\widetilde{j}}=k+1$. 
Therefore, we obtain 
\begin{align*}
  & (\textrm{div}(W\, \partial_W^k u),\widetilde{\chi}) = \lim_{\varepsilon\to 0}\sum_{j=1}^k \sum_{\substack{l_1\geq\cdots\geq l_j \geq 1 \\
  l_1+\cdots + l_j = k}} c^k_{l_1,\cdots, l_j} \mathcal{A}_\varepsilon\\
  & = \int_{\RR^2}\bigg(\sum_{j=1}^{k+1} \sum_{\substack{l_1\geq \cdots\geq l_j \geq 1 \\
  l_1+\cdots + l_j = k+1}} c^{k+1}_{l_1,\cdots, l_j}\,
  \mathrm{p.v.}\int_D \mathcal{D}_W^{l_1,\cdots,l_j}(x,y) \cdot \nabla^j K(x-y)\dd y\bigg) \widetilde{\chi}(x)\dd x,
\end{align*}
if we define the coefficients $c_{l_1,\cdots,l_j}^{k+1}$ ($j=1,\cdots,k+1$) properly by
\begin{align*}
 c_{l_1,\cdots,l_j}^{k+1} = \sum_{\substack{s=1\\ l_s\geq2}}^j c_{l_1,\cdots,l_s-1,\cdots,l_j}^{k} + \mathbf{1}_{\{l_j=1\}}\, c_{l_1,\cdots,l_{j-1}}^k.
\end{align*}
Recalling the notation \eqref{def:M_W-l1-lj}, we conclude with \eqref{eq:block_higher_tangential_derivative} for $k+1$, as desired.
\end{proof}

Given the expression of $\partial_W^k u$ in \eqref{eq:block_higher_tangential_derivative}, 
the leading term requiring the highest regularity of $W$ corresponds to the case $j=1$ and $l_1 = k$, which takes the form
\begin{align*}
  \mathbf{G}_k(x)\triangleq \mathrm{p.v.} \int_D 
  \big(\partial_W^{k-1}W(x) -\partial_W^{k-1} W(y) \big) \cdot\nabla K(x-y) \dd y. 
\end{align*}
We obtain the following controls on $\mathbf{G}_k$ as well as the remaining lower order terms.

\begin{lemma}\label{lem:higher_tangential_est}
Let $k\in\mathbb{N}^\star\cap [2,n]$. 
Under the assumptions of Proposition \ref{prop:higher-tangential-derivative},
we have that
\begin{equation}\label{eq:tangential_higher_est_1}
  \lVert \mathbf{G}_k\rVert_{C_\mathrm{m}^{\mu}(\RR^2)}
  \leq C(1+\log \mathbf{\Delta}_{\gamma}) 
   \lVert \partial_W^{k-1} W \rVert_{\dot{C}^{\mu}(\RR^2)}, 
\end{equation}
and
\begin{equation}\label{eq:tangential_higher_est-2}
  \lVert \partial_W^k u-\mathbf{G}_k \rVert_{C^{\mu}(\RR^2)}
  \leq C\sum_{j=2}^k \sum_{\substack{l_1\geq\cdots\geq l_j \geq 1 \\
  l_1 + \cdots + l_j = k}} \Big(\prod_{s=1}^j 
  \|\partial_W^{l_s-1} W \|_{\dot C^{\gamma_k}}\Big),
\end{equation}
for any $\mu\in (0,1)$, $\gamma\in (0,1)$, and $\gamma_k \in (1-\frac{1-\mu}{k},1)$.
\end{lemma}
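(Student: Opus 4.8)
The plan is to prove the two estimates \eqref{eq:tangential_higher_est_1} and \eqref{eq:tangential_higher_est-2} separately, treating $\mathbf{G}_k$ exactly as in the $n=1$ analysis and handling the genuinely lower-order striated terms by brute force using only kernel bounds and the $C^{\gamma_k}$-norms of the involved $\partial_W^{l_s-1}W$.

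\textbf{Step 1: the leading term $\mathbf{G}_k$.} Since
$\mathbf{G}_k(x) = \mathrm{p.v.} \int_D \big(\partial_W^{k-1}W(x) - \partial_W^{k-1}W(y)\big)\cdot \nabla K(x-y)\,\dd y$
has exactly the same structure as $\partial_W u(x)$ in \eqref{eq:desingularization_tangential_vector} with the vector field $W$ replaced by $\partial_W^{k-1}W$ (which is merely $C^\mu$, not tangent to $\partial D$, but tangency was never used in the proof of Lemma~\ref{lem:striated_est_mu} — only the $C^\mu$ regularity of the difference entered), I would simply rerun the proof of Lemma~\ref{lem:striated_est_mu} verbatim with $W \rightsquigarrow \partial_W^{k-1}W$. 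The four-piece splitting $\mathfrak{J}_1,\dots,\mathfrak{J}_4$, the kernel bounds \eqref{eq:gradKbound}, \eqref{eq:Kbound} with $l=2$, the integral identity \eqref{eq:approx-m-int-1}, and crucially the geometric cancellation claim \eqref{claim:refine_ineq} (which involves $\nabla K$ integrated over $D\cap\{|x-y|\ge 2|h|\}$ and is independent of the test vector field) all apply unchanged, yielding \eqref{eq:tangential_higher_est_1}. This step is essentially bookkeeping.

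\textbf{Step 2: the lower-order terms $\partial_W^k u - \mathbf{G}_k$.} These are the terms in \eqref{eq:block_higher_tangential_derivative} with either $j\ge 2$, or $j=1$ and $l_1<k$; but $l_1+\cdots+l_j=k$ with $j=1$ forces $l_1=k$, so only the $j\ge 2$ terms remain. A generic such term is $T(x) = \mathrm{p.v.}\int_D \mathcal{D}_W^{l_1,\dots,l_j}(x,y)\cdot\nabla^j K(x-y)\,\dd y$ with $j\ge 2$, $l_1\ge\cdots\ge l_j\ge 1$, $\sum l_s=k$, so each $l_s-1\le k-2<k-1$ and hence every factor $\delta^x_y[\partial_W^{l_s-1}W]$ is controlled by $\|\partial_W^{l_s-1}W\|_{\dot C^{\gamma_k}}|x-y|^{\gamma_k}$. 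The $L^\infty$ bound follows from \eqref{eq:Kbound} ($|\nabla^j K(z)|\le C|z|^{-(j+1)}(m(|z|^{-1})+1)$) and the rearrangement inequality, since the power of $|x-y|$ is $j\gamma_k - (j+1)$; the condition $\gamma_k>1-\frac{1-\mu}{k}$ guarantees $j\gamma_k - j + \mu > 0$ (indeed $j\gamma_k-j>-\frac{j(1-\mu)}{k}\ge -(1-\mu)$ since $j\le k$), so in particular $j\gamma_k-(j+1)+2>0$ and all the radial integrals $\int_0^L \rho^{j\gamma_k-(j+1)}(m(\rho^{-1})+1)\rho\,\dd\rho$ converge (using \eqref{eq:m-prop2} for the logarithmic growth of $m$). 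For the Hölder seminorm in $C^\mu$, I would use the standard increment splitting at scale $|h|$: the near-diagonal pieces $|x-y|\lesssim|h|$ for both $x$ and $x+h$ contribute $\int_0^{C|h|}\rho^{j\gamma_k-(j+1)}(m(\rho^{-1})+1)\rho\,\dd\rho \lesssim |h|^{j\gamma_k-j+1}(m(|h|^{-1})+1)\lesssim |h|^\mu$ (here again \eqref{eq:approx-m-int-1} and $j\gamma_k-j+1\ge\mu$), and the far piece is handled by the mean value theorem, writing $\nabla^j K(x-y)-\nabla^j K(x+h-y) = h\cdot\nabla^{j+1}K(\theta x+(1-\theta)(x+h)-y)$ together with $|\nabla^{j+1}K(z)|\le C|z|^{-(j+2)}(m(|z|^{-1})+1)$, plus the usual correction for replacing one factor $\delta^x_y$ by $\delta^{x+h}_y$ (this correction is again a pure Hölder increment times a convergent kernel integral). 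Summing over the finitely many multi-indices $(l_1,\dots,l_j)$ and invoking Young's inequality to absorb products $\prod_s \|\partial_W^{l_s-1}W\|_{\dot C^{\gamma_k}}$ — which is how they naturally appear — yields \eqref{eq:tangential_higher_est-2}. Note the bound is stated with $\|\cdot\|_{C^\mu}$ (no $m$-adapted space and no $\log\mathbf{\Delta}_\gamma$ factor), which is consistent because these terms carry enough spare Hölder regularity $\gamma_k$ that no geometric-cancellation argument is needed.

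\textbf{Main obstacle.} The delicate point is the exponent bookkeeping: one must verify that the threshold $\gamma_k\in(1-\frac{1-\mu}{k},1)$ is exactly what makes \emph{every} radial integral $\int_0^L \rho^{j\gamma_k-(j+1)}(m(\rho^{-1})+1)\,\rho\,\dd\rho$ converge \emph{and} makes the resulting power of $|h|$ at least $\mu$, simultaneously for all $2\le j\le k$ and all admissible partitions; the worst case is $j=k$, $l_1=\cdots=l_k=1$, where $j\gamma_k - j = k\gamma_k - k > -(1-\mu)$ is precisely the binding constraint. A secondary, more technical nuisance is justifying that the principal-value integrals in \eqref{eq:block_higher_tangential_derivative} can be manipulated termwise (differenced in $x$, mean-value-expanded) — but since for $j\ge 2$ every factor already supplies a positive power $\gamma_k$ of $|x-y|$, the integrands are in fact absolutely integrable near the diagonal (the net near-diagonal power $j\gamma_k-(j+1)+1 = j\gamma_k-j>-(1-\mu)>-1$), so the p.v. is redundant there and these manipulations are rigorous without extra care; the only place genuine p.v./cancellation structure matters is $\mathbf{G}_k$, already dispatched in Step 1.
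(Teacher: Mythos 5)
Your proposal follows essentially the same route as the paper: \eqref{eq:tangential_higher_est_1} is obtained by rerunning the proof of Lemma~\ref{lem:striated_est_mu} with $W$ replaced by $\partial_W^{k-1}W$ (tangency is indeed never used there, only the cancellation \eqref{claim:refine_ineq} and the $\dot C^{\mu}$ seminorm), and \eqref{eq:tangential_higher_est-2} by the same splitting at scale $|h|$, the kernel bounds \eqref{eq:Kbound}, the mean value theorem on the far region, and the exponent condition $j\gamma_k-j+1>\mu$ coming from $\gamma_k>1-\frac{1-\mu}{k}$ and $j\leq k$. Apart from two harmless slips (the inequality you actually need and in fact derive is $j\gamma_k-j>-(1-\mu)$, not ``$j\gamma_k-j+\mu>0$'', and no Young's inequality is needed since the products $\prod_s\|\partial_W^{l_s-1}W\|_{\dot C^{\gamma_k}}$ appear as such in the stated bound), this is the paper's argument.
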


\begin{proof}
The proof of \eqref{eq:tangential_higher_est_1} is analogous to that of Lemma \ref{lem:striated_est_mu}, replacing $W$ on the right-hand side of \eqref{eq:part-W-u-Hold} by $\partial_W^{k-1} W$.

To prove \eqref{eq:tangential_higher_est-2}, we also follow a similar procedure as in Lemma \ref{lem:striated_est_mu}. Recall that
\begin{align*}
  \partial_W^k u(x)- \mathbf{G}_k(x) 
  = \sum_{j=2}^k \sum_{\substack{l_1\geq\cdots\geq l_j \geq 1 \\
  l_1+\cdots + l_j = k}} c^k_{l_1,\cdots, l_j}\,\Psi_{l_1,\cdots,l_j}(x)  ,
\end{align*}
where we denote by
\begin{align*}
  \Psi_{l_1,\cdots,l_j}(x) & \triangleq 
  \mathrm{p.v.}\int_D \mathcal{D}_W^{l_1,\cdots,l_j}(x,y)
  \cdot\nabla^j K(x-y)\dd y \\
  & =  \mathrm{p.v.}\int_D \Big(\delta^x_y\big[\partial_W^{l_1-1}W\big]\otimes
  \cdots \otimes\delta^x_y\big[\partial_W^{l_j-1}W\big] \Big)
  \cdot\nabla^j K(x-y)\dd y.
\end{align*}
By virtue of \eqref{eq:dxyW}, \eqref{eq:Kbound}, and \eqref{eq:m-prop2}, we have the $L^\infty$ bound:
\begin{align*}
  \|\partial_W^k u -\mathbf{G}_k\|_{L^\infty} 
  & \leq C\sum_{j=2}^k \sum_{\substack{l_1\geq\cdots\geq l_j \geq 1 \\
  l_1 + \cdots + l_j = k}}  
  \Big(\prod_{s=1}^j \|\partial_W^{l_s-1} W \|_{\dot C^{\gamma_k}}\Big) 
  \int_{D}|x-y|^{-j-1+j\gamma_k } \big(m(|x-y|^{-1}) + 1 \big)\dd y \\ 
  &\leq  C\sum_{j=2}^k \sum_{\substack{l_1\geq\cdots\geq l_j \geq 1 \\
  l_1 + \cdots + l_j = k}}  
  \Big(\prod_{s=1}^j \|\partial_W^{l_s-1} W \|_{\dot C^{\gamma_k}}\Big) 
  \int_0^L r^{-j+j\gamma_k } \big(m(r^{-1}) +1 \big)\dd y \\
  & \leq  C\sum_{j=2}^k \sum_{\substack{l_1\geq\cdots\geq l_j \geq 1 \\
  l_1 + \cdots + l_j = k}} 
  \Big(\prod_{s=1}^j \|\partial_W^{l_s-1} W \|_{\dot C^{\gamma_k}}\Big),
\end{align*}
Choosing $x,h\in \RR^2$ such that $|h|\le\frac{\bar{c}_0}{3}$ (recalling that $\bar{c}_0>0$ is the constant appearing in Lemma \ref{lem:G-prop}), we write
\begin{align*}
 \Psi_{l_1,\cdots,l_j}(x) - & \Psi_{l_1,\cdots,l_j}(x+h) = \mathfrak{J}_1 + \mathfrak{J}_2 + \mathfrak{J}_3 + \mathfrak{J}_4,	
\end{align*}
where
\begin{align*}
  \mathfrak{J}_1 & \triangleq \mathrm{p.v.}\int_{D\cap\{|x-y|<2|h|\}}\mathcal{D}_W^{l_1,\cdots,l_j}(x,y)\cdot\nabla^j K(x-y)\dd y, \\
  \mathfrak{J}_2 & \triangleq - \mathrm{p.v.}\int_{D\cap\{|x-y|<2|h|\}}\mathcal{D}_W^{l_1,\cdots,l_j}(x+h,y)\cdot\nabla^{j}K(x+h-y)\dd y,\\
  \mathfrak{J}_3 & \triangleq \int_{D\cap\{|x-y|\geq 2|h|\}}\Big(\mathcal{D}_W^{l_1,\cdots,l_j}(x,y) - \mathcal{D}_W^{l_1,\cdots,l_j}(x+h,y)\Big)\cdot\nabla^j K(x-y)\dd y, \\
  \mathfrak{J}_4 & \triangleq \int_{D\cap\{|x-y|\geq 2|h|\}}\mathcal{D}_W^{l_1,\cdots,l_j}(x+h,y) \cdot
  \Big(\nabla^j K(x-y)-\nabla^j K(x+h-y)\Big)\dd y.
\end{align*}
For $\mathfrak{J}_1$, from \eqref{eq:dxyW}, \eqref{eq:Kbound} and \eqref{eq:approx-m-int-1},  we get
\begin{align*}
  |\mathfrak{J}_1| & \leq C\int_{|x-y|\leq 2|h|}\Big(\prod_{s=1}^j \lVert \partial_W^{l_s-1} W\rVert_{\dot{C}^{\gamma_k}}|x-y|^{\gamma_k}\Big)\frac{m(|x-y|^{-1})}{|x-y|^{j+1}}\dd y\\
  & \leq  C\Big(\prod_{s=1}^j \lVert \partial_W^{l_s-1} W\rVert_{\dot{C}^{\gamma_k}}\Big)\int_0^{2|h|}\rho^{j\gamma_k-j}m(\rho^{-1})\dd\rho
  \leq C\Big(\prod_{s=1}^j \lVert \partial_W^{l_s-1} W\rVert_{\dot{C}^{\gamma_k}}\Big)|h|^{j\gamma_k-j+1}m(|h|^{-1})\\
 & \leq C\Big(\prod_{s=1}^j \lVert \partial_W^{l_s-1} W\rVert_{\dot{C}^{\gamma_k}}\Big)|h|^{\mu},
\end{align*}
where for the last inequality, we have used the fact that $j\gamma_k-j+1>\mu$, which follows from assumptions $\gamma_k>1-\frac{1-\mu}{k}$ and $j\leq k$. A similar argument yields
\begin{align*}
  |\mathfrak{J}_2| \leq C\Big(\prod_{s=1}^j \lVert \partial_W^{l_s-1} W\rVert_{\dot{C}^{\gamma_k}}\Big) \int_0^{3|h|}\rho^{j\gamma_k-j}m(\rho^{-1})\dd\rho
  \leq C\Big(\prod_{s=1}^j \lVert \partial_W^{l_s-1} W\rVert_{\dot{C}^{\gamma_k}}\Big)|h|^{\mu}.
\end{align*}
For the term $\mathfrak{J}_3$, since $|x-y|\geq 2h$, we have $|x+h-y|\leq |x-y|+|h|\leq \frac32|x-y|$. We apply \eqref{eq:dxyW} and obtain
\begin{align*}
  & |\mathcal{D}_W^{l_1,\cdots,l_j}(x,y) - \mathcal{D}_W^{l_1,\cdots,l_j}(x+h,y)|\\
  & = \Big|\sum_{s=1}^j\Big(\delta^{x+h}_y[\partial_W^{l_1-1}W]\otimes \cdots \otimes \big(\delta^x_y[\partial_W^{l_s-1}W]-\delta^{x+h}_y[\partial_W^{l_s-1}W]\big) \otimes \cdots \otimes\delta^x_y[\partial_W^{l_j-1}W]\Big)\Big|\\
  & \leq C\Big(\prod_{s=1}^j \lVert \partial_W^{l_s-1} W\rVert_{\dot{C}^{\gamma_k}}\Big) |h|^{\gamma_k}|x-y|^{(j-1)\gamma_k}.
\end{align*}
Then, it follows from \eqref{eq:Kbound} and analogous estimates for $\mathfrak{J}_1$ that
\begin{align*}
  |\mathfrak{J}_3| & \leq C\Big(\prod_{s=1}^j \lVert \partial_W^{l_s-1} W\rVert_{\dot{C}^{\gamma_k}}\Big) |h|^{\gamma_k}\mathrm{p.v.}\int_{D\cap\{|x-y|\geq 2|h|\}}|x-y|^{(j-1)\gamma_k}\frac{m(|x-y|^{-1})+1}{|x-y|^{j+1}}\dd y\\
   &\leq  C\Big(\prod_{s=1}^j \lVert \partial_W^{l_s-1} W\rVert_{\dot{C}^{\gamma_k}}\Big) |h|^{\gamma_k}\int_{2|h|}^{L}\rho^{(j-1)\gamma_k-j}\big(m(\rho^{-1})+1\big)\,\dd\rho\\
   &\leq C\Big(\prod_{s=1}^j \lVert \partial_W^{l_s-1} W\rVert_{\dot{C}^{\gamma_k}}\Big)|h|^{j\gamma_k-j+1}\big(m(|h|^{-1})+1\big)
   \leq C\Big(\prod_{s=1}^j \lVert \partial_W^{l_s-1} W\rVert_{\dot{C}^{\gamma_k}}\Big)|h|^{\mu}.
\end{align*}
Using the mean value theorem and \eqref{eq:Kbound}, the term $\mathfrak{J}_4$ can be bounded as
\begin{align*}
  |\mathfrak{J}_4| & \leq C\int_{D\cap\{|x-y|\geq 2|h|\}}\Big(\prod_{s=1}^j \lVert \partial_W^{l_s-1} W\rVert_{\dot{C}^{\gamma_k}}\big(\tfrac32|x-y|\big)^{\gamma_k}\Big)\,|h|\,\frac{m\big((\tfrac12|x-y|)^{-1}\big)+1}{\big(\tfrac12|x-y|\big)^{j+2}}\,\dd y \\
  & \leq C\Big(\prod_{s=1}^j \lVert \partial_W^{l_s-1} W\rVert_{\dot{C}^{\gamma_k}}\Big)|h|\int_{2|h|}^{L}\rho^{j\gamma_k-j-1}\big(m(\rho^{-1})+1\big)\,\dd \rho\\
  &\leq C\Big(\prod_{s=1}^j \lVert \partial_W^{l_s-1} W\rVert_{\dot{C}^{\gamma_k}}\Big)|h|^{j\gamma_k-j+1}\big(m(|h|^{-1})+1\big)
   \leq C\Big(\prod_{s=1}^j \lVert \partial_W^{l_s-1} W\rVert_{\dot{C}^{\gamma_k}}\Big)|h|^{\mu}.
\end{align*}
Gathering the above estimates yields
\begin{align*}
  \lVert \Psi_{l_1,\cdots,l_j}\rVert_{\dot{C}^{\mu}(\mathbb{R}^2)} 
  \leq C \prod_{s=1}^j \Big(\|\partial_W^{l_s-1} W \|_{\dot C^{\gamma_k}}\Big),
\end{align*}
and the desired estimate \eqref{eq:tangential_higher_est-2} follows.
\end{proof}

\subsection{Proof of Theorem \ref{thm:propagation_regularity}}
We prove \eqref{eq:tangential_regularity}-\eqref{eq:Rgrowth} by induction. The case $n=1$ has been proved in Theorem \ref{thm:regularityCmu}, where the bound is given by \eqref{eq:gradphiCmu} with $\mu'=\mu-\varepsilon$.

Assume that \eqref{eq:tangential_regularity}-\eqref{eq:Rgrowth} holds for any $n\in \{1,2,\cdots,k\}$. 
We will show that it holds for $n=k+1$, that is, if $\|\partial_{W_0}^k W_0\|_{C^{\mu}(\mathbb{R}^2)}<+\infty$, then 
\begin{align}\label{eq:induct-k-conc}
  \|\partial_W^k W(t)\|_{C^{\mu'}(\mathbb{R}^2)} \leq R_{k+1,\mu,\mu'}(t)<+\infty,
\end{align}
where $R_{k+1,\mu,\mu'}(t)$ satisfies \eqref{eq:Rgrowth}.

Recalling that $\Phi_{t,s}(x)$ is the flow map given by \eqref{eq:flow_map_def}, 
and from the equation \eqref{eq:parWk-1W}, we have 
\begin{align*}
  \partial_W^k W(x,t)  = \partial_{W_0}^k W_0(\Phi_{0,t}(x))
  + \int_0^t\partial_W^{k+1} u(s,\Phi_{s,t}(x))\dd s.
\end{align*}
Let $\varepsilon = \mu-\mu'$, $\gamma=\mu'$. Fix $T>0$, and define $V_\gamma(t)$, $\eta_{\gamma,T}$ and $\mu(t)$ as in \eqref{eq:V-t-def}. Consequently,
\begin{equation*}
\begin{aligned}
  |\partial_W^k W(x,t) - \partial_W^k W(y,t)|
  & \leq \lVert \partial_{W_0}^k W_0\rVert_{\dot{C}^\mu}|\Phi_{0,t}(x)-\Phi_{0,t}(y)|^{\mu} \\
  & \quad +\int_0^t\Big|\partial_W^{k+1}u(\Phi_{s,t}(x),s)-\partial_W^{k+1}u(\Phi_{s,t}(y),s)\Big|\dd s.
\end{aligned}
\end{equation*}
It follows from Proposition \ref{prop:higher-tangential-derivative} 
and Lemma \ref{lem:higher_tangential_est} that for every $\gamma_k\in (1- \frac{1-\mu}{k+1},1)$,
\begin{align}\label{eq:ukdiff}
  \Big|\partial_W^{k+1} u(x,s)-\partial_W^{k+1} u(y,s)\Big|
  & \leq C \big(1 + \log \mathbf{\Delta}_{\gamma}(s)\big) \|\partial_W^k W(s)\|_{\dot C^{\mu(s)}} |x-y|^{\mu(s)}\big(m(|x-y|^{-1})+1\big) \nonumber  \\
  & \quad +C\sum_{j=2}^{k+1} \sum_{\substack{l_1\geq\cdots\geq l_j \geq 1 \\ l_1 + \cdots + l_j = k+1}} \Big(\prod_{i=1}^j\|\partial_W^{l_i-1} W(s) \|_{\dot C^{\gamma_k}}\Big) |x-y|^\mu.
\end{align}
Note that the first term can be handled analogously to the argument in Theorem~\ref{thm:regularityCmu} (in the case $n=1$), while the second term is controlled by the induction hypothesis \eqref{eq:induct-k-conc}:
\begin{align*}
 \sum_{j=2}^{k+1} \sum_{\substack{l_1\geq\cdots\geq l_j \geq 1 \\ l_1 + \cdots + l_j = k+1}} \Big(\prod_{i=1}^j\|\partial_W^{l_i-1} W(t) \|_{\dot C^{\gamma_k}}\Big)\leq \sum_{j=2}^{k+1} \sum_{\substack{l_1\geq\cdots\geq l_j \geq 1 \\ l_1 + \cdots + l_j = k+1}} \Big(\prod_{i=1}^j R_{l_i,\frac{\gamma_k+1}{2},\gamma_k}(t)\Big) \triangleq \mathsf{R}_{k+1,\mu}(t)<+\infty.
\end{align*}
Indeed, $R_{l_i,\frac{\gamma_k+1}{2},\gamma_k}(t)<+\infty$ due to the fact $l_i\leq k$ (since $l_i\geq1$, $l_1 + \cdots + l_j = k+1$, and $j\geq2$), and consequently $C^{k+1,\mu}\subset C^{l_i, \frac{\gamma_k+1}{2}}$.
Applying \eqref{eq:est-diff-Phi-s-t} and \eqref{eq:est-diff-Mphi-t-s}, we obtain the following bound analogously to \eqref{eq:est-main-ineq}:
\begin{align*}
  & \frac{|\partial_W^k W(x,t)-\partial_W^k W(y,t)|}{|x-y|^{\mu(t)}}
  \leq C \|\partial^k_{W_0} W_0\|_{\dot{C}^{\mu}} |x-y|^{\mu-\mu(t)} e^{\mu \,\mathfrak{M}_{0,t,\gamma}(|x-y|^{-1})}\\
  & \quad + C_1 \int_0^t(1+\log \mathbf{\Delta}_{\gamma}(s))
  \lVert \partial_W^k W(s)\rVert_{\dot{C}^{\mu(s)}}|x-y|^{\mu(s)-\mu(t)}\big(m(|x-y|^{-1})+1\big) e^{\mu\, \mathfrak{M}_{s,t,\gamma}(|x-y|^{-1})} \dd s\\
  & \quad + C |x-y|^{\mu-\mu(t)} \int_0^t \mathsf{R}_{k+1,\mu}(s)e^{\mu \,\mathfrak{M}_{s,t,\gamma}(|x-y|^{-1})} \dd s \\
  & \leq C \Big(\|\partial^k_{W_0} W_0\|_{\dot{C}^{\mu}}+\int_0^T\mathsf{R}_{k+1,\mu}(t)\,\dd t\Big) |x-y|^{\mu-\mu(t)} e^{\mu \,\mathfrak{M}_{0,t,\gamma}(|x-y|^{-1})}\\
   & \quad + C_1 \int_0^t(1+\log \mathbf{\Delta}_{\gamma}(s))
  \lVert \partial_W^k W(s)\rVert_{\dot{C}^{\mu(s)}}|x-y|^{\mu(s)-\mu(t)}\big(m(|x-y|^{-1})+1\big) e^{\mu\, \mathfrak{M}_{s,t,\gamma}(|x-y|^{-1})} \dd s. 
\end{align*}
Then, through the same argument as in the proof of Theorem \ref{thm:regularityCmu}, we obtain a bound analogously to \eqref{eq:gradphiCmu}:
\begin{align*}
  &\lVert \partial_W^k W(T)\rVert_{C^{\mu'}} = \lVert \partial_W^k W(T)\rVert_{C^{\mu-\varepsilon}}\\
  & \leq C \Big(\|\partial^k_{W_0} W_0\|_{C^{\mu}}+\int_0^T\mathsf{R}_{k+1,\mu}(t)\,\dd t\Big)\exp\Big(C\log^{\beta+2\epsilon}(e+T)\mathcal{H}^{-1}\big(C(1+T)\log^{\beta+2\epsilon}(e+T)\big)\Big)\\
  & \triangleq R_{k+1,\mu',\mu}(T)<+\infty.
\end{align*}
This finishes the proof of \eqref{eq:induct-k-conc}. By induction, \eqref{eq:tangential_regularity} holds.

We are left to verify that $R_{k+1,\mu',\mu}(t)$ satisfies \eqref{eq:Rgrowth}. 
By the induction hypothesis, we have
\begin{align*}
 \mathsf{R}_{k+1,\mu}(t)\leq C\lVert\varphi_0\rVert_{C^{k+1,\mu}}\exp\Big(C\log^{\beta+2\epsilon}(e+t)\mathcal{H}^{-1}\big(C(1+t)\log^{\beta+2\epsilon}(e+t)\big)\Big). 	
\end{align*}
Denote
\begin{align*}
 g(t) \triangleq C\log^{\beta+2\epsilon}(e+t)\mathcal{H}^{-1}\big(C(1+t)\log^{\beta+2\epsilon}(e+t)\big).	
\end{align*}
It is easy to check that $g$ is a strictly increasing function in $[0,+\infty)$ with $\min_{t\geq0}g'(t) = c>0$. Then,
\begin{align*}
 \int_0^T\mathsf{R}_{k+1,\mu}(t)\,\dd t \leq \int_0^T C\lVert\varphi_0\rVert_{C^{k+1,\mu}}e^{g(t)}\,\dd t \leq \frac{C\lVert\varphi_0\rVert_{C^{k+1,\mu}}}{c}\int_0^T g'(t) e^{g(t)}\,\dd t \leq C\lVert\varphi_0\rVert_{C^{k+1,\mu}} e^{g(T)}.
\end{align*}
Therefore, we find that
\begin{align*}
 R_{k+1,\mu',\mu}(T) 
 & \leq C\big(\lVert\varphi_0\rVert_{C^{k+1,\mu}} + C\lVert\varphi_0\rVert_{C^{k+1,\mu}}e^{g(T)})\,e^{g(T)}\\
 & \leq C\lVert\varphi_0\rVert_{C^{k+1,\mu}}\exp\Big(C\log^{\beta+2\epsilon}(e+t)\mathcal{H}^{-1}\big(C(1+t)\log^{\beta+2\epsilon}(e+t)\big)\Big),
\end{align*}
finishing the proof.

\section{Global regularity of multiple vortex patches}\label{sec:mul-patch}

In this section, we consider the patch solutions that are composed of multiple patches.

Suppose that $\omega_0(x)$ takes the form of \eqref{eq:patch-data}, where $N>1$ and $D_i(0)\subset\mathbb{R}^2$ for $i\in\{1,\cdots,N\}$ are simply connected disjoint bounded domains with 
\begin{align*}
 d_0\triangleq \min_{i\neq j}\mathrm{dist}(D_i(0),D_j(0))>0.	
\end{align*}
Then, according to Theorem \ref{thm:Yudovich_type}, the 2D Loglog-Euler type equation \eqref{m-SQG} admits a unique global vortex patch solution
\begin{equation}\label{eq:initial_data_general}
  \omega(x,t) = \sum_{j=1}^N a_j \mathbf{1}_{D_j(t)}(x),\quad a_j\in \mathbb{R},
  \quad \textrm{with}\;\; D_j(t) \triangleq \Phi_t(D_j(0)),
\end{equation} 
where $\Phi_t(\cdot) = \Phi_{t,0}(\cdot)$ is the flow map defined by \eqref{eq:flow_map_def}.

Suppose that $\partial D_i(0)\in C^{n,\mu}$, $i\in \{1,\cdots,N\}$. 
Let $\varphi_{i,0}\in C^{n,\mu}$ be a level-set characterization of $D_i(0)$ with a compact support $\Omega_i\triangleq \overline{\{x\in \RR^2: \varphi_{i,0}(x) \ne 0\}}$ slightly larger than $D_i(0)$, such that  
\begin{align*}
  D_i(0)\subset \Omega_i,\quad \mathrm{dist}\,(\partial \Omega_i,\partial D_i(0)) \geq \tfrac{d_0}{3},\quad \text{and}\quad \mathsf{d}_i(0) \triangleq \max_{j\neq i} \mathrm{dist}\,(\Omega_i,D_j(0))\geq \tfrac{d_0}{2}. 
\end{align*}
Denote by 
\begin{align*}
  \varphi_i(x,t) = \varphi_{i,0}(\Phi_{0,t}(x)), \quad \textrm{and} \quad W_i(x,t) \triangleq \nabla^{\perp}\varphi_i(x,t). 
\end{align*}
From the framework discussed in Section \ref{subsec:formulation}, we know that $\varphi_i(\cdot,t)$ is a level-set characterization of the domain $D_i(t)$, and that the boundary regularity $\partial D_i(t)\in C^{n,\mu}$ follows from the control of $\lVert \partial_{W_i}^{n-1}W_i(t) \rVert_{C^{\mu}(\RR^2)}$. Furthermore, we have
\begin{align*}
 \mathrm{supp}\,\varphi_i(\cdot,t)=\Omega_i(t) \triangleq \Phi_t(\Omega_i).	
\end{align*}
and 
\begin{align*}
  \lVert \partial_{W_i}^l u(t)\rVert_{C^{\mu-\varepsilon}(\RR^2)} 
  = \lVert \partial_{W_i}^lu(t)\rVert_{C^{\mu-\varepsilon}(\Omega_i(t))},
  \qquad \lVert \partial_{W_i}^{l-1}W_i(t)\rVert_{C^{\mu-\varepsilon}(\RR^2)} 
  = \lVert \partial_{W_i}^{l-1}W_i(t)\rVert_{C^{\mu-\varepsilon}(\Omega_i(t))}. 
\end{align*}
Define the distance 
\begin{align*}
  \mathsf{d}_i(t)\triangleq \min_{j\ne i}\mathrm{dist}\,(\Omega_i(t),D_j(t)).
\end{align*}
Applying the estimate \eqref{eq:bound-flow-map-general} on the flow map, we obtain
\begin{align}\label{eq:est-mini-dist-t}
  \mathsf{d}_i(t)\geq \frac{1}{\mathsf{H}^{-1}\big(\mathsf{H}(\mathsf{d}_i(0)^{-1})+ Ct\big)} \geq \frac{1}{\mathsf{H}^{-1}\big(\mathsf{H}(\tfrac{2}{d_0})+ Ct\big)}>0,\quad \forall\, t\geq0.
\end{align}
Hence, different patches remain separated for all time.

Let us also comment on the velocity field $u$, which takes the form
\begin{align}\label{eq:u-decompose}
 u(x,t) = \sum_{j=1}^N u_j(x,t) \triangleq \sum_{j=1}^N \nabla^\perp (-\Delta)^{-1}m(\Lambda)\big(a_j\mathbf{1}_{D_j(t)}(x)\big).
\end{align}
Since the patches are separated, the main contribution to $u(x,t)$ for $x\in\Omega_i(t)$ comes from the $i$-th patch $u_i(x,t)$. Indeed, for $j\neq i$, we apply \eqref{eq:Kbound} and control $u_j(x,t)$ by
\begin{align}\label{eq:smooth-pertb}
 \sum_{j\ne i}\lVert \nabla^l u_j(t)\rVert_{L^\infty(\Omega_i(t))}
 &\leq \sum_{j\neq i} |a_j|\, \Big\| \int_{D_j(t)} \nabla^l K(x-y) \dd y\Big\|_{L^\infty_x(\Omega_i(t))}\nonumber\\
 &\leq C\,\frac{m(\mathsf{d}_i(t)^{-1})+1}{\mathsf{d}_i(t)^{l+1}}
 \leq C \big(\mathsf{d}_i(t)^{-l-2}+1\big),
\end{align}
for $l\in\{0,1,\cdots,n+1\}$ where we do not intend to find the optimized exponent in the last inequality. Therefore, the global regularity results for a single vortex patch extend to the case of multiple vortex patches.

\begin{theorem}\label{thm:multi-patch}
Let $\mu \in (0,1)$ and $n \in \mathbb{N}^\star$.
Assume that $m(\xi) = m(|\xi|)$ is a radial function on $\mathbb{R}^2$, where $m(r)$ satisfies \eqref{H1}-\eqref{H2a}-\eqref{eq:Osgood-cond}.
For each patch solution $\omega$ given by \eqref{eq:initial_data_general}, if $\partial D_i(0)\in C^{n,\mu}$, then the patch boundaries $\partial D_j(t) \in C^{n,\mu-\varepsilon}$ for any $\varepsilon \in (0,\mu)$ and $j\in \{1,\cdots,N\}$. 
	
Moreover, for any given $\varepsilon>0$ and $\epsilon>0$, there are some constant $C>0$ 
depending on initial data, $\varepsilon$ and $\epsilon$ such that  
for all $i=\{1,2,\cdots, N\}$, the following estimates hold: 
\begin{align}\label{ineq:multi-est-1}
  |\nabla \varphi_i(t)|_{\inf} \geq |\nabla \varphi_{i,0}|_{\inf} \exp\Big(-C\mathcal{H}^{-1}\big(C (1+t)\log^{\beta+\epsilon}(e+t) + C\mathcal{H}(\mathsf{g}(t) )\big) \Big),
\end{align}
and
\begin{align}\label{ineq:multi-est-2}
  \|\partial_{W_i}^{n-1} W_i(t)\|_{C^{\mu-\varepsilon}} \leq  C \|\nabla \varphi_{i,0}\|_{C^\mu}
  \exp\Big(C\log^{\beta+\epsilon}(e+t)\mathcal{H}^{-1}\big(C (1+t)\log^{\beta+\epsilon}(e+t) +C \mathcal{H}(\mathsf{g}(t)) \big)\Big),
\end{align}
where $\varphi_i(\cdot)$ is a level-set characterization of $D_i(t)$, $W_i(\cdot)\triangleq \nabla^{\perp}\varphi_i(\cdot)$, the mapping $\mathcal{H}(\cdot)$ is given by \eqref{def:Hr}, $\mathsf{g}(t)$ is defined by
\begin{align}\label{def:gt}
  \mathsf{g}(t)\triangleq \big(\mathsf{H}^{-1}(\mathsf{H}(\tfrac{2}{d_0})+Ct)\big)^{3}.
\end{align}
\end{theorem}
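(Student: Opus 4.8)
The plan is to reduce the multiple-patch problem to the single-patch analysis of Sections~\ref{sec:C1mu-single}--\ref{sec:main-proof-n}, treating the contributions from the other patches as a smooth perturbation that is quantitatively controlled by the inter-patch distance $\mathsf{d}_i(t)$. First I would record from \eqref{eq:est-mini-dist-t} that $\mathsf{d}_i(t)^{-1}\leq \mathsf{H}^{-1}(\mathsf{H}(2/d_0)+Ct)$, so that the quantity $\mathsf{g}(t)$ in \eqref{def:gt} satisfies $\mathsf{d}_i(t)^{-3}+1\lesssim \mathsf{g}(t)$; by \eqref{eq:smooth-pertb} this gives, for each fixed $i$, the bound $\sum_{j\neq i}\|\nabla^l u_j(t)\|_{L^\infty(\Omega_i(t))}\leq C\mathsf{g}(t)$ for all $l\in\{0,\dots,n+1\}$. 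Thus on $\Omega_i(t)$ the velocity splits as $u=u_i+\sum_{j\neq i}u_j$, where $u_i=\nabla^\perp(-\Delta)^{-1}m(\Lambda)(a_i\mathbf 1_{D_i(t)})$ is exactly of the single-patch type handled before (up to the harmless constant $a_i$), and the remainder is a $C^{n+1}$-in-space field with all derivatives bounded by $C\mathsf{g}(t)$.

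Next I would rerun the closing-estimate machinery of Section~\ref{subsec:main-proof-1} (and its higher-order analogue in Section~\ref{sec:main-proof-n}) for $W_i=\nabla^\perp\varphi_i$. Because $[\partial_{W_i},\partial_t+u\cdot\nabla]=0$, the transport identity \eqref{eq:parWk-1W} still holds with $u=u_i+\sum_{j\neq i}u_j$. In the representation of $\partial_{W_i}^k u$ the self-interaction part $\partial_{W_i}^k u_i$ obeys exactly Lemmas~\ref{lem:continuity-u}, \ref{lem:striated_est_mu}, \ref{lem:es-u-whole}, \ref{lem:higher_tangential_est} with $D=D_i(t)$, hence the singular contributions (and the logarithmic losing-estimate structure, controlled by $\mathbf{\Delta}_{\gamma,i}\triangleq \|\nabla\varphi_i\|_{\dot C^\gamma}/|\nabla\varphi_i|_{\inf}+1$) are identical to the single-patch case. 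The extra term $\partial_{W_i}^k\big(\sum_{j\neq i}u_j\big)$ is estimated trivially: since $\sum_{j\neq i}u_j\in C^{n+1}$ with all derivatives $\leq C\mathsf{g}(t)$, a Leibniz/chain-rule expansion bounds its $C^{\mu}$ norm by $C\mathsf{g}(t)\,(1+\|\partial_{W_i}^{k-1}W_i\|_{C^\mu}+\text{lower order striated terms})$, which is of the same structural form as the terms already present, with the coefficient $C$ replaced by $C\mathsf{g}(t)$. Likewise the flow-map estimate \eqref{eq:flow-map-up-l} survives with the bound on $\int_s^t\|u(\tau)\|_{C^1_{\mathrm m}}\,d\tau$ augmented by the innocuous additive term $C\int_s^t\mathsf{g}(\tau)\,d\tau$, which is absorbed into $\mathfrak M_{s,t,\gamma}$.

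Carrying this through, the Osgood-type inequality \eqref{ineq:f} for $f_i(t)\triangleq 1+\log\mathbf{\Delta}_{\mu-\varepsilon,i}(t)$ picks up the forcing $\mathsf{g}(t)$ and becomes, schematically, $f_i(t)\leq C+C\log^{\beta+\epsilon}(e+t)\int_0^t\mathcal M(f_i(s))\,ds + C\int_0^t\mathsf{g}(s)\,ds$; integrating the associated differential inequality for $R_i(t)=r_0\log^{-(\beta+\epsilon)}(e+t)+\int_0^t\mathcal M(f_i(s))\,ds$ yields $\mathcal H(R_i(T))\leq C(1+T)\log^{\beta+2\epsilon}(e+T)+C\mathcal H(\mathsf{g}(T))$ — here one uses that $\int_0^t\mathsf{g}(s)\,ds\leq C\,\mathsf{d}_i(t)^{-1}\int_0^t(\cdots)\lesssim$ a term controlled through the monotonicity of $\mathcal H$ and $\mathsf{g}$, and that $g(t)=\mathsf H^{-1}(\mathsf H(2/d_0)+Ct)$ has $g'\geq c>0$ so $\int_0^T\mathsf{g}(t)\,dt\lesssim e^{g(T)}$, matching $\mathcal H(\mathsf{g}(T))$. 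Applying $\mathcal H^{-1}$ gives \eqref{ineq:multi-est-1} and \eqref{ineq:multi-est-2} via \eqref{eq:gradphiinf}--\eqref{eq:gradphiCmu} exactly as in the single-patch proof, and then \eqref{eq:z0bound}--\eqref{eq:zHolder} transfers the bound to the boundary parametrization, giving $\partial D_i(t)\in C^{n,\mu-\varepsilon}$; the disjointness for all time is \eqref{eq:est-mini-dist-t}. The main obstacle, though a mild one, is bookkeeping: one must verify that inserting the extra $\mathsf{g}(t)$-forcing into the coupled Osgood/Gr\"onwall scheme still closes — i.e. that $\mathsf{g}(t)$ grows slowly enough relative to $\mathcal M$ and $\mathcal H^{-1}$ that $\mathcal H(\mathsf{g}(t))$ is the correct additive correction and no runaway feedback between $f_i$ and $\mathsf{g}$ occurs — and that the constants remain uniform over $i\in\{1,\dots,N\}$.
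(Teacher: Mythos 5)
Your overall strategy coincides with the paper's: use \eqref{eq:est-mini-dist-t} to keep the patches separated for all time, split $u=u_i+\sum_{j\ne i}u_j$ on $\Omega_i(t)$, treat $u_i$ exactly as in the single-patch analysis, control the other patches as a smooth field on $\Omega_i(t)$ through \eqref{eq:smooth-pertb}, and rerun the losing-estimate/Osgood scheme (your Leibniz expansion of $\partial_{W_i}^k u_\ell$ for $\ell\ne i$ is in substance the same as the paper's use of the representation formula over $D_\ell(t)$, which is disjoint from $\Omega_i(t)$). One small inaccuracy: \eqref{eq:smooth-pertb} gives $C(\mathsf{d}_i(t)^{-l-2}+1)$, which for $l\ge 2$ is not bounded by $C\mathsf{g}(t)$ as you claim; this is harmless because $\mathcal{H}(r^\kappa)\le \kappa\,\mathcal{H}(r)+C$ so any fixed power of $\mathsf{d}_i(t)^{-1}$ still yields a correction $C\mathcal{H}(\mathsf{g}(t))$, but it should be stated.

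The genuine gap is in your closing Osgood step. Keeping $f_i=1+\log\mathbf{\Delta}_{i,\mu-\varepsilon}$ and asserting that the forcing enters additively as $C\int_0^t\mathsf{g}(s)\,\dd s$ is not what the estimates produce: in the multi-patch analogue of \eqref{eq:inf-part} the extra term is $\int_0^t\big(m(\mathbf{\Delta}_{i,\gamma}(s))+1\big)\,\mathsf{d}_i(s)^{-3}\,\dd s$, i.e.\ the forcing is multiplied by the unbounded factor $m(\mathbf{\Delta}_{i,\gamma})+1$, and in the analogue of \eqref{ineq:holder-part} the forcing sits inside $\mathcal{M}\big(\int_0^t (f_i+\mathsf{d}_i^{-3})\big)$, not outside the nonlinearity. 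Your schematic inequality can be repaired (e.g.\ via $(\widetilde m(f_i)+1)\mathsf{g}\le C\big(\mathcal{M}(f_i)+\mathcal{M}(\mathsf{g})\big)$, \eqref{eq:mathcalM}, Jensen, and $\mathcal{H}(r^\kappa)\le\kappa\mathcal{H}(r)+C$), but as written this step is missing, and your justification of the correction term --- ``$\int_0^T\mathsf{g}\le Ce^{g(T)}$, matching $\mathcal{H}(\mathsf{g}(T))$'' --- is wrong: $\mathcal{H}(\mathsf{g}(T))\le\log\mathsf{g}(T)$ grows far more slowly than $\mathsf{g}(T)$, let alone $e^{g(T)}$; what is actually needed is that $\mathcal{H}$ applied to any fixed power or polynomial-in-$T$ multiple of $\mathsf{g}(T)$ is still $\le C\mathcal{H}(\mathsf{g}(T))+C$. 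The paper sidesteps all of this by defining $f_i(t)\triangleq 1+\log\mathbf{\Delta}_{i,\mu-\varepsilon}(t)+\mathsf{d}_i(t)^{-3}$, so that every cross term is automatically of the form $\mathcal{M}(f_i)$, the inequality reads $f_i(t)\le C\big(\mathsf{g}(T)+\log^{\beta+\epsilon}(e+t)\int_0^t\mathcal{M}(f_i(s))\dd s\big)$, and the correction $\mathcal{H}(\mathsf{g}(T))$ appears simply as $\mathcal{H}(R_i(0))$ with $R_i(0)=\mathsf{g}(T)$. With that modification (or the explicit repair of the cross terms indicated above), your argument does deliver \eqref{ineq:multi-est-1}--\eqref{ineq:multi-est-2}; the higher-order part and the uniformity in $i\in\{1,\dots,N\}$ are fine as described.
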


\begin{remark}\label{rmk:multi}
The main difference of the bounds \eqref{ineq:multi-est-1}-\eqref{ineq:multi-est-2} 
compared to the single-patch case \eqref{eq:gradphiinf}-\eqref{eq:gradphiCmu} 
is the appearance of the term $\mathcal{H}(\mathsf{g}(t))$, which grows exponentially (or faster) in time. 
This growth arises because patches may approach one another as time evolves. 
As a consequence, the regularity bounds grow triple-exponentially for the 2D Euler equation 
(i.e. $\beta=\varepsilon=\epsilon =0$), and potentially faster in the general case.
\end{remark}

\begin{proof}[Proof of Theorem \ref{thm:multi-patch}]
We start by proving this result for the case $n=1$. Define 
\begin{align*}
  \mathbf{\Delta}_{i,\gamma}(t)\triangleq \frac{\lVert \nabla \varphi_i(t)
  \rVert_{\dot{C}^{\gamma}(\Omega_i(t))}}{\lVert \nabla\varphi_i(t)\rVert_{\inf (\partial D_i(t))}}+1,\quad \gamma\in (0,1). 
\end{align*}
To estimate $u$, we use the decomposition \eqref{eq:u-decompose}. The control of $u_i(t)$ on $\Omega_i(t)$ is the same as the single patch case, see \eqref{eq:u-patch-imp-est}, \eqref{eq:part-W-u-Hold} and \eqref{es:nab-u-tang}; while the control of $u_j(t)$ on $\Omega_i(t)$ follows from \eqref{eq:smooth-pertb}. We deduce
\begin{align*}
  \lVert u(t) \rVert_{C^1_{\mathrm{m}}(\Omega_i(t))} 
  &\leq C\big(1+\log \mathbf{\Delta}_{i,\gamma}(t) + \mathsf{d}_i(t)^{-3}\big),\\   \lVert \partial_{W_i} u\rVert_{C_{\mathrm{m}}^{\sigma}(\Omega_i(t))}
  &\leq C\big(1+\log \mathbf{\Delta}_{i,\gamma}(t) + \mathsf{d}_i(t)^{-3}\big) 
  \lVert W_i(t)\rVert_{C^{\sigma}(\Omega_i(t))},\quad \sigma\in(0,1),\\ 
  \big\lVert \nabla u\, \tfrac{W_i}{|W_i|} \cdot \tfrac{W_i}{|W_i|}\big\rVert_{L^\infty(\Omega_i(t))} 
  &\leq C\big(m(\mathbf{\Delta}_{i,\gamma}(t))+1\big)\big(1+\log \mathbf{\Delta}_{i,\gamma}(t)+\mathsf{d}_i(t)^{-3}\big).
\end{align*}
The rest of the proof follows from the same procedure in Section \ref{subsec:main-proof-1}, replacing $1+\log \mathbf{\Delta}_{\gamma}(\cdot )$ with $1+\log_{+}\mathbf{\Delta}_{i,\gamma}(\cdot)+ \mathsf{d}_i(\cdot)^{-3}$. 
In particular, we deduce the bound on $\mathbf{\Delta}_{i,\gamma}(t)$ analogous to \eqref{eq:Deltabound}: 
\begin{align*}
  \mathbf{\Delta}_{i,\mu-\varepsilon}(t)
  \leq C\exp\bigg(C\big(m(e^{V_{i,\mu-\varepsilon}(t)})+1\big)V_{i,\mu-\epsilon}(t) 
  + C \int_0^t \big(m(\mathbf{\Delta}_{i,\mu-\varepsilon}(s))+1\big) V'_{i,\mu-\varepsilon}(s)\, \dd s\bigg),
\end{align*}
where
\begin{align*}
  V_{i,\gamma}(t) \triangleq \int_0^t \Big(1+\log \mathbf{\Delta}_{i,\gamma}(s) + \mathsf{d}(s)^{-3} \Big)\dd s, \quad \gamma\in (0,1).  
\end{align*}
Define 
\begin{align*}
  f_i(t) \triangleq  V'_{i,\gamma}(t) = 1+\log \mathbf{\Delta}_{i,\mu-\epsilon}(t) + \mathsf{d}_i(t)^{-3},
\end{align*}
and $\mathcal{M}$ as in \eqref{def:fandM}. It follows from \eqref{eq:est-mini-dist-t} and the inequality above that for every $t\in [0,T]$ and $\epsilon>0$,
\begin{align*}
  f_i(t) \leq C\Big(\mathsf{g}(T)+\log^{\beta+\epsilon}(e+t)\int_0^t \mathcal{M}(f_i(s))\dd s\Big), 
\end{align*}
where $\mathsf{g}(\cdot)$ is  given by \eqref{def:gt}. 
The function
\begin{align*}
  R_i(t) \triangleq \frac{\mathsf{g}(T)}{\log^{\beta+\epsilon}(e+t)} + \int_0^t \mathcal{M}(f_i(s))\dd s 
\end{align*}
satisfies
\begin{align*}
  \frac{\dd R_i(t)}{\dd t} 
  & = -\frac{(\beta+\epsilon)\mathsf{g}(T)}{(e+t)\log^{\beta+\epsilon}(e+t)}  + \mathcal{M}(f_i(t))\\
  & \leq \mathcal{M}\big(C\log^{\beta+\epsilon}(e+t)R_i(t)\big)
  \leq  C\log^{\beta+2\epsilon}(e+t)\mathcal{M}(R_i(t)),
\end{align*}
which implies 
\begin{align*}
  \mathcal{H}(R_i(T)) - \mathcal{H}(R_i(0)) = \int_0^{T}\frac{R_i'(t) \dd t}{\mathcal{M}(R_i(t))} 
  \leq C(1+T)\log^{\beta+2\epsilon}(e+T),
\end{align*}
where $\mathcal{H}$ is defined in \eqref{def:Hr}.
Therefore, we conclude that 
\begin{align*}
  f_i(T) \leq C\log^{\beta+\epsilon}(e+T)R_i(T)\leq \log^{\beta+2\epsilon}(e+T) \mathcal{H}^{-1}\big(C(1+T)\log^{\beta+2\epsilon}(e+T)+ \mathcal{H}(\mathsf{g}(T))\big).
\end{align*}
This leads to \eqref{ineq:multi-est-1} and \eqref{ineq:multi-est-2},
and hence we complete the proof of Theorem \ref{thm:multi-patch} for the case $n=1$. 

Next, we address the case of higher regularity ($n \geq 2$) and aim to prove that $\partial D_i(t) \in C^{n,\mu-\varepsilon}$ for each $i \in \{1,\cdots,N\}$. The proof follows closely the single-patch case (Theorem~\ref{thm:propagation_regularity}), with additional arguments to account for interactions among patches.

To estimate $\partial_{W_i}^{k+1} u$, we use the decomposition \eqref{eq:u-decompose}
\begin{align*}
 \partial_{W_i}^{k+1} u(x,t) = 	\partial_{W_i}^{k+1} u_i(x,t) + \sum_{\ell\neq i}\partial_{W_i}^{k+1} u_\ell(x,t).
\end{align*}
The first term can be treated like the single-patch case. For the second term, since $D_\ell(t)\cap \Omega_i(t)=\emptyset$, we are still able to apply the expression
formula \eqref{eq:block_higher_tangential_derivative} and obtain
\begin{align*}
 \partial_{W_i}^{k+1} u_\ell(x)  &=\mathrm{p.v.} \int_{D_\ell(t)}
 \big(\partial_{W_i}^{k}W_i(x) -\partial_{W_i}^{k} W(y) \big) \cdot\nabla K(x-y) \dd y\\
 &\quad+a_\ell \sum_{j=2}^{k+1} \sum_{\substack{l_1\geq\cdots\geq l_j \geq 1 \\
 		l_1 + \cdots + l_j = k+1}}c^{k+1}_{l_1,\cdots,l_j}\int_{D_\ell(t)} \mathcal{D}_{W_i}^{l_1,\cdots,l_j}(x,y)\cdot \nabla^jK(x-y)\dd y\\
 & \triangleq \widetilde{\mathbf{G}}_{k+1}(x) + \Big(\partial_{W_i}^{k+1} u_\ell(x,t)-\widetilde{\mathbf{G}}_{k+1}(x)\Big),
 \end{align*}
 with 
 \begin{align*}
 	 \widetilde{\mathbf{G}}_{k+1}(x)=\mathrm{p.v.} \int_{D_\ell(t)}
 \big(\partial_{W_i}^{k}W_i(x) -\partial_{W_i}^{k} W(y) \big) \cdot\nabla K(x-y) \dd y. 
 \end{align*} 
 For $\widetilde{\mathbf{G}}_{k+1}$, we proceed similarly as in Lemma \ref{lem:higher_tangential_est}, and apply \eqref{eq:block_higher_tangential_derivative} to obtain
 \begin{align*}
 	\lVert \widetilde{\mathbf{G}}_{k+1}(t) \rVert_{C^\mu(\Omega_i(t))} \leq \lVert \partial_W^kW \rVert_{\dot{C}^\mu}\big(\mathsf{d}_i(t)^{-3}+1\big).
 \end{align*}
 For $\partial_{W_i}^{k+1} u_\ell(x,t)-\widetilde{\mathbf{G}}_{k+1}(x)$, the same proof of \eqref{eq:tangential_higher_est-2} yields
\begin{align*}
  \lVert \partial_{W_i}^{k+1} u_\ell(x,t)-\widetilde{\mathbf{G}}_{k+1}(x) \rVert_{C^\mu(\Omega_i(t))}
  \leq C\sum_{j=2}^{k+1} \sum_{\substack{l_1\geq\cdots\geq l_j \geq 1 \\
  l_1 + \cdots + l_j = k+1}} \Big(\prod_{s=1}^j 
  \|\partial_{W_i}^{l_s-1} W_i \|_{ \dot{C}^{\gamma_k}}\Big),
\end{align*}
Gathering the above estimates, we deduce the bound \eqref{eq:ukdiff}, where $1+\log \mathbf{\Delta}_{\gamma}(\cdot )$ is replaced by $1+\log\mathbf{\Delta}_{i,\gamma}(\cdot)+ \mathsf{d}_i(\cdot)^{-3}$.
 
The rest of the proof is identical to the single-patch case, with the additional treatment of the $\mathsf{d}_i(\cdot)^{-3}$ term same as the $n=1$ case in Theorem \ref{thm:multi-patch}. We conclude with the bound \eqref{ineq:multi-est-2}.
\end{proof}

\appendix

\section{Proof of Lemma \ref{lem:G-prop}}\label{sec:app}
Properties \eqref{eq:G-prop1} and \eqref{eq:G-prop1b} with $l=1,2$ have been proved in \cite[Lemma 2]{MTXX}. 
For \eqref{eq:G-prop1b} with general $l$ and \eqref{eq:G-prop2} can be proved analogously. Here we sketch the proof.

Starting from the explicit expression \eqref{eq:G-exp1} on $G$, we claim that for every $l\in \{1,2,\cdots,n+1\}$, 
the $l$-th order derivative $G^{(l)}$ can be expressed by
\begin{align}\label{def:G-l-deri}
  G^{(l)}(\rho) = \mathcal{G}_l(\rho) 
  + \frac{(-1)^l}{2\pi \rho^l} \int_0^\infty J_0(\rho r) M_l(r) \dd r,
\end{align}
where $\mathcal{G}_l(\rho)$ and $M_l(r)$ are iteratively defined as
\begin{align*}
  \mathcal{G}_l(\rho) = \mathcal{G}_{l-1}'(\rho) - \frac{l-1}{\rho} \big(G^{(l-1)}(\rho) - \mathcal{G}_{l-1}(\rho)\big), 
  \quad \mathcal{G}_0(\rho) = \mathcal{G}_1(\rho)= 0 , 
\end{align*}
\begin{align*}
  M_l(r) = M_{l-1}(r) + r M_{l-1}'(r),\quad M_0(r) = m'(r) ;
\end{align*}
or equivalently, 
\begin{align*}
  \forall\, l\in \{2,3,\cdots,n+1\},\quad \mathcal{G}_l(\rho) = \sum_{j=1}^{l-1} a_{j,l} \frac{G^{(j)}(\rho)}{\rho^{l-j}}, \quad 
  \textrm{$\{a_{j,l}\}_{1\leq j\leq l-1}$\;are iteratively defined}, 
\end{align*}
\begin{align*}
  \forall\, l\in \{1,2,\cdots,n+1\},\quad M_l(r) = \sum_{j=1}^{l+1} b_{j,l}\, r^{j-1} m^{(j)}(r), \quad b_{1,l} = b_{l+1,l}=1,
\end{align*}
and $ b_{j,l}=b_{j,l-1} + (j-1)b_{j,l-1} + b_{j-1,l-1}$ for every $j\in \{2,\cdots,l\}$.
Indeed, 
we calculate $G^{(l)}$ by differentiating $G^{(l-1)}$ and using the following integration by parts
\begin{align*}
  \int_0^\infty J_0'(\rho r) r M_{l-1}(r) \dd r & = \frac{J_0(\rho r) r M_{l-1}(r)}{\rho} \bigg|_{r=0}^{+\infty} 
  - \frac{1}{\rho} \int_0^\infty J_0(\rho r) \big(M_{l-1}(r) + r M_{l-1}'(r) \big) \dd r\\
  & =  - \frac{1}{\rho} \int_0^\infty J_0(\rho r) M_l(r) \dd r,
\end{align*}
where we have used the fact that 
\begin{align*}
  \lim_{r\rightarrow +\infty} \big|J_0(\rho r) r M_{l-1}(r)\big| 
  \leq C \lim_{r\rightarrow +\infty} r^{-\frac12}\cdot r \cdot m'(r) = 
  C\beta_1 \lim_{r\rightarrow +\infty} \frac{m(r)}{r^{1/2} \log r}=0,
\end{align*}
and
\begin{align*}
  \lim_{r\to 0^{+}}r^{l}m^{(l)}(r)=0,\quad\quad \lim_{r\rightarrow 0^+} r M_{l-1}(r) = 0, 
\end{align*}
applying the Mikhlin-H\"ormander condition in \eqref{H1}, \eqref{H2a}, \eqref{eq:m-prop2} and \eqref{eq:rm'(r)-limit}. 

Next, we prove \eqref{eq:G-prop1b} and \eqref{eq:G-prop2} by induction.
Note that \eqref{eq:G-prop1b}-\eqref{eq:G-prop2} with $l=1$ 
have already been proven in \cite[Lemma 2]{MTXX} (although it was only stated that
$|G'(\rho)|\leq C$ on $[\bar{c}_0,+\infty)$, it can easily be extended to
\eqref{eq:G-prop2} with $l=1$; see also below).
Let $k\in \{1,\cdots,n\}$. Suppose that \eqref{eq:G-prop1b} and \eqref{eq:G-prop2} 
with each $l\in \{1,\cdots,k\}$ hold, 
we intend to show that they also hold for the $l=k+1$ case.
Let $\chi(\xi)= \chi(|\xi|)\in C^\infty_c(\RR^2)$ be a smooth radial function such that
\begin{align*}
  \chi\equiv 1,\;\; \textrm{on}\;\{|\xi|\leq 1\},\qquad \chi\equiv 0,\;\; \textrm{on}\;\{|\xi|\geq 2\},\qquad
  0\leq \chi\leq 1.
\end{align*}
Thanks to identity \eqref{eq:G-exp1} with $|x|=\rho$ and formulas \eqref{def:G-l-deri}, we have
\begin{align*}
  G^{(k+1)}(\rho) & = \mathcal{G}_{k+1}(\rho) 
  + \frac{(-1)^{k+1}}{2\pi\rho^{k+1}} 
  \int_0^{\infty}J_0(\rho r)\chi(\rho r) M_{k+1}(r) \dd r\\
  & \quad + \frac{(-1)^{k+1}}{(2\pi)^2 \rho^{k+1}} 
  \int_{\RR^2} e^{ix\cdot \xi}(1-\chi(\rho |\xi|))
  \frac{M_{k+1}(|\xi|)}{|\xi|}\dd \xi \\
  & \triangleq \mathbf{I}_1+\mathbf{I}_2 + \mathbf{I}_3.
\end{align*}
For the first term, noting that
$\mathbf{I}_1 = \sum_{j=1}^k a_{j,k+1} \frac{G^{(j)}(\rho)}{\rho^{k+1-j}}$,
we immediately use \eqref{H1} and the induction assumptions to deduce that
\begin{align*}
  |\mathbf{I}_1|\leq C \frac{1 + m(\rho^{-1})}{\rho^{k+1}}.
\end{align*}
For the term $\mathbf{I}_2$, due to that the Bessel function $J_0(\cdot)$ satisfies that $0<J_0(r)\leq 1$ for $r\in [0,2]$,
and using \eqref{H1}, we find that
\begin{align*}
  |\mathbf{I}_2| & \leq \frac{1}{2\pi \rho^{k+1}} \int_0^\infty J_0(\rho r) \chi(\rho r)
  \Big(m'(r) + |b_{2,k+1}|\,r\,|m''(r)| +\cdots +|b_{k+2,k+1}|\,r^{k+1}\, |m^{(k+2)}(r)| \Big) \dd r \\
  & \leq \frac{C}{\rho^{k+1}} \int_0^{2\rho^{-1}} J_0(\rho r) m'(r) \dd r
  \leq \frac{C }{\rho^{k+1}} \Big(m(2\rho^{-1})  - m(0^+) \Big).
\end{align*}
For the last term $\mathbf{I}_3$, through the integration by parts and \eqref{H1}, observe that
\begin{align*}
  |\mathbf{I}_3| & = \frac{1}{(2\pi)^2 \rho^{k+3}}\Big|\sum_{j=1}^{k+2} b_{j,k+1}\int_{\RR^2}e^{ix\cdot \xi}\Delta_{\xi}
  \Big((1-\chi(\rho |\xi|))\frac{|\xi|^{j-1} m^{(j)}(|\xi|)}{|\xi|}\Big)\dd \xi \Big| \\
  & \leq \frac{C}{\rho^{k+3}} \sum_{j=1}^{k+2} \bigg(\int_{\rho^{-1}\leq |\xi|\leq 2\rho^{-1}} \big(\rho^2 + \rho |\xi|^{-1}\big)
  \frac{|\xi|^{j-1} m^{(j)}(|\xi|)}{|\xi|} \dd \xi 
  + \int_{|\xi|\geq \rho^{-1}} \Delta_\xi \Big(\frac{|\xi|^{j-1}m^{(j)}(|\xi|)}{|\xi|}\Big) \dd \xi \bigg) \\
  & \leq \frac{C}{\rho^{k+3}} \bigg(\int_{\rho^{-1}\leq |\xi|\leq 2\rho^{-1}} \big(\rho^2 + \rho |\xi|^{-1} \big)
  \frac{m'(|\xi|)}{|\xi|} \dd \xi 
  + \int_{|\xi|\geq \rho^{-1}} \frac{m'(|\xi|)}{|\xi|^3}\dd \xi \bigg).
\end{align*}
By virtue of the fact that $r\mapsto m'(r)$ is non-increasing and $r \,m'(r)\leq m(r)$ for $r>0$ large (from \eqref{H2a}-\eqref{eq:Osgood-cond}), 
we infer that for $\rho>0$ small enough, i.e., $\rho\in (0,\bar{c}_0]$,
\begin{align*}
  |\mathbf{I}_3| \leq \frac{C}{\rho^{k+2}} m'(\rho^{-1}) \leq \frac{C}{\rho^{k+1}} m(\rho^{-1});
\end{align*}
while for every $\rho \in [\bar{c}_0,\infty)$,
\begin{align*}
  |\mathbf{I}_3| & \leq \frac{C}{\rho^{k+3}} \bigg( \rho^2 \int_{\rho^{-1}}^{2\rho^{-1}} m'(r)\dd r 
  + \int_{\rho^{-1}}^{\bar{c}_0^{-1}} \frac{m'(r)}{r^2} \dd r + \int_{\bar{c}_0^{-1}}^\infty \frac{m'(r)}{r^2} \dd r \bigg) \\
  & \leq \frac{C}{\rho^{k+3}} \bigg( \rho^2 m(2 \rho^{-1})  
  + \bar{c}_0^2 m(\bar{c}_0^{-1}) + m(\bar{c}_0^{-1}) \int_{\rho^{-1}}^{\bar{c}_0^{-1}} \frac{1}{r^3} \dd r 
  + m'(\bar{c}_0^{-1}) \int_{\bar{c}_0^{-1}}^\infty \frac{1}{r^2} \dd r \bigg)  \leq \frac{C}{\rho^{k+1}}.
\end{align*}
Gathering the above estimates leads to the inequalities 
\eqref{eq:G-prop1b} and \eqref{eq:G-prop2} with $l=k+1$, as desired.
Hence, we complete the proof of \eqref{eq:G-prop1b} and \eqref{eq:G-prop2}.

\end{document}